\definecolor{dblue}{rgb}{0,0,0.70}
\DeclareRobustCommand{\SkipTocEntry}[5]{}
\newtheorem{theorem}{Theorem}[section]
\newtheorem{corollary}[theorem]{Corollary}
\newtheorem{lemma}[theorem]{Lemma}
\newtheorem{proposition}[theorem]{Proposition}
\newtheorem{remark}[theorem]{Remark}
\theoremstyle{definition}
\newtheorem{definition}[theorem]{Definition}
\newtheorem{question}[theorem]{Question}
\newtheorem{problem}[theorem]{Problem}
\newcommand{\Ult}{\mathop{\rm Ult}}
\newcommand{\Union}{\bigcup}
\newcommand{\union}{\cup}
\newcommand{\smallleq}{\mathrel{\mathchoice{\raise2pt\hbox{$\scriptstyle\leq$}}{\raise1pt\hbox{$\scriptstyle\leq$}}{\raise1pt\hbox{$\scriptscriptstyle\leq$}}{\scriptscriptstyle\leq}}}
\newcommand{\smalllt}{\mathrel{\mathchoice{\raise2pt\hbox{$\scriptstyle<$}}{\raise1pt\hbox{$\scriptstyle<$}}{\raise0pt\hbox{$\scriptscriptstyle<$}}{\scriptscriptstyle<}}}
\newcommand{\Add}{\mathop{\rm Add}}
\newcommand{\GCH}{{\rm GCH}}
\newcommand{\ZFC}{{\rm ZFC}}
\newcommand{\KP}{{\rm KP}}
\newcommand{\VV}{\mathbf{V}}
\newcommand{\p}{\mathbb{P}}
\newcommand{\la}{\langle}
\newcommand{\ra}{\rangle}
\newcommand{\tail}{\text{tail}}
\newcommand{\forces}{\Vdash}
\newcommand{\Los}{\L o\'s}
\newcommand{\GBC}{{\rm GBC}}
\newcommand{\KM}{{\rm KM}}
\newcommand{\crit}{\text{crit}}
\newcommand{\ran}{{\rm ran}}
\newcommand{\CC}{{\rm CC}}
\newcommand{\Tr}{{\rm Tr}}
\newcommand{\Ord}{{\rm Ord}}
\newcommand{\g}{{\mathsf{G}}}
\newcommand{\rg}{{\mathsf{RG}}}
\newcommand{\frg}{{\mathsf{frG}}}
\newcommand{\Ra}{{\mathsf{Ramsey}}}
\title{Between Ramsey and measurable cardinals}
\author{Victoria Gitman}
\address{The City University of New York, CUNY Graduate Center, Mathematics Program, 365 Fifth Avenue, New York, NY 10016}
\email{vgitman@gmail.com}
\urladdr{https://victoriagitman.github.io/}
\author{Philipp Schlicht}
\address{School of Mathematics, University of Bristol, Fry Building, Woodland Road, Bristol, BS8 1UG, UK}
\email{philipp.schlicht@bristol.ac.uk}
\subjclass[2020]{(Primary) 03E55; (Secondary) 03E35}
\keywords{}
\keywords{Large cardinal, Ramsey cardinal, measurable cardinal, infinite game}
\thanks{We are grateful to Philip Welch for his kind permission to use an argument of his in the proof of Lemma \ref{lem:Sigma_nDefinableSubstructure}.}
\thanks{This research was funded in whole or in part by EPSRC grant number EP/V009001/1 of the second-listed author.  For the purpose of open access, the authors have applied a ‘Creative Commons Attribution' (CC BY) public copyright licence to any Author Accepted Manuscript (AAM) version arising from this submission.}
\begin{document}
\maketitle

\begin{abstract}
We study several intertwined hierarchies between $\kappa$-Ramsey cardinals and measurable cardinals to illuminate 
the structure of the large cardinal hierarchy in this region. 
In particular, we study baby versions of measurability
introduced by Bovykin and McKenzie and some variants by locating these notions in the large cardinal hierarchy and providing characterisations via filter games.
As an application, we determine the theory of the universe up to a measurable cardinal.
\end{abstract}

\tableofcontents

\section{Introduction}

Measurable cardinals and most stronger large cardinals are defined by the existence of elementary embeddings $j:V\to M$ from the universe $V$ into an inner model $M$ with that cardinal as the critical point. Stronger large cardinal axioms impose additional assumptions on the target model $M$ that allow it to capture more and more sets from $V$. Weakly compact and many other smaller large cardinals are defined via combinatorial properties, often involving existence of large homogeneous sets for colorings. But almost all of them also have elementary embedding characterizations that, like the larger large cardinals, follow prescribed patterns as consistency strength grows. These smaller large cardinals $\kappa$ are characterized by elementary embeddings of models $\la M,\in\ra\models\ZFC^-$ of size $\kappa$, most often transitive, with critical point $\kappa$, and usually, but not always, with well-founded targets. $\ZFC^-$ is the theory $\ZFC$ with the powerset axiom removed, the collection scheme in place of the replacement scheme, and the version of the axiom of choice which states that every set can be well-ordered.\footnote{Without the power set axiom, collection and replacement schemes are not equivalent and neither are the various forms of the axiom of choice. See \cite{zarach:unions_of_zfminus_models} and \cite{zfcminus:gitmanhamkinsjohnstone}.} In many cases, the existence of these embeddings can be equivalently expressed in terms of the existence of certain filters on the subsets of $\kappa$ contained in $M$.

Suppose $\la M,\in\ra\models\ZFC^-$ is a transitive model of size $\kappa$ with $\kappa\in M$ (we will give precise definitions and drop the transitivity requirement in the next section). We call such structures \emph{weak $\kappa$-models}. We will call a filter $U$ on the subsets of $\kappa$ of a weak $\kappa$-model $M$ an \emph{$M$-ultrafilter} if the structure $\la M,\in,U\ra$, together with a predicate for $U$, satisfies that $U$ is a uniform, normal ultrafilter on $\kappa$. What this says is that $U$ is an ultrafilter on the subsets of $\kappa$ that live in $M$ and if a $\kappa$-length sequence of elements of $U$ is an element of $M$, then its diagonal intersection is in $U$. \Los' theorem holds for ultrapowers by an $M$-ultrafilter, but the ultrapower need not be well-founded. Note that the filter $U$ may not be countably complete for sequences outside of $M$. The $M$-ultrafilter $U$ is, in most interesting cases, external to $M$ and even though $\la M,\in\ra\models\ZFC^-$, separation and replacement can fail completely in the structure $\la M,\in,U\ra$ once we let $M$ know about $U$. In the same way that making the inner model $M$ closer to $V$ in the characterizations of larger large cardinals increases strength, for the smaller large cardinals, we increase strength by making $M$ be more compatible with $U$, which amounts to $M$ being more correct about the properties of $U$ or to having more of the $\ZFC^-$ axioms in the structure $\la M,\in,U\ra$.

Let's look at some examples. A cardinal $\kappa$ is weakly compact if and only if it is inaccessible and every $A\subseteq\kappa$ is an element of a weak $\kappa$-model $M$ for which there is an $M$-ultrafilter with a well-founded ultrapower. It turns out that it is equivalent to assume the a priori stronger assertion that every $A\subseteq \kappa$ is an element of a weak $\kappa$-model for which there is a (externally) countably complete $M$-ultrafilter $U$. In particular, the ultrapower is well-founded. A cardinal $\kappa$ is $1$-iterable if every $A\subseteq\kappa$ is an element of a weak $\kappa$-model $M$ for which there is an $M$-ultrafilter $U$ such that $\la M,\in, U\ra$ satisfies $\Sigma_0$-separation and the ultrapower is well-founded \cite[Definition 2.11]{gitman:welch}. The $1$-iterable cardinals are stronger than ineffable cardinals, and therefore much stronger than weakly compact cardinals. They are however still compatible with $L$. Thus, the additional requirement that the structure $\la M,\in, U\ra$ satisfies $\Sigma_0$-separation pushes up the consistency strength. A cardinal $\kappa$ is Ramsey if and only if every $A\subseteq\kappa$ is an element of a weak $\kappa$-model $M$ for which there is an $M$-ultrafilter $U$ such that $\la M,\in,U\ra$ satisfies $\Sigma_0$-separation and which is (externally) countably complete \cite[Theorem 3]{mitchell:ramsey}. Ramsey cardinals sit much higher in the hierarchy than $1$-iterable cardinals. Thus, in particular, in the presence of the requirement of $\Sigma_0$-separation for the structure $\la M,\in, U\ra$ for the $M$-ultrafilter $U$, having $U$ be countably complete is much stronger than having it just produce a well-founded ultrapower.

In this article, motivated by the work of Bovykin and McKenzie \cite{BovykinMcKenzie:RamseyLikeNFUM}, we consider a hierarchy of large cardinal notions characterized by the existence, for weak $\kappa$-models $M$, of $M$-ultrafilters $U$ such that the structure $\la M,\in,U\ra$ satisfies fragments up to full $\ZFC^-$. Following Bovykin and McKenzie, we call such cardinals \emph{$n$-baby measurable} where the fragment is $\ZFC^-_n$ (the separation and collection schemes are restricted to $\Sigma_n$-formulas).
Baby measurable cardinals and some variants were introduced by Bovykin and McKenzie in \cite[Definition 4.2]{BovykinMcKenzie:RamseyLikeNFUM} and used to obtain the following application to the theory ${\rm NFUM}$.
This theory is a natural strengthening of ${\rm NFU}$ due to Holmes (see \cite[Section 2.2]{BovykinMcKenzie:RamseyLikeNFUM}) that aims to facilitate mathematics in ${\rm NFU}$, the latter being a variant of Quine's \emph{New Foundations} with Urelements introduced by Jensen.

\begin{theorem}[Bovykin, McKenzie {\cite[Section 4]{BovykinMcKenzie:RamseyLikeNFUM}}]
\label{BovykinMcKenzieMain}
The following theories are equiconsistent.\footnote{Note that their notion of $n$-baby measurable cardinal in \cite[Definition 4.2]{BovykinMcKenzie:RamseyLikeNFUM} is slightly different from ours in Definition \ref{def baby meas} \eqref{def baby meas main}, but the theorem holds for our notion as well by Theorem \ref{Bovykin equicon}.}
\begin{enumerate}
\item $\ZFC$ together with the scheme consisting of the assertions for every $n\in \omega$:
\begin{center}
``There exists an $n$-baby measurable cardinal $\kappa$ such that $V_\kappa\prec_{\Sigma_n} V$.''
\end{center}
\item ${\rm NFUM}$
\end{enumerate}
\end{theorem}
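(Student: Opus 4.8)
The plan is to establish both directions of the equiconsistency using the classical correspondence, going back to Jensen and refined by Holmes, between models of ${\rm NFU}$-style theories and models of fragments of $\ZFC$ equipped with an external type-shifting automorphism (or endomorphism) $j$. The rough dictionary is that the amount of stratified comprehension available in the ${\rm NFU}$-model is governed by how much of $\ZFC^-$ holds in the structure carrying $j$, that the \emph{Infinity} and \emph{Cantorian} axioms correspond to the existence of a suitable fixed ordinal below the critical structure, and that the distinguishing strong axiom $\mathsf{M}$ of ${\rm NFUM}$ corresponds precisely to the witnessing $M$-ultrafilter. Baby measurability is tailored to make this dictionary exact: an $n$-baby measurable cardinal supplies an embedding whose source structure satisfies $\ZFC^-_n$, which is exactly the strength needed to interpret stratified comprehension for formulas of complexity $n$.

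For the direction $\mathrm{Con}(1)\Rightarrow\mathrm{Con}(2)$, I would start from a model $V$ of $\ZFC$ satisfying the scheme and, for each $n$, fix an $n$-baby measurable $\kappa_n$ with $V_{\kappa_n}\prec_{\Sigma_n}V$. From the witnessing weak $\kappa_n$-model $M$ and $M$-ultrafilter $U$ one forms the ultrapower $j\colon M\to\Ult(M,U)$ with $\crit(j)=\kappa_n$; since $\langle M,\in,U\rangle\models\ZFC^-_n$, \Los' theorem together with the internal normality of $U$ makes $j$ an elementary map respecting $\Sigma_n$-formulas, and the reflection $V_{\kappa_n}\prec_{\Sigma_n}V$ guarantees that the finite fragment of set theory read off from $M$ is genuinely correct. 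Assembling these embeddings into a single structure $\langle W,\in,j\rangle$ carrying a type-shifting automorphism and then running Jensen's construction (membership defined through $j$ on an appropriately chosen rank) yields a model of ${\rm NFU}$ in which the presence of $U$ validates the measurability axiom and the uniformity over all $n$ validates full stratified comprehension, so the resulting model satisfies ${\rm NFUM}$.

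For the converse $\mathrm{Con}(2)\Rightarrow\mathrm{Con}(1)$, I would begin with a model of ${\rm NFUM}$ and pass to its well-founded ``isomorphism-type'' structure: the strongly Cantorian sets, together with the endomorphism induced by the singleton/type-shifting operation, form a model $\langle N,\in,j\rangle$ of a fragment of $\ZFC$ with an elementary $j$. Stratified comprehension in ${\rm NFUM}$ then translates, level by level, into the assertions that the relevant cutoff cardinal $\kappa$ satisfies $V_\kappa\prec_{\Sigma_n}V$ inside $N$, while the ${\rm NFUM}$ axiom $\mathsf{M}$ produces, for each $n$, an $M$-ultrafilter witnessing that $\kappa$ is $n$-baby measurable in $N$. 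Because comprehension is available at every stratified complexity, one recovers the full scheme, giving a model of the theory in item (1).

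The main obstacle I anticipate is the \emph{scheme-level uniformity}: each finite fragment $\ZFC^-_n$ and each instance $V_\kappa\prec_{\Sigma_n}V$ corresponds only to stratified comprehension of bounded complexity, and one must ensure that a single external automorphism simultaneously witnesses every level, rather than obtaining, via compactness, merely the consistency of each finite subtheory. Keeping the bookkeeping between the metatheoretic $\Sigma_n$-hierarchy and the syntactic stratification of ${\rm NFU}$-formulas exactly aligned — so that the interpretation is faithful on both sides — is the delicate technical point, and it is precisely here that the ``$V_\kappa\prec_{\Sigma_n}V$'' clause, rather than a weaker correctness hypothesis, is needed.
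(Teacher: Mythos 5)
This statement is not actually proved in the paper: it is quoted from Bovykin and McKenzie, and the paper's own proved contribution (see the footnote to the theorem) is Theorem \ref{Bovykin equicon}, which shows that the scheme in item (1), formulated with the paper's notion of $n$-baby measurable cardinal, is equiconsistent with the single first-order theory $\ZFC^-_U$ ($\ZFC^-$ plus a largest, inaccessible cardinal $\kappa$ and a predicate $U$ for a normal ultrafilter on $\kappa$); the link between $\ZFC^-_U$ and ${\rm NFUM}$ is then exactly the cited Bovykin--McKenzie work. Your proposal instead tries to pass directly between the scheme and ${\rm NFUM}$ by Jensen-style constructions. That is the right general technology for the ${\rm NFU}$ side, but as written the argument has genuine gaps at precisely the points where the real proof needs the intermediate theory $\ZFC^-_U$.

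First, the assembly step in your forward direction is not a construction. For each $n$ you obtain a different weak $\kappa_n$-model $M_n$ and an ultrapower embedding $j_n\colon M_n\to \Ult(M_n,U_n)$; these are maps between distinct structures, none of them is an automorphism, and ``assembling these embeddings into a single structure $\la W,\in,j\ra$ carrying a type-shifting automorphism'' is exactly the content that has to be supplied. The standard resolution is to interpose $\ZFC^-_U$: from a model of the scheme, each witness $\la M,\in,U\ra\models\ZFC^-_n$ shows that every finite fragment of $\ZFC^-_U$ is consistent (this is direction (3)$\Rightarrow$(1) of Theorem \ref{Bovykin equicon}), so $\ZFC^-_U$ is consistent by compactness, and the automorphism feeding Jensen's construction is then extracted from a model of the \emph{full} theory $\ZFC^-_U$ (e.g.\ via iterated ultrapowers, which need $\ZFC^-$ with the ultrafilter predicate and cannot be run level-by-level inside the $\ZFC^-_n$ fragments). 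This also shows that the ``scheme-level uniformity'' you flag as the main obstacle is pointed in the wrong direction: since the theorem asserts an equiconsistency rather than an interpretation, no single automorphism witnessing all levels at once is required; compactness through $\ZFC^-_U$ does that work. Second, in the converse direction you assert that ${\rm NFUM}$ hands you, for each $n$, an ultrafilter witnessing $n$-baby measurability \emph{together with} $V_\kappa\prec_{\Sigma_n}V$, but you give no mechanism for producing, inside a single $\ZFC$ model, cardinals $\alpha$ with $V_\alpha\prec_{\Sigma_n}V_\kappa$ that are themselves $n$-baby measurable. In the paper this is direction (1)$\Rightarrow$(2) of Theorem \ref{Bovykin equicon}: starting from $\la M,\in,U\ra\models\ZFC^-_U$, one uses Lemma \ref{lem:ZFCnImpliesElemSubstructures} to see that $\kappa$ is $n$-baby measurable in the ultrapower, and then applies \Los' theorem to reflect the statement ``$V_\alpha\prec V_\kappa$ and $\alpha$ is $n$-baby measurable'' below $\kappa$. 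Some such reflection argument is indispensable and is absent from your sketch.
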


We collect facts about ultrafilters and large cardinals in Section \ref{section: preliminaries} and study structures consisting of models with ultrafilters satisfying fragments of $\ZFC^-$ in Section \ref{section: amen coll}.
The new large cardinals notions are introduced in Section \ref{section: babym}.
We show in Section \ref{sec:hierarchy} that increasing levels of collection and separation for the structures $\la M,\in,U\ra$ increase the large cardinal strength. For example, adding even $\Sigma_0$-replacement to the characterization of $1$-iterable cardinals pushes consistency strength well beyond a Ramsey cardinal, and hence beyond $L$.
We then provide a fine analysis of the resulting hierarchies.
Surprisingly, the closure of the models does not play a role when working solely with fragments of collection.
However, closure conditions do induce a strict hierarchy for the setting of $\ZFC^-_n$ with both the collection and separation schemes restricted to $\Sigma_n$-formulas.
This is shown in Section \ref{section: games} with the help of filter games resembling those introduced by Holy and the second-listed author \cite[Section 5]{HolySchlicht:HierarchyRamseyLikeCardinals}.
We thus arrive at similar patterns of large cardinal notions as the one around strongly Ramsey and $\alpha$-Ramsey cardinals.

\tikzset{>={Stealth[round, length=1.5mm,width=1.1mm]}}
     
\newcommand{\zz}{2.8}
\begin{figure}[h]\label{Figure.Theories}
  \begin{tikzpicture}[theory/.style={draw,rounded rectangle,scale=.75,minimum height=6.5mm},scale=.35]
      \draw (0:0) node[theory] (1) {$1$-iterable}
           ++(90:\zz) node[theory] (2) {$\alpha$-Ramsey}
           ++(90:\zz) node[theory] (3) {strongly Ramsey}
           ++(90:\zz) node[theory] (4) {$\kappa$-Ramsey};
     \draw (0:20) node[theory, blue!80!black] (1a) {weakly baby measurable}
           ++(90:\zz) node[theory, blue!80!black] (2a) {$\alpha$-baby measurable}
           ++(90:\zz) node[theory, blue!80!black] (3a) {baby measurable}
           ++(90:\zz) node[theory, blue!80!black] (4a) {$\kappa$-baby measurable};
     \draw (0:10) node[theory, blue!80!black] (1b) {weakly $n$-baby measurable}
           ++(90:\zz) node[theory, blue!80!black] (2b) {$(\alpha,n)$-baby measurable}
           ++(90:\zz) node[theory, blue!80!black] (3b) {$n$-baby measurable}
           ++(90:\zz) node[theory, blue!80!black] (4b) {$(\kappa,n)$-baby measurable};
     \draw[<-]
                        (1) edge (2)
                        (3) edge (4);
     \draw[<-, dotted]
                        (2) edge (3); 
     \draw[<-, blue!80!black]
                        (1a) edge (2a)
                        (3a) edge (4a);
     \draw[<-, dotted]
     			     (2a) edge (3a); 
     \draw[<-, blue!80!black]
                        (1b) edge (2b)
                        (3b) edge (4b);
     \draw[<-, dotted]
                        (2b) edge (3b); 
  \end{tikzpicture}
  \caption{Patterns in analogy with $\alpha$-Ramsey cardinals.
Solid arrows denote direct implications, dotted arrows implications in consistency strength.}
\end{figure}
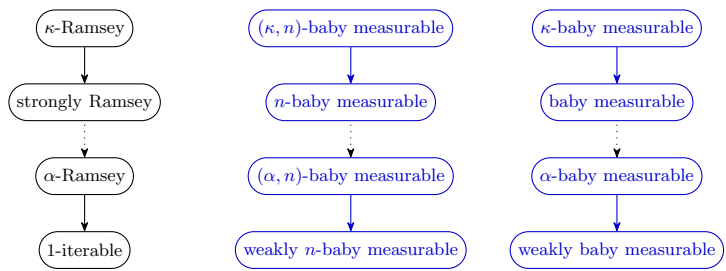

We show that some of these new large cardinal notions are robust under forcing in Section \ref{section: indestructibility}.
Finally, we will see that these notions are naturally connected to models of Kelley-Morse set theory in which $\Ord$ is measurable and help us to understand the structure of these second-order models in Section \ref{section: second-order}.
In particular, we apply the above methods to show that the theory of structures of the form $V_\kappa^M$, where $M$ is a model of $\ZFC^-$ or $\ZFC$ where $\kappa$ is measurable, is axiomatizable by the existence of large cardinals similar to the ones above.
These results shed light on the large cardinal hierarchy below measurable cardinals and provide a blueprint how to approximate a large cardinal notion from below by a natural hierarchy.

\section{Preliminaries}
\label{section: preliminaries}

Given a set $X$, when we say that $X$ is a model of set theory, we will tacitly assume that the membership relation is the actual membership $\in$ restricted to $X$, so that $X$ is really the model $\la X,\in\ra$.
\begin{definition}
Suppose that $\kappa$ is an inaccessible cardinal.
\begin{itemize}
\item A \emph{weak $\kappa$-model} is a transitive set $M\models\ZFC^-$ of size $\kappa$ with $V_\kappa\in M$.
\item A \emph{$\kappa$-model} is a weak $\kappa$-model such that $M^{{<}\kappa}\subseteq M$.
\item A \emph{basic weak $\kappa$-model} is a (not necessarily transitive) set $M\models\ZFC^-$ of size $\kappa$ such that $M\prec_{\Sigma_0}V$ and $V_\kappa\union \{V_\kappa\}\subseteq M$.
\item A \emph{basic $\kappa$-model} is a basic weak $\kappa$-model such that $M^{{<}\kappa}\subseteq M$.
\end{itemize}
In each case, we will say that $M$ is \emph{simple} if $\kappa$ is the largest cardinal in $M$.\footnote{Note that these definitions differ slightly from those appearing in earlier literature. For instance, in the definition of weak $\kappa$-model, it is not usually assumed that $\kappa$ is inaccessible and $V_\kappa\in M$.}
\end{definition}
\noindent Note that if $\kappa$ is a basic weak $\kappa$-model, then $\kappa\in M$. To see this, let $\alpha\in M$ be such that $$M\models``\alpha=V_\kappa\cap \Ord",$$ and then observe that by $\Sigma_0$-elementarity, $\alpha=V_\kappa\cap\Ord$ holds true in $V$, which means that $\alpha=\kappa$.

Our canonical examples of basic models will be elementary substructures of size $\kappa$ of some large $H_\theta$ for a regular $\theta$.\footnote{$H_\theta$ for a cardinal $\theta$ is the collection of all sets whose transitive closure has size less than $\theta$ and given that $\theta$ is regular, we have $H_\theta\models\ZFC^-$.} For simple models, the notions of basic weak $\kappa$-models and weak $\kappa$-models coincide.

\begin{lemma}
\label{simple basic transitive}
If $M$ is a simple basic weak $\kappa$-model, then $M$ is a weak $\kappa$-model.
\end{lemma}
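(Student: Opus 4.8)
The plan is to observe that every requirement for $M$ to be a weak $\kappa$-model, apart from transitivity, is already part of the hypothesis that $M$ is a basic weak $\kappa$-model: we have $M\models\ZFC^-$, $M$ has size $\kappa$, and $V_\kappa\in M$ (since $V_\kappa\cup\{V_\kappa\}\subseteq M$). So the whole content of the lemma is to prove that $M$ is transitive, and this is exactly where simplicity and $\Sigma_0$-elementarity must be used. I would show directly that $x\subseteq M$ for every $x\in M$.

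First I would fix $x\in M$, which we may assume is nonempty (otherwise $x\subseteq M$ trivially). Since $M\models\ZFC^-$ includes the well-ordering form of choice and $\kappa$ is the largest cardinal of $M$ (simplicity), $M$ satisfies ``$|x|\le\kappa$''. Hence there is an ordinal $\lambda\le\kappa$ and a function $f\in M$ with $M\models$ ``$f$ is a surjection of $\lambda$ onto $x$''. The key step is to transfer this fact to $V$: the assertion ``$f$ is a function with domain $\lambda$ whose range is $x$'' can be written as a $\Delta_0$ formula with the parameters $f,\lambda,x\in M$, so by $M\prec_{\Sigma_0}V$ it holds in $V$ as well. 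Thus $f$ is genuinely a surjection of $\lambda$ onto $x$.

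Then I would take an arbitrary $w\in x$. By the previous step there is $\alpha<\lambda$ with $f(\alpha)=w$, and since $\lambda\le\kappa$ we have $\alpha<\kappa$, so $\alpha\in V_\kappa\subseteq M$. As $f,\alpha\in M$ and $M\models\ZFC^-$, the value $z$ that $M$ assigns to $f$ at $\alpha$ lies in $M$; and ``$(\alpha,z)\in f$'' is $\Delta_0$ and true in $M$, hence true in $V$, so $z=f(\alpha)=w$ because $f$ is a genuine function. Therefore $w\in M$. As $w\in x$ and $x\in M$ were arbitrary, $M$ is transitive, and together with the remaining clauses this shows $M$ is a weak $\kappa$-model.

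I expect the single subtle point to be precisely the transfer in the second paragraph. Without $M\prec_{\Sigma_0}V$ one cannot conclude that the range of $f$ as computed inside $M$ coincides with the true elements of $x$: when $M$ is not transitive, an ordered pair $(\alpha,w)$ with $\alpha,w\in M$ need not belong to $M$, so membership facts about $f$ are not automatically absolute. It is exactly the interplay between simplicity---which furnishes a surjection $f$ from an ordinal below $\kappa$, all of whose arguments already sit in $V_\kappa\subseteq M$---and $\Sigma_0$-elementarity that closes this gap. An alternative packaging would be to form the Mostowski collapse $\pi\colon M\to\bar M$, check that $\pi$ fixes $V_\kappa$ pointwise and fixes $\kappa$, and then prove by $\in$-induction that $\pi$ is the identity using the same surjections; but the direct argument above seems cleanest.
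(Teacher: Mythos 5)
Your proof is correct and follows essentially the same route as the paper's: use simplicity to obtain, inside $M$, a map from an ordinal $\le\kappa$ onto the given set, transfer it to $V$ by $\Sigma_0$-elementarity, and conclude $x\subseteq M$ from $\kappa\subseteq M$. If anything, your version is slightly more careful than the paper's (which takes a bijection $f\colon\kappa\to a$, glossing over sets of size less than $\kappa$, and leaves implicit the $\Delta_0$ transfer showing that $M$'s value of $f$ at $\alpha$ is the true value), but these are refinements of the same argument, not a different one.
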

\begin{proof}
We just need to show that $M$ is transitive. Fix $a\in M$. Since $M$ is simple, it thinks that there is a bijection $f:\kappa\to a$. By $\Sigma_0$-elementarity, $f$ is really a bijection between $\kappa$ and $a$. Thus, since $\kappa\subseteq M$, $a\subseteq M$.
\end{proof}

Note that even a simple weak $\kappa$-model might not be a $\kappa$-model. For instance, its height may have countable cofinality.

Given a basic weak $\kappa$-model $M$, we will let $P^M(\kappa)$ denote the collection of all the subsets of $\kappa$ that are elements of $M$. Note that in most cases, $P^M(\kappa)$ will be a class, but not an element of $M$.

\begin{lemma}\label{prop:internalKappaModel}
Suppose that $M$ is a basic weak $\kappa$-model. If $\bar M\in M$ and $M$ thinks that $\bar M$ is a basic $\kappa$-model, then $\bar M$ is a basic $\kappa$-model.
\end{lemma}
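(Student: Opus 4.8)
The plan is to verify, clause by clause, that the real structure $\bar M$ satisfies the definition of a basic $\kappa$-model, transferring each property from what $M$ believes using the only elementarity at our disposal, namely $M\prec_{\Sigma_0}V$, together with $V_\kappa\cup\{V_\kappa\}\subseteq M$ and the fact that $M$ is well-founded (its membership relation is the real $\in$). The first and central step is to show that $\bar M\subseteq M$, since without this the internal $\Sigma_0$-elementarity of $\bar M$ inside $M$ cannot be promoted to genuine $\Sigma_0$-elementarity in $V$. To get $\bar M\subseteq M$, I would use that $M$ believes $|\bar M|=\kappa$, witnessed by a bijection $f\colon\kappa\to\bar M$ with $f\in M$. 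Every ordinal $\xi<\kappa$ lies in $V_\kappa\subseteq M$, so $f(\xi)\in M$ because $M\models\ZFC^-$ is closed under applying its functions to its elements; and the assertion that $f$ maps onto $\bar M$ is $\Sigma_0$ in the parameters $f,\bar M,\kappa\in M$, so by $\Sigma_0$-elementarity every real element of $\bar M$ is some $f(\xi)$ and hence lies in $M$.

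The second preparatory step is to check that $M$ computes $V_\kappa$ correctly, i.e.\ $V_\kappa^M=V_\kappa$. I would prove by induction on $\gamma<\kappa$ that $V_\gamma^M=V_\gamma$: the zero and limit stages are immediate, and at a successor $\gamma+1$ the point is that $\mathcal P(V_\gamma)=V_{\gamma+1}$ is a transitive element of $V_\kappa\subseteq M$, so every real subset of $V_\gamma$ already belongs to $M$; since ``$x\subseteq V_\gamma$'' is $\Sigma_0$ in the parameter $V_\gamma\in M$, the power set $M$ forms agrees with the real one. Taking the union over $\gamma<\kappa$, and using that $M$ sees $\kappa$ as a limit ordinal, yields $V_\kappa^M=V_\kappa$. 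This will let me promote $M$'s belief that $V_\kappa\cup\{V_\kappa\}\subseteq\bar M$ to the genuine inclusion, since ``$V_\kappa^M\in\bar M$'' and ``$V_\kappa^M\subseteq\bar M$'' then become statements about the real $V_\kappa$ that are absolute between $M$ and $V$.

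With these in hand, the remaining clauses follow routinely. That $\bar M\models\ZFC^-$ transfers by absoluteness of the satisfaction relation for set-sized structures (the membership relation $M$ uses on $\bar M$ is the real one, so $M$'s computed satisfaction is correct). That $|\bar M|=\kappa$ and $\bar M\prec_{\Sigma_0}V$ follow by combining the internal facts with $\bar M\subseteq M$ and $M\prec_{\Sigma_0}V$: for a $\Sigma_0$ formula $\varphi$ and parameters $\bar a\in\bar M\subseteq M$, one chains $\bar M\models\varphi(\bar a)\iff M\models\varphi(\bar a)\iff V\models\varphi(\bar a)$, the first equivalence being $M$'s internal $\Sigma_0$-elementarity of $\bar M$ and the second being $M\prec_{\Sigma_0}V$.

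The step I expect to be the main obstacle is the $\ltkappa$-closure $\bar M^{\ltkappa}\subseteq\bar M$, precisely because $M$ is not assumed to be $\ltkappa$-closed, so a real sequence $s\colon\lambda\to\bar M$ with $\lambda<\kappa$ need not a priori be an element of $M$, and then $M$'s internal closure statement says nothing about it. The resolution is to route the sequence through $f$: given such an $s$, the composite $g=f^{-1}\circ s\colon\lambda\to\kappa$ is a real function of rank below $\kappa$ (here $\kappa$ regular is used), hence $g\in V_\kappa\subseteq M$; since $f\in M$ and $M\models\ZFC^-$, the composite $s=f\circ g$ lies in $M$ as well. Now $M$ genuinely sees $s$ as a $\ltkappa$-sequence into $\bar M$, so its internal instance of $\bar M^{\ltkappa}\subseteq\bar M$ gives $s\in\bar M$. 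This completes the verification that $\bar M$ is a basic $\kappa$-model.
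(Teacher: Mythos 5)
Your proof is correct and follows essentially the same route as the paper: the key closure step---transporting a real sequence $s\colon\lambda\to\bar M$ through the internal bijection into a sequence of rank below $\kappa$, concluding it lies in $V_\kappa\subseteq M$ by the regularity (inaccessibility) of $\kappa$, pulling it back into $M$, and then invoking $M$'s belief that $\bar M$ is ${<}\kappa$-closed---is exactly the paper's argument, the only (immaterial) difference being that the paper uses a bijection $f\colon\bar M\to V_\kappa$ rather than $f\colon\kappa\to\bar M$. Your preliminary steps (establishing $\bar M\subseteq M$ and $V_\kappa^M=V_\kappa$) spell out details that the paper compresses into the single remark that $\bar M$ is a basic weak $\kappa$-model ``by the $\Sigma_0$-elementarity of $M$.''
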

\begin{proof}
The model $\bar M$ is a basic weak $\kappa$-model by the $\Sigma_0$-elementarity of $M$, so it remains to check closure. Let $f:\bar M\to V_\kappa$ be a bijection in $M$, which really must be a bijection by $\Sigma_0$-elementarity. Fix a sequence $\vec a=\la a_\xi\mid\xi<\beta\ra$ such that $\beta<\kappa$ and $a_\xi\in \bar M$ for every $\xi<\beta$. Let $b_\xi=f(a_\xi)$ and let $\vec b=\la b_\xi\mid\xi<\beta\ra$. The sequence $\vec b\in V_\kappa$, and hence $\vec b\in M$. Thus, $f[\vec b]=\vec a\in M$ by $\Sigma_0$-elementarity, and hence $\vec a\in\bar M$ since $M$ thinks it is a basic $\kappa$-model.
\end{proof}

\begin{definition}
Suppose that $M$ is a basic weak $\kappa$-model.
\begin{itemize}
\item We will say that $U\subseteq P^M(\kappa)$ is an \emph{$M$-ultrafilter} if the structure
\begin{center}
$\la M,\in,U\ra\models ``U$ is a uniform\footnote{Recall that a filter on a cardinal $\kappa$ is \emph{uniform} if it contains all tail sets $(\alpha,\kappa)$ for $\alpha<\kappa$.} normal ultrafilter on $\kappa.$"
\end{center}
\item An $M$-ultrafilter $U$ is $\emph{good}$ if the ultrapower of $M$ by $U$ is well-founded.
\item An $M$-ultrafilter $U$ is \emph{countably complete} if for every sequence $\la A_n\mid n<\omega\ra$ with $A_n\in U$ (but the sequence itself not necessarily in $M$), $\bigcap_{n<\omega}A_n\neq\emptyset$.
\end{itemize}
\end{definition}
If $M$ is a basic weak $\kappa$-model and $j:M\to N$ is an elementary embedding with $\crit(j)=\kappa$, then $$U=\{A\subseteq\kappa\mid A\in M\text{ and }\kappa\in j(A)\}$$ is the $M$-ultrafilter \emph{derived} from $j$, and if $N$ is well-founded, then $U$ is good. If $j$ happens to be the ultrapower by an $M$-ultrafilter $U$, then the $M$-ultrafilter derived from $j$ is precisely $U$. Standard arguments using \Los' theorem show that if $M$ is a basic weak $\kappa$-model and $U$ is a countably complete $M$-ultrafilter, then $U$ is good.
In particular, if a basic weak $\kappa$-model $M$ is closed under $\omega$-sequences ($M^\omega\subseteq M$), e.g. if $M$ is a $\kappa$-model, then every $M$-ultrafilter must be countably complete, and hence good. Thus, whenever $M$ is a basic weak $\kappa$-model such that \hbox{$M^\omega\subseteq M$}, every $M$-ultrafilter is automatically good. Even when the ultrapower $N$ of $M$ by $U$ is ill-founded, by our assumptions on $U$, $V_\kappa\union \{V_\kappa\}\subseteq N$ and $V_\kappa^M=V_\kappa^N$.\footnote{We are assuming here that we have collapsed the well-founded part of $N$.}

\begin{definition}
Suppose that $M$ is a basic weak $\kappa$-model. An $M$-ultrafilter $U$ is \emph{weakly amenable} if for all $A\in M$ with $|A|^M=\kappa$, $A\cap U\in M$.
\end{definition}

Note that for simple models $M$, a weakly amenable $M$-ultrafilter $U$ is fully amenable in the sense that for every $A\in M$, we have $A\cap U\in M$. A weakly amenable $M$-ultrafilter can be iterated to carry out the iterated ultrapower construction for any ordinal length. Recall that in the case of say a measure $U$ on $\kappa$ (namely if $\kappa$ is a measurable cardinal), we can define an $\Ord$-length system of iterated ultrapowers of $U=U_0$. We let $j_{01}:V=M_0\to M_1$ be the ultrapower by $U_0$ and use $j_{01}$ to obtain the ultrafilter for the next stage of the iteration by defining $U_1=j_{01}(U_0)$. This gives us a general procedure for obtaining the next stage ultrafilter at the successor stages of the iteration and at limits we take a direct limit of the system of embeddings obtained thus far. By a theorem of Gaifman, all the iterated ultrapowers $M_\xi$ are well-founded \cite[Section II, Theorem 5]{gaifman:ultrapowers}. We cannot apply the same procedure to obtain successor stage ultrafilters with an $M$-ultrafilter $U$ because $U$ is (in most interesting cases) not an element of $M$. However, given weak amenability, we can define, for example, $U_1$, given that $j:M\to N$ is the ultrapower embedding, by $$U_1:=\{A=[f]_U\subseteq j(\kappa)\mid \{\alpha<\kappa\mid f(\alpha)\in U\}\in U\}.$$

Weak amenability has an equivalent characterization in terms of the preservation of subsets of $\kappa$ between the model and its ultrapower. An $M$-ultrafilter $U$ for a basic weak $\kappa$-model $M$ is weakly amenable if and only if $M$ and its ultrapower $N$ have the same subsets of $\kappa$. Note that this holds true regardless of whether $N$ is well-founded or not. It is also the case that if $j:M\to N$ is an elementary embedding with $\crit(j)=\kappa$ such that $M$ and $N$ have the same subsets of $\kappa$, then the $M$-ultrafilter derived from $j$ is weakly amenable.

\begin{lemma}\label{prop:wellFoundedPartOfUltrapowerWA}
If $M$ is a simple basic weak $\kappa$-model, $U$ is a weakly amenable $M$-ultrafilter, and $\langle N,\bar{\in}\rangle$ is the ultrapower of $M$ by $U$, then $M=H_{\kappa^+}^N$.
\end{lemma}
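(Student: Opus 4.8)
The plan is to show that the sets $M$ and $H_{\kappa^+}^N$ have exactly the same elements. Since $M$ is a simple basic weak $\kappa$-model, Lemma~\ref{simple basic transitive} makes $M$ transitive, and because $\kappa$ is the largest cardinal of $M$, every element of $M$ has hereditary size at most $\kappa$ there; thus $M=H_{\kappa^+}^M$. On the other side, weak amenability of $U$ (which for simple $M$ is full amenability) is equivalent to $M$ and its ultrapower $N$ having the same subsets of $\kappa$, so $P^M(\kappa)=P^N(\kappa)$. The strategy is then to represent every hereditarily-${\le}\kappa$ set, in either model, by a code $A\of\kappa$, and to argue that coding and decoding are computed identically in $M$, in $N$, and in $V$.

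Concretely, I would fix a $\Delta_0$-definable pairing function on $\kappa$ (absolute since $V_\kappa^M=V_\kappa^N$), so that each $A\of\kappa$ determines a relation $E_A$ on $\kappa$. Let $\mathcal C$ be the set of $A\of\kappa$ for which $(\kappa,E_A)$ is genuinely well-founded and extensional, and for $A\in\mathcal C$ let $T_A$ be the Mostowski collapse. Since $M$ is transitive and models $\ZFC^-$, for $A\in P^M(\kappa)$ the model $M$ correctly recognises membership in $\mathcal C$ and correctly computes $T_A\in M$; and as every $x\in M$ satisfies $M\models|\tc(\{x\})|\le\kappa$, one gets $x\in T_A$ for a suitable $A\in P^M(\kappa)\cap\mathcal C$, so that $M=\bigcup\{T_A : A\in P^M(\kappa)\cap\mathcal C\}$.

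The crux — and the step I expect to be the main obstacle — is that $N$ may be ill-founded, so I must check that $N$ nonetheless agrees about which codes are well-founded and about their collapses. For the first point, $(\kappa,E_A)$ is ill-founded in a model precisely when that model contains a nonempty subset of $\kappa$ with no $E_A$-minimal element; having no minimal element is $\Delta_0$ in the parameters, and the relevant subsets of $\kappa$ are shared, so for $A\in P^M(\kappa)$ we have $M\models\mathrm{WF}(\kappa,E_A)$ iff $N\models\mathrm{WF}(\kappa,E_A)$, and by transitivity of $M$ this holds iff $(\kappa,E_A)$ is genuinely well-founded. For the second point, granting genuine well-foundedness I would show by induction along $E_A$ that $N$'s internally computed collapse agrees with the true collapse: at each node $\xi$ the set $N$ forms has $\bar\in$-extension exactly the genuine images of the $E_A$-predecessors of $\xi$, which are real ordinals below $\kappa$, so this $N$-set lies in the well-founded part of $N$ and equals its true value. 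Hence for $A\in\mathcal C$, decoding in $N$ yields the same real transitive set $T_A$, lying in the well-founded part of $N$.

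Finally I would assemble the two inclusions. If $x\in H_{\kappa^+}^N$, then $N\models|\tc(\{x\})|\le\kappa$ gives a code $A\in P^N(\kappa)=P^M(\kappa)$ with $N\models A\in\mathcal C$; by the above $A\in\mathcal C$ genuinely, $x\in T_A$, and $T_A\in M$ with $M$ transitive, so $x\in M$. Conversely, if $x\in M$ then $x\in T_A$ for some $A\in P^M(\kappa)\cap\mathcal C=P^N(\kappa)\cap\mathcal C$; since $N$ decodes $A$ to the same $T_A$ and regards it as a set of hereditary size ${\le}\kappa$, we get $x\in H_{\kappa^+}^N$. Therefore $M=H_{\kappa^+}^N$.
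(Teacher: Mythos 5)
Your proof is correct and follows essentially the same route as the paper's: transfer codes (subsets of $\kappa$) between $M$ and $N$ via weak amenability, verify that both models agree---correctly---about well-foundedness of the coded relations, and check by recursion that their Mostowski collapses coincide. The only cosmetic difference is the well-foundedness step: the paper transfers a descending $\omega$-sequence from $M$ to $N$ (it lies in $V_\kappa$) to get a contradiction with foundation in $N$, while you transfer the internal characterisation of well-foundedness through the shared subsets of $\kappa$ and then invoke Mostowski absoluteness for the transitive model $M$; these are interchangeable versions of the same absoluteness argument.
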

\begin{proof}
Let's assume that we have collapsed the well-founded part of $\langle N,\bar{\in}\rangle$ so that for well-founded sets, $\in=\!\!\bar{\in}$. Clearly $M\subseteq H_{\kappa^+}^N$. So suppose that $a\,\bar{\in}\, H_{\kappa^+}^N$ and assume without loss that $a$ is transitive in $\bar{\in}$. Let $f:a\to\kappa$ be a bijection in $N$, and let $$A=\{(f(a),f(b))\mid (a,b)\in \bar{\in}\}\subseteq \kappa\times \kappa.$$ Then $A\in M$ by weak amenability. First, suppose that $A$ is ill-founded. Then $M$ would know this, and hence it would have a descending $\omega$-sequence witnessing the ill-foundedness. But then $N$ would have the sequence as well, which is impossible. Thus, $A$ is well-founded. We can now argue by recursion on rank that the Mostowski collapse of $A$ in $M$ is the Mostowski collapse of $A$ in $N$, and hence $(a,\bar{\in})=(a,\in)$.
\end{proof}

The simplest characterization of a large cardinal in terms of embeddings on weak $\kappa$-models belongs to weakly compact cardinals.
\begin{theorem}[folklore]
Suppose that $\kappa$ is inaccessible. Then the following are equivalent.
\begin{enumerate}
\item $\kappa$ is weakly compact.
\item Every $A\subseteq \kappa$ is an element of a weak $\kappa$-model $M$ for which there is a good $M$-ultrafilter.
\item Every $A\subseteq \kappa$ is an element of a weak $\kappa$-model $M$ for which there is a countably complete $M$-ultrafilter.
\item Every weak $\kappa$-model has a countably complete $M$-ultrafilter.
\end{enumerate}
\end{theorem}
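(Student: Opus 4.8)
The plan is to prove the cycle $(1)\Rightarrow(4)\Rightarrow(3)\Rightarrow(2)\Rightarrow(1)$, with essentially all the content in $(1)\Rightarrow(4)$. The two middle steps are immediate: for $(4)\Rightarrow(3)$, since $\kappa$ is inaccessible every $A\of\kappa$ belongs to some weak $\kappa$-model, to which $(4)$ applies; and $(3)\Rightarrow(2)$ holds because a countably complete $M$-ultrafilter is automatically good, as recorded above.

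For $(2)\Rightarrow(1)$ I would verify the tree property, which together with inaccessibility characterises weak compactness. Given a $\kappa$-tree $T$, code it as a subset $A\of\kappa$ and apply $(2)$ to obtain a weak $\kappa$-model $M$ with $T\in M$ and a good $M$-ultrafilter $U$. Collapsing the well-founded ultrapower gives an elementary $j\colon M\to N$ into a transitive $N$ with $\crit(j)=\kappa$. Since each level $T_\alpha$ for $\alpha<\kappa$ lies in $V_\kappa$ and is therefore fixed by $j$, we get $j(T)\restrict\kappa=T$, while $j(T)$ has height $j(\kappa)>\kappa$; the $T$-predecessors of any node of $j(T)$ at level $\kappa$ then form a cofinal branch of $T$, which lies in $V$.

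The heart of the matter is $(1)\Rightarrow(4)$, where the difficulty is producing an $M$-ultrafilter that is \emph{countably complete}, not merely good, even though $M$ need not be closed under $\omega$-sequences. The idea is to push the problem into an $\omega$-closed model. Given a weak $\kappa$-model $M$, using $\kappa^{{<}\kappa}=\kappa$ I would build a $\kappa$-model $\hat M$ with $M\in\hat M$ and $M\of\hat M$ (take an elementary substructure of a suitable $H_\theta$ of size $\kappa$ containing $M$ both as an element and as a subset and closed under ${<}\kappa$-sequences, then collapse; since $M$ is transitive and contained in the substructure, the collapse fixes it). The folklore embedding characterisation of weak compactness, applied to the $\kappa$-model $\hat M$, yields an elementary $j\colon\hat M\to\hat N$ with $\hat N$ transitive and $\crit(j)=\kappa$. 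I then set
\[
U=\{A\in P^M(\kappa)\mid \kappa\in j(A)\}.
\]

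A routine check using only $\crit(j)=\kappa$ and that $j$ fixes all objects of rank ${<}\kappa$ shows $U$ is a uniform, normal $M$-ultrafilter (normality uses that any $\kappa$-sequence witnessing it lies in $M\of\hat M$, so $j$ acts on it coordinatewise below $\kappa$). For countable completeness, let $\hat U=\{A\in P^{\hat M}(\kappa)\mid\kappa\in j(A)\}$ be the analogous $\hat M$-ultrafilter; since $\hat M$ is a $\kappa$-model, hence closed under $\omega$-sequences, $\hat U$ is countably complete by the fact recorded above. Now any $\omega$-sequence $\la A_n\mid n<\omega\ra$ with each $A_n\in U$ consists of elements of $M\of\hat M$, hence is a sequence of elements of $\hat U$ (as $U\of\hat U$), so $\bigcap_n A_n\neq\emptyset$; thus $U$ is countably complete. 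I expect this last passage to be the main obstacle: one cannot extract countable completeness from an embedding of $M$ alone, and the device of embedding an $\omega$-closed $\hat M\ni M$ and then restricting the derived ultrafilter back to $P^M(\kappa)$ is precisely what supplies it.
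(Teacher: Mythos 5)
Your overall architecture is sound, and three of the four implications are complete and correct: $(4)\Rightarrow(3)$ and $(3)\Rightarrow(2)$ are exactly as routine as you say, and your $(2)\Rightarrow(1)$ argument (levels of $T$ lie in $V_\kappa$ and are fixed by $j$, so $j(T)$ restricted to levels below $\kappa$ equals $T$ and a node of $j(T)$ at level $\kappa$ yields a cofinal branch) is fine. The internal reduction in $(1)\Rightarrow(4)$ is also correct: building a $\kappa$-model $\hat M$ with $M\in\hat M$ and $M\subseteq\hat M$, deriving $U$ from an embedding of $\hat M$ with seed $\kappa$, and pulling countable completeness of $U$ back from countable completeness of $\hat U$ (automatic because $\hat M$ is closed under $\omega$-sequences) all check out. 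Since the paper states this theorem as folklore and gives no proof of its own, your attempt has to stand entirely on its own feet, and that is where the problem lies.

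The genuine gap is the one step you cite rather than prove: ``the folklore embedding characterisation of weak compactness, applied to the $\kappa$-model $\hat M$, yields an elementary $j\colon\hat M\to\hat N$ with $\hat N$ transitive and $\crit(j)=\kappa$.'' In the present context this is circular. The existence of such an embedding is equivalent to the existence of a good $\hat M$-ultrafilter, which for a $\kappa$-model (being $\omega$-closed) is in turn equivalent to the existence of a countably complete one; so the cited fact is precisely statement (4) of the theorem restricted to $\kappa$-models, and, by the very reduction you carry out, it is equivalent to the full implication $(1)\Rightarrow(4)$ being proved. As written, no embedding or ultrafilter is ever manufactured from the combinatorial definition of weak compactness, and producing one from the combinatorics is exactly the content this folklore theorem is supposed to supply.

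To close the gap one needs the standard tree argument. Enumerate $P^{\hat M}(\kappa)=\{A_\alpha\mid\alpha<\kappa\}$ and consider the tree of those $s\in 2^{{<}\kappa}$ for which $\bigcap_{\alpha<\dom(s)}A_\alpha^{s(\alpha)}$ has size $\kappa$ (where $A^1=A$ and $A^0=\kappa\setminus A$), ordered by end-extension. Inaccessibility makes this a $\kappa$-tree, so the tree property yields a cofinal branch $b$, and $U_0=\{A_\alpha\mid b(\alpha)=1\}$ measures every set in $P^{\hat M}(\kappa)$, with every ${<}\kappa$-sized subfamily of $U_0$ having intersection of size $\kappa$. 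In particular $U_0$ is genuinely countably complete, so the ultrapower of $\hat M$ by $U_0$ is well-founded, and a partition argument shows its critical point is $\kappa$. The ultrafilter derived from this ultrapower with seed $\kappa$ is then uniform and normal, and, being a pushforward of $U_0$ along a function representing $\kappa$, it remains countably complete. Note that this argument applies verbatim to the original weak $\kappa$-model $M$, which shows that once the cited fact is actually proved, your $\hat M$ detour becomes unnecessary---another way of seeing that all the work of $(1)\Rightarrow(4)$ is concentrated in the single step you outsourced.
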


It is then natural to ask what happens if we addtionally require that the $M$-ultrafilter is weakly amenable. The answer is that we get a much stronger large cardinal notion. Let's start by introducing the following large cardinal notion.
\begin{definition}
A cardinal $\kappa$ is $0$-\emph{iterable} if every $A\subseteq \kappa$ is an element of a weak $\kappa$-model $M$ for which there is a weakly amenable (but not necessarily good) $M$-ultrafilter.
\end{definition}

Recall that a cardinal $\kappa$ is \emph{weakly ineffable} if for every sequence $\la A_\alpha\mid\alpha<\kappa\ra$ with $A_\alpha\subseteq\alpha$, there is a threading set $A\subseteq\kappa$ such that for unboundedly many $\alpha<\kappa$, $A\cap\alpha=A_\alpha$. A cardinal $\kappa$ is \emph{ineffable} if we can find such a set $A$ that is stationary in $\kappa$.

\begin{proposition}
A $0$-iterable cardinal is a weakly ineffable limit of ineffable cardinals.
\end{proposition}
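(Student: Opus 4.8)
The plan is to fix a sequence as in the definition of weak ineffability, thread it using the ultrapower embedding supplied by $0$-iterability, and then read off both conclusions — weak ineffability of $\kappa$ and the existence of unboundedly many ineffables below $\kappa$ — from where the threading set lands in the ultrafilter together with elementarity. First I would set up the embedding. Given a sequence $\la A_\alpha\mid\alpha<\kappa\ra$ with $A_\alpha\of\alpha$, code it as a single subset $B\of\kappa$ and use $0$-iterability to find a weak $\kappa$-model $M$ with $B\in M$ (so the decoded sequence $\vec A\in M$) carrying a weakly amenable $M$-ultrafilter $U$. Let $j\colon M\to N=\Ult(M,U)$ be the ultrapower embedding, so $j$ is elementary with $\crit(j)=\kappa$ and $U$ is the $M$-ultrafilter derived from $j$. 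The structural facts I would record are: although $N$ may be ill-founded (as $U$ need not be good), we have $V_\kappa^N=V_\kappa$, and an argument with regressive functions using normality shows that the $N$-ordinals below $\kappa=[\mathrm{id}]_U$ are exactly the genuine ordinals below $\kappa$, so $\kappa$ lies in the well-founded part of $N$; moreover weak amenability gives $P^M(\kappa)=P^N(\kappa)$, whence ``subset of $\kappa$'' and ``club subset of $\kappa$'' are absolute between $M$ and $N$.

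Next I would produce the threading set. Since $\vec A\in M$ and $\kappa<j(\kappa)$ lies in the well-founded part, $A:=j(\vec A)(\kappa)$ is a genuine subset of $\kappa$, and by weak amenability $A\in M$. Put $S=\set{\alpha<\kappa\mid A\cap\alpha=A_\alpha}\in M$. Because $j\restrict\kappa$ is the identity we have $j(A)\cap\kappa=A=j(\vec A)(\kappa)$, which says exactly that $\kappa\in j(S)$; as $U$ is derived from $j$, this gives $S\in U$. Since $U$ is uniform, $S$ must be unbounded in $\kappa$, and as $S,A\in M$ are genuine objects, $A$ threads $\vec A$ on an unbounded set. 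This already witnesses that $\kappa$ is weakly ineffable.

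Finally I would extract the ineffable cardinals by reflecting an \emph{internal} ineffability of $\kappa$ in $N$. Normality of $U$ means $U$ contains every club of $M$, so the set $S\in U$ meets every $M$-club, equivalently every $N$-club; thus $N$ regards $S$ as stationary and $A$ as threading $\vec A$ stationarily. Running this for an arbitrary $N$-sequence $\vec A$ — which, being coded by a subset of $\kappa$, already lies in $M$ by $P^M(\kappa)=P^N(\kappa)$ — shows $N\models$ ``$\kappa$ is ineffable''. Now for any $\beta<\kappa$ we have $j(\beta)=\beta<\kappa<j(\kappa)$, so $N\models\exists\gamma\,(j(\beta)<\gamma<j(\kappa)\wedge\gamma\text{ is ineffable})$, witnessed by $\gamma=\kappa$; pulling this back through the elementary $j$ yields $M\models\exists\gamma\,(\beta<\gamma<\kappa\wedge\gamma\text{ is ineffable})$. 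Since $M$ is transitive and contains $V_\kappa$, it computes ineffability of ordinals below $\kappa$ correctly, so there is a genuine ineffable $\gamma\in(\beta,\kappa)$; as $\beta$ was arbitrary, $\kappa$ is a limit of ineffable cardinals.

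The main obstacle I expect is the bookkeeping around the possible ill-foundedness of $N$: one must verify that $\kappa$, the subsets of $\kappa$, and the relevant club structure all sit inside the well-founded part, so that the internal statement ``$\kappa$ is ineffable'' is meaningful and reflects correctly. The accompanying conceptual subtlety is the asymmetry this creates — $N$'s internal ineffability of $\kappa$ is genuinely weaker than real ineffability, because the threading set is only stationary against the clubs in $M=P^N(\kappa)$ and not against all clubs of $V$, which is precisely why $\kappa$ need only come out weakly ineffable; below $\kappa$, by contrast, the transitivity of $M$ forces internal and genuine ineffability to coincide, which is what delivers the honest ineffable cardinals.
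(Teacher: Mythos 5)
Your proof is correct and follows exactly the route of the paper's own (much terser) argument: form the possibly ill-founded ultrapower, check via \Los/the derived-ultrafilter identity that the threading set lies in $U$ (hence is unbounded by uniformity, giving weak ineffability), observe that $U$-sets are stationary with respect to the clubs of $M=P^N(\kappa)$ so that $N$ thinks $\kappa$ is ineffable, and pull this back through the elementary $j$ to obtain genuine ineffable cardinals unboundedly below $\kappa$. The paper compresses all of this into three sentences; your write-up simply supplies the well-foundedness, absoluteness, and reflection details it leaves to the reader.
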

\begin{proof}
Fix a sequence $\vec A=\la A_\alpha\mid\alpha<\kappa\ra$ with $A_\alpha\subseteq\alpha$, and find a weak $\kappa$-model $M$ with $\vec A\in M$ for which there is weakly amenable $M$-ultrafilter $U$. Let $j:M\to N$ be the possibly ill-founded ultrapower by $U$. It is easy to check using \Los' theorem that $A=j(\vec A)(\kappa)$ has the required property. This argument also shows that $\kappa$ is ineffable in the ultrapower $N$. Thus, $\kappa$ is a limit of ineffable cardinals.
\end{proof}

In particular, requiring that the $M$-ultrafilter be weakly amenable, but not even requiring that it is good, gives a large cardinal notion much stronger than weakly compact cardinals.

\begin{definition}[{\cite[Definition 2.11]{gitman:welch}}]
A cardinal $\kappa$ is \emph{1-iterable} if every $A\subseteq\kappa$ is an element of a weak $\kappa$-model $M$ for which there is a good weakly amenable $M$-ultrafilter.
\end{definition}
Iterating a measure on $\kappa$ gives rise to $\Ord$-many well-founded iterated ultrapowers. By a theorem of Kunen, the same holds true for iterating a weakly amenable countably complete $M$-ultrafilter for a weak $\kappa$-model $M$ \cite{kunen:ultrapowers}. However, a weakly amenable $M$-ultrafilter that is not countably complete can give rise to $0\leq\alpha<\omega_1$ or $\Ord$-many well-founded ultrapowers (the later follows just as in the case of a measure from having $\omega_1$-many). In fact, we can define the hierarchy of $\alpha$-iterable cardinals (for $1\leq\alpha\leq\omega_1$), where a cardinal $\kappa$ is \emph{$\alpha$-iterable} if every $A\subseteq\kappa$ is an element of a weak $\kappa$-model $M$ for which there is a weakly amenable $M$-ultrafilter with $\alpha$-many well-founded iterated ultrapowers \cite{gitman:welch}. The 0-iterable cardinals fit naturally at the head of this hierarchy, which is consistent with $L$ for $\alpha<\omega_1$ \cite{gitman:welch}, while it is not difficult to see that $\omega_1$-iterable cardinals imply $0^\#$.
\begin{theorem}[Mitchell {\cite[Theorem 3]{mitchell:ramsey}}]
A cardinal $\kappa$ is Ramsey if and only if every $A\subseteq\kappa$ is an element of a weak $\kappa$-model $M$ for which there is a weakly amenable countably complete $M$-ultrafilter on $\kappa$.
\end{theorem}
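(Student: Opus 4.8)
The plan is to prove both implications through the classical bridge between weakly amenable countably complete $M$-ultrafilters and sets of \emph{good indiscernibles} of order type $\kappa$, using that $\kappa$ is Ramsey exactly when every first-order structure with universe $\kappa$ in a countable language has a set of good indiscernibles of size $\kappa$, where ``good'' means indiscernible even over parameters of rank below the indiscernibles. In both directions $\kappa$ is inaccessible by hypothesis, so weak $\kappa$-models containing a prescribed $A$ and $V_\kappa$ are available.

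For the forward direction, fix $A\subseteq\kappa$. I would apply the Ramsey partition property to the Skolem functions of a structure $\langle H_\theta,\in,A,<^*\rangle$ (with definable Skolem functions and a well-order) to extract a good set $I\subseteq\kappa$ of indiscernibles of order type $\kappa$, and then let $M$ be the transitive collapse of the Skolem hull of $V_{\min I}\cup I$. This makes $A\in M$, renders $M$ a simple weak $\kappa$-model, and---crucially---ensures that every element of $M$ is a Skolem term in finitely many indiscernibles from $I$ together with a parameter of rank below $\min I$. I then set
\[
U=\{X\in P^M(\kappa)\mid I\setminus\gamma\subseteq X\text{ for some }\gamma<\kappa\},
\]
the filter of sets containing a tail of $I$. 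Uniformity is immediate since $I$ has order type $\kappa$; that $U$ is an ultrafilter and is normal follows from one- and two-dimensional indiscernibility applied to the Skolem term defining a given $X$ or a given sequence $\langle X_\alpha\mid\alpha<\kappa\rangle$; and countable completeness is external and cheap, since $\omega$ many tails of $I$ share a common tail by the regularity of $\kappa$.

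The crux of the forward direction, and the step I expect to be the main obstacle, is \emph{weak amenability}. Given $B=\langle b_\alpha\mid\alpha<\kappa\rangle\in M$, I want $\{\alpha<\kappa\mid b_\alpha\in U\}\in M$. Goodness is precisely what makes the $U$-value of each $b_\alpha$ computable from a \emph{single} indiscernible: for a fixed $\delta\in I$ above the indiscernibles coding $B$, and for $\alpha<\delta$, one has $b_\alpha\in U\iff\delta\in b_\alpha$, because both $\alpha$ and the coding parameters lie below $\delta$. The sets $S_\delta=\{\alpha<\delta\mid\delta\in b_\alpha\}$ are uniformly definable in $M$ and cohere along $I$, and since $M$ is the Skolem hull of $I\cup V_{\min I}$ their common thread is itself a Skolem term in indiscernibles, hence an element of $M$; this thread is exactly $\{\alpha\mid b_\alpha\in U\}$. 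Reconciling the externally defined $U$ with the internal Skolem structure of $M$ is the delicate part of the argument.

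For the converse, fix a coloring $F\colon[\kappa]^{<\omega}\to 2$, code it as a subset of $\kappa$, and take a weak $\kappa$-model $M\ni F$ with a weakly amenable countably complete $M$-ultrafilter $U$. By Kunen's theorem, weak amenability together with countable completeness makes $U$ fully iterable, so the linear iteration $j_{0\xi}\colon M\to M_\xi$ has well-founded iterates of every ordinal length, with $P^{M_\xi}(\kappa)=P^M(\kappa)$ and $j_{0\xi}\restrict\kappa=\mathrm{id}$ throughout. The critical sequence $\langle\kappa_\xi\mid\xi<\kappa\rangle$ is a set of indiscernibles for the limit model relative to $\ran(j_{0\kappa})$, hence homogeneous for $j_{0\kappa}(F)$; the usual Mitchell--Kunen analysis then descends this to a homogeneous set of size $\kappa$ for $F$ inside $M$, which is genuinely homogeneous in $V$ by transitivity, giving $\kappa\to(\kappa)^{<\omega}_2$. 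The analog of the main obstacle here is exactly this descent to a homogeneous set of full size $\kappa$: it is where a bare good $M$-ultrafilter (the weakly compact level) fails, since Rowbottom's assembly of the per-exponent homogeneous sets needs the $\omega$-sequence of those sets to lie in $M$, which weak amenability supplies but plain goodness does not.
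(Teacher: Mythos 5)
First, a point of comparison: the paper does not prove this statement at all---it is imported as Mitchell's theorem with a citation to \cite[Theorem 3]{mitchell:ramsey}---so there is no in-paper argument to measure your proposal against. Judged against the standard proof (Mitchell's indiscernible-hull construction for the forward direction, Rowbottom's argument for the converse), your proposal identifies the right route, but in both directions there is a genuine gap, sitting exactly at the step you yourself flag as the crux.

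In the forward direction, the claim ``this makes $A\in M$'' is false for your $M$. The Skolem hull $H$ of $V_{\min I}\cup I$ contains $A$ as an element, but its ordinals below $\kappa$ do not form an initial segment of $\kappa$, so the transitive collapse $\pi$ moves ordinals and $\pi(A)=\{\pi(\xi)\mid\xi\in A\cap H\}\neq A$; for the same reason the true $V_\kappa$ is not in $M$, which the paper's definition of weak $\kappa$-model requires. This is not a slip that can be patched inside your architecture: in a model all of whose subsets of $\kappa$ are Skolem terms in finitely many indiscernibles and a parameter from $V_{\min I}$, an arbitrary prescribed $A\subseteq\kappa$ has no reason to appear. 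Forcing $A$, $\kappa$ and $V_\kappa$ into $M$ means keeping every ordinal below $\kappa$ inside $M$ (also note that the good-indiscernibles theorem you cite applies to structures with universe $\kappa$, so $H_\theta$ must first be replaced by a size-$\kappa$ model coded on $\kappa$, with $A$ as a predicate). Goodness then still makes the tail filter a uniform, normal, countably complete $M$-ultrafilter, and your identity $T\cap\delta=\{\alpha<\delta\mid\delta\in b_\alpha\}$ for $\delta\in I$ survives; what collapses is your weak-amenability argument. The thread $T=\{\alpha<\kappa\mid b_\alpha\in U\}$ is \emph{not} ``a Skolem term in indiscernibles'': a term involves finitely many indiscernibles, whereas $T$ depends on unboundedly many---it is exactly the trace on $\kappa$ of a subset of $j(\kappa)$ in the ultrapower, i.e., a set definable from a ``top'' indiscernible, and in general it lies outside the model even though all of its initial segments lie inside (in $L$-like situations the ultrapower of such a hull by the tail filter has strictly more subsets of $\kappa$ than the hull, so weak amenability genuinely fails for the naive construction). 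Mitchell's proof addresses precisely this: one must close the model under the threads, equivalently under $P(\kappa)$ of the iterated ultrapowers, $\omega$ many times, and verify via higher-dimensional goodness that the tail filter still measures each newly added set. That closure-and-verification argument is the actual content of the forward direction and is absent from your proposal.

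In the backward direction the Rowbottom skeleton is right, but both justifications offered for the assembly step fail. The iteration detour cannot carry the load: the paper itself records that $\omega_1$-iterable cardinals---where every $A\subseteq\kappa$ lies in a weak $\kappa$-model with a weakly amenable $M$-ultrafilter whose iterates are all well-founded---are strictly \emph{weaker} than Ramsey cardinals, and well-founded iterability is all that Kunen's theorem extracts from countable completeness; so no argument using countable completeness only through iterability (critical-sequence indiscernibles included) can yield Ramseyness, and your ``descent'' is exactly the unexplained step where countable completeness would have to re-enter. Likewise, weak amenability does \emph{not} put the sequence $\langle H_n\mid n<\omega\rangle$ into $M$: weak $\kappa$-models need not be closed under $\omega$-sequences, and $\langle M,\in,U\rangle$ can fail even $\Sigma_0$-replacement (as the paper notes via Theorem \ref{th:0babymeasurable}), so there is no internal recursion producing that sequence as an element of $M$. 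The correct division of labour is: weak amenability and normality let you run the Rowbottom induction inside $\langle M,\in,U\rangle$ separately for each exponent $n$ (weak amenability builds, inside $M$, the $\kappa$-sequence recording which of $F_s^{-1}(0)$, $F_s^{-1}(1)$ lies in $U$; normality supplies the diagonal intersection), giving for each $n$ some $H_n\in U$ homogeneous for $F\restrict[\kappa]^n$; the $H_n$ are then chosen \emph{externally}, and countable completeness together with uniformity shows $\bigcap_{n<\omega}H_n$ is unbounded in $\kappa$ (for each $\gamma<\kappa$ the sets $H_n\setminus\gamma$ lie in $U$, so their intersection is nonempty), hence is a homogeneous set of size $\kappa$. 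This is where countable completeness, rather than mere iterability or amenability, does its work.
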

The $\omega_1$-iterable cardinals are slightly weaker than Ramsey cardinals, which are $\omega_1$-iterable limits of $\omega_1$-iterable cardinals \cite[Lemma 5.2]{welch:ramsey}.

Note that unlike the situation with weakly compact cardinals, asking that the weakly amenable $M$-ultrafilter be countably complete, and not just good, pushes up the large cardinal strength from 1-iterable cardinals, which are consistent with $L$, to Ramsey cardinals. For weakly compact cardinals $\kappa$, every weak $\kappa$-model, and so, in particular, every $\kappa$-model, has an $M$-ultrafilter. If we ask that every $A\subseteq\kappa$ is an element of a $\kappa$-model for which there is a weakly amenable $M$-ultrafilter (which is automatically good), then we get a large cardinal notion stronger than a Ramsey cardinal. Indeed, assuming that we have a weakly amenable $M$-ultrafilter for every weak $\kappa$-model is inconsistent \cite[Theorem 1.7]{gitman:ramsey}.

\begin{definition} Suppose that $\kappa$ is a cardinal.
\begin{itemize}
\item
{\cite[Definition 1.4]{gitman:ramsey}}
$\kappa$ is \emph{strongly Ramsey} if every $A\subseteq\kappa$ is an element of a $\kappa$-model $M$ for which there is a weakly amenable $M$-ultrafilter.
\item
{\cite[Definition 1.5]{gitman:ramsey}}
$\kappa$ is \emph{super Ramsey} if every $A\subseteq\kappa$ is an element of a $\kappa$-model $M\prec H_{\kappa^+}$ for which there is a weakly amenable $M$-ultrafilter.
\end{itemize}

\end{definition}
Strongly Ramsey cardinals are limits of Ramsey cardinals, super Ramsey cardinals are limits of strongly Ramsey cardinals, and measurable cardinals are limits of super Ramsey cardinals \cite[Theorem 1.7]{gitman:ramsey}. It is natural to ask given these various notions what will happen (1) if we stratify by closure on the weak $\kappa$-model $M$, weakening, for instance, the $\kappa$-model assumption to just countable closure on $M$, and (2) if we ask for elementarity in a large $H_\theta$. The second question does not make sense as stated because a weak $\kappa$-model cannot be elementary in $H_\theta$ for any $\theta>\kappa^+$, but this is precisely where the weakening to basic weak $\kappa$-models comes into play. Combining both of these ideas, Holy and the second-listed author introduced the $\alpha$-Ramsey hierarchy below a measurable cardinal \cite{HolySchlicht:HierarchyRamseyLikeCardinals}.

\begin{definition}
Suppose that $\kappa$ is a cardinal.
 \begin{itemize}
 \item
{\cite[Definition 5.1]{HolySchlicht:HierarchyRamseyLikeCardinals}}
$\kappa$ is $\alpha$-Ramsey for a regular \hbox{$\omega\leq\alpha\leq\kappa$} if for every $A\subseteq\kappa$ and arbitrarily large cardinals $\theta$, there is a ${<}\alpha$-closed basic weak $\kappa$-model $M\prec H_\theta$ with $A\in M$ for which there is a good weakly amenable $M$-ultrafilter.

\item
{\cite[Definition 5.7]{HolySchlicht:HierarchyRamseyLikeCardinals}}
$\kappa$ is \emph{faintly $\omega$-Ramsey}\footnote{This property is called the \emph{$\omega$-filter property} in {\cite{HolySchlicht:HierarchyRamseyLikeCardinals}}.} if in the definition of an $\omega$-Ramsey cardinal, the $M$-ultrafilter $U$ is not required to be good.
 \end{itemize}
\end{definition}

Note that in the definition of $\alpha$-Ramsey cardinals, only the $\omega$-Ramsey cardinals need the extra assumption that the $M$-ultrafilter is good; for the others, the closure on the basic $\kappa$-model already implies it. The $\alpha$-Ramsey cardinals have a natural game theoretic characterization \cite[Theorem 5.6]{HolySchlicht:HierarchyRamseyLikeCardinals} arising out of the question of whether given a weak $\kappa$-model $M$, an $M$-ultrafilter $U$, and another weak $\kappa$-model $N$ extending $M$, we can always find an $N$-ultrafilter extending $U$. The first-listed author showed that this extension property is inconsistent \cite[Proposition 2.13]{HolySchlicht:HierarchyRamseyLikeCardinals}, but the above characterization can be understood as a game theoretic variant of this property.

Consider the following game $\mathsf{Ramsey}\mathsf G^\theta_\alpha(\kappa)$, for regular cardinals $\omega\leq\alpha \leq\kappa$ and $\theta>\kappa$, of perfect information played by two players the challenger and the judge. The challenger starts the game and plays a basic $\kappa$-model $M_0\prec H_\theta$. The judge responds by playing an $M_0$-ultrafilter $U_0$. In the next step, the challenger plays a basic $\kappa$-model $M_1\prec H_\theta$ such that $\la M_0,U_0\ra\in M_1$, and the judge responds with an $M_1$-ultrafilter $U_1$ extending $U_0$. More generally, at stage $\gamma$, the challenger plays a basic $\kappa$-model $M_\gamma\prec H_\theta$ such that \hbox{$\{\la M_\xi,U_\xi\ra\mid\xi<\gamma\}\in M_\gamma$}, and the judge responds with an $M_\gamma$-ultrafilter $U_\gamma$ extending $\Union_{\xi<\gamma}U_\gamma$. The judge wins the game if she can continue playing for $\alpha$-many steps (and $U=\bigcup_{\xi<\alpha}U_\xi$ is a good $M=\bigcup_{\xi<\alpha}M_\xi$-ultrafilter). Otherwise, the challenger wins. Since $M=\bigcup_{\xi<\alpha}M_\xi$ is a union of $\kappa$-models, it is ${<}\alpha$-closed, and thus, the additional assumption that $U=\bigcup_{\xi<\alpha}U_\xi$ is good is only necessary for $\alpha=\omega$. Let $\mathsf{faintRamseyG}^\theta_\omega(\kappa)$ be an analogous game to $\mathsf {RamseyG}^\theta_\omega(\kappa)$, but where we don't require the judge to ensure that the union $U$ of her plays is good. It is shown in \cite[Lemma 3.3]{HolySchlicht:HierarchyRamseyLikeCardinals} that if either of the players has a winning strategy in the game $\mathsf{Ramsey}\mathsf G^\theta_\alpha(\kappa)$, then the same player has a winning strategy in the game $\mathsf{Ramsey}\mathsf G_\alpha^\rho(\kappa)$ for any other regular cardinal $\rho>\kappa$. An analogous result holds for the game $\mathsf{faintRamseyG}^\theta_\omega(\kappa)$.

\begin{theorem}\label{th:alphaRamseyEquivGame}
Suppose that $\kappa$ is a cardinal.
\begin{itemize}
\item
{\cite[Theorem 5.6]{HolySchlicht:HierarchyRamseyLikeCardinals}}
For $\omega\leq\alpha<\kappa$, $\kappa$ is $\alpha$-Ramsey if and only if the challenger does not have a winning strategy in the game $\mathsf{RamseyG}^\theta_\alpha(\kappa)$ for some/all regular cardinals $\theta>\kappa$.

\item
$\kappa$ is faintly $\omega$-Ramsey if and only if the challenger does not have a winning strategy in the game $\mathsf{faintRamseyG}^\theta_\omega$ for some/all regular cardinals $\theta>\kappa$.
\end{itemize}
\end{theorem}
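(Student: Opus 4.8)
The first item is precisely \cite[Theorem 5.6]{HolySchlicht:HierarchyRamseyLikeCardinals}, so the work is to prove the second item; the plan is to run the same argument as for $\omega$-Ramsey cardinals while deleting every reference to goodness, which is legitimate here because the faintly $\omega$-Ramsey property and the game $\mathsf{faintRamseyG}^\theta_\omega(\kappa)$ both drop exactly that requirement. The first thing I would record is that $\mathsf{faintRamseyG}^\theta_\omega(\kappa)$ is \emph{open for the challenger}: the judge wins a run precisely by surviving all $\omega$ rounds, so the challenger wins if and only if at some finite stage $n$ the judge has no $M_n$-ultrafilter extending $\Union_{\xi<n}U_\xi$, and this is decided by a finite initial segment. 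Hence the game is determined by the Gale--Stewart theorem. As in the discussion preceding the theorem, the analogue of \cite[Lemma 3.3]{HolySchlicht:HierarchyRamseyLikeCardinals} lets me fix any one regular $\theta>\kappa$; I would take $\theta=\kappa^+$, which has the convenient feature of being definable and hence an element of every relevant elementary submodel.

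For the direction from faint $\omega$-Ramseyness to the nonexistence of a winning strategy for the challenger, I would suppose toward a contradiction that $\sigma$ is such a strategy, fix a regular $\lambda>(2^\kappa)^+$ with $\sigma\in H_\lambda$, and apply the faintly $\omega$-Ramsey property at level $\lambda$ to obtain a ${<}\omega$-closed basic weak $\kappa$-model $\bar M\prec H_\lambda$ together with a weakly amenable (not necessarily good) $\bar M$-ultrafilter $\bar U$. Since $\kappa,\kappa^+\in\bar M$ and $H_\lambda$ satisfies ``the challenger has a winning strategy in $\mathsf{faintRamseyG}^{\kappa^+}_\omega(\kappa)$'', elementarity yields $\sigma^*\in\bar M$ that $\bar M$, and hence $H_\lambda$, believes to be such a strategy. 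The point is that ``$\sigma^*$ is a winning strategy'' is absolute between $H_\lambda$ and $V$: every run has hereditary size at most $2^\kappa$ and so lies in $H_{(2^\kappa)^+}\of H_\lambda$, while the winning condition only inspects finite stages of a run. Thus $\sigma^*$ is genuinely winning for the challenger in $V$. I would then simulate a run in $V$ in which the challenger follows $\sigma^*$ and the judge always answers with $U_n=\bar U\cap M_n$. Whenever $M_n\in\bar M$ we have $M_n\of\bar M$ (as $\bar M$ has a bijection $\kappa\to M_n$ and $\kappa\of\bar M$), so $U_n$ is a genuine $M_n$-ultrafilter, and weak amenability of $\bar U$ gives $U_n=\bar U\cap M_n\in\bar M$; since $\bar M$ is closed under finite sequences, the partial run lies in $\bar M$ and the next challenger move $M_{n+1}=\sigma^*(\la M_\xi,U_\xi\mid\xi\le n\ra)$ again lies in $\bar M$. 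As all the $U_n$ are restrictions of the single ultrafilter $\bar U$ they cohere and extend one another, so the judge survives all $\omega$ rounds. This run follows the genuinely winning challenger strategy $\sigma^*$ yet is a win for the judge, the desired contradiction.

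For the converse, I would assume the challenger has no winning strategy; by the open determinacy noted above the judge has a winning strategy $\tau$, and by the some/all reduction I may assume $\tau$ is winning in $\mathsf{faintRamseyG}^{\theta}_\omega(\kappa)$ for the given target $\theta$. Given $A\of\kappa$, I would have the challenger build, using $\tau$ externally to compute the judge's answers, an increasing chain of basic $\kappa$-models $M_n\prec H_\theta$ with $A\in M_0$ and $\{\la M_\xi,U_\xi\ra\mid\xi<n\}\in M_n$ at each stage, letting the judge respond by $\tau$ with $U_n$. Setting $M=\Union_n M_n$ and $U=\Union_n U_n$, the model $M$ is a ${<}\omega$-closed basic weak $\kappa$-model with $A\in M$ and $M\prec H_\theta$, being an increasing union of size-$\kappa$ elementary substructures. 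Since $\tau$ is winning, the judge survives all $\omega$ rounds, so $U$ is an $M$-ultrafilter: every set or $\kappa$-sequence relevant to the ultrafilter axioms already lies in some $M_n$ and is decided there by $U_n$, using the coherence $U\cap P^{M_n}(\kappa)=U_n$ that the extension clause of the game guarantees. Finally $U$ is weakly amenable, since for $B\in M$ with $|B|^M=\kappa$ we have $B\in M_n$ and $B\of M_n$ for some $n$, whence $B\cap U=B\cap U_n$; and $U_n\in M_{n+1}\of M$ because the challenger coded $\la M_\xi,U_\xi\mid\xi\le n\ra$ into $M_{n+1}$, so $B\cap U_n\in M$. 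Thus $M$ witnesses faint $\omega$-Ramseyness, and no goodness was needed.

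The main obstacle is the first direction, and specifically the reflection step: one must arrange the model $\bar M$ to contain a challenger strategy that is \emph{actually} winning rather than merely winning inside $\bar M$. This is exactly where the absoluteness of the winning condition between $H_\lambda$ and $V$ does the work, and it is what lets us sidestep the fact that the external $\omega$-run need not belong to $\bar M$ (recall $\bar M$ is only ${<}\omega$-closed, so the sequence of judge moves can leak out of it). The remaining points, namely the coherence identities $U\cap P^{M_n}(\kappa)=U_n$ and the inclusions $M_n\of\bar M$, respectively $M_n\of M$, that make the restrictions legal $M_n$-ultrafilters and drive weak amenability, are routine and are precisely the steps of \cite[Theorem 5.6]{HolySchlicht:HierarchyRamseyLikeCardinals} with ``good'' deleted throughout.
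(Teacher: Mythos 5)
Your proof is correct, and it is essentially the execution the paper intends: the paper gives no argument beyond the remark that the second bullet ``is proved analogously to the proof of the first part'' in \cite[Theorem 5.6]{HolySchlicht:HierarchyRamseyLikeCardinals}, and your forward direction is precisely that argument with goodness deleted --- reflect the existence of a winning strategy into the witnessing model $\bar M\prec H_\lambda$ to get a genuinely winning $\sigma^*\in\bar M$ (absoluteness of the winning condition doing the work, since all runs lie in $H_\lambda$ and the faint payoff is decided stage by stage), then defeat $\sigma^*$ by having the judge answer with $\bar U\cap M_n$, using weak amenability to keep each finite position inside $\bar M$. The one place you genuinely depart from the template is the converse: you invoke Gale--Stewart determinacy to convert ``the challenger has no winning strategy'' into ``the judge has a winning strategy'' $\tau$, and then play an arbitrary legal challenger chain against $\tau$. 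This is valid, but it is worth noting two things. First, it works only because dropping goodness makes $\mathsf{faintRamseyG}^\theta_\omega(\kappa)$ open for the challenger; in the non-faint game $\mathsf{RamseyG}^\theta_\omega(\kappa)$ of the first bullet, the judge's winning condition (well-foundedness of the ultrapower of the union) is a condition on the whole infinite run, the game is not open for the challenger, and this shortcut is unavailable --- which is exactly why the cited proof does not use determinacy. Second, determinacy is also unnecessary here: one can fix any single ``legal'' challenger strategy (play models containing $A$ and the previous position, chosen via a well-order of $H_\theta$), observe it is not winning by hypothesis, and take a run defeating it; the union of that run gives the same witness $\la M,\in,U\ra$ with the same verification of weak amenability and of $U$ being an $M$-ultrafilter that you carry out. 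So your argument buys a slightly slicker converse at the cost of an appeal to open determinacy that is special to the faint setting, while the standard route is uniform across both bullets.
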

The second part of the Theorem, although not explicitly proved in \cite{HolySchlicht:HierarchyRamseyLikeCardinals}, is proved analogously to proof of the first part given there.

Nielsen and Welch showed that faintly $\omega$-Ramsey cardinals are precisely the well-known completely ineffable cardinals .
The first author showed that the $\omega$-Ramsey cardinals lie between 1-iterable and 2-iterable cardinals in consistency strength. The result was mentioned in \cite[Section 8]{HolySchlicht:HierarchyRamseyLikeCardinals} without proof.
\begin{theorem}
A $2$-iterable cardinal $\kappa$ is a limit of $\omega$-Ramsey cardinals.
\end{theorem}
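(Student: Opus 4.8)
The plan is to fix a witness to $2$-iterability and push the conclusion down through the first iterated ultrapower. We may assume $M$ is a \emph{simple} weak $\kappa$-model (every $A\subseteq\kappa$ lies in such an $M$; pass to a suitable elementary submodel and collapse), together with a good weakly amenable $M$-ultrafilter $U$ whose first two iterated ultrapowers $M=M_0\xrightarrow{j_{01}}M_1\xrightarrow{j_{12}}M_2$ are well-founded. Write $j=j_{01}$ and $\kappa_1=j(\kappa)$. By Lemma \ref{prop:wellFoundedPartOfUltrapowerWA} we have $M=H_{\kappa^+}^{M_1}$, and since $\crit(j_{12})=\kappa_1>\kappa^+$ the same lemma applied to $M_1$ and $U_1$ gives $M_1=H_{\kappa_1^+}^{M_2}$ while $H_{\kappa^+}^{M_1}=H_{\kappa^+}^{M_2}$.

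The reduction is to prove that $M_1\models$ ``$\kappa$ is $\omega$-Ramsey''. Granting this, fix any $\beta<\kappa$; since $j(\beta)=\beta<\kappa<j(\kappa)$ and $\kappa$ witnesses $\omega$-Ramseyness in $M_1$, we get $M_1\models$ ``there is an $\omega$-Ramsey cardinal in $(\beta,j(\kappa))$''. Elementarity of $j\colon M\to M_1$ yields $M\models$ ``there is an $\omega$-Ramsey cardinal in $(\beta,\kappa)$''. Finally, $\omega$-Ramseyness of an ordinal $\lambda<\kappa$ is absolute between $M$ and $V$: by Theorem \ref{th:alphaRamseyEquivGame} it is equivalent to the challenger having no winning strategy in $\mathsf{RamseyG}^\theta_\omega(\lambda)$, and by the $\theta$-independence quoted before that theorem we may take $\lambda<\theta<\kappa$, so the whole game tree and all strategies lie in $V_\kappa\subseteq M$, where $M$ and $V$ agree. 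Hence there is a genuine $\omega$-Ramsey cardinal strictly between $\beta$ and $\kappa$, and as $\beta$ was arbitrary, $\kappa$ is a limit of $\omega$-Ramsey cardinals.

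To prove $M_1\models$ ``$\kappa$ is $\omega$-Ramsey'' I will use the game characterisation, which is convenient because it asks only that the union of the judge's filters be good, not weakly amenable. First, $\kappa$ is already $1$-iterable, hence weakly compact, inside $M_1$: for a transitive weak $\kappa$-model $\bar M\in M$ with $A\in\bar M$, the restriction $j\restrict\bar M$ lies in $M_1$ (any $j\restrict X$ with $X\in M$ of size $\kappa$ is coded in $M_1$ from an $M$-enumeration of $X$ and its image $j(e)$), so the derived filter $U\cap P^{\bar M}(\kappa)=\{X\in P^{\bar M}(\kappa)\mid\kappa\in j(X)\}$ is an element of $M_1$, is good since $[f]\mapsto j(f)(\kappa)$ embeds its ultrapower into the well-founded $j(\bar M)$, and is weakly amenable. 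Thus the judge can always answer a challenger move. Now let $\sigma\in M_1$ be a putative winning strategy for the challenger in $\mathsf{RamseyG}^\theta_\omega(\kappa)$; I defeat it by having the judge answer each challenger model $N_i\prec H_\theta^{M_1}$ with $W_i:=U\cap P^{N_i}(\kappa)$. As $N_i$ has size $\kappa$ in $M_1$, we have $P^{N_i}(\kappa)\in H_{\kappa^+}^{M_1}=M$ of $M$-size $\kappa$, so weak amenability of $U$ gives $W_i\in M$ and, exactly as above, $W_i\in M_1$.

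Two points remain, and they are the crux of the argument and precisely where the \emph{second} well-founded iterate is consumed (mere $1$-iterability, which only supplies $M_1$, is insufficient). First, the run $\langle N_i,W_i\mid i<\omega\rangle$ must itself be an element of $M_1$, so that it is a legitimate play against $\sigma$ inside $M_1$ and contradicts $\sigma$ being winning there; this is delicate because the assignment $X\mapsto U\cap P^X(\kappa)$ is driven by $U\notin M_1$ and the needed restriction $j\restrict P^N(\kappa)$ of the final union $N=\bigcup_i N_i$ is only available once $N$ is known. Second, the union filter $W=U\cap P^N(\kappa)$ on $N\prec H_\theta^{M_1}$ must be \emph{good}, i.e.\ $\Ult(N,W)$ must be well-founded, even though $U$ is not countably complete and $N$ has elements of rank far above $\kappa^+$. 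The plan is to extract both facts from the second iterate: the well-foundedness of $M_2=\Ult(M_1,U_1)$, together with $M_1=H_{\kappa_1^+}^{M_2}$, lets $M_1$ recognise the relevant initial segments of the ultrapower embedding and, via a factor embedding into an iterate captured inside $M_2$, certifies both that the restricted ultrapowers $\Ult(N,W)$ are well-founded and that the play is coded by an object of $M_1$. Making this last step precise—pinning down which element of $M_1$ codes $j\restrict P^N(\kappa)$ and why a \emph{second} well-founded iterate (not just one) forces its existence and the well-foundedness of $\Ult(N,W)$—is the main obstacle.
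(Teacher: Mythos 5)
Your reduction (verify that $\kappa$ is $\omega$-Ramsey inside a model extracted from the iteration, pull this back through the ultrapower embedding, and finish by absoluteness of $\omega$-Ramseyness for cardinals below $\kappa$) matches the shape of the paper's argument, and your absoluteness discussion at the end of the second paragraph is fine. But the proposal has a genuine gap exactly where you flag it: the two facts you defer --- that the run $\langle N_i,W_i\mid i<\omega\rangle$ is an \emph{element} of the model in which the challenger's strategy lives, and that the union filter is good --- are the entire mathematical content of the theorem, and the mechanism you sketch for them does not work. The embedding $j_{12}\colon M_1\to M_2$ has critical point $\kappa_1=j_{01}(\kappa)>\kappa$, so it fixes every subset of $\kappa$; it induces no nontrivial filter on $P(\kappa)$ and no factor map for ultrapowers taken with $U$. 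More fundamentally, well-foundedness of $M_2=\Ult(M_1,U_1)$ controls membership statements of the form ``for $U$-many $\alpha$, for $U$-many $\beta$'' (outer coordinate first), whereas well-foundedness of $\Ult(N,U\cap N)$, for a judge's model $N\in M_1$ containing sets of rank far above $\kappa^+$, is a ``wrong-order'' ultrapower whose membership statements read ``for $U$-many $\beta$, for $U$-many $\alpha$''; for external (merely weakly amenable) ultrafilters these two orders do not imply one another, so the second well-founded iterate does not by itself certify goodness of the judge's filter, nor does it place the externally built run (which is constructed using $U\notin M_1$) inside $M_1$.

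The paper's proof supplies the two ideas that are missing here. First, instead of working with the raw iteration $M_0\to M_1\to M_2$, it invokes the structural consequence of $2$-iterability from \cite[Lemma 4.4]{gitman:ramsey}: there is a weak $\kappa$-model $M\models\ZFC$ with a good $M$-ultrafilter $U$ whose ultrapower $j\colon M\to N$ satisfies $M=V_{j(\kappa)}^N$, so the entire model in which $\omega$-Ramseyness is to be verified, together with each $H_\theta^M$, is a \emph{set} of rank below $j(\kappa)$ in the well-founded $N$. Second, rather than trying to prove that a run built externally from $U$ lies in the model, it runs a tree argument: inside $N$ one defines the tree $T$ of finite chains of embeddings $h_i\colon m_i\to k_i$ with $m_i\prec H_\theta^M$ and derived filters $u_i\in m_{i+1}$; the restrictions $j\restrict M_n$ (which, as you note, exist in $V$) witness that $T$ is ill-founded in $V$, hence by absoluteness of well-foundedness $T$ has a branch \emph{in} $N$. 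The union along such a branch is automatically an element of $N$ of rank below $j(\kappa)$, hence an element of $M=V_{j(\kappa)}^N$; the derived ultrafilter $u=\bigcup_i u_i$ is weakly amenable by the tree's side condition $u_i\in m_{i+1}$ and is good because $\Ult(m,u)$ embeds via $h=\bigcup_i h_i$ into $\bigcup_i k_i$, which is a set in the transitive $N$ and therefore truly well-founded. This absoluteness step is what replaces your unresolved ``main obstacle''; without it, or some substitute for it, the proposal does not prove the theorem.
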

\begin{proof}
Suppose that $\kappa$ is $2$-iterable. It follows from the diagram in \cite[Lemma 4.4]{gitman:ramsey} that there is a weak $\kappa$-model $M\models\ZFC$ and a good $M$-ultrafilter $U$ such that for the ultrapower map $j:M \to N$, we have $M=V_{j(\kappa)}^N$. We would like to argue that $\kappa$ is $\omega$-Ramsey in $V_{j(\kappa)}^N(=M)$. Fix a regular $\theta\in M$ and $A\subseteq\kappa$ in $M$. We need to produce a basic weak $\kappa$-model $m\prec H_\theta^M$ in $M$ with $A\in m$ for which there is a good weakly amenable $m$-ultrafilter on $\kappa$ in $M$. Let $A\in M_0\prec H_\theta^M$ be any basic weak $\kappa$-model in $M$. Since $M_0$ has size $\kappa$, the restriction $j:M_0 \to j(M_0)$ is in $N$. Next, let $M_1\prec H_\theta^M$ be a basic weak $\kappa$-model in $M$ such that $M_0, U \cap M_0 \in M_1$. Again, we have $j:M_1 \to j(M_1)$ is in $N$. In this way, we construct an elementary sequence $\{ M_n\mid n<\omega\}$ such that for each $n<\omega$, $M_n,U\cap M_n\in M_{n+1}$, and the restriction $j:M_n\to j(M_n)$ is in $N$.

Now working in $N$, we define a tree $T$ whose elements are finite sequences $$\la h_0:m_0\to k_0,\ldots,h_{n-1}:m_{n-1}\to k_{n-1}\ra,$$ ordered by end-extension, such that for all $i,j<n$:
\begin{enumerate}
\item $m_i\prec H_\theta^M$,
\item $m_i,u_i\in m_{i+1}$, where $u_i$ is the $m_i$-ultrafilter derived from $h_i$.
\item $m_i\prec m_j$, $k_i\prec k_j$, and $h_i\subseteq h_j$.
\end{enumerate}
Note crucially that the tree $T$ is an element of $N$ by the assumption that $j(\kappa)$ is a set in $N$. The sequence $\{ j_n:M_n\to j(M_n)\mid n<\omega\}$, constructed above, witnesses that the tree $T$ has a branch, and is thus ill-founded in $V$. By the absoluteness of well-foundedness, $T$ is ill-founded in $N$ as well. Let $h=\bigcup_{i<\omega} h_i:m\to n$, where $m=\bigcup_{i<\omega}m_i$ and $n=\bigcup_{i<\omega}n_i$. Finally, observe that $u=\bigcup_{i<\omega}u_i$ is the $m$-ultrafilter derived from $h$, and must be weakly amenable and good by construction.
\end{proof}
It is shown in~\cite[Proposition 5.2]{HolySchlicht:HierarchyRamseyLikeCardinals} that a $\kappa$-Ramsey cardinal $\kappa$ is a limit of super Ramsey cardinals. Let us say that a cardinal $\kappa$ is ${<}\alpha$-\emph{Ramsey} if it is $\beta$-Ramsey for all regular $\omega\leq \beta<\alpha$.

\begin{proposition}
A strongly Ramsey cardinal is a limit of cardinals $\alpha$ that are ${<}\alpha$-Ramsey.
\end{proposition}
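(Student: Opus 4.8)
The plan is to fix an arbitrary $\beta<\kappa$ and to produce a cardinal $\alpha$ with $\beta<\alpha<\kappa$ that is ${<}\alpha$-Ramsey; since $\beta$ is arbitrary, this exhibits $\kappa$ as a limit of such cardinals. Using that $\kappa$ is strongly Ramsey, I would fix a $\kappa$-model $M$ with $V_\kappa\in M$ and a weakly amenable $M$-ultrafilter $U$, and let $j:M\to N$ be the ultrapower embedding with $N$ transitive. Two consequences of the $\kappa$-model hypothesis drive the argument. First, since $M^{<\kappa}\subseteq M$, the filter $U$ is countably complete, hence good, so $N$ is well-founded; moreover $M$ and $N$ have the same subsets of $\kappa$ by weak amenability, and $\crit(j)=\kappa<j(\kappa)$. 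Second, $N$ is itself ${<}\kappa$-closed: given $\delta<\kappa$ and $x_\eta=[f_\eta]_U\in N$ for $\eta<\delta$, the sequence $\langle f_\eta\mid\eta<\delta\rangle$ lies in $M$ by the closure of $M$, so the function $\alpha\mapsto\langle f_\eta(\alpha)\mid\eta<\delta\rangle$ lies in $M$, and by \Los' theorem it represents $\langle x_\eta\mid\eta<\delta\rangle$ in $N$.

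The central step is to show that $N\models$``$\kappa$ is ${<}\kappa$-Ramsey''. Fixing in $N$ a regular $\gamma<\kappa$, a regular $\theta>\kappa$, and a set $A\in P^N(\kappa)=P^M(\kappa)$, I would build a continuous elementary chain $\langle m_\xi\mid\xi<\gamma\rangle$ of ${<}\gamma$-closed elementary substructures of $H_\theta^N$ of size $\kappa$, taking $m_\xi$ to be the hull, computed in $N$, of $V_\kappa\cup\{V_\kappa,A\}\cup\{m_\eta,u_\eta\mid\eta<\xi\}$ closed under ${<}\gamma$-sequences, and setting $u_\xi:=U\cap m_\xi$, which is an $m_\xi$-ultrafilter because $U$ is normal over $M$ and $M,N$ agree on $P(\kappa)$. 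The crucial point is that each trace $U\cap m_\xi$ is an element of $N$: an enumeration of $P^{m_\xi}(\kappa)$ is a $\kappa$-sequence of subsets of $\kappa$ and so is coded by a single subset of $\kappa$, which lies in $M$ by $P(\kappa)$-agreement; hence $P^{m_\xi}(\kappa)\in M$, and weak amenability places $U\cap m_\xi$ in $M$ and therefore in $N$. Feeding $u_\xi$ into $m_{\xi+1}$ makes the union $m=\bigcup_{\xi<\gamma}m_\xi$ closed under $U$-traces of its size-$\kappa$ members, so $u=\bigcup_{\xi<\gamma}u_\xi=U\cap m$ is a \emph{weakly amenable} $m$-ultrafilter; it is \emph{good} because $u\subseteq U$ and $U$ is countably complete. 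Since $N$ is ${<}\kappa$-closed and $\gamma<\kappa$, the sequence $\langle m_\xi,u_\xi\mid\xi<\gamma\rangle$, and hence $m$ and $u$, belong to $N$, so $N$ genuinely sees the witness. As $\gamma,\theta,A$ were arbitrary, $N\models$``$\kappa$ is ${<}\kappa$-Ramsey''.

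Next I would reflect. Because $\beta<\kappa<j(\kappa)$ and $j(\beta)=\beta$, the previous step yields $N\models$``there is $\alpha$ with $\beta<\alpha<j(\kappa)$ that is ${<}\alpha$-Ramsey'', as witnessed by $\alpha=\kappa$. By elementarity of $j:M\to N$ this pulls back to $M\models$``there is $\alpha$ with $\beta<\alpha<\kappa$ that is ${<}\alpha$-Ramsey''. Finally I would transfer this to $V$: for $\alpha<\kappa$ the property ``$\alpha$ is ${<}\alpha$-Ramsey'' is absolute between $M$ and $V$, since by the some/all-$\theta$ form of the game characterisation in Theorem \ref{th:alphaRamseyEquivGame} it is witnessed using strategies and models of hereditary size below $\kappa$ together with a parameter $\theta<\kappa$, and $V_\kappa^M=V_\kappa$. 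Hence $V\models$``there is $\alpha\in(\beta,\kappa)$ that is ${<}\alpha$-Ramsey'', which is what we wanted.

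The hard part is the central step, and specifically the construction of the $m$-ultrafilter inside $N$. In the analogous argument for $\omega$-Ramsey cardinals one has only countable closure and must pass to a tree and invoke the absoluteness of well-foundedness, which confines the construction to length $\omega$. Here it is precisely the full ${<}\kappa$-closure of the $\kappa$-model that removes this restriction: it simultaneously makes $U$ countably complete, so that the limit ultrafilter is automatically good, and makes the ultrapower $N$ closed under ${<}\kappa$-sequences, so that the entire length-$\gamma$ chain together with its ultrafilter traces is captured by $N$. The one genuinely delicate verification is that the traces $U\cap m_\xi$ land in $N$ and that threading them through the chain produces weak amenability of the limit $u$; granting that, the downward absoluteness of ${<}\alpha$-Ramseyness for $\alpha<\kappa$ is routine.
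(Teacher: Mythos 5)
Your proof is correct, but its central step takes a genuinely different route from the paper's. The paper argues game-theoretically: it invokes Theorem \ref{th:alphaRamseyEquivGame} inside $V_{j(\kappa)}^N$ and derives a contradiction from a putative winning strategy $\sigma$ for the challenger there, by having the judge answer each challenger move $M_\gamma$ with the trace $U\cap M_\gamma$ (which lies in $M=H_{\kappa^+}^N$ by weak amenability, the paper taking $M$ simple); the ${<}\kappa$-closure of $M$ keeps every limit stage and the full run inside $M$, hence inside $N$, so the judge defeats $\sigma$. You instead verify the definition of ${<}\kappa$-Ramseyness directly in $N$: you build the witnessing ${<}\gamma$-closed models as hulls $m_\xi\prec H_\theta^N$ with the traces $u_\xi=U\cap m_\xi$ fed forward, check via coding and $P(\kappa)$-agreement that each trace lies in $N$, verify weak amenability and goodness of the union by hand, and capture the whole chain in $N$ using the ${<}\kappa$-closure of $N$, which you correctly derive from that of $M$. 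The paper's approach buys brevity: the challenger's strategy supplies the models and the game characterization does the packaging, so no hull construction or transfer of closure to the ultrapower is needed. Your approach buys independence of the game machinery at the key step (you need Theorem \ref{th:alphaRamseyEquivGame} only for the routine final absoluteness transfer, where the paper needs an equivalent correctness claim anyway), and it makes explicit where each hypothesis --- closure, weak amenability, countable completeness --- enters. Two harmless imprecisions: your chain is not literally ``continuous'' (a hull closed under ${<}\gamma$-sequences properly extends the union at limits of small cofinality), though only an increasing chain is needed; and your final transfer tacitly uses that runs of the relevant games are absolute between $M$ (equivalently $V_\kappa$) and $V$, which follows from the ${<}\kappa$-closure of $M$ (or inaccessibility of $\kappa$) --- the same point the paper glosses as ``$V_\kappa$ is correct about this.''
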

\begin{proof}
Suppose that $\kappa$ is strongly Ramsey. Let $M$ be a simple $\kappa$-model for which there is a weakly amenable $M$-ultrafilter $U$. Let $j:M\to N$ be the ultrapower by $U$. We will argue that $\kappa$ is ${<}\kappa$-Ramsey in $V_{j(\kappa)}^N$. Fix $\alpha<\kappa$. By Theorem~\ref{th:alphaRamseyEquivGame}, it suffices to show that the challenger does not have a winning strategy in the game $\mathsf{RamseyG}^{\kappa^+}_\alpha(\kappa)$.  Suppose towards a contradiction that, in $V^N_{j(\kappa)}$, the challenger has a winning strategy $\sigma$ in $\mathsf{RamseyG}^{\kappa^+}_\alpha(\kappa)$. Note that by the weak amenability of $U$, the moves of the challenger are in $M$. Consider the following run of the game $\mathsf{RamseyG}^{\kappa^+}_\alpha(\kappa)$. The challenger plays some $M_0$ according to $\sigma$. The judge responds with $U\cap M_0$, which is an element of $M$ by weak amenability. The challenger then plays $M_1$ according to $\sigma$, and the judge again responds with $U\cap M_1$. Suppose that the challenger and the judge continue to play in this manner up to some limit step $\beta$. Since $M$ is a $\kappa$-model, the run of the game up to $\beta$ is an element of $M$, and hence, the challenger can respond to it with some $M_\alpha$ according to $\sigma$. Thus, the judge and the challenger can continue playing in this manner for $\alpha$-many steps. The entire run of the game is also an element of $M$ by closure. But clearly the judge wins this play, contradicting our assumption that $\sigma$ was a winning strategy for the challenger. Now we can conclude by elementarity that, in $V_\kappa$, $\kappa$ is a limit of cardinals $\alpha$ that are ${<}\alpha$-Ramsey. Moreover, $V_\kappa$ is correct about this since to verify that $\alpha$ is $\beta$-Ramsey (for some $\beta<\kappa$) we only need to consider games with elementary substructures of $H_{\alpha^+}\subseteq V_\kappa$.
\end{proof}

For future sections, let us consider a variant game $\mathsf{Ramsey\bar G}^{\theta}_\alpha(\kappa)$, where we ask the judge to play, instead of an $M_\gamma$-ultrafilter $U_\gamma$ extending her moves from the previous stages, a structure $\la N_\gamma,\in,U_\gamma\ra$ such that $N_\gamma$ is a $\kappa$-model with $P^{M_\gamma}(\kappa)\subseteq N_\gamma$ and $U_\gamma$ is an $N_\gamma$-ultrafilter, keeping the requirement that the $U_\gamma$'s must extend. We additionally require the challenger to ensure that the sequence of the judge's previous plays $\{\la N_\xi,\in,U_\xi\ra\mid \xi<\gamma\}$ is an element of $M_\gamma$. In this variant game, we are giving the judge the extra ability to ensure that certain subsets of $\kappa$ make it into the final model. Essentially the same argument as for the game $\mathsf{RamseyG}^\theta_\alpha(\kappa)$ shows that the existence of a winning strategy for either player is independent of $\theta$ in the game $\mathsf{Ramsey\bar G}^{\theta}_\alpha(\kappa)$, and moreover $\kappa$ is $\alpha$-Ramsey if and only if the challenger doesn't have a winning strategy in the game $\mathsf{Ramsey\bar G}^{\theta}_\alpha(\kappa)$. It follows that the challenger has a winning strategy in the game $\mathsf{RamseyG}^\theta_\alpha(\kappa)$ if and only if he has a winning strategy in $\mathsf{Ramsey\bar G}^{\theta}_\alpha(\kappa)$. But, in fact, this is true for the judge as well.

\begin{lemma}
Either player has a winning strategy in the game $\mathsf{RamseyG}^{\theta}_\alpha(\kappa)$ if and only if they have a winning strategy in the game $\mathsf{Ramsey\bar G}^{\theta}_\alpha(\kappa)$.
\end{lemma}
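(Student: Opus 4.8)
As already observed, the challenger has a winning strategy in $\mathsf{RamseyG}^{\theta}_\alpha(\kappa)$ if and only if he has one in $\mathsf{Ramsey\bar G}^{\theta}_\alpha(\kappa)$, so the plan is to prove the corresponding equivalence for the judge, treating the two implications separately. The easy direction turns a judge strategy $\tau$ for $\mathsf{RamseyG}^{\theta}_\alpha(\kappa)$ into one for $\mathsf{Ramsey\bar G}^{\theta}_\alpha(\kappa)$. Given a play of $\mathsf{Ramsey\bar G}$ in which the challenger plays models $M_\gamma$, the judge reads off from $\tau$ the $M_\gamma$-ultrafilter $V_\gamma$ it returns and plays $\la N_\gamma,\in,V_\gamma\ra$, where $N_\gamma$ is the transitive collapse of $M_\gamma$. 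Because $V_\kappa\cup\{V_\kappa\}\subseteq M_\gamma$ and $\kappa\subseteq M_\gamma$, the collapse is the identity on $P^{M_\gamma}(\kappa)$, so $N_\gamma$ is a transitive $\kappa$-model with $P^{N_\gamma}(\kappa)=P^{M_\gamma}(\kappa)$ and $V_\gamma$ remains an $N_\gamma$-ultrafilter, meeting the legality requirement $P^{M_\gamma}(\kappa)\subseteq N_\gamma$ with equality. The one thing to verify is that $\tau$ is being fed a legal $\mathsf{RamseyG}$-history: this holds because in $\mathsf{Ramsey\bar G}$ the challenger guarantees $\{\la N_\xi,\in,U_\xi\ra\mid\xi<\gamma\}\in M_\gamma$ and $M_\xi\in M_\gamma$, and since the collapse fixes subsets of $\kappa$ we have $U_\xi=V_\xi$, so $M_\gamma$ reconstructs $\{\la M_\xi,V_\xi\ra\mid\xi<\gamma\}$. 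As the union of the $N_\gamma$ is isomorphic to the union of the $M_\gamma$ and the terminal ultrafilters coincide, goodness transfers and the judge wins.

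For the converse I would turn a judge strategy $\bar\tau$ for $\mathsf{Ramsey\bar G}^{\theta}_\alpha(\kappa)$ into one for $\mathsf{RamseyG}^{\theta}_\alpha(\kappa)$ by running a shadow play of $\mathsf{Ramsey\bar G}$. When the $\mathsf{RamseyG}$-challenger plays $M_\gamma$, the judge builds a ${<}\kappa$-closed basic $\kappa$-model $M'_\gamma\prec H_\theta$ of size $\kappa$ (possible since $\kappa$ is inaccessible) containing $M_\gamma$ as an element, hence by the usual bijection argument as a subset, together with the shadow history $\{\la N_\xi,\in,U_\xi\ra\mid\xi<\gamma\}$ and the earlier $M'_\xi$. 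She submits $M'_\gamma$ as the shadow challenger move, lets $\bar\tau$ reply with $\la N_\gamma,\in,U_\gamma\ra$, and plays $V_\gamma:=U_\gamma\cap P^{M_\gamma}(\kappa)$ in $\mathsf{RamseyG}$. Since $P^{M_\gamma}(\kappa)\subseteq P^{M'_\gamma}(\kappa)\subseteq P^{N_\gamma}(\kappa)$, the restriction of the normal $N_\gamma$-ultrafilter $U_\gamma$ is an $M_\gamma$-ultrafilter, normality being preserved because a $\kappa$-sequence lying in $M_\gamma$ is coded by a subset of $\kappa$ in $M_\gamma\subseteq N_\gamma$ and its diagonal intersection is computed identically on both sides; the extension requirement $V_\xi\subseteq V_\gamma$ follows from $U_\xi\subseteq U_\gamma$ and $M_\xi\subseteq M_\gamma$.

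The hard part is the winning condition at stage $\alpha$: one must show that $V=U\cap P^{M}(\kappa)$ is a \emph{good} $M$-ultrafilter for $M=\bigcup_{\xi<\alpha}M_\xi$, where $U=\bigcup_{\xi<\alpha}U_\xi$ is the good $N=\bigcup_{\xi<\alpha}N_\xi$-ultrafilter produced by $\bar\tau$. For $\alpha>\omega$ this is immediate, since $M$ is then $\omega$-closed and every $M$-ultrafilter is countably complete, hence good. For $\alpha=\omega$ I would argue by a factor embedding, and this is exactly where the choice $M_\gamma\in M'_\gamma$ pays off: a code for $M_\gamma$ is then a subset of $\kappa$ in $M'_\gamma$ and therefore lies in $N_\gamma$, so the transitive collapse $\bar M_\gamma$ belongs to, and hence is contained in, the transitive $\kappa$-model $N_\gamma$. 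Taking unions over $\gamma$, the transitive collapse $\bar M$ of $M$ is contained in $N$; the collapse fixes $P^{M}(\kappa)$, so the collapsed ultrafilter is again $V$, and the map $[f]_V\mapsto[f]_U$ is a well-defined $\in$-preserving map of $\Ult(\bar M,V)\cong\Ult(M,V)$ into the well-founded $\Ult(N,U)$. Hence $\Ult(M,V)$ is well-founded, $V$ is good, and the judge wins $\mathsf{RamseyG}^{\theta}_\alpha(\kappa)$, completing the equivalence.

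I expect the main obstacle to be precisely this goodness transfer in the case $\alpha=\omega$, since there neither $M$ nor $N$ is countably closed and one cannot appeal to countable completeness; the decoding trick that forces $\bar M\subseteq N$ via the extra demand $M_\gamma\in M'_\gamma$ is what makes the factor embedding available, and keeping track of the two bookkeeping requirements (that the shadow challenger models $M'_\gamma$ absorb both $M_\gamma$ and the growing shadow history) is the part that requires the most care.
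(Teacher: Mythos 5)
Your overall plan — dispose of the challenger via the remark preceding the lemma, then transfer judge strategies in both directions — matches the paper, but the implementations differ and one step fails. The paper's translations are direct and avoid collapsing entirely: from a strategy in $\mathsf{RamseyG}^\theta_\alpha(\kappa)$ the judge plays $\la N_\xi,\in,U_\xi\ra$ with $N_\xi=M_\xi\cap H_{\kappa^+}$, which is \emph{already transitive} (a basic $\kappa$-model contains as a subset every element it believes has size at most $\kappa$, via a bijection with $\kappa\subseteq M_\xi$), and in the converse direction she plays $U_\xi\cap M_\xi$, handling the bookkeeping by the observation that $N_\xi$ is definable from $U_\xi$. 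Your shadow-play device in the converse direction is a legitimate, arguably more careful, way of guaranteeing that $\bar\tau$ is only ever fed legal positions, and your verification of normality and of the extension requirement for the restricted filters is fine.

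The genuine gap is your treatment of transitive collapses of unions, used at the end of both directions. If $M_\xi\prec M_\gamma$ are non-transitive elementary substructures of $H_\theta$, the collapse of $M_\gamma$ does not extend the collapse of $M_\xi$: for an ordinal $\delta\in M_\xi$ we have $\pi_{M_\xi}(\delta)=\mathrm{ot}(M_\xi\cap\delta)$ while $\pi_{M_\gamma}(\delta)=\mathrm{ot}(M_\gamma\cap\delta)$, and these differ as soon as $M_\gamma\cap\delta\supsetneq M_\xi\cap\delta$. Hence the collapse $\bar M$ of $M=\bigcup_\gamma M_\gamma$ is \emph{not} the union of the collapses $\bar M_\gamma$, and $\pi_M\upharpoonright M_\gamma\neq\pi_{M_\gamma}$. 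In your first direction this makes the closing sentence false as stated ($\bigcup_\gamma N_\gamma$ need not be isomorphic to $M$), though the conclusion is salvageable: since $\la N_\gamma,\in,U_\gamma\ra\in M_{\gamma+1}$ and $M_{\gamma+1}$ sees that $N_\gamma$ has size $\kappa$, we get $N_\gamma\subseteq M$, so every function appearing in the ultrapower of $N=\bigcup_\gamma N_\gamma$ already appears in the ultrapower of $M$ with the same measure-one sets, and goodness passes from $M$ to $N$. In the converse direction with $\alpha=\omega$ — which you rightly identify as the crux — no such repair is available: ``taking unions over $\gamma$, the transitive collapse $\bar M$ of $M$ is contained in $N$'' is exactly where non-coherence bites. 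The intended factor map must send $[f]_V$ to $[\pi_{M_\gamma}\circ f]_U$ for $f\in M_\gamma$, but this value genuinely depends on the chosen $\gamma$, since $\pi_{M_\gamma}$ and $\pi_{M_{\gamma'}}$ disagree on $M_\gamma$; and the single collapse $\pi_M$ does not help, because $\pi_M(f)$ is computed from all of $M$ at once, so no code for it lies in any $N_\gamma$. Even if the judge maintains coherent injections $b_\gamma\colon M_\gamma\to\kappa$ in her shadow models, so that the codes $E_\gamma$ of $\la M_\gamma,\in\ra$ increase and lie in $N$, what is then needed is well-foundedness of $\bigcup_\gamma j_U(E_\gamma)$, and well-foundedness of the ultrapower of $N$ by $U$ alone does not yield this — it would follow from countable completeness of $U$, which is precisely what is unavailable when $\alpha=\omega$. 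So the $\alpha=\omega$ case of your converse direction remains open in your write-up.
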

\begin{proof}
We argued above that this is true for the challenger. So suppose that the judge has a winning strategy $\sigma$ in the game $\mathsf{RamseyG}^{\theta}_\alpha(\kappa)$. The winning strategy $\bar\sigma$ for the judge in the game $\mathsf{Ramsey\bar G}^{\theta}_\alpha(\kappa)$ is going to be to play $\la N_\xi,\in,U_\xi\ra$ at stage $\xi$ of the game, where $U_\xi$ is the response of the judge according to $\sigma$ to the moves of the challenger so far (because these could very well be the moves of the challenger in the game $\mathsf{Ramsey G}^{\theta}_\alpha(\kappa)$) and $N_\xi=M_\xi\cap H_{\kappa^+}$. Next, suppose that the judge has a winning strategy $\bar\sigma$ in the game $\mathsf{Ramsey\bar G}^{\theta}_\alpha(\kappa)$. The winning strategy $\sigma$ for the judge in the game $\mathsf{RamseyG}^{\theta}_\alpha(\kappa)$ is going to be to play $U_\xi\cap M_\xi$ at stage $\xi$ of the game, where $\la N_\xi,\in,U_\xi\ra$ is the response of the of judge according to $\bar\sigma$ to the moves of the challenger so far (because again these could be the moves of the challenger in the game $\mathsf{Ramsey\bar G}^{\theta}_\alpha(\kappa)$). Note that if $\{U_\xi\mid\xi<\gamma\}\in M_\gamma$, then the sequence $\{\la N_\xi,\in,U_\xi\ra\mid \xi<\gamma\}$ is also in $M_\gamma$ because each $N_\xi$ is definable from $U_\xi$ since every element of $N_\xi$ is coded by a subset of $\kappa$ and $U_\xi$ has either the subset or its complement.
\end{proof}
Above the $\alpha$-Ramsey hierarchy, but still below a measurable cardinal sits a large cardinal notion introduced by Holy and L\"ucke \cite{HolyLucke:smallModels}.

\begin{definition}[{\cite[Definition 1.1]{HolyLucke:smallModels}}]
\label{def: loc meas}
A cardinal $\kappa$ is \emph{locally measurable} if every $A\subseteq\kappa$ is an element of a weak $\kappa$-model $M$ which thinks that it has normal ultrafilter on $\kappa$.
\end{definition}
Standard arguments show that a measurable cardinal is a limit of locally measurable cardinals. It follows immediately from \cite[Theorem 15.3]{HolyLucke:smallModels} that a locally measurable cardinal is a limit of cardinals $\kappa$ that are $\kappa$-Ramsey. To see this, note that the set of $\Delta^\forall_\kappa$-Ramsey cardinals is unbounded in $\kappa$ by this theorem. Moreover, by the list of large cardinal properties after \cite[Definition 9.4]{HolyLucke:smallModels}, a cardinal $\kappa$ is $\kappa$-Ramsey if and only if it is $\mathbf{T}^\forall_\kappa$-Ramsey, where the latter follows from $\Delta^\forall_\kappa$-Ramsey by \cite[Definition 9.4]{HolyLucke:smallModels}. As happens often with these smaller large cardinals, replacing weak $\kappa$-models by $\kappa$-models in the definition of locally measurable increases consistency strength (the argument is similar to that given in Proposition~\ref{bm stronger than weaklybm}). In the situation of Definition \ref{def: loc meas}, $P^M(\kappa)$ must be an element of any such weak $\kappa$-model $M$, and so it obviously cannot be simple.
While the normal ultrafilter $U$ is not required to be good, Proposition \ref{prop:locallyMeasurableInternalUltrapower} shows that this would not add extra strength.

\begin{proposition}\label{prop:locallyMeasurableInternalUltrapower}
If $\kappa$ is locally measurable, then every $A\subseteq\kappa$ is an element of a weak $\kappa$-model $M$ such that $M$ contains what it thinks is a normal ultrafilter $U$ on $\kappa$ with a well-founded ultrapower $N\subseteq M$.
\end{proposition}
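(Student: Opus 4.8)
The plan is to show that a single model $M$ witnessing local measurability for $A$ already satisfies the conclusion, so the real work is to upgrade internal to external well-foundedness of the ultrapower. Fix $A\subseteq\kappa$ and, by Definition~\ref{def: loc meas}, a weak $\kappa$-model $M$ with $A\in M$ containing an element $U$ that $M$ believes to be a normal ultrafilter on $\kappa$. First I would form, internally to $M$, the ultrapower $j:M\to N=\Ult(M,U)$ by functions $f:\kappa\to M$ lying in $M$; since $M\models\ZFC^-$, \Los' theorem holds by the collection scheme and $\crit(j)=\kappa$. As $M$ thinks $U$ is normal, hence $\kappa$-complete, and $\kappa$ is uncountable, $M$ thinks $U$ is $\sigma$-complete; the standard argument then shows that $M$ proves its ultrapower to be well-founded (any internal descending $\bar\in$-chain is a set of $M$, and $\sigma$-completeness produces a real descending $\in$-chain through the representing functions, which is absurd). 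Hence $M$ forms the transitive collapse $\pi$, and I may identify $N$ with a transitive subclass of $M$, so $N\subseteq M$.

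The hard part is that $U$ need not be externally countably complete --- $M$ need not be closed under $\omega$-sequences --- so $M$'s proof of well-foundedness does not immediately give genuine well-foundedness. The key point, which I would stress, is that transitivity of $M$ settles this. The external ultrapower is the very same structure $P=(\{[f]_U:f:\kappa\to M,\ f\in M\},\bar\in)$, a set of size at most $\kappa$ in $V$ (since $|M|=\kappa$, there are only $\kappa$-many such $f$). The collapse map $\pi$ is a genuine function because $M$ is transitive, and for all $x,y\in P$ the equivalences ``$x\,\bar\in\,y$'' and ``$\pi(x)\in\pi(y)$'' are absolute between $M$ and $V$, again by transitivity and $U\in M$. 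Thus $\pi$ is, in $V$, an isomorphism of $(P,\bar\in)$ onto the genuinely transitive set $N$, and since real transitive sets are well-founded, so is $P$. Concretely, an external descending chain $[f_0]\,\bar\ni\,[f_1]\,\bar\ni\,\cdots$ would map under $\pi$ to a real infinite descending $\in$-chain inside $N\subseteq V$, contradicting Foundation.

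Assembling these facts, $M$ is a weak $\kappa$-model with $A\in M$ that contains $U$, which it believes to be a normal ultrafilter on $\kappa$, and the ultrapower $N=\Ult(M,U)$ is genuinely well-founded with $N\subseteq M$; moreover $N$ is a set of size at most $\kappa$. As $A$ was arbitrary, the proposition follows. I expect the only delicate step to be the internal-to-external transfer of well-foundedness in the second paragraph; the rest is unwinding the definition of local measurability, where it is also worth noting that $\mathcal P(\kappa)^M=U\cup\{\kappa\setminus X:X\in U\}$ is a set in $M$, so that $M$ genuinely has the means to form the ultrapower.
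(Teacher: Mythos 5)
There is a genuine gap, and it is located exactly at the sentence ``Hence $M$ forms the transitive collapse $\pi$, and I may identify $N$ with a transitive subclass of $M$, so $N\subseteq M$.'' The ultrapower of $M$ by $U$ is a \emph{class} from $M$'s point of view, and in $\ZFC^-$ the Mostowski collapse of a class relation requires not just (internal) well-foundedness and extensionality but also that the relation be \emph{set-like}: for each $[f]_U$ there must be a \emph{set} in $M$ of representatives covering all $\bar{\in}$-predecessors of $[f]_U$. Your argument never addresses set-likeness, and it can genuinely fail for the model $M$ you start with: as the paper observes right after this proposition, one can build a weak $\kappa$-model $M$ (collapse an elementary substructure of $H_{\lambda^+}$ of size $\kappa$, where $\lambda>2^\kappa$ has cofinality $\kappa$) whose internal normal ultrafilter yields an ultrapower with \emph{more ordinals than $M$}, so that no collapse contained in $M$ can exist. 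Consequently your opening claim --- that a single model witnessing local measurability for $A$ already satisfies the conclusion --- is false, and the proposition cannot be proved without modifying $M$. (Your second paragraph, transferring internal to external well-foundedness via transitivity of $M$, is fine in spirit, but it presupposes the collapse whose existence is precisely the issue.)

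The missing idea, which is the heart of the paper's proof, is a preliminary normalization of $M$: replace $M$ by $H_{\delta^+}^M$ where $\delta=(2^\kappa)^M$, so that $\delta$ becomes the largest cardinal of $M$ (this is harmless, since $A$, $U$, and $P(\kappa)^M$ all survive the cut). After this step, by elementarity $[c_\delta]_U$ is the largest cardinal of the ultrapower, so every element of the ultrapower is internally bijective with $[c_\delta]_U$, and it suffices to verify set-likeness at $[c_\delta]_U$ alone: every $[g]_U\,\bar{\in}\,[c_\delta]_U$ is represented by some $g':\kappa\to\delta$, and the collection of all such $g'$ \emph{is a set in $M$} because $\delta=(2^\kappa)^M$ is a set there. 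With set-likeness in hand, $M$ can carry out the collapse, producing a transitive class $N\subseteq M$, and external well-foundedness then follows from the transitivity of $M$ as you indicate. Without the largest-cardinal arrangement (or more generally, a largest cardinal $\gamma$ with $\gamma^\kappa$ a set in $M$), this covering argument is unavailable and the conclusion $N\subseteq M$ can fail outright.
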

\begin{proof}
Fix $A\subseteq\kappa$. Choose any weak $\kappa$-model $M$ such that $A\in M$ and $M$ has what it thinks is a normal ultrafilter $U$ on $\kappa$. Let $\delta=(2^\kappa)^M$. We can assume without loss of generality that $\delta$ is the largest cardinal of $M$ by replacing $M$ with $H_{\delta^+}^M$ if necessary.
Let $\la N,\bar{\in}\ra$ be the ultrapower of $M$ by $U$. The model $M$ believes that the relation $\bar{\in}$ is well-founded in the sense that it doesn't have a sequence of functions $\{f_n:\kappa\to M\mid n<\omega\}$ such that for all $n<\omega$, $\{\xi<\kappa\mid f_{n+1}(\xi)\in f_n(\xi) \}\in U$. Since models of $\ZFC^-$ can perform Mostowski collapses, it suffices to argue that the relation $\bar{\in}$ is set-like from the perspective of $M$, namely for every function $f:\kappa\to M$, there is a set $X_f$ in $M$ such that for every $g:\kappa\to M$ with $[g]_U \bar{\in} [f]_U$, there is some $g':\kappa\to M$ with $[g']_U=[g]_U$ and $g'\in X_f$. By elementarity, $[c_\delta]_U$, where $c_\delta$ is the constant function with value $\delta$, is the largest cardinal of $N$, and thus, every element of $N$ is bijective with $[c_\delta]_U$. So it actually suffices to argue that $M$ has a set $X_{c_\delta}$ as above. For this, observe that $[g]_U\,\bar{\in}\,[c_\delta]_U$ if and only if $[g]_U=[g']_U$ for some $g':\kappa\to \delta$, and the collection of functions $g':\kappa\to \delta$ is a set in $M$ because $\delta=2^\kappa$ is a set in $M$.
\end{proof}
Thus, as long as $2^\kappa$ is the largest cardinal of a weak $\kappa$-model $M$ with a normal ultrafilter $U$ on $\kappa$, the ultrapower of $M$ by $U$ is contained in $M$ just as in the case of an ultrapower by an actual measure. In fact, it follows from the proof that this will be case whenever $M$ has a largest cardinal $\gamma$ and $\gamma^\kappa$ is a set in $M$. Note that it is definitely possible for an ultrapower of a weak $\kappa$-model $M$ by a normal ultrafilter $U$ in $M$ to have more ordinals than $M$. Suppose that $\kappa$ is measurable, $U$ is a normal ultrafilter on kappa, $j:V\to M$ is the ultrapower embedding, and $\lambda>2^\kappa$ is a cardinal with $\text{cof}(\lambda)=\kappa$. By elementarity, $\text{cof}(j(\lambda))=j(\kappa)$ in $M$, which means, since $M^\kappa\subseteq M$, that $j(\lambda)>\lambda^+$, and hence also then $(j(\lambda)^+)^M> \lambda^+$. Thus, $H_{(j(\lambda)^+)^M}^M$, which is the ultrapower of $H_{\lambda^+}$ by $U$, has more ordinals than $H_{\lambda^+}$. Let $N$ be the Mostowski collapse of an elementary substructure of $H_{\lambda^+}$ of size $\kappa$. Let $\bar U$ be the preimage of $U$ under the collapse and let $\bar \lambda$ be the preimage of $\lambda$. Then, by elementarity, $N$ satisfies that the image of $\bar\lambda$ under the ultrapower map cannot be a set in $M$.

\section{From amenability to collection}
\label{section: amen coll}

Suppose that $M$ is a simple weak $\kappa$-model and $U$ is an $M$-ultrafilter. By assumption, $M$ is a model of $\ZFC^-$, but separation and replacement can fail very badly in the structure $\la M,\in, U\ra$ once the model learns about the ultrafilter. Weak amenability of $U$ is equivalent to having a minimal amount of separation for $\la M,\in,U\ra$. Note that for the results in this section, we mostly do not need to assume that the ultrapower of $M$ by $U$ is well-founded, that is, $U$ need not be good.

\begin{lemma}
Suppose that $M$ is a simple weak $\kappa$-model and $U$ is an $M$-ultrafilter. Then $U$ is weakly amenable if and only if the structure $\la M,\in,U\ra$ satisfies $\Delta_0$-separation.
\end{lemma}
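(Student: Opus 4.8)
The plan is to handle the two implications separately: the direction from $\Delta_0$-separation to weak amenability is immediate, and the converse is the real content. For the easy direction, I would apply $\Delta_0$-separation to the single atomic formula expressing membership in the predicate. Given $A\in M$ of any $M$-cardinality, the formula $\varphi(x)\equiv U(x)$ is $\Delta_0$ in the language $\{\in,U\}$, so separation in $\la M,\in,U\ra$ yields $\{x\in A : x\in U\}=A\cap U\in M$. In particular this holds whenever $|A|^M=\kappa$, which is exactly weak amenability (indeed full amenability).

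For the converse I would argue by induction on the complexity of $\Delta_0$ formulas, proving the stronger statement that for every $\Delta_0$ formula $\varphi(v_1,\dots,v_n)$ in the language $\{\in,U\}$ and all sets $a_1,\dots,a_n\in M$, the extension
$$[\varphi]^{\vec a}=\{(x_1,\dots,x_n)\in a_1\times\cdots\times a_n : \la M,\in,U\ra\models\varphi(x_1,\dots,x_n)\}$$
is an element of $M$. The two external inputs are that, since $M$ is simple, weak amenability upgrades to full amenability, so $b\cap U\in M$ for every $b\in M$; and that $\la M,\in\ra\models\ZFC^-$ enjoys the full ($U$-free) separation scheme, so any subset of a set in $M$ that is first-order definable over $\la M,\in\ra$ with parameters from $M$ is again in $M$.

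In the induction, the pure atomic cases $v_i\in v_j$ and $v_i=v_j$ give relations definable over $\la M,\in\ra$ from $\vec a$, hence in $M$. The atomic case $U(v_i)$ is where amenability enters: $[\,U(v_i)\,]^{\vec a}=\{\vec x\in\prod_j a_j : x_i\in a_i\cap U\}$, and since $a_i\cap U\in M$ this relation is $U$-free definable over $M$ and thus lies in $M$. Boolean combinations are handled by complementing within $\prod_j a_j$ and intersecting, operations under which $M$ is closed. The crux is the bounded quantifier $\varphi=\exists y\in v_i\,\psi(\vec v,y)$: I would bound $y$ by the set $b=\bigcup a_i$ (the union of the members of $a_i$), apply the induction hypothesis to obtain $[\psi]^{\vec a,b}\in M$, and then observe that
$$[\varphi]^{\vec a}=\{\vec x\in\textstyle\prod_j a_j : \exists y\,(y\in x_i\wedge(\vec x,y)\in[\psi]^{\vec a,b})\}$$
is definable over $\la M,\in\ra$ \emph{without any reference to} $U$, from the single parameter $[\psi]^{\vec a,b}$; hence it is in $M$ by separation in $M$. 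The universal bounded quantifier reduces to this via negation.

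The main obstacle, and the whole point of the argument, is precisely this bounded-quantifier step: one cannot separate directly using a formula that mentions $U$ inside a quantifier, but by precomputing the extension of the immediate subformula as a genuine element of $M$ we localize all information about $U$ into that one set and reduce to ordinary definability over the $\ZFC^-$ model $M$. Finally, to recover $\Delta_0$-separation itself from the claim, given $\varphi(v,\vec p)$ with parameters $\vec p\in M$ and a set $a\in M$, I would apply the claim with bounding sets $a$ for $v$ and singletons $\{p_1\},\dots,\{p_k\}$ for the parameters and take the first-coordinate section of $[\varphi]^{a,\{p_1\},\dots,\{p_k\}}$, which is $\{x\in a : \la M,\in,U\ra\models\varphi(x,\vec p)\}\in M$.
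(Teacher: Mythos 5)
Your proof is correct, and both directions rest on the same two external facts the paper uses: that simplicity upgrades weak amenability to full amenability, and that $\la M,\in\ra$, being a $\ZFC^-$ model, satisfies full $U$-free separation. The easy direction is identical to the paper's. For the substantive direction, however, your route is organized differently. The paper argues by a one-shot semantic localization: since all quantifiers of $\varphi(x,b)$ are bounded, its truth value in $\la M,\in,U\ra$ depends only on $U\cap\mathrm{TCl}(b)$, which lies in $M$ by (full) amenability; the desired set is then obtained by a single instance of separation over $\la M,\in\ra$ with this piece of $U$ as an additional parameter. You instead run an explicit induction on the structure of $\Delta_0$ formulas, computing the entire extension of each subformula (relative to bounding sets in $M$) as an element of $M$, localizing $U$ only at atomic occurrences via $a_i\cap U$ and handling bounded quantifiers by passing to the precomputed extension of the immediate subformula. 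What the paper's version buys is brevity, at the price of leaving implicit the absoluteness claim that restricting $U$ to the transitive closure of the parameters does not change $\Delta_0$ truth --- a claim whose verification is itself an induction on formulas; your version makes exactly that induction explicit, so it is longer but self-contained, and it yields the marginally stronger conclusion that every $\Delta_0$-definable relation bounded by sets in $M$ (not merely every definable subset of a single set) is an element of $M$.
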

\begin{proof}
Suppose that $\Delta_0$-separation holds in the language with $U$. Fix $A\in M$ and observe that $$U\cap A=\{x\in A\mid x\in U\},$$ which indeed requires separation only for atomic formulas. Now suppose that $U$ is weakly amenable to $M$. Fix $A\in M$ and a $\Delta_0$-formula $\varphi(x,b)$ in the language with $U$. We need to argue that $$\{x\in A\mid M\models\varphi(x,b)\}\in M.$$ Now observe that since $\varphi(x,b)$ is a $\Delta_0$-formula, all quantifiers are bounded and therefore, $U$ restricted to $\mathrm{TCl}(b)$, the transitive closure of $b$, suffices to interpret $\varphi(x,b)$ correctly. But this piece of $U$ is an element of $M$ by weak amenability.
\end{proof}

It follows from Theorem \ref{th:0babymeasurable} below that a structure $\la M,\in,U\ra$ with a weakly amenable $M$-ultrafilter $U$ need not need to satisfy even $\Delta_0$-replacement.

Next, let's observe that our typical structures $\la M,\in,U\ra$ have a $\Delta_1$-definable total order that is a strong well-order from the point of view of the structure. Let us say that a total order $\lhd$ on a weak $\kappa$-model $M$ is a \emph{strong well-order of $M$} if for every formula $\varphi(x,a)$ over $\la M,\in\ra$, the structure $\la M,\in,\lhd\ra$ satisfies that there is a $\lhd$-least $b$ such that $\varphi(b,a)$. Usually, a well-order is defined to have the property that every set has a least element. This property is equivalent to our stronger requirement for a set-like order, but the orders we encounter won't necessarily be set-like. These structures $\la M,\in,U\ra$ will also have a $\Delta_1$-definable truth predicate for $\la M,\in\ra$.

\begin{lemma}\label{lem:definableWellOrder}
Suppose that $M$ is a simple weak $\kappa$-model and $U$ is a weakly amenable $M$-ultrafilter. Then the structure $\la M,\in,U\ra$ has a $\Delta_1$-definable strong well-order $\lhd_U$ of $M$.
\end{lemma}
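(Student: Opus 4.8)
The plan is to transfer to $M$ the well-order that the ultrapower $N=\Ult(M,U)$ puts on $M$ once it views $M$ as the \emph{set} $H_{\kappa^+}^N$. Write $j:M\to N$ for the ultrapower embedding. Since $U$ is weakly amenable, the structure $\langle M,\in,U\rangle$ satisfies $\Delta_0$-separation, and this is exactly enough for it to $\Delta_1$-define the ultrapower: for $f,g\in M$ the sets $\{\xi<\kappa:f(\xi)\in g(\xi)\}$ and $\{\xi<\kappa:f(\xi)=g(\xi)\}$ lie in $M$, so $[f]_U\,\bar\in\,[g]_U$ and $[f]_U=[g]_U$ are atomic statements about $U$, and, more generally, by \Los' theorem satisfaction of any formula in $\langle N,\bar\in\rangle$ reduces to membership in $U$ of a $\Delta_0$-definable subset of $\kappa$. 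By Lemma~\ref{prop:wellFoundedPartOfUltrapowerWA}, $M=H_{\kappa^+}^N$, so in particular $M$ is a single set of the model $N\models\ZFC^-$.

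First I would fix the representing functions for the \emph{literal} elements of $M=H^N_{\kappa^+}$. The point is that for $a\in M$ the function $\hat a\colon\xi\mapsto a\cap V_\xi^M$ satisfies $[\hat a]_U=a$ (the literal set, not $j(a)=[c_a]_U$), the standard fact that rank-initial approximations represent a set of $H_{\kappa^+}^N$ in its own ultrapower. Because $N\models\ZFC^-$ proves that every set can be well-ordered, $N$ has a well-order $W$ of the set $H^N_{\kappa^+}=M$; I fix $h\in M$ with $[h]_U=W$ and use $h$ as a parameter. I then define, for $a,b\in M$,
$$a\lhd_U b\quad:\Longleftrightarrow\quad \{\xi<\kappa:(\,a\cap V_\xi^M,\;b\cap V_\xi^M\,)\in h(\xi)\}\in U,$$
which by \Los' theorem says exactly that $N\models (a,b)\in W$.

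It then remains to check three things. That $\lhd_U$ is $\Delta_1$: the displayed set is $\Delta_0$-definable from $a,b,h$ and is an element of $M$, so ``$a\lhd_U b$'' is $\Sigma_1$ (``there is such a set and it lies in $U$''), while its negation is also $\Sigma_1$ (it asserts $a=b$ or $b\lhd_U a$, using that $N$ thinks $W$ is a linear order), whence $\Delta_1$. That $\lhd_U$ is a linear order of $M$: this is immediate from \Los' theorem and the fact that $N$ thinks $W$ linearly orders $H^N_{\kappa^+}=M$. That $\lhd_U$ is a \emph{strong} well-order: given a formula $\varphi(x,c)$ over $\langle M,\in\rangle$, the class $C=\{b\in M:\langle M,\in\rangle\models\varphi(b,c)\}$ is, since $M=H^N_{\kappa^+}$ and satisfaction for $\langle M,\in\rangle$ agrees with that computed inside $N$ for the set $H^N_{\kappa^+}$, a subclass of $H^N_{\kappa^+}$ defined over $N$; by separation in $N\models\ZFC^-$ it is a set $C^*\in N$ with the same elements as $C$, and $N$ proves that the nonempty $C^*$ has a $W$-least element. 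That element lies in $M$ (the well-founded part $H^N_{\kappa^+}$) and is the $\lhd_U$-least element of $C$.

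The same ultrapower computation, applied to the satisfaction predicate of the set $H^N_{\kappa^+}$ in $N$, yields the promised $\Delta_1$-definable truth predicate for $\langle M,\in\rangle$. I expect the main obstacle to be the verification that the rank-approximation functions $\xi\mapsto a\cap V_\xi^M$ represent the literal elements of $H^N_{\kappa^+}$ rather than their $j$-images, since it is this identification that makes $\lhd_U$ reflect the genuine well-order $W$ of the set $M$; coupled with it is the need to match $\langle M,\in\rangle$-definable classes with the subsets of $H^N_{\kappa^+}$ separated inside $N$, which must be done carefully because $N$ may be ill-founded above $(\kappa^+)^N=\Ord^M$, although this never affects the well-founded set $H^N_{\kappa^+}=M$ on which $W$ and $\lhd_U$ live.
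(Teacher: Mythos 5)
There is a genuine gap, and it sits exactly at the point you flagged as ``the main obstacle'': the representation claim $[\hat a]_U=a$ for $\hat a\colon\xi\mapsto a\cap V_\xi^M$ is false for general $a\in M=H_{\kappa^+}^N$. What is true (and standard) is that $[\hat a]_U=j(a)\cap V_\kappa^N$, since $[\xi\mapsto V_\xi^M]_U=V_{[\mathrm{id}]_U}^N=V_\kappa^N$; this equals $a$ only when $a\subseteq V_\kappa$ (equivalently, for sets of rank at most $\kappa$), e.g.\ for $A\subseteq\kappa$, which is the case treated in the paper. For sets of higher rank it fails badly: if $a$ is any ordinal with $\kappa<a<\Ord^M$, then $a\cap V_\xi^M=\xi$ for all $\xi<\kappa$, so $\hat a=\mathrm{id}$ and $[\hat a]_U=\kappa\neq a$ by normality. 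More generally $\hat a$ depends only on $a\cap V_\kappa$, so your relation $a\lhd_U b$ compares $a\cap V_\kappa$ with $b\cap V_\kappa$ under $W$ and cannot distinguish, say, $\kappa+1$ from $\kappa+2$. Hence $\lhd_U$ as you define it is not even a linear order of $M$, let alone a strong well-order, and the subsequent verifications (linearity, least elements via separation in $N$) do not go through.

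The repair is the coding device the paper uses, and it is where simplicity of $M$ enters essentially: since every $a\in M$ has $|\mathrm{TCl}(a)|^M\leq\kappa$, the membership relation on $\mathrm{TCl}(a)$ can be coded by a subset $A\subseteq\kappa$ (via a pairing of $\kappa\times\kappa$ with $\kappa$), and subsets of $\kappa$ \emph{are} correctly represented by their initial-segment functions $f_A(\xi)=A\cap\xi$, by the tail argument you have in mind. One then works inside $N$ with a set $\mathcal C$ of unique codes for the elements of $H_{\kappa^+}^N$ together with a well-order $\mathcal W$ of $\mathcal C$, and pulls $\mathcal W$ back to $M$ through the coding relation ``$A$ is the code for $a$''. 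The extra work in the $\Delta_1$-complexity computation is precisely to express ``$A$ is a code'' and ``$A$ codes $a$'' in both a $\Sigma_1$ and a $\Pi_1$ way (the latter using uniqueness of codes and of transitive collapses), which is what your proposal's direct rank-approximation shortcut was meant to avoid but cannot.
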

\begin{proof}
Let $\la N,\bar{\in}\ra$ be the possibly ill-founded ultrapower by $U$. Since $U$ is weakly amenable, we have $M=H_{\kappa^+}^N$ and $(\kappa^+)^N=\Ord^M$ by Lemma~\ref{prop:wellFoundedPartOfUltrapowerWA}.
Note that every set $a\in M$ can be coded in $M$ by a subset $A$ of $\kappa$. The code $A$ codes a subset of $\kappa\times\kappa$, which in turn, codes the membership relation on $\mathrm{TCl}(a)$.
While a set $a$ can have many different codes, $N$ has a set $\mathcal C$ consisting of a unique code for every set in $M$ by elementarity, since this holds true in $M$ of every $H_\nu$.
In fact, $N$ has a membership relation $\mathcal E$ for elements of this set as well, making $\la \mathcal C,\mathcal E\ra$ isomorphic to $\la M,\in\ra$. Let $[C]_U$ be the equivalence class representing the set $\mathcal C$ of $N$.

Let's consider the complexity of a series of statements in the structure $\la M,\in,U\ra$.
First observe that any $A\subseteq\kappa$ from $M$ is represented in the ultrapower $N$ by the equivalence class of the function $f_A$ such that $f_A(\xi)=A\cap \xi$. To verify this, it suffices to check that $[f_A]_U\cap \xi=A\cap\xi$ for every $\xi<\kappa$. Since $\xi$ and $A\cap \xi$ are in $V_\kappa$, they are represented by the constant functions $[c_\xi]_U$ and $[c_{A\cap\xi}]_U$. So we need to check that $$[f_A]_U\cap [c_\xi]_U=[c_{A\cap \xi}]_U$$ holds in the ultrapower $N$.
By \Los' theorem, this holds if and only if $$\{\alpha<\kappa\mid f_A(\alpha)\cap \xi=A\cap \xi\}\in U,$$ but this is certainly true since it holds on a tail. Let ``$A$ is a code" be the assertion that $A$ is an element of $\mathcal C$ in the ultrapower $N$. The complexity of the statement ``$A$ is a code" in the structure $\la M,\in,U\ra$ is $\Delta_1$. The $\Sigma_1$-version says that there exists a function $f$ such that $f(\xi)=A\cap \xi$ and there exists a set $\{\alpha<\kappa\mid f(\xi)\in C(\xi)\}$ and this set is in $U$, and the $\Pi_1$-version says that for every function $f$ such that $f(\xi)=A\cap \xi$ and for every set equal to $\{\alpha<\kappa\mid f(\xi)\in C(\xi)\}$, the set is in $U$.

Next, let ``$A$ is the code for $a$" be the conjunction of assertions that ``$A$ is a code" and that $A$ codes $a$ as explained above. The assertion that $A$ codes $a$ is actually $\Delta_1$ in the structure $\la M,\in\ra$ without the ultrafilter.  The $\Sigma_1$-assertion is that there are sets $\bar a$ and $\pi$ such that $\bar a=\mathrm{TCl}(a)$ and $\pi$ is an isomorphism between $\bar a$ and $A$. Note that $\bar a=\mathrm{TCl}(a)$ is $\Delta_1$. The $\Pi_1$-assertion is that for every set $\bar a$, $B$, and $\pi$, if $\bar a=\text{TCl}(a)$ and $\pi$ is an isomorphism between $\bar a$ and $B$, then $A=B$.

Finally, let $[W]_U$ represent the equivalence class of a well-order $\mathcal W$ of $\mathcal C$ in the ultrapower $N$. Given $a$ and $b$ in $M$, we define that $a\lhd_U b$ whenever there are codes $A$ and $B$ of $a$ and $b$ respectively such that $A$ is below $B$ according to $\mathcal W$. The assertion that $A$ is below $B$ according to $\mathcal W$ translates to saying that $[f_A]_U$ is below $[f_B]_U$ according to $[W]_U$. With our preliminary analysis this is clearly a $\Delta_1$-assertion in the language with $U$, using \Los' theorem.
\end{proof}

If the $\GCH$ holds below $\kappa$, or just $\{\alpha<\kappa\mid M\models 2^\alpha=\alpha^+\}\in U$, then the ultrapower $N$ has a well-order of $M$ of order-type $\kappa^+$, which must then be an actual well-order as $\kappa^{+N}$ is well-founded.
If the $M$-ultrafilter $U$ is good, and hence $N$ is well-founded, then its well-order of $M$ is an actual well-order. Otherwise, it is possible that $N$'s well-order of $M$ is ill-founded externally.

Recall that a \emph{choice function} for a binary relation $R$ on a set $X$ is a function $F\colon X\rightarrow X$ with $(x,F(x))\in R$ for all $(x,y)\in R$. The next lemma shows that our typical structures $\la M,\in, U\ra$ have a $\Delta_1$-definable choice function for every relation $R$ on $M$ that exists in the ultrapower of $M$ by $U$.

\begin{lemma}
\label{Delta1ClassChoice}
Suppose that $M$ is a simple weak $\kappa$-model and $U$ is an $M$-ultrafilter such that $\langle M,\in,U\rangle$ is
weakly amenable. Then any binary relation $R$ on $M$ that exists in the ultrapower of $M$ by $U$ admits a choice function that is $\Delta_1$-definable in $\langle M,\in,U\rangle$.
\end{lemma}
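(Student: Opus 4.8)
The plan is to let the ultrapower $N=\Ult(M,U)$ construct the choice function internally and then transfer its graph back down to $\langle M,\in,U\rangle$ using the coding apparatus from the proof of Lemma~\ref{lem:definableWellOrder}. Write $j\colon M\to N$ for the ultrapower map. By Lemma~\ref{prop:wellFoundedPartOfUltrapowerWA} we have $M=H_{\kappa^+}^N$ and $(\kappa^+)^N=\Ord^M$, and by \Los' theorem $N\models\ZFC^-$. Moreover, by the proof of Lemma~\ref{lem:definableWellOrder}, $N$ has the set $\mathcal C$ of canonical codes of all elements of $M$ together with a membership relation $\mathcal E$ making $\la\mathcal C,\mathcal E\ra\cong\la M,\in\ra$, and an element $\mathcal W\in N$ (represented by some $[W]_U$) that well-orders $\mathcal C$, i.e. well-orders $M=H_{\kappa^+}^N$. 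The given relation $R$ is an element of $N$.

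First I would build the choice function inside $N$. Working in $N$, let $F^N$ be the partial function sending $x$ to the $\mathcal W$-least $y\in H_{\kappa^+}$ with $(x,y)\in R$. Its graph is a separation from the set $R$ (with parameter $\mathcal W$ and the formula ``$y\in H_{\kappa^+}$''), so $\mathrm{graph}(F^N)\in N$ and its domain $D\in N$; restricting values to $H_{\kappa^+}=M$ guarantees that $F^N$ takes values in $M$. The point to check is that $D$ agrees externally with $\dom(R)$ on $M$: for $x\in M$, if $N\models\exists y\in H_{\kappa^+}\,(x,y)\in R$ then the witness $y$ lies in $H_{\kappa^+}^N=M$, so $(x,y)$ is a genuine pair and really belongs to $R$; conversely an external witness $y\in M$ works in $N$. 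Hence for every $x\in M$ with $x\in\dom(R)$ we get $x\in D$, $F^N(x)\in M$, and $(x,F^N(x))\in R$. I then set $F(x)=F^N(x)$ for $x\in D$ and $F(x)=x$ otherwise; this is a choice function $F\colon M\to M$ for $R$.

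It remains to see that the graph of $F$ is $\Delta_1$-definable over $\langle M,\in,U\rangle$, and here I reuse the coding analysis from the proof of Lemma~\ref{lem:definableWellOrder}: each $w\in M$ has a unique canonical code $A_w\subseteq\kappa$ in $\mathcal C$, the predicate ``$A_w$ is the code of $w$'' is $\Delta_1$, and $A_w=[f_{A_w}]_U$ for $f_{A_w}(\xi)=A_w\cap\xi$. Given any $z\in N$ I transfer it, inside $N$, to the corresponding relation $\hat z$ on codes and choose $\hat g\in M$ with $[\hat g]_U=\hat z$. By \Los' theorem, for $x,y\in M$ the statement ``$(x,y)\in z$'' (or ``$x\in z$'' when $z$ is a set) is equivalent to a condition of the form ``$\{\alpha<\kappa\mid(A_x\cap\alpha,A_y\cap\alpha)\in\hat g(\alpha)\}\in U$''; quantifying existentially and universally over the unique codes yields a $\Sigma_1$ and a $\Pi_1$ form exactly as in Lemma~\ref{lem:definableWellOrder}, so this is $\Delta_1$ with parameter $\hat g$. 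Applying this with $z=\mathrm{graph}(F^N)$ and with $z=D$ shows that ``$x\in D$'' and ``$(x,y)\in\mathrm{graph}(F^N)$'' are $\Delta_1$, whence ``$F(x)=y$'' $\Longleftrightarrow$ ``$[\,x\in D\wedge(x,y)\in\mathrm{graph}(F^N)\,]\vee[\,x\notin D\wedge y=x\,]$'' is $\Delta_1$, as required.

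The step I expect to be the main obstacle is keeping the definition at the $\Delta_1$ level. The naive external definition, taking $F(x)$ to be the $\lhd_U$-least $y$ with $(x,y)\in R$, produces a matrix of the shape $\Sigma_1\wedge\Pi_1$ --- the minimality clause ``$\forall z\,(z\lhd_U y\to(x,z)\notin R)$'' is only $\Pi_1$ --- which is not visibly $\Delta_1$. The resolution is precisely to push the minimization and the computation of the domain into $N$, where $\ZFC^-$ furnishes the separation that realizes the choice function as a single element $\mathrm{graph}(F^N)\in N$; once it is an element of $N$, its graph is automatically $\Delta_1$ over $\langle M,\in,U\rangle$ by the ultrapower coding analysis. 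A secondary subtlety, ensuring that the values of $F$ land in $M$ rather than in the ill-founded part of $N$, is handled by the explicit restriction to $H_{\kappa^+}$ in the definition of $F^N$.
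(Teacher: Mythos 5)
Your proposal is correct and follows essentially the same route as the paper's proof: both arguments have the ultrapower $N$, as a model of $\ZFC^-$ with a well-order of $H_{\kappa^+}^N=M$, produce the choice function internally as a single element of $N$, and then pull its graph back to a $\Delta_1$ definition over $\langle M,\in,U\rangle$ via the canonical codes, the $\Delta_1$-ness of the coding relation, and \Los' theorem from Lemma~\ref{lem:definableWellOrder}. The only differences are cosmetic: the paper takes the choice function as a map $\bar F$ on the set of codes $\mathcal C$, whereas you define it on $H_{\kappa^+}^N$ directly and additionally spell out the domain-agreement and totality details.
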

\begin{proof}
Let $N$ be the, possibly ill-founded, ultrapower of $M$ by $U$. Let $R\in N$ be a binary relation on $M$, and observe that $N$ must have choice functions for $R$. Let $\bar F\in M$ be a function on the class of codes $\mathcal C$ corresponding to a choice function on $R$. Then we can define a choice function $F$ on $R$ by $(x,y)\in F$ whenever $A_x$ is a code for $x$, $A_y$ is a code for $y$ and $\bar F(A_x)=A_y$. Letting $[f]_U$ represent the equivalence class of $\bar F$, we use similar arguments as above to show that $F$ is $\Delta_1$ definable over $\la M,\in, U\ra$.
\end{proof}

A similar argument shows that the ultrafilter provides a definable truth predicate.

\begin{lemma}
\label{truth predicate from ultrafilter}
Suppose that $M$ is a simple weak $\kappa$-model and $U$ is a weakly amenable $M$-ultrafilter.
Then $\la M,\in,U\ra$ has a $\Delta_1$-definable truth predicate $\Tr_M(\varphi,x)$ for $\la M,\in\ra$.
\end{lemma}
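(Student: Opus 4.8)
The plan is to mimic the proof of Lemma~\ref{lem:definableWellOrder}, transporting a genuine satisfaction predicate living in the ultrapower back down to $\la M,\in,U\ra$ through the coding machinery established there. Let $\la N,\bar{\in}\ra$ be the (possibly ill-founded) ultrapower of $M$ by $U$. By weak amenability and Lemma~\ref{prop:wellFoundedPartOfUltrapowerWA}, we have $M=H_{\kappa^+}^N$ and $(\kappa^+)^N=\Ord^M$, and $N$ carries the set $\mathcal C$ of canonical codes together with the membership relation $\mathcal E$, so that the decoding map $\pi$ is an (external) isomorphism $\la\mathcal C,\mathcal E\ra\cong\la M,\in\ra$. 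Recall from Lemma~\ref{lem:definableWellOrder} that ``$A$ is the code for $a$'' is $\Delta_1$ over $\la M,\in,U\ra$ and that every $A\subseteq\kappa$ in $M$ is represented in $N$ by $[f_A]_U$, where $f_A(\xi)=A\cap\xi$.

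Since $M\models\ZFC^-$, \Los' theorem gives $N\models\ZFC^-$, and as $\la\mathcal C,\mathcal E\ra$ is a \emph{set} structure in $N$, the model $N$ contains a Tarskian satisfaction predicate $\mathcal T$ for $\la\mathcal C,\mathcal E\ra$, built by the usual recursion on formulas (the existence of such a satisfaction set for a set-sized structure is provable in $\ZFC^-$). Fix $\bar T\in M$ representing $\mathcal T$, so that $[\bar T]_U=\mathcal T$. For a \Godel\ code $\varphi$ of an $\in$-formula (an element of $\omega\subseteq V_\kappa\subseteq M$) and an assignment $x=\la a_1,\dots,a_k\ra$ from $M$, I define
\[
\Tr_M(\varphi,x)\iff\text{there are codes }A_1,\dots,A_k\text{ for }a_1,\dots,a_k\text{ with }\mathcal T(\varphi,\la A_1,\dots,A_k\ra).
\]
The inner clause ``$\mathcal T(\varphi,\la A_1,\dots,A_k\ra)$ holds in $N$'' translates, via \Los' theorem, into ``$\{\alpha<\kappa\mid(\varphi,\la f_{A_1}(\alpha),\dots,f_{A_k}(\alpha)\ra)\in\bar T(\alpha)\}\in U$'', which is an assertion about membership in $U$ of a set definable from the $f_{A_i}$ and $\bar T$. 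Combining this with the $\Delta_1$ coding facts exactly as in Lemma~\ref{lem:definableWellOrder}---pairing the $\Sigma_1$ and $\Pi_1$ forms of ``$A_i$ is the code for $a_i$'' with the $\Sigma_1/\Pi_1$ renderings of the membership-in-$U$ clause---shows that $\Tr_M$ is $\Delta_1$-definable over $\la M,\in,U\ra$.

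It remains to check that $\Tr_M$ genuinely computes satisfaction for $\la M,\in\ra$, and this is the one delicate point. Since $\pi$ is an isomorphism, it suffices that $\mathcal T$ correctly decides, for standard $\varphi$, whether $\la\mathcal C,\mathcal E\ra\models\varphi$; but a priori $\mathcal T$ is only $N$'s \emph{internal} satisfaction predicate, and $\omega^N$ may be nonstandard when $N$ is ill-founded. The resolution is that every standard \Godel\ code lies in $V_\kappa\subseteq M$, hence in the well-founded part of $N$, and for a fixed standard formula the Tarskian recursion is a finite, absolute computation on well-founded objects; consequently $N$'s verdict $\mathcal T(\varphi,\vec A)$ agrees with the true value of $\la\mathcal C,\mathcal E\ra\models\varphi[\vec A]$, and thus with $\la M,\in\ra\models\varphi[\pi(\vec A)]$. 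The remaining verifications are routine adaptations of the complexity bookkeeping already carried out for the definable well-order.
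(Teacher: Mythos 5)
Your proposal is correct and follows essentially the same route as the paper's proof: pull $N$'s internal satisfaction predicate $\mathcal T$ for the coded copy $\la\mathcal C,\mathcal E\ra$ of $\la M,\in\ra$ back through the $\Delta_1$ coding machinery of Lemma~\ref{lem:definableWellOrder}, using \Los' theorem to get the $\Delta_1$ complexity bound. Your extra verification that $\mathcal T$ is correct on standard formulas addresses a point the paper leaves implicit; note only that the worry about a nonstandard $\omega^N$ never actually arises, since by Lemma~\ref{prop:wellFoundedPartOfUltrapowerWA} we have $\omega^N\in M=H_{\kappa^+}^N$, which lies in the well-founded part of $N$, so $N$'s naturals (hence its formulas) are standard, and the agreement of internal with external satisfaction is then the usual induction on formula complexity rather than a ``finite computation''.
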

\begin{proof}
Let $N$ be the possibly ill-founded ultrapower of $M$ by $U$. In the notation of the proof of Proposition~\ref{lem:definableWellOrder}, the model $N$ has a truth predicate $\mathcal T$ for the structure $\la \mathcal C,\mathcal E\ra$. Thus, we have $M\models\varphi(a)$ whenever $A$ is the code for $a$ and $\la \varphi,A\ra\in \mathcal T$. Let $[T]_U$ represent $\mathcal T$ in the ultrapower $N$. Then $\la\varphi,A\ra\in \mathcal T$ if and only if
$\{\alpha<\kappa\mid \la \varphi,f_A(\alpha)\ra\in T(\alpha)\}\in U$.
\end{proof}

\begin{lemma}
\label{truth pred for simple weak}
Suppose that $M\models\ZFC^-$, $U$ is an amenable predicate on $M$.
Then for every $n<\omega$, $\la M,\in,U\ra$ has a $\Sigma_n$-definable truth predicate $\Tr^U_n(\varphi,x)$ for $\Sigma_n$-formulas in the language with $U$.
\end{lemma}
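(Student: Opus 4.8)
The plan is to carry out the standard \Levy\ construction of partial truth predicates, adapted so that the extra predicate $U$ is absorbed into the $\Delta_0$-matrix of formulas, and to run an induction on $n$. The only new ingredient beyond the pure set-theoretic case is the base case, where amenability of $U$ is used to make $\Delta_0$-satisfaction in the language with $U$ definable over $\la M,\in,U\ra$. Throughout I use that $M\models\ZFC^-$ has a $\Delta_1$-definable internal (set) satisfaction relation $\mathrm{Sat}$ for set-sized structures in a finite language, in particular for the language with a unary predicate symbol interpreting $U$, and that $\ZFC^-$ proves every set has a transitive closure $\mathrm{TCl}$, which is itself $\Delta_1$-definable.

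\emph{Base case ($\Delta_0$-formulas).} Given $x\in M$, let $a=\mathrm{TCl}(\{x\})$ and $u=U\cap a$; by amenability $u\in M$. Since $a$ is transitive and contains the parameters and every quantifier of a $\Delta_0$-formula $\varphi$ in the language $\{\in,U\}$ is bounded, satisfaction of $\varphi(x)$ is absolute between $\la M,\in,U\ra$ and the set structure $\la a,\in,u\ra$; in particular each atomic subformula $U(z)$ is decided correctly because $z\in a$ forces $z\in U\Leftrightarrow z\in u$. Hence I set
\[
\Tr_0^U(\varphi,x)\ :\Longleftrightarrow\ \exists a\,\exists u\,\big(a=\mathrm{TCl}(\{x\})\wedge u=U\cap a\wedge \mathrm{Sat}(\la a,\in,u\ra,\varphi,x)\big).
\]
The clause $u=U\cap a$ is $\Delta_0$ in the language with $U$, and the remaining conjuncts are $\Delta_1$ over $\la M,\in\ra$; since $a$ and $u$ are uniquely determined by $x$, replacing the leading existentials by universals (in front of an implication) yields a $\Pi_1$-definition of the same relation. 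Thus $\Tr_0^U$ is $\Delta_1$-definable over $\la M,\in,U\ra$, which serves as the base of the induction.

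\emph{Inductive step.} Assume $\Tr_n^U$ has been defined. Put $\Tr_{\Pi_n}^U(\psi,x):\Longleftrightarrow \neg\,\Tr_n^U(\dot\neg\psi,x)$, where $\dot\neg$ is the primitive recursive operation sending a $\Pi_n$-formula to the logically equivalent $\Sigma_n$-form of its negation; this is $\Pi_n$-definable. Every $\Sigma_{n+1}$-formula is equivalent to one of the form $\exists\vec y\,\psi$ with $\psi\in\Pi_n$, and I define
\[
\Tr_{n+1}^U(\exists\vec y\,\psi,\,x)\ :\Longleftrightarrow\ \exists\vec y\,\Tr_{\Pi_n}^U(\psi,\la\vec y,x\ra),
\]
coding the block $\vec y$ as a single set. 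Prepending an existential quantifier to a $\Pi_n$-predicate yields a $\Sigma_{n+1}$-predicate, so $\Tr_{n+1}^U$ has the required complexity, and its correctness is immediate from the Tarskian clauses together with the induction hypothesis. The entire construction is uniform in the \Godel\ code of $\varphi$, so each $\Tr_n^U$ is a single formula working for all $\Sigma_n$-formulas $\varphi$ in the language with $U$.

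The main obstacle is the base case: one must verify that restricting $U$ to $\mathrm{TCl}(\{x\})$ genuinely captures $\Delta_0$-truth in $\la M,\in,U\ra$ (the absoluteness argument above) and that the internal set-satisfaction relation of $M$, available since $M\models\ZFC^-$, correctly computes it. The inductive step is routine quantifier counting. One minor point to flag is the mismatch at $n=0$: $\Delta_0$-truth is only $\Delta_1$, not $\Delta_0$, so the complexity is exactly $\Sigma_n$ for $n\ge 1$, with the $n=0$ predicate being $\Delta_1$ (hence in particular $\Sigma_1$), which is all that is needed subsequently.
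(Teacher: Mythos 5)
Your proof is correct and follows essentially the same route as the paper's: amenability gives $U\cap\mathrm{TCl}(\cdot)\in M$, which makes $\Delta_0$-truth in the language with $U$ definable in a $\Delta_1$ fashion (checking $u=U\cap a$ is $\Delta_0$ with the predicate), and then one inducts on the finitely many quantifier alternations. Your write-up is simply more explicit than the paper's about the negation/existential bookkeeping in the inductive step and about the $n=0$ complexity being $\Delta_1$ rather than $\Sigma_0$, a point the paper's proof also implicitly accepts.
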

\begin{proof}
Given a $\Delta_0$-formula $\varphi(x,a)$ in the language with $U$, observe that we only need\break $U_a:=U\cap \mathrm{TCl}(a)$ to evaluate it. By the amenability of $U$, we have that $U_a\in M$ for every $a\in M$. Thus, a truth predicate for a $\Delta_0$-formula $\varphi(x,a)$ can be defined as usual in a $\Delta_1$ fashion with $U_a$ as a parameter. This means we should require that the set $U_a$ is a part of the witnessing sequence for truth and the extra step in the definition needs to check that $U_a=U\cap \mathrm{TCl}(a)$, but that is a $\Delta_0$-assertion. Once we have truth for $\Delta_0$-formulas, the rest follows by induction on complexity of formulas because we have only finitely many quantifiers.
\end{proof}

We will mainly be interested in properties of models $\la M,\in, U\ra$ of the following fragments of $\ZFC^-$:

\begin{itemize}
\item
$\ZFC^-_n$ denotes the theory where the collection and separation schemes are restricted to $\Sigma_n$-formulas.

\item
$\KP_n$ denotes the theory where the collection scheme is restricted to $\Sigma_n$-formulas and the separation scheme is restricted to $\Delta_0$-formulas.
\end{itemize}

Note that $\ZFC^-_0$, $\KP_0$ and $\KP_1$ are equivalent. The theory $\KP_n$ further implies fragments of separation and recursion.
Parts of the next folklore result are stated without proof in \cite[Theorem 1.5]{kranakis1982reflection}.

\begin{lemma}
\label{KP implies sep}
The theory $\KP_n$ implies the schemes of $\Sigma_n$-replacement, $\Delta_n$-separation and $\Sigma_n$-recursion along the ordinals.
\end{lemma}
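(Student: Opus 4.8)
The plan is to prove the three schemes in the order $\Delta_n$-separation, then $\Sigma_n$-replacement, then $\Sigma_n$-recursion, since each step uses the previous ones. Throughout I will freely use the routine fact that, granted $\Sigma_m$-collection, the \Levy{} classes $\Sigma_m$ and $\Pi_m$ are closed under bounded quantification; this is itself a standard consequence of collection and is available at every level $m\leq n$ inside $\KP_n$. The heart of the matter is $\Delta_n$-separation, which I would establish by induction on $n$. The base case $n=0$ is the given $\Delta_0$-separation. For the inductive step, observe first that since $\Sigma_{n-1}\of\Sigma_n$ we have $\KP_n\vdash\KP_{n-1}$, so the induction hypothesis already supplies $\Delta_{n-1}$-separation inside $\KP_n$.

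Now fix a set $a$ and a $\Delta_n$ predicate $\psi$, written as $\psi(x)\leftrightarrow\exists u\,\sigma(x,u)$ with $\sigma\in\Pi_{n-1}$ and $\neg\psi(x)\leftrightarrow\exists v\,\tau(x,v)$ with $\tau\in\Pi_{n-1}$. For every $x\in a$ there is a witness $w$ with $\sigma(x,w)\vee\tau(x,w)$, and this matrix is $\Pi_{n-1}$, hence $\Sigma_n$; so $\Sigma_n$-collection yields a set $b$ with $\forall x\in a\,\exists w\in b\,(\sigma(x,w)\vee\tau(x,w))$. The crucial point is that passing to $b$ lowers the complexity of $\psi$ on $a$: for $x\in a$ one has $\psi(x)\leftrightarrow\exists w\in b\,\sigma(x,w)$ (since if $\psi(x)$ fails the collected witness must satisfy $\tau$, not $\sigma$) and dually $\psi(x)\leftrightarrow\forall w\in b\,\neg\tau(x,w)$. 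After absorbing the bounded quantifiers, the former is $\Pi_{n-1}$ and the latter $\Sigma_{n-1}$, so $\psi\restrict a$ is provably $\Delta_{n-1}$ with parameter $b$, and $\Delta_{n-1}$-separation produces $\{x\in a\mid\psi(x)\}$.

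With $\Delta_n$-separation in hand, $\Sigma_n$-replacement is immediate. Given a $\Sigma_n$ relation $\varphi$ that is functional on a set $a$, apply $\Sigma_n$-collection to obtain a set $b$ with $\forall x\in a\,\exists y\in b\,\varphi(x,y)$; the image is then $\{y\in b\mid\exists x\in a\,\varphi(x,y)\}$. Membership in the image is $\Sigma_n$ (a bounded existential over the $\Sigma_n$ formula $\varphi$), while non-membership is $\forall x\in a\,\neg\varphi(x,y)$, a bounded universal over $\Pi_n$ and hence $\Pi_n$. Thus the image is a $\Delta_n$ subset of $b$ and exists by $\Delta_n$-separation.

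Finally, $\Sigma_n$-recursion along the ordinals follows by the usual recursion-theorem argument adapted to this level. For a total $\Sigma_n$-definable class function $G$, the graph $y=G(z)$ is $\Delta_n$ by functionality, so the predicate ``$f$ is an approximation of length $\alpha$'', i.e.\ $f$ is a function with domain $\alpha$ satisfying $f(\beta)=G(f\restrict\beta)$ for all $\beta<\alpha$, is $\Delta_n$, and ``there is such an $f$'' is $\Sigma_n$. Uniqueness of approximations follows from foundation together with $\Delta_0$-separation (the set of disagreement points, if nonempty, would have a least element contradicting the recursion equation), while existence for every $\alpha$ follows by ordinal induction: one extends at successors using totality of $G$, and at limits $\lambda$ uses $\Sigma_n$-replacement to gather the (unique) approximations below $\lambda$ into a set and takes their union. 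The resulting $F$ is $\Sigma_n$-definable with each $F\restrict\alpha$ a set. I expect the main obstacle to be precisely the inductive step for $\Delta_n$-separation, namely verifying that collecting all witnesses into the single set $b$ genuinely drops the \Levy{} complexity of $\psi$ from $\Delta_n$ to $\Delta_{n-1}$ relative to $b$, as this is exactly what makes the induction close; the remaining steps reduce to standard bounded-quantifier bookkeeping and the classical recursion theorem.
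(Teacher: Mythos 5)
Your inductive proof of $\Delta_n$-separation is correct, and it is a genuinely different decomposition from the paper's: the paper first proves full $\Sigma_{n-1}$-separation (by a closely related witness-collecting trick, using the disjunction $\varphi(x,y)\vee\forall z\,\neg\varphi(x,z)$), then derives $\Sigma_n$-replacement, and only then gets $\Delta_n$-separation \emph{from} replacement, via the range of the $\Sigma_n$-definable function $F(x)=x$ if $\varphi(x)$, $F(x)=y_0$ if $\neg\varphi(x)$. However, your $\Sigma_n$-replacement step has a genuine gap. You claim the image $\{y\in b\mid\exists x\in a\,\varphi(x,y)\}$ is $\Delta_n$ because membership is $\Sigma_n$ while non-membership is $\Pi_n$. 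Those two facts are one and the same statement (each is the negation of the other) and hold of \emph{every} $\Sigma_n$ predicate; they do not give $\Delta_n$-ness. For your own formulation of $\Delta_n$-separation you need non-membership to be $\Sigma_n$ (equivalently, membership to be $\Pi_n$), and that is exactly what is missing. The point is not pedantic: in $\KP_n$, full $\Sigma_n$-separation is genuinely unavailable (keeping $\KP_n$ strictly weaker than $\ZFC^-_n$ is essential throughout the paper), so "the image is $\Sigma_n$-definable over $b$" alone does not produce a set.

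The repair uses functionality of $\varphi$ on $a$, the hypothesis you never invoke in this step: since for each $x\in a$ there is exactly one $y$ with $\varphi(x,y)$, for $y\in b$ one has that $y$ is \emph{not} in the image if and only if $\forall x\in a\,\exists y'\,\bigl(\varphi(x,y')\wedge y'\neq y\bigr)$, a bounded universal over a $\Sigma_n$ formula, hence $\Sigma_n$ by $\Sigma_n$-collection. With this, the image really is $\Delta_n$ over $b$ and your appeal to $\Delta_n$-separation goes through. (You make precisely this move later, when you observe that the graph of $G$ is $\Delta_n$ "by functionality" in the recursion step; the same move is needed here.) The paper's alternative repair lowers the complexity instead: writing $\varphi(x,y)$ as $\exists z\,\psi(x,y,z)$ with $\psi\in\Pi_{n-1}$, it collects the witnesses $z$ into a set $C$ and, using functionality, identifies $\ran(F)=\{y\in b\mid\exists x\in a\,\exists z\in C\,\psi(x,y,z)\}$, which is $\Pi_{n-1}$ by $\Sigma_{n-1}$-collection and so falls under the already-established $\Sigma_{n-1}$-separation. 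The remainder of your outline ($\Sigma_n$-recursion via unique approximations, with $\Sigma_n$-replacement at limit stages) matches the paper's intended argument.
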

\begin{proof}
The claims for $n=0$ and $n=1$ are identical, since $\KP_0=\KP_1$.
Their proofs are easy variants of the following argument.

For $n>1$, suppose that $M$ is a model of $\KP_n$.
We work in $M$.
We have $\Sigma_{n-2}$-separation by the induction hypothesis. We first show that $\Sigma_{n-1}$-separation holds.
Suppose that $A$ is a set and $\varphi(x,y)$ is a $\Pi_{n-2}$-formula. We need to show that the set $$ B:= \{ x\in A\mid \exists y\, \varphi(x,y)  \} $$ is in $M$. Consider the $\Pi_{n-1}$-formula $\bar\varphi(x,y):=\varphi(x,y)\vee \forall z\,\neg\varphi(x,z)$ and observe that $$M\models\forall x\in A\,\exists y\,\bar\varphi(x,y).$$ Thus, by $\Sigma_n$-collection, there is a set $C$ such that $$M\models\forall x\in A\,\exists y\in C\,\bar\varphi(x,y).$$ Thus, also, $M\models\forall x\in A\exists y\in C\,\varphi(x,y)$. It follows that $$B=\{x\in A\mid \exists y\in C\,\varphi(x,y)\}.$$ The formula $\exists y\in C\,\varphi(x,y)$ is a equivalent to a $\Pi_{n-2}$-formula by $\Sigma_{n-2}$-collection, and so we can use $\Sigma_{n-2}$-separation to conclude that $B$ exists.

For $\Sigma_n$-replacement, suppose that $A$ is a set and $\varphi$ is a $\Pi_{n-1}$-formula such that $\exists z\ \varphi(x,y,z)$ defines a function $F\colon A\rightarrow M$.
By $\Sigma_n$-collection, there exists a set $B$ such that for all $x\in A$, there is some $y\in B$ with $\exists z\ \varphi(x,y,z)$.
Again by $\Sigma_n$-collection, there exists a set $C\in M$ such that for all $x\in A$, there is some $z\in C$ with $\exists y\ \varphi(x,y,z)$.
Since $y$ is uniquely determined by $x$, $$\ran(F)=\{ y\in B \mid \exists z\in C\ \varphi(x,y,z) \}.$$
By $\Sigma_{n-1}$-collection, the formula $\exists z\in C\ \varphi(x,y,z)$ is equivalent to a $\Pi_{n-1}$-formula, and so $\ran(F)$ is a set by $\Sigma_{n-1}$-separation.

For $\Delta_n$-separation, suppose that $A$ is a set and $\varphi(x)$, $\psi(x)$ are $\Pi_n$-formulas such that $$\varphi(x) \leftrightarrow \neg \psi(x)$$ holds for all $x\in A$.
We need to show that $B:=\{x\in A \mid \varphi(x)\}$ is a set.
Assume there exists some $y\in A$ with $\varphi(y)$.
Since the function $F\colon A\rightarrow A$ defined by letting $F(x)=x$ if $\varphi(x)$ holds and $F(x)=y$ if $\psi(x)$ holds is $\Sigma_n$-definable, $B=\ran(F)$ is a set by $\Sigma_n$-replacement.

$\Sigma_n$-recursion along ordinals now follows just like $\Sigma_1$-recursion follows from $\KP$, using $\Sigma_n$-replacement for the existence of the recursion at limit stages.
In fact, the proof works for recursion along $\Delta_n$-definable strongly well-founded relations.
\end{proof}
Thus, in particular, $\KP_{n+1}$ implies $\ZFC_n^-$, which in turn implies $\KP_n$.

The fragments $\ZFC_n^-$ and $\KP_n$ have their own advantages for different situations. The theory $\ZFC^-_n$ implies a fragment of \Los' theorem for $\Sigma_n$-formulas. To state this, we first fix some notation.
Suppose that $M$ is a weak $\kappa$-model and $U$ is a weakly amenable $M$-ultrafilter. Let $N$ be the ultrapower of $M$ by $U$ and let $$W=\{[f]_U\subseteq [c_\kappa]_U\mid \{\alpha<\kappa\mid f(\alpha)\in U\}\in U\}$$ be the $N$-ultrafilter on $[c_\kappa]_U$ derived from $U$. We know that \Los' theorem holds for the ultrapower in the language $\in$. We will say that \Los' theorem \emph{holds} for an assertion $\varphi(x_1,\ldots,x_n)$ in the language with a predicate for the ultrafilter if for every sequence $\vec f=\la f_1,\ldots,f_n\ra$ with $f_i:\kappa\to M$, the set

 $$A_{\vec f,\varphi}=\{ \alpha<\kappa\mid \la M,\in,U\ra\models\varphi(f_1(\alpha),\ldots,f_n(\alpha))\}\in M$$ and $\la N,\in,W\ra\models\varphi([f_1]_U,\ldots,[f_n]_U)$ if and only if $A_{\vec f,\varphi}\in U$.

\begin{lemma}
\label{Los theorem}
Suppose $M$ is a weak $\kappa$-model and $U$ is an $M$-ultrafilter. If $\la M,\in,U\ra\models\ZFC^-_n$, then \Los' theorem holds for $\Sigma_n$ and $\Pi_n$-assertions in the language with a predicate for the ultrafilter.
\end{lemma}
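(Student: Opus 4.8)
The plan is to prove both directions of \Los' theorem together with the auxiliary claim that each set $A_{\vec f,\varphi}$ belongs to $M$, by induction on the build-up of $\varphi$ through the $\Sigma_n/\Pi_n$ fragment of the language with the predicate $U$. At each formula I would establish simultaneously that (a) $A_{\vec f,\varphi}\in M$ and (b) $\la N,\in,W\ra\models\varphi([f_1]_U,\dots,[f_m]_U)$ if and only if $A_{\vec f,\varphi}\in U$. Treating negations by the maximality of the ultrafilter lets me run the argument for $\Sigma_k$ and $\Pi_k$ in tandem as $k$ climbs to $n$.

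For the base case of atomic formulas, claim (b) for the $\in$- and $=$-atomic formulas is just ordinary \Los' theorem, which holds by hypothesis, and claim (a) holds because the relevant set is $\Delta_0$-definable over $\la M,\in\ra$. The only genuinely new atomic formula is $x\in U$: for a single function $f$ the defining set is $\{\alpha<\kappa\mid f(\alpha)\in U\}$, which lies in $M$ precisely because $\la M,\in,U\ra$ satisfies $\Delta_0$-separation, available since $\ZFC^-_n$ contains $\Delta_0$-separation and hence makes $U$ weakly amenable (by the equivalence between weak amenability and $\Delta_0$-separation established earlier). Claim (b) for $x\in U$ is then literally the definition of the derived $N$-ultrafilter $W$. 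For Boolean combinations, $A_{\vec f,\varphi\wedge\psi}=A_{\vec f,\varphi}\cap A_{\vec f,\psi}$ is in $M$ by induction and transfers to $U$ because $U$ is a filter, with disjunction dual; for negation I use that $U$ is an \emph{ultra}filter, so $A_{\vec f,\neg\varphi}=\kappa\setminus A_{\vec f,\varphi}$ lies in $U$ iff $A_{\vec f,\varphi}\notin U$. This is exactly the step that couples $\Sigma_k$ with $\Pi_k$ and keeps the induction inside the intended complexity band.

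The quantifier step is the crux. Consider $\psi(\vec x)=\exists y\,\varphi(y,\vec x)$ with $\varphi$ one level lower, so that $\psi$ is $\Sigma_n$ and $\varphi$ stays within range. For claim (a), the set $A_{\vec f,\psi}$ is obtained by a single application of $\Sigma_n$-separation to the $\Sigma_n$-formula $\exists y\,\varphi$ with parameters $\vec f$; this is where $\ZFC^-_n$ rather than mere amenability is needed. For claim (b) the forward direction is easy: a witness $[g]_U$ in $\la N,\in,W\ra$ yields, by the induction hypothesis for $\varphi$, a $U$-large set on which $\varphi(g(\alpha),\vec f(\alpha))$ holds, and that set is contained in $A_{\vec f,\psi}$, which is therefore in $U$. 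The backward direction is the main obstacle and uses the full strength of $\ZFC^-_n$: given $A_{\vec f,\psi}\in U$, I must manufacture a single witnessing function $g\colon\kappa\to M$ inside $M$. First apply $\Sigma_n$-collection to bound the witnesses, obtaining $B\in M$ so that for every $\alpha\in A_{\vec f,\psi}$ some $y\in B$ satisfies $\varphi(y,\vec f(\alpha))$. Then use the $\Delta_1$-definable strong well-order $\lhd_U$ of Lemma~\ref{lem:definableWellOrder} (equivalently the definable choice function of Lemma~\ref{Delta1ClassChoice}) to set $g(\alpha)$ equal to the $\lhd_U$-least such $y\in B$, with a fixed arbitrary value off $A_{\vec f,\psi}$. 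The graph of $g$ is $\Sigma_n$-definable over $\la M,\in,U\ra$ with set-sized domain, so $g\in M$ by the $\Sigma_n$-replacement that $\ZFC^-_n$ supplies via Lemma~\ref{KP implies sep}. By construction $A_{\vec f,\psi}\subseteq\{\alpha<\kappa\mid\varphi(g(\alpha),\vec f(\alpha))\}$, so the latter is $U$-large, and the induction hypothesis for $\varphi$ gives $\la N,\in,W\ra\models\varphi([g]_U,[f_1]_U,\dots,[f_m]_U)$, whence $\la N,\in,W\ra\models\psi$. The universal quantifier follows by rewriting $\forall y\,\varphi$ as $\neg\exists y\,\neg\varphi$ and invoking the negation and existential steps. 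The delicate point to monitor throughout is that every separation, collection, and selection invoked at the level of $\psi$ stays within $\Sigma_n$, which is exactly why the conclusion is confined to the $\Sigma_n/\Pi_n$ fragment.
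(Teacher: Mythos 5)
Your overall architecture---a simultaneous induction maintaining (a) $A_{\vec f,\varphi}\in M$ and (b) the \Los\ equivalence, with negation handled by ultrafilter maximality and the backward existential direction handled by collection plus least-witness selection---is the same as the paper's, but there is a genuine gap at the base of your induction. Your base case covers only atomic formulas (including $x\in U$) and their Boolean combinations; it never treats \emph{bounded} quantifiers. A $\Sigma_n$-assertion in the language with $U$ consists of $n$ alternating blocks of unbounded quantifiers in front of a $\Delta_0$ matrix, and that matrix may contain arbitrarily many nested bounded quantifiers whose scopes mention $U$. Such matrices are neither atomic nor Boolean combinations of atomic formulas, so your induction never reaches them---unless you push them through your unbounded-quantifier step, which breaks the complexity accounting: rewriting each bounded quantifier as an unbounded one turns a $\Delta_0$ matrix with $k$ nested bounded quantifiers into a formula of quantifier rank about $k$, and then the separation and collection instances invoked ``at the level of $\psi$'' exceed $\Sigma_n$ (already for $n=1$ when the matrix has two nested bounded quantifiers around a $U$-atom). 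This is exactly why the paper spends the first half of its proof on a separate induction inside $\Delta_0$: there the bounded existential case $\exists x\in g(\alpha)\,\varphi(x,f(\alpha))$ needs no collection in the language with $U$ at all, since $G=\bigcup_{\alpha<\kappa}g(\alpha)$ is a set in $M$ by replacement in $\la M,\in\ra$ and serves as the bound, while membership of $A_{\la f,g\ra,\varphi}$ in $M$ and the witness function both come from $\Delta_0$-separation together with a set well-order $w_G\in M$ of $G$. Your proof needs this block before the unbounded-quantifier step can begin; it is where weak amenability actually does its work.

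Separately, your witness selection appeals to the strong well-order $\lhd_U$ of Lemma~\ref{lem:definableWellOrder} (equivalently the choice function of Lemma~\ref{Delta1ClassChoice}). Both of those lemmas assume $M$ is \emph{simple}, which the present lemma does not, and $\lhd_U$ is only guaranteed to be a strong well-order for formulas over $\la M,\in\ra$: the paper notes after Lemma~\ref{lem:ZFCnImpliesElemSubstructures} that when the ultrapower is ill-founded---which is allowed here, since $U$ need not be good---$\lhd_U$ can fail to pick least elements for classes defined in the language with $U$. Your restriction of the witnesses to the collecting set $B$ can be made to rescue this (a nonempty subset of $B$ that is an element of $M$ does have a $\lhd_U$-least element, taking the set itself as a parameter), but the detour is both unnecessary and stronger than warranted: since $B\in M$ and $M\models\ZFC^-$ proves every set can be well-ordered, one simply fixes a set well-order $w_B\in M$ of $B$ and takes $w_B$-least witnesses, which is precisely what the paper does with its collecting set $C$ and $w_C$. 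With that substitution, and with the $\Delta_0$ block supplied, your argument coincides with the paper's proof.
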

\begin{proof}
Let's first argue that the extended \Los' theorem is true for all $\Delta_0$-assertions. It is true for atomic formulas by the definition of $W$ and the case of conjunctions is clear. So let's suppose that the assertion is true for some $\Delta_0$-formula $\varphi(x)$ and argue that it is true for $\neg\varphi(x)$. By our assumption, for every function $f:\kappa\to M$ from $M$, the set $$A_{\la f\ra}=\{\alpha<\kappa\mid \la M,\in,U\ra\models \varphi(f(\alpha))\}\in M.$$ Thus, its complement $$A_{\la f\ra,\neg\varphi}=\{\alpha<\kappa\mid \la M,\in,U\ra\models\neg\varphi(f(\alpha))\}$$ is in $M$ as well. Now we have that $A_{\la f\ra,\neg\varphi}\in U$ if and only if $A_{\la f\ra,\varphi}\notin U$ if and only if $\la N,\in,W\ra$ does not satisfy $\varphi([f]_U)$ if and only if $\la N,\in,W\ra\models\neg\varphi([f]_U)$.
So suppose now that the assertion holds for a $\Delta_0$-formula $\varphi(x,y)$. First, observe that for functions $f:\kappa\to M$ and $g:\kappa\to M$ from $M$, the set $$A_{\la f,g\ra,\varphi}=\{\alpha<\kappa\mid \la M,\in,U\ra\models\exists x\in g(\alpha)\,\varphi(x,f(\alpha))\}$$ is in $M$ by $\Delta_0$-separation because $G=\bigcup_{\alpha<\kappa}g(\alpha)$ is a set in $M$. Suppose next that $$\la N,\in,W\ra\models\exists x\in [g]_U\,\varphi(x,[f]_U).$$
Then there is $[h]_U$ such that $$\la N,\in,W\ra\models\varphi([h]_U,[f]_U)\wedge [h]_U\in[g]_U$$ and so by our assumption, the set $$A_{\la f,g,h\ra,\varphi}=\{\alpha<\kappa\mid \varphi(h(\alpha),f(\alpha))\wedge h(\alpha)\in g(\alpha)\}\in U,$$ and hence $A_{\la f,g\ra,\varphi}\in U$. In the other direction, suppose that $A_{\la f,g\ra,\varphi}\in U$. Let $w_G\in M$ be any well-order of $G$. Using $w_G$, we can choose for every $\alpha\in A_{\la f,g\ra,\varphi}$ the $w_G$-least $b\in g(\alpha)$ such that $\la M,\in,U\ra \models\varphi(b,f(\alpha))$ and define $h(\alpha)=b$. The definition of $h$ uses $\Delta_0$-separation in the structure $\la M,\in, U\ra$.

Next, observe that the negation case holds for any formula provided that we have the inductive assumption for that formula, and so it remains to argue the case of the unbounded existential quantifier. So suppose that the inductive assumption holds for a formula $\varphi(x,y)$ of complexity at most $\Pi_{n-1}$. The set $$A_{\la f\ra,\varphi}=\{\alpha<\kappa\mid \la M,\in,U\ra\models\exists x\,\varphi(x,f(\alpha))\}$$ is in $M$ since $\la M,\in,U\ra$ satisfies $\Sigma_n$-separation. If $\la N,\in,W\ra\models\exists x\,\varphi(x,[f]_U)$, then as before there is some $[h]_U$ such that $\la N,\in,W\ra\models\varphi([h]_U,[f]_U)$, so by the inductive assumption $A_{\la f,h\ra,\varphi}\in U$, which in turn implies that $A_{\la f\ra,\exists x\varphi}\in U$. In the other direction, supposing that $A_{\la f\ra,\varphi}\in U$, we use $\Sigma_n$-collection to obtain a set $C$ such that for every $\alpha\in A_{\la f\ra,\varphi}$, there is $x\in C$ for which $\la M,\in, U\ra\models\varphi(x,f(\alpha))$. Let $w_C\in M$ be a well-order of $C$, and use $w_C$ to chose for every $\alpha\in A_{\la f\ra,\varphi}$, the $w_C$-least $b$ such that $\la M,\in,U\ra \models\varphi(b,f(\alpha))$ and define $h(\alpha)=b$. The definition of $h$ uses $\Sigma_n$-separation in the structure $\la M,\in, U\ra$.
\end{proof}

The next lemma shows that the fragment $\KP_n$ is more natural than $\ZFC^-_n$ with respect to forming $\Sigma_n$-elementary substructures. Namely, we have that any element of a model of $\KP_n$ that is transitive and $\Sigma_n$-elementary in the model is itself a model of $\KP_n$. This can fail to hold for models of $\ZFC_n$. For instance, it is easy to see that the $\Sigma_1$-Skolem hull is $\Sigma_1$-definable in any model $L_\alpha\models\ZFC^-_1$, and therefore must be a set in $L_\alpha$ by collection. Thus, every model $L_\alpha\models\ZFC^-_1$ has a proper $\Sigma_1$-elementary substructure, but then the least $\Sigma_1$-substructure of $L_\alpha$ cannot be a model of $\ZFC^-_1$. In the complexity calculations that follow, we will repeatedly use the observation that $\Sigma_n$-collection implies a normal form theorem for $\Sigma_n$-assertions, namely that every assertion with a $\Sigma_n$-alternation of unbounded quantifiers is equivalent to a $\Sigma_n$-assertion.

\begin{lemma}
\label{lem:nElemImpliesKPn}
Suppose that $M\models\ZFC^-$ and $U$ is a predicate such that $\la M,\in,U\ra\models\KP_n$ for some $n\geq 1$. If $\bar M\in M$ is transitive in $M$ and $\la \bar M,\in,U\cap\bar M\ra\prec_{\Sigma_n}\la M,\in,U\ra$, then $\la\bar M,\in,U\cap\bar M\ra\models\KP_n.$
\end{lemma}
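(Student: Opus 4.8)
The plan is to verify the two schemes of $\KP_n$ separately for $\la\bar M,\in,\bar U\ra$, where $\bar U:=U\cap\bar M$, doing all nontrivial reasoning inside $M$ (where $\Sigma_n$-collection is available) and transporting conclusions across the embedding $\la\bar M,\in,\bar U\ra\prec_{\Sigma_n}\la M,\in,U\ra$. The basic observation is that $\Delta_0$-formulas in the language with the predicate are absolute between the two structures: since $\bar M$ is transitive in $M$ and $\bar U=U\cap\bar M$, every bounded quantifier ranges over the same elements and each atomic instance $x\in\bar U$ agrees with $x\in U$ for $x\in\bar M$. I also record that $\bar M$ is closed under pairing and unions, since for $a,b\in\bar M$ the statements $\exists c\,(c=\{a,b\})$ and $\exists c\,(c=\bigcup a)$ are $\Sigma_1$ with $\Delta_0$ matrix, hold in $M$, and reflect down by $\Sigma_n$-elementarity (using $n\geq 1$, so $\Sigma_1\subseteq\Sigma_n$). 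For $\Delta_0$-separation, fix $A\in\bar M$ and a $\Delta_0$-formula $\varphi(x,\vec p)$ with $\vec p\in\bar M$; by absoluteness the set $B:=\{x\in A\mid\la\bar M,\in,\bar U\ra\models\varphi(x,\vec p)\}$ coincides with the corresponding set over $\la M,\in,U\ra$, which exists in $M$. Since ``the separation subset of $A$ exists'' is the $\Sigma_1$-assertion $\exists B\,[\forall x\in B\,(x\in A\wedge\varphi)\wedge\forall x\in A\,(\varphi\to x\in B)]$ (its matrix is $\Delta_0$) and holds in $M$, it reflects to $\bar M$, and absoluteness identifies the witness with $B$.

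For $\Sigma_n$-collection, suppose $\la\bar M,\in,\bar U\ra\models\forall x\in A\,\exists y\,\varphi(x,y)$ with $\varphi$ a $\Sigma_n$-formula (parameters suppressed). For each $x\in A\subseteq\bar M$ the instance $\exists y\,\varphi(x,y)$ is $\Sigma_n$, so by upward $\Sigma_n$-elementarity it holds in $M$; hence $\la M,\in,U\ra\models\forall x\in A\,\exists y\,\varphi(x,y)$. Writing $\varphi(x,y)=\exists u\,\chi(x,y,u)$ with $\chi\in\Pi_{n-1}$ and setting $\chi'(x,w):=\chi(x,(w)_0,(w)_1)$, $\Sigma_n$-collection in $M$ yields a set $W$ with $\la M,\in,U\ra\models\forall x\in A\,\exists w\in W\,\chi'(x,w)$. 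Here I invoke the normal form theorem, valid in $M$ by its $\Sigma_{n-1}$-collection, to fix a $\Pi_{n-1}$-formula $\rho(W)$ with $\la M,\in,U\ra\models\rho(W)\leftrightarrow\forall x\in A\,\exists w\in W\,\chi'(x,w)$. Then $\exists W\,\rho(W)$ is a genuine $\Sigma_n$-assertion holding in $M$, so it reflects to $\bar M$, producing $W\in\bar M$ with $\la\bar M,\in,\bar U\ra\models\rho(W)$.

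It remains to verify that this $W$ collects witnesses in $\bar M$ without re-deriving the normal form there. Since $\rho\in\Pi_{n-1}\subseteq\Sigma_n\cap\Pi_n$ is preserved upward, $\la M,\in,U\ra\models\rho(W)$, hence $\la M,\in,U\ra\models\forall x\in A\,\exists w\in W\,\chi'(x,w)$. For each $x\in A\subseteq\bar M$ the instance $\exists w\in W\,\chi'(x,w)$ is itself $\Sigma_n$ (one unbounded existential over the $\Pi_{n-1}$-matrix $w\in W\wedge\chi'$), so it reflects back down, giving $\la\bar M,\in,\bar U\ra\models\forall x\in A\,\exists w\in W\,\chi'(x,w)$. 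Taking $B:=\bigcup\bigcup W\in\bar M$ (by closure under unions), each $y=(w)_0$ witnessing $\chi'$ lies in $B$ and satisfies $\varphi(x,y)$ in $\bar M$, so $\la\bar M,\in,\bar U\ra\models\forall x\in A\,\exists y\in B\,\varphi(x,y)$, establishing $\Sigma_n$-collection.

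The main obstacle, and the reason the argument is routed entirely through $M$, is precisely the circularity one must avoid: the collapse of bounded-quantifier formulas to low complexity requires $\Sigma_n$-collection, which is exactly what we are proving in $\bar M$. The normal form theorem is therefore applied only in $M$, and the $\Sigma_n$-witness $W$ it produces is transported to $\bar M$ and then checked there instance-by-instance using only $\Sigma_n$-elementarity. I expect the complexity bookkeeping — that $\exists w\in W\,\chi'$ and $\exists W\,\rho$ are $\Sigma_n$ while $\rho$ is $\Pi_{n-1}$ — to be the only delicate point, everything else being routine reflection.
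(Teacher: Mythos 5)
Your proof is correct, and it resolves the one genuinely delicate point --- the potential circularity of using normal forms inside $\bar M$ before collection there has been established --- by a different mechanism than the paper. The paper's proof of $\Sigma_n$-collection is an induction on $n$: for $n=1$ the collecting statement $\exists z\,\forall x\in a\,\exists y\in z\,\varphi$ is outright $\Sigma_1$ and reflects down, while for $n>1$ the inductive hypothesis (the lemma for $n-1$) supplies $\Sigma_{n-1}$-collection in $\la\bar M,\in,U\cap\bar M\ra$, so the formula $\forall x\in a\,\exists y\in z\,\varphi$ can be converted to a $\Sigma_n$-formula inside $\bar M$ itself and the reflected witness unwound there. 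You avoid the induction entirely: the normal-form conversion happens only in $M$ (where $\KP_n$ gives $\Sigma_{n-1}$-collection), the $\Sigma_n$-assertion $\exists W\,\rho(W)$ is reflected down to produce $W\in\bar M$, and the collecting property of $W$ is then re-imported into $\bar M$ by passing $\rho(W)$ (a $\Pi_{n-1}$-formula, hence transferable in both directions under $\Sigma_n$-elementarity) back up to $M$ and pulling each genuinely $\Sigma_n$ instance $\exists w\,(w\in W\wedge\chi'(x,w))$ down pointwise, with $\bigcup\bigcup W$ repackaging the witnesses. Your route buys a proof that is uniform in $n$ and makes explicit that no fragment of collection in $\bar M$ is presupposed; the paper's route is shorter per step but must carry the whole lemma through an induction on $n$. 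Your $\Delta_0$-separation argument and the upward transfer of the hypothesis via transitivity coincide with the paper's. The only soft spots in your write-up are bookkeeping, of the same character as conventions the paper itself suppresses: you should apply the upward transfer to the paired form $\exists w\,\chi'(x,w)$ rather than to $\exists y\,\varphi(x,y)$ (merging the two existentials already invokes pairing and its absoluteness), and you should build ``$w$ is an ordered pair'' into $\chi'$ so that $(w)_0$ is guaranteed to lie in $\bigcup\bigcup W$.
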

\begin{proof}
$\Delta_0$-separation in $\la \bar M,\in,U\cap\bar M\ra$ can be easily verified as follows. Suppose that $A\in \bar M$ and $\varphi(x)$ is a $\Delta_0$-formula. Then, by $\Delta_0$-separation, $$\la M,\in,U\ra\models\exists z\,(\forall x\in z\, x\in A\wedge \forall x\in A(\varphi(x)\leftrightarrow x\in z)).$$ Since this is $\Sigma_1$-assertion, we have that it holds in $\la \bar M,\in, U\cap\bar M\ra $ by $\Sigma_1$-elementarity.

Next, we verify $\Sigma_n$-collection. Suppose that $$\la \bar M,\in,U\cap\bar M\ra\models\forall x\in a\,\exists y\,\varphi(x,y,a),$$ for a $\Sigma_n$-formula $\varphi(x,y,a)$. Indeed, we can assume without loss of generality that $\varphi(x,y,a)$ is $\Pi_{n-1}$.
Since $\bar M$ is transitive in $M$, $\Sigma_n$-elementarity yields
$$\la M,\in,U\ra \models \forall x\in a\,\exists y\,\varphi(x,y,a).$$
By $\Sigma_n$-collection, $\la M,\in,U\ra\models\psi(a)$, where $$\psi(a):=\exists z \forall x\in a\,\exists y\in z\,\varphi(x,y,a).$$
For $n=1$, the formula $\psi(a)$ is $\Sigma_1$, and so $\la \bar M,\in, U\cap \bar M\ra\models\psi(a)$ by $\Sigma_1$-elementarity, verifying $\Sigma_1$-collection. For $n>1$, we can assume inductively that we have already verified $\Sigma_{n-1}$-collection in $\la \bar M,\in,U\cap\bar M\ra$. Under $\Sigma_{n-1}$-collection, the formula $\forall x\in a\,\exists y\in z\,\varphi(x,y,a)$ is equivalent to a $\Sigma_n$-formula $\bar\psi(z)$. Since $\la M,\in,U\ra$ satisfies the $\Sigma_n$-formula $\exists z\,\bar\psi(z)$, by $\Sigma_n$-elementarity, $\la \bar M,\in, U\cap\bar M\ra\models\exists z\,\bar\psi(z)$. Thus, by $\Sigma_{n-1}$-collection, $\la \bar M,\in,U\cap \bar M\ra\models \psi(a)$, verifying $\Sigma_n$-collection.
\end{proof}

Since $\Sigma_0$-elementarity is equivalent to just being a submodel for transitive structures, it is not difficult to find a counterexample to Lemma~\ref{lem:nElemImpliesKPn} for $n=0$.

The \emph{$\Sigma_n$-reflection} scheme states that for every true $\Sigma_n$-assertion $\varphi(x,a)$ with parameter $a$, there is a transitive set $M$ containing $a$ such that for every $x\in M$, $M\models\varphi(x,a)$ if and only if $\varphi(x,a)$.  If $M$ is a simple weak $\kappa$-model and $U$ is a good $M$-ultrafilter such that $\la M,\in, U\ra\models\KP_{n+1}$, then the structure $\la M,\in, U\ra$ satisfies a strong form of $\Sigma_n$-reflection.

\begin{lemma}\label{lem:ZFCnImpliesElemSubstructures}
Suppose that $M$ is a simple weak $\kappa$-model and $U$ is a good $M$-ultrafilter.
If $\la M,\in,U\ra\models\KP_{n+1}$, for some $n\geq 1$, then for every $A\in M$ there is a $\kappa$-model $\bar M\in M$ such that $A\in M$, $\bar{M}\prec M$,
$$\la \bar M,\in,U\cap \bar M\ra\prec_{\Sigma_n}\la M,\in,U\ra,$$ and $\la \bar M,\in, U\cap \bar M\ra\models\ZFC^-_n$.
\end{lemma}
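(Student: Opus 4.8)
The plan is to realise $\bar M$ as a definable Skolem hull computed inside the structure $\la M,\in,U\ra$, exploiting that $\KP_{n+1}$ furnishes both the recursion needed to build the hull and the truth predicates needed to make it elementary. Since $M$ is simple and $U$ is weakly amenable, Lemma~\ref{lem:definableWellOrder} gives a $\Delta_1$-definable strong well-order $\lhd_U$ of $M$, Lemma~\ref{truth predicate from ultrafilter} gives a $\Delta_1$-definable truth predicate $\Tr_M$ for $\la M,\in\ra$, and, as $U$ is amenable, Lemma~\ref{truth pred for simple weak} gives a $\Sigma_n$-definable truth predicate $\Tr^U_n$ for $\Sigma_n$-formulas in the language with $U$. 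Using $\lhd_U$ to select the least witness, I would package these into (i) $\in$-Skolem functions defined from $\Tr_M$, (ii) $\Sigma_n$-Skolem functions defined from $\Tr^U_n$, and (iii) the separation operator $A\mapsto S_\varphi(A):=\{x\in A\mid \Tr^U_n(\varphi,x)\}$ for each $\Sigma_n$-formula $\varphi$, which returns an element of $M$ because $\KP_{n+1}$ implies $\ZFC^-_n$ and hence $\Sigma_n$-separation by Lemma~\ref{KP implies sep}.

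Next I would build, by recursion inside $\la M,\in,U\ra$, a continuous increasing chain $\la\bar M_\xi\mid\xi<\kappa\ra$ with $V_\kappa\union\{V_\kappa,A\}\of\bar M_0$, where $\bar M_{\xi+1}$ is obtained from $\bar M_\xi$ by first adjoining (as computed in $M$) every ${<}\kappa$-sequence of its elements and then closing under the operations (i)--(iii), taking unions at limits. The one-step operation is $\Sigma_{n+1}$-definable over $\la M,\in,U\ra$: the only unbounded work is the selection of $\lhd_U$-least $\Sigma_n$-witnesses and the formation of the sets $S_\varphi(A)$, and the resulting objects can be gathered by $\Sigma_{n+1}$-collection. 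Hence, by the $\Sigma_{n+1}$-recursion along the ordinals available in $\KP_{n+1}$ (Lemma~\ref{KP implies sep}), the sequence is an element of $M$, and so is $\bar M:=\bigcup_{\xi<\kappa}\bar M_\xi$. By construction $M$ believes $\bar M$ is a $\kappa$-model---it has size $\kappa$, contains $V_\kappa$, and is internally ${<}\kappa$-closed by the regularity of $\kappa$ together with the sequence-adjoining step---so $\bar M$ genuinely is a $\kappa$-model by Lemma~\ref{prop:internalKappaModel}; it is transitive because $\kappa\of\bar M$, $M$ is simple, and so each $a\in\bar M$ admits a surjection $f\colon\kappa\to a$ lying in $\bar M$, forcing $a\of\bar M$. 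Finally $A\in\bar M$, closure under (i) gives $\bar M\prec M$, and closure under (ii) gives $\la\bar M,\in,U\cap\bar M\ra\prec_{\Sigma_n}\la M,\in,U\ra$, both by the Tarski--Vaught criterion.

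It then remains to check that $\la\bar M,\in,U\cap\bar M\ra\models\ZFC^-_n$. For $\Sigma_n$-collection, note that $\bar M$ is transitive in $M$ and $\Sigma_n$-elementary in $\la M,\in,U\ra\models\KP_n$, so Lemma~\ref{lem:nElemImpliesKPn} yields $\la\bar M,\in,U\cap\bar M\ra\models\KP_n$, which already includes $\Sigma_n$-collection. For $\Sigma_n$-separation, fix $A\in\bar M$ and a $\Sigma_n$-formula $\varphi$; since $\bar M$ is transitive we have $A\of\bar M$, so by $\Sigma_n$-elementarity $\{x\in A\mid \la\bar M,\in,U\cap\bar M\ra\models\varphi(x)\}=S_\varphi(A)$, which lies in $\bar M$ by the closure under (iii). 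Together these give $\ZFC^-_n$, and the low-complexity assertion that $U\cap\bar M$ is a uniform normal $\bar M$-ultrafilter transfers downward by $\Sigma_n$-elementarity.

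The main obstacle I anticipate is the complexity bookkeeping in the second step: one must verify that the single closure operation genuinely stays $\Sigma_{n+1}$-definable---in particular that forming the sets $S_\varphi(A)$ from the $\Sigma_n$-predicate $\Tr^U_n$ and collecting the $\lhd_U$-least witnesses remain within $\KP_{n+1}$---so that the length-$\kappa$ recursion can be executed inside $M$ and returns an element of $M$, and that the simultaneous Tarski--Vaught closures for the $\in$-language (via $\Tr_M$) and for the $\Sigma_n$ $U$-language (via $\Tr^U_n$) cooperate to deliver full $\in$-elementarity alongside $\Sigma_n$ $U$-elementarity. The division of labour---$\KP_n$ from Lemma~\ref{lem:nElemImpliesKPn} for collection, and the explicit closure under $S_\varphi$ for separation---is precisely what bridges the gap between the $\KP_n$ that elementarity alone provides and the strictly stronger $\ZFC^-_n$ demanded by the conclusion.
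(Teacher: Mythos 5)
Your strategy is essentially the paper's own: run a $\Sigma_{n+1}$-definable recursion of length $\kappa$ inside $\la M,\in,U\ra$, using the $\Delta_1$-definable well-order $\lhd_U$ of Lemma~\ref{lem:definableWellOrder}, the truth predicates $\Tr_M$ and $\Tr^U_n$ of Lemmas~\ref{truth predicate from ultrafilter} and~\ref{truth pred for simple weak}, and the $\Sigma_{n+1}$-collection/recursion supplied by $\KP_{n+1}$ via Lemma~\ref{KP implies sep}; close under least witnesses to get $\Sigma_n$-elementarity in the $U$-language and full elementarity in the $\in$-language, and finish with Lemma~\ref{prop:internalKappaModel}. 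Your two deviations are harmless and arguably tidier: (a) you secure ${<}\kappa$-closure by adjoining all ${<}\kappa$-sequences stage by stage, where the paper instead interleaves $\lhd_U$-least internal $\kappa$-models $N\prec M$, whose existence it extracts from the ultrapower (using $M=H_{\kappa^+}^N$); (b) you conclude $\ZFC^-_n$ from Lemma~\ref{lem:nElemImpliesKPn} (for $\Sigma_n$-collection) together with explicit closure under the separation operator $S_\varphi$, where the paper routes through $\Sigma_n$-reflection and Lemma~\ref{lem:ElemSubstructuresImplyZFCn}. The complexity bookkeeping you defer (totalizing the witness functions by sending unwitnessed instances to $\emptyset$, then applying $\Sigma_{n+1}$-collection and $\Delta_{n+1}$-separation) is exactly what the paper carries out, so that part of the plan is sound.

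There is, however, one genuine gap: you never invoke the hypothesis that $U$ is \emph{good}, and the argument cannot be correct without it. Lemma~\ref{lem:definableWellOrder} guarantees that $\lhd_U$ is a strong well-order only for classes definable over $\la M,\in\ra$; your operation (ii) selects the $\lhd_U$-least $b$ with $\Tr^U_n(\varphi,\la b,a\ra)$, and this class of witnesses is definable only in the language \emph{with} $U$. If the ultrapower of $M$ by $U$ is ill-founded, $\lhd_U$ may be externally ill-founded, and a $U$-definable nonempty class can fail to have a $\lhd_U$-least element; then your $\Sigma_n$-Skolem ``functions'' are not well-defined, the one-step operation is not single-valued, and the $\Sigma_{n+1}$-recursion cannot be executed. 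Goodness is precisely what repairs this: since $U$ is good, the ultrapower is well-founded, so $\lhd_U$ is an actual external well-order of $M$, and every nonempty class of elements of $M$, $U$-definable or not, has a $\lhd_U$-least member. The paper makes this observation the first line of its proof and remarks afterwards that the lemma is unclear for non-good $U$; as written, your argument would ``prove'' the statement for arbitrary $M$-ultrafilters, which is exactly the tell that a step is unjustified. Adding this one observation makes your proof complete.
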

\begin{proof}
Recall that, by Lemma~\ref{lem:definableWellOrder}, the structure $\la M,\in, U\ra$ has a $\Delta_1$-definable well-order $\lhd_U$. Since $U$ is good, it follows that $\lhd_U$ is externally a well-order.
First, let's argue that for every set $X\in M$, there exists a unique set $X^*\in M$ of $\lhd_U$-least witnesses for $\Sigma_n$-assertions $\theta(x,a)$ in the language with the ultrafilter and with $a\in X$.
Let $C$ be a collecting set for the assertion
$$\forall a\in X\,\forall\theta\in\omega\,\exists y\,\varphi(a,\theta,y),$$ where $\varphi(a,\theta,y)$ is the assertion $$\left(y=\emptyset \wedge \forall z\,\neg \Tr_{n}^U(\theta,\la z,a\ra)\right)\vee\left(\Tr_{n}^U(\theta,\la y,a\ra)\wedge \forall z(z\lhd_U y\rightarrow \neg\Tr_{n}^U(\theta,\la z,a\ra))\right).$$ The complexity of the assertion $\varphi(a,\theta,y)$ is clearly at most $\Sigma_{n+1}$ (it is a disjunction of a $\Pi_n$ and a $\Sigma_n$-formula), and so we can apply $\Sigma_{n+1}$-collection.
Now using $\Delta_{n+1}$-separation for $C$ by Lemma \ref{KP implies sep}, we can obtain the desired unique set $X^*$.

Next, let's argue that every set $X\in M$ can be put into a $\kappa$-model $N\in M$ such that $N\prec M$. For every cardinal $\alpha<\kappa$, we have that every set in $H_{\alpha^+}$ is contained in an elementary submodel of size $\alpha$ that is closed under sequences of length ${<}\alpha$. Thus, the ultrapower of $M$ by $U$ satisfies that every set in $H_{\kappa^+}=M$ is contained in such an elementary submodel, and all these submodels are in $M$.

Observe that the assertion that $N$ is a $\kappa$-model is $\Delta_1$ for the following reasons. The assertion that $N$ is a weak $\kappa$-model is $\Delta_1$ because you just have to say that it is transitive, has $\kappa, V_\kappa$ as elements, and satisfies $\ZFC^-$ (this last gives the unbounded quantifier). The assertion that the model is closed under ${<}\kappa$-sequences is clearly $\Pi_1$, but in fact it is also $\Sigma_1$. The $\Sigma_1$-assertion is ``there is a bijection $f:V_\kappa\to N$ such that for every $g:\xi\to V_\kappa$ in $V_\kappa$ with $\xi<\kappa$, there exists $x\in N$ such that $x=f\circ g$." Next, observe that the assertion that $N$ is the $\lhd_U$-least $\kappa$-model extending a set $X$ is $\Pi_1$. Finally, observe that the assertion that $\la N,\in\ra\prec \la M,\in\ra$ is $\Delta_1$ using the $\Delta_1$-definable truth predicate $\Tr_M(\varphi,x)$ for $\la M,\in\ra$.

Now consider the assertion $\psi(s,\lambda)$, which states that $s$ is a sequence of length $\lambda$ such that:
\begin{enumerate}
\item $s_0=\{A\}$
\item $s_\delta=\bigcup_{\xi<\delta}s_\xi$ for limit ordinals $\delta$,
\item if $\xi+1$ is an even successor ordinal, then $s_{\xi+1}=s^*_{\xi}$, the unique closure under existential witnesses for $\Sigma_n$-assertions,
\item if $\xi+1$ is an odd successor ordinal, then $s_{\xi+1}$ is the $\lhd$-least $\kappa$-model such that\break $s_{\xi}\subseteq s_{\xi+1}\prec M$.
\end{enumerate}
The recursion defining the $s_\xi$ is $\Delta_{n+1}$, so we can use the $\Sigma_{n+1}$-recursion scheme to conclude that there exists a function $f:\kappa\to M$ such that $f(\xi)=s_\xi$.  Let $\bar M=\bigcup_{\xi<\kappa}f(\xi)$. By construction, $\la \bar M,\in, U\cap \bar M\ra\prec_n \la M,\in,U\ra$ with $\bar M\prec M$.
Since $\bar M$ is a $\kappa$-length union of $\kappa$-models in $M$, it is itself a $\kappa$-model in $M$. The model $\bar M$ really is a $\kappa$-model by Lemma~\ref{prop:internalKappaModel}. Note that the model $\la M,\in, U\cap\bar M\ra$ satisfies $\Sigma_n$-reflection by construction because it is a union of $\Sigma_n$-elementary substructures. It will follow by the next Lemma~\ref{lem:ElemSubstructuresImplyZFCn} that $\Sigma_n$-reflection implies $\ZFC^-_n$.
\end{proof}
Again, the result can fail for $n=0$ because being $\Sigma_0$-elementary and transitive is equivalent to being a submodel.
\begin{lemma}\label{lem:ElemSubstructuresImplyZFCn}
Suppose that $M\models\ZFC^-$ and $U$ is a predicate.
If $\la M,\in,U\ra$ satisfies $\Sigma_n$-reflection for some $n\geq 1$, then it is a model of $\ZFC_n^-$.
\end{lemma}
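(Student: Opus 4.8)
The plan is to verify the two schemes of $\ZFC^-_n$ separately, treating collection as the easy direction and separation as the one that carries the real content. Throughout I work inside $\la M,\in,U\ra$ and freely use that $M$, being a model of $\ZFC^-$ in the pure $\in$-language, has full separation and a definable satisfaction predicate for set-sized structures.

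\emph{$\Sigma_n$-collection.} Suppose $\la M,\in,U\ra\models\forall x\in a\,\exists y\,\varphi(x,y)$ for a $\Sigma_n$-formula $\varphi$ in the language with $U$; note $\exists y\,\varphi(x,y)$ is again $\Sigma_n$. I would apply $\Sigma_n$-reflection simultaneously to $\varphi(x,y)$ and to $\exists y\,\varphi(x,y)$, with $a$ as a parameter (the scheme extends to finitely many reflected free variables by pairing, and finitely many instances can be folded into one), obtaining a transitive $m\ni a$, hence $a\subseteq m$, that reflects both. Fixing $x\in a$, the hypothesis gives $\exists y\,\varphi(x,y)$, so by reflection this holds relativised to $m$, i.e.\ there is $y\in m$ with $\la m,\in,U\cap m\ra\models\varphi(x,y)$; reflecting $\varphi$ upward then yields $\la M,\in,U\ra\models\varphi(x,y)$. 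Thus $m$ itself is a collecting set, $\forall x\in a\,\exists y\in m\,\varphi(x,y)$. The point worth stressing is that this step uses reflection only in its relativised form and needs nothing about the definability of $U\restriction m$ inside $M$.

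\emph{$\Sigma_n$-separation.} Fix $A\in M$ and a $\Sigma_n$-formula $\varphi(x)$. I would apply $\Sigma_n$-reflection with $A$ as a parameter to get a transitive $m\ni A$ reflecting $\varphi$; since $A\subseteq m$, for every $x\in A$ we have $\varphi(x)$ if and only if $\la m,\in,U\cap m\ra\models\varphi(x)$. Now $\la m,\in,U\cap m\ra$ is a genuine set-sized structure lying in $M$, so, since $M\models\ZFC^-$, it carries a definable satisfaction predicate. Consequently $\{x\in A\mid \la m,\in,U\cap m\ra\models\varphi(x)\}$ is definable over $\la M,\in\ra$ from the parameters $A$, $m$ and $U\cap m$, and by the full $\in$-separation of $M$ it is an element of $M$; by the reflection equivalence it equals $\{x\in A\mid\varphi(x)\}$, as required. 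Running this for arbitrary $A$ also delivers weak amenability, since $U\cap A=(U\cap m)\cap A\in M$.

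The main obstacle is precisely the separation step, and specifically the assertion that the reflecting structure $\la m,\in,U\cap m\ra$ is an \emph{element} of $M$, i.e.\ that $U\cap m\in M$, which is what licenses the use of $M$'s definable satisfaction for set models. This amenability of the reflecting sets is exactly the feature that pushes reflection beyond bare relativisation, and it is what the $\Sigma_n$-reflection scheme must be read as providing; it holds in the situation of Lemma~\ref{lem:ZFCnImpliesElemSubstructures}, where the reflecting substructures are $\kappa$-models met by the weakly amenable ultrafilter, so that $U\cap m$ is available. By contrast, the collection half needs no such input, which is why I would present it first. A secondary, purely routine, matter is the bookkeeping in reflecting several $\Sigma_n$-formulas at once and keeping the relevant assertions in $\Sigma_n$ normal form; here the $\Sigma_n$-collection already obtained supplies the normal form theorem for $\Sigma_n$-assertions.
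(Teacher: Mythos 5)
Your proof is correct and follows essentially the same route as the paper's: the reflecting set itself serves as the collecting set for $\Sigma_n$-collection (downward reflection to find a witness in $m$, upward reflection to verify it), and $\Sigma_n$-separation is obtained by computing the desired set inside the set-sized structure $\la m,\in,U\cap m\ra$ and invoking $M$'s full $\in$-separation. Your explicit observation that this last step requires $U\cap m\in M$ --- i.e.\ that the reflection scheme must be read as supplying amenable reflecting structures --- is precisely the reading the paper's proof implicitly uses when it treats $\la \bar N,\in,U\cap\bar N\ra$ as a set structure and concludes that the right-hand side set ``exists by separation in $M$''.
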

\begin{proof}
Concerning $\Sigma_n$-collection, suppose that $$\la M,\in,U\ra\models\forall x\in a\,\exists y\,\varphi(x,y,a)$$ for a $\Sigma_n$-formula $\varphi(x,y,a)$. Let $a\in \bar M\in M$ such that $$\la \bar M,\in,U\cap \bar M\ra\prec_{\Sigma_n}\la M,\in,U\ra.$$ Fix $b\in a$.
By assumption, $\la M,\in,U\ra\models\exists y\,\varphi(b,y,a)$.
By elementarity, we have $\la \bar M,\in,U\cap \bar M\ra\models\exists y\,\varphi(b,y,a)$.
Thus, $$\la \bar M,\in,U\cap \bar M\ra\models\forall x\in a\,\exists y\,\varphi(x,y,a),$$ and so $\bar M$ is the required collecting set for $\varphi$.

For $\Sigma_n$-separation, fix $a\in M$ and a $\Sigma_n$-formula $\psi(x,b)$ in the language with $U$. Let $a,b\in \bar N$ such that $$\la \bar N,\in,U\cap\bar N\ra\prec_{\Sigma_n}\la M,\in,U\ra.$$ The structure $\la \bar N,\in,U\cap\bar N\ra$ reflects $\la M,\in,U\ra$ for the formula $\psi(x,b)$. Therefore $$\{x\in a\mid \la M,\in,U\ra\models\psi(x,b)\}=\{x\in a\mid \la \bar N,\in, U\cap \bar N\ra\models\psi(x,b)\},$$ and the right-hand side set exists by separation in $M$.
\end{proof}

The hypothesis in Lemma \ref{lem:ZFCnImpliesElemSubstructures} cannot be reduced to $\ZFC^-_n$, since the claim fails for an $\in$-minimal model $\langle M,\in,U\rangle$ of $\ZFC^-_n$. For the same reason, the result can fail if $n=0$. The argument above does not work if the ultrapower by $U$ is ill-founded. In this case, the strong well-order $\lhd_U$ might be ill-founded for formulas in the language with $U$. Thus, it is not clear whether the above lemma holds in the case where $U$ is not good. If $U$ is not good, we can however take, for any set $A\in M$, a substructure of $\la M,\in, U\ra$ with a $\Delta_1$-definable true well-order, namely, $L_\alpha[A,U]$, where $\alpha$ is the height of the model. Indeed, replacing a model $\la M,\in, U\ra$ by $\la L_\alpha[A,U],\in,U\cap L_\alpha[A,U]\ra$ will prove useful in other ways as well. First though we have to verify that this move preserves the theory.

Suppose that $M$ is a weak $\kappa$-model and $U$ is an $M$-ultrafilter such that $$\la M,\in,U\ra\models\KP_n(\ZFC^-_n),$$ for $n\geq 1$, and $\alpha=\Ord^M$. Let $A\in M$ be a subset of $\kappa$. Consider the model $\la L_\alpha[A,\bar U],\in,\bar U\ra$, where $U\cap L_\alpha[A,U]$.
\begin{lemma}\label{lem:LU}
$\la L_\alpha[A,\bar U],\in, \bar U\ra\models\KP_{n}(\ZFC^-_n)$.
\end{lemma}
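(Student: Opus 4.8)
The plan is to realize $L_\alpha[A,U]$ as a definable inner model of $\la M,\in,U\ra$ and then transfer each scheme by relativization, using the extra separation of $M$ only at the point where $\KP_n$ is strengthened to $\ZFC^-_n$. First I would check that the level function $\beta\mapsto L_\beta[A,U]$ is definable over $\la M,\in,U\ra$. Since $\la M,\in,U\ra\models\KP_n$ with $n\geq 1$ we have $\Delta_0$-separation, so for each level the amenable piece $U\cap L_\beta[A,U]$ is a set in $M$; consequently the successor step $L_{\beta+1}[A,U]=\mathrm{Def}(\la L_\beta[A,U],\in,A,U\ra)$ is $\Delta_1$ in the language with $U$, and by the $\Sigma_1$-recursion available in $\KP_n$ (Lemma \ref{KP implies sep}) the whole function is $\Sigma_1$-recursive and total on $\Ord^M$. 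Thus each $L_\beta[A,U]$ is an element of $M$, so $L_\alpha[A,U]=\bigcup_{\beta<\alpha}L_\beta[A,U]\subseteq M$ and the predicate ``$y\in L_\alpha[A,U]$'' is $\Delta_1$ over $\la M,\in,U\ra$. The payoff is a relativization equivalence: for a $\Sigma_n$-formula $\varphi$ in the language with the ultrafilter and parameters $\vec p\in L_\alpha[A,U]$, the relativization $\varphi^{L_\alpha[A,U]}$ is again $\Sigma_n$ (as $n\geq 1$ absorbs the $\Delta_1$ membership bound, and ``$t\in\bar U$'' becomes ``$t\in U$'' on $L_\alpha[A,U]$), and
$$\la L_\alpha[A,U],\in,\bar U\ra\models\varphi(\vec p)\iff\la M,\in,U\ra\models\varphi^{L_\alpha[A,U]}(\vec p).$$

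With this in hand, $\Delta_0$-separation in $L_\alpha[A,U]$ is immediate: a $\Delta_0$-definable subset of some $a\in L_\eta[A,U]$ is already definable over the set $\la L_\eta[A,U],\in,U\cap L_\eta[A,U]\ra$ by absoluteness of $\Delta_0$-formulas, hence lies in $L_{\eta+1}[A,U]$, using that $\alpha=\Ord^M$ is a limit. For $\Sigma_n$-collection I would relativize a given instance $\la L_\alpha[A,U],\in,\bar U\ra\models\forall x\in a\,\exists y\,\psi(x,y)$ to the $\Sigma_n$-statement $\forall x\in a\,\exists\beta\,\exists y\,(y\in L_\beta[A,U]\wedge\psi^{L_\alpha[A,U]}(x,y))$ over $M$, and apply $\Sigma_n$-collection in $M$ to collect the \emph{construction levels} $\beta$ into a set $D\in M$; then $\gamma=\sup(D\cap\Ord)+1<\Ord^M$ and $L_\gamma[A,U]\in L_\alpha[A,U]$ is the required collecting set. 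These two schemes give $\KP_n$ in $\la L_\alpha[A,U],\in,\bar U\ra$, settling the first case.

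The remaining, and genuinely harder, task is $\Sigma_n$-separation for the $\ZFC^-_n$ case; it cannot follow from $\KP_n$ alone, since the usual $\bar\varphi$-trick that turns a conditional into a total collection instance raises the complexity to $\Sigma_{n+1}$. Here the extra separation of $M$ must be used essentially. Given $a\in L_\alpha[A,U]$ and a $\Sigma_n$-formula $\psi$, relativization together with $\Sigma_n$-separation in $M$ produces the set $b=\{x\in a:\psi^{L_\alpha[A,U]}(x)\}\in M$; the problem is to place $b$ back inside $L_\alpha[A,U]$. The plan is to reflect $\psi$ down to a level: having $b$ as an actual set lets me apply $\Sigma_n$-collection over $b$ (rather than over $a$, which would reintroduce the conditional) to bound witnesses, and, exploiting the genuine $\Delta_1$-definable well-order of $L_\alpha[A,U]$ —- externally well-founded precisely because we have passed to the constructible model —- to form $\Sigma_n$-Skolem functions and close off to a level with $\la L_\gamma[A,U],\in,U\cap L_\gamma[A,U]\ra\prec_{\Sigma_n}\la L_\alpha[A,U],\in,\bar U\ra$, where $\Sigma_n$-replacement in $M$ keeps $\gamma<\Ord^M$. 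Then $b=\{x\in a:\psi^{L_\gamma[A,U]}(x)\}$ is definable over the \emph{set} $L_\gamma[A,U]$ and hence lies in $L_{\gamma+1}[A,U]\subseteq L_\alpha[A,U]$.

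I expect the construction of this $\Sigma_n$-elementary level to be the main obstacle: one must run the \Levy-style reflection with only the $\Sigma_n$-fragment of collection and separation in $M$, handle the inner $\Pi_{n-1}$-matrices and the predicate $\bar U$ correctly through the transitive level, and verify that the closure ordinals stay below $\alpha$. This is exactly the step that requires $\ZFC^-_n$ rather than merely $\KP_n$, which is why the two cases of the lemma diverge precisely here.
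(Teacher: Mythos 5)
Your first half---realizing the levels $L_\beta[A,U]$ as a $\Sigma_1$-recursive, $\Delta_1$-definable hierarchy over $\la M,\in,U\ra$, getting $\Delta_0$-separation from definability over levels, and proving $\Sigma_n$-collection by relativizing and collecting construction levels in $M$---is correct and is essentially the paper's own argument for the $\KP_n$ case. Your first two moves in the $\ZFC^-_n$ case also match the paper: form $b=\{x\in a:\psi^{L_\alpha[A,U]}(x)\}\in M$ by $\Sigma_n$-separation in $M$, then apply $\Sigma_n$-collection in $M$ over $b$ (not over $a$, exactly to avoid the conditional) to bound the outer witnesses inside some level $L_\beta[A,U]$ with $\beta<\alpha$.

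The gap is in your finish. You propose to build a level $L_\gamma[A,U]\prec_{\Sigma_n}\la L_\alpha[A,\bar U],\in,\bar U\ra$ with $\gamma<\alpha$ containing the parameters, and you yourself flag this as the ``main obstacle'' without carrying it out; but this step is both unavailable at this point and unnecessary. It is unavailable because the Skolem-hull-plus-condensation argument you sketch (as in Lemma \ref{lem:Sigma_nDefinableSubstructure}) needs the hull to be \emph{transitive}, which uses that every set of $L_\alpha[A,\bar U]$ has size at most $\kappa$ inside $L_\alpha[A,\bar U]$; the paper explicitly warns that $L_\alpha[A,\bar U]$ need not be simple even when $M$ is (that is what the proposition immediately following this lemma is for), so the hull need not be transitive, and the collapse of a non-transitive hull is not $\Sigma_n$-elementary in $L_\alpha[A,\bar U]$. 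If instead you settle for a non-transitive hull $H\in M$, then placing $H$, or the set $b$ defined from it, \emph{inside} $L_\alpha[A,\bar U]$ requires separating a $\Sigma_n$-definable subclass of a level---which is precisely the $\Sigma_n$-separation you are trying to prove, so the argument becomes circular. The step is also unnecessary: once the witnesses for elements of $b$ are bounded in $L_\beta[A,U]$, the instance of separation for $\varphi(x,b)=\exists y\,\psi(y,x,b)$ on $C$ is equivalent to separation for $\exists y\in L_\beta[A,U]\,\psi(y,x,b)$, and this formula is equivalent to a $\Pi_{n-1}$-formula using the $\Sigma_n$-collection you have already established in $\la L_\alpha[A,\bar U],\in,\bar U\ra$. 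Now $\KP_n$ itself supplies $\Sigma_{n-1}$- and $\Delta_n$-separation by Lemma \ref{KP implies sep}, which finishes the proof. So your diagnosis that ``full $\Sigma_n$-separation cannot follow from $\KP_n$ alone'' is true but misleading here: the witness-bounding step drops the complexity of the particular instance to $\Delta_n$, where $\KP_n$ does suffice, and no reflection to an elementary level is needed at all.
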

\begin{proof}
Observe that the $L[A,U]$ construction (up to $\alpha$) can be carried out in the structure $\la M,\in, U\ra$ by $\Sigma_1$-recursion.

First, suppose that $\la M,\in, U\ra\models\KP_n$. $\Sigma_0$-separation clearly holds in $\la L_\alpha[A,\bar U],\in, \bar U\ra$, so it suffices to verify $\Sigma_n$-collection. Suppose that for some $\Pi_{n-1}$-formula $\varphi(x,a)$, $$\la L_\alpha[A,\bar U],\in, \bar U\ra\models\forall x\in a\,\exists y\,\varphi(x,y,a).$$ For every $x\in a$, let $\alpha_x$ be the least ordinal such that $x\in L_{\alpha_x}[A,\bar U]$ and let $\beta_x$ be least ordinal such that $L_{\beta_x}[A,\bar U]$ has some $y$ witnessing $\varphi(x,y,a)$. Let $a\in L_\lambda[A,\bar U]$. Let $\varphi^*(x,y,a)$ be the formula $\varphi(x,y,a)$ relativized to $L_\alpha[A,\bar U]$, that is, for every unbounded quantifier we add the assertion that the variable is in $L[A,U]$. Since the assertion $x\in L[A,\bar U]$ is $\Sigma_1$ over $\la M,\in, U\ra$, the formula $\varphi^*(x,y,a)$ is $\Pi_{n-1}$. Next, observe that in $\la M,\in, U\ra$, the function $f:\lambda\to\alpha$ defined by $f(\xi)=0$ if $\xi\neq \alpha_x$ for any $x\in a$ and otherwise $f(\alpha_x)=\beta_x$ is $\Sigma_n$-definable. Thus, by $\Sigma_n$-collection in $\la M,\in, U\ra$, there is some $\beta<\alpha$ such that the range of $f$ is contained in $\beta$. It follows that $L_\beta[A,U]$ is a collecting set.

Next, suppose that $\la M,\in, U\ra\models\ZFC^-_n$. We already showed that $\la L_\alpha[A,\bar U],\in, \bar U\ra$ satisfies $\Sigma_n$-collection. Thus, it suffices to verify $\Sigma_n$-separation. Fix a $\Sigma_n$-formula $\varphi(x,b):=\exists y\,\psi(y,x,b)$, where $\psi(x,y,b)$ is $\Pi_{n-1}$, and a set $C\in L_\alpha[A,\bar U]$. The set $$\bar C=\{c\in C\mid \la L_\alpha[A,\bar U],\in, \bar U\ra\models\varphi(c,b)\}$$ exists in $M$ by $\Sigma_n$-separation in $\la M,\in, U\ra$. For every $c\in \bar C$, there is $y\in L_\alpha[A,U]$ such that $\la L_\alpha[A,U],\in, U\cap L_\alpha[A,U]\ra\models\psi(y,c,b)$. Thus, by $\Sigma_n$-collection in $\la M,\in, U\ra$, there is an ordinal $\beta$ such that $L_\beta[A,U]$ already has all the witnesses $y$ for $c\in\bar C$. It follows that we can replace the formula $\varphi(x,b)$ by the formula $\exists y \in L_\beta[A,U]\,\psi(y,x,b)$, in verifying separation for the set $C$ and the formula $\varphi(x,b)$ in $\la L_\alpha[A,U],\in, U\cap L_\alpha[A,U]\ra$. Since $\exists y \in L_\beta[A,U]\,\psi(y,x,b)$ is equivalent to a $\Pi_{n-1}$-formula by $\Sigma_n$-collection in $\la L_\alpha[A,U],\in, U\cap L_\alpha[A,U]\ra$, we have separation for it by $\KP_n$, which implies $\Sigma_{n-1}$-separation by Lemma~\ref{KP implies sep}.
\end{proof}
We can ensure that $L_\alpha[A,\bar U]$ is a weak $\kappa$-model by taking $\bar A$, instead of $A$, that codes $A$ and $V_\kappa$. However, if $M$ is simple, it is not necessarily the case that $L_\alpha[A,\bar U]$ is simple. In this case, letting $\beta=(\kappa^+)^{L_\alpha[A,\bar U]}$, we will be able to replace $L_\alpha[A,\bar U]$ by the simple weak $\kappa$-model $L_\beta[A,\bar U]$.
\begin{proposition}
$\la L_{\beta}[A,\bar U],\in,\bar U\cap L_{\beta}[A,\bar U]\ra\models\KP_n(\ZFC^-_n)$.
\end{proposition}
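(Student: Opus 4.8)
The plan is to exploit two structural facts about $\beta=(\kappa^+)^{L_\alpha[A,\bar U]}$ and then transfer the fragment from $L_\alpha[A,\bar U]$, which satisfies $\KP_n(\ZFC^-_n)$ by Lemma~\ref{lem:LU}, down to $L_\beta[A,\bar U]$. Write $L_\gamma:=L_\gamma[A,\bar U]$ and $U_\gamma:=\bar U\cap L_\gamma$ for brevity, and note $\beta<\alpha=\Ord^M$. The first fact is that $\beta$ is regular in $L_\alpha$, being a successor cardinal there. The second is the condensation identity $L_\beta=H_{\kappa^+}^{L_\alpha}$: any $x\in L_\alpha$ of hereditary size $\le\kappa$ sits in some $L_\gamma$ with $|\gamma|^{L_\alpha}=\kappa$, hence $\gamma<\beta$, by taking a size-$\kappa$ hull containing $(\kappa+1)\cup\tc(\{x\})$ and condensing (the predicates $A,\bar U$ are preserved because the $L[A,\bar U]$-construction carries them amenably); conversely every element of $L_\beta$ lies in some $L_\gamma$ with $\gamma<\beta$, so has hereditary size $\le|\gamma|\le\kappa$. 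In particular, every subset of $\kappa$ lying in $L_\alpha$ already lies in $L_\beta$, and every ordinal in $[\kappa,\beta)$ is collapsed to $\kappa$ inside $L_\beta$; this simultaneously shows $L_\beta$ is a simple weak $\kappa$-model. I will also use, as in the proof of Lemma~\ref{lem:LU}, that the relativised hierarchy $\gamma\mapsto L_\gamma$ is $\Sigma_1$-definable over $\la L_\alpha,\in,\bar U\ra$, so that for a fixed formula $\varphi$ the relation ``$\la L_\beta,\in,U_\beta\ra\models\varphi$'' is expressed by the syntactic relativisation $\varphi^{L_\beta}$, whose quantifiers are bounded to the $\Delta_1$-definable set $L_\beta$ and whose complexity over $\la L_\alpha,\in,\bar U\ra$ is therefore no greater than that of $\varphi$, up to the normal form afforded by $\Sigma_n$-collection.

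For separation, fix $C\in L_\beta$ and a formula $\psi(x,b)$ in the language with the ultrafilter, of complexity $\Delta_0$ in the $\KP_n$ case and $\Sigma_n$ in the $\ZFC^-_n$ case. The set $S=\{x\in C\mid\la L_\beta,\in,U_\beta\ra\models\psi(x,b)\}$ is defined over $\la L_\alpha,\in,\bar U\ra$ by $\psi^{L_\beta}$, which is $\Delta_0$ (resp. $\Sigma_n$) there, so $S\in L_\alpha$ by the corresponding instance of separation available in $L_\alpha$ by Lemma~\ref{lem:LU}. Since $S\subseteq C\in L_\beta=H_{\kappa^+}^{L_\alpha}$ we have $\tc(S)\subseteq\tc(C)$, so $S$ has hereditary size $\le\kappa$ in $L_\alpha$, and hence $S\in L_\beta$. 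This yields $\Delta_0$-separation in the $\KP_n$ case and $\Sigma_n$-separation in the $\ZFC^-_n$ case.

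For $\Sigma_n$-collection, suppose $\la L_\beta,\in,U_\beta\ra\models\forall x\in a\,\exists y\,\varphi(x,y,a)$ for $a\in L_\beta$ and $\varphi$ of complexity $\Pi_{n-1}$. For each $x\in a$ let $\gamma_x<\beta$ be least such that some $y\in L_{\gamma_x}$ satisfies $\varphi^{L_\beta}(x,y,a)$; by the complexity analysis above together with the normal form theorem, the graph of $x\mapsto\gamma_x$ is $\Sigma_n$ over $\la L_\alpha,\in,\bar U\ra$. As $\la L_\alpha,\in,\bar U\ra$ models $\KP_n(\ZFC^-_n)$, $\Sigma_n$-replacement holds there by Lemma~\ref{KP implies sep}, so the range $R=\{\gamma_x\mid x\in a\}$ is a set in $L_\alpha$ with $|R|\le|a|\le\kappa$. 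By regularity of $\beta$ we get $\gamma^*:=\sup R<\beta$, whence $L_{\gamma^*}\in L_\beta$ is a collecting set: every $x\in a$ has a witness in $L_{\gamma_x}\subseteq L_{\gamma^*}$. This verifies $\Sigma_n$-collection, completing both cases.

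The main obstacle is the bookkeeping underlying the two structural facts used throughout: establishing the condensation identity $L_\beta=H_{\kappa^+}^{L_\alpha}$ for the predicated hierarchy $L[A,\bar U]$, which is what makes separated subsets of small sets reappear in $L_\beta$ and makes $L_\beta$ simple, and pinning down that relativisation to the $\Delta_1$-definable set $L_\beta$ keeps the relevant formulas within $\Sigma_n$, so that $\Sigma_n$-replacement and the appropriate level of separation in $L_\alpha$ can legitimately be invoked. Both are routine once one notes that the $L[A,\bar U]$-construction is $\Sigma_1$-definable and carries its predicates amenably, exactly as exploited in the proof of Lemma~\ref{lem:LU}.
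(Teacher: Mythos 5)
Your proof is correct and is essentially the paper's own argument: both first establish the condensation identity $L_\beta[A,\bar U]=H_{\kappa^+}^{L_\alpha[A,\bar U]}$ via a size-$\kappa$ elementary hull plus Mostowski collapse, both transfer separation through that identity (a separated subset of a member of $H_{\kappa^+}$ lies back in $H_{\kappa^+}$), and both obtain collection from the fact that $\beta=(\kappa^+)^{L_\alpha[A,\bar U]}$ cannot be the supremum of a $\kappa$-sized set of ordinals of $L_\alpha[A,\bar U]$. Two places where your write-up elides what is in fact the crux deserve attention. First, you assert $\beta<\alpha$ outright; $\beta=\alpha$ is possible, but in that case the proposition is exactly Lemma~\ref{lem:LU}, so this costs nothing once noted. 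Second, and more substantively, your collection step rests on ``$\beta$ is regular in $L_\alpha[A,\bar U]$, being a successor cardinal there.'' In a structure satisfying only $\KP_n$ (resp.\ $\ZFC^-_n$) with a predicate, this is not a free fact: regularity of successor cardinals is a choice- and collection-dependent theorem, and its proof in this setting is precisely the closing argument of the paper's proof, namely using the $\Sigma_1$-definable well-order of the $L[A,\bar U]$-hierarchy together with $\Sigma_1$-collection in $\la L_\alpha[A,\bar U],\in,\bar U\ra$ to gather bijections $f_\xi\colon\kappa\to\eta_\xi$ into a set and then contradict that $\beta$ is a cardinal of $L_\alpha[A,\bar U]$. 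With that justification supplied, your direct ``$\Sigma_n$-replacement plus regularity'' derivation of $\Sigma_n$-collection is a mild reorganization of the paper's proof-by-contradiction of full collection, and the remaining bookkeeping in your argument (the hierarchy and $L_\beta[A,\bar U]$ being $\Delta_1$-definable, relativised formulas staying within the required complexity, and the separated set landing in $H_{\kappa^+}$) is sound.
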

\begin{proof}
First, let's argue that $L_{\beta}[A,\bar U]=H_{\kappa^+}^{L_\alpha[A,\bar U]}$. Fix a set $B\in L_\alpha[A,\bar U]$ whose transitive closure $\mathrm{TCl}(B)$ has size at most $\kappa$. Fix some limit ordinal $\gamma$ such that $\mathrm{TCl}(B)\in L_\gamma[A,\bar U]$. In $L_\alpha[A,\bar U]$, let $$\la X,\in, \bar U\cap X\ra\prec \la L_\gamma[A,\bar U],\in,\bar U\cap L_\gamma[A,\bar U]\ra,$$ with $|X|=\kappa$, $\kappa+1\subseteq X$, and $A,\mathrm{TCl}(B)\in X$. A $\Sigma_1$-recursion suffices to construct $X$ because truth in $\la L_\gamma[A,\bar U],\in, \bar U\cap L_\gamma[A,\bar U]\ra$ is $\Delta_1$-definable. Note that $U^*=\bar U\cap X$ is a set because $X$ has size $\kappa$ in $L_\alpha[A,\bar U]$. Let $\pi:X\to N$ be the Mostowski collapse. Note that $\pi(\mathrm{TCl}(B))=\mathrm{TCl}(B)$. Since $\pi$ fixes subsets of $\kappa$, we have that $\pi$ is an isomorphism between $\la X,\in, U^*\ra$ and $\la N,\in, U^*\ra$. It follows that $N=L_{\bar\beta}[A,\bar U]$ for some $\bar\beta$ of size $\kappa$ in $L_\alpha[A,\bar U]$. Thus, $B\in L_{\beta}[A,\bar U]$.

If $\beta=\alpha$, then we are done. So we can assume that $\beta<\alpha$, and in particular, that $L_\beta[A,\bar U]$ is a set in $L_\alpha[A,\bar U]$. Since $L_\beta[A,\bar U]=H_{\kappa^+}^{L_\alpha[A,\bar U]}$, $\la L_{\beta}[A,\bar U],\in,\bar U\cap L_{\beta}[A,\bar U]\ra$ has the same amount of comprehension as $\la L_\alpha[A,\bar U],\in,\bar U\ra$. Next, we will argue that $\la L_\beta[A,\bar U],\in,\bar U\cap L_\beta[A,U]\ra$ satisfies full collection. Suppose that collection fails in \hbox{$\la L_\beta[A,\bar U],\in, \bar U\cap L_\beta[A,\bar U]\ra$}. We can assume without loss of generality that it fails for a formula $\forall \gamma\in\kappa\,\exists y\,\psi(\gamma,y)$. We can use the failure to obtain an injection from $\kappa$ into $\beta$ that is definable over  $\la L_\beta[A,\bar U],\in,\bar U\cap L_\beta[A,\bar U]\ra$. We define the injection by mapping $\xi$ to the least $\eta_\xi$ such that $\exists y\in L_{\eta_\xi}[A,\bar U]\,\psi(\xi,y)$ holds. Now, we use $\Sigma_1$-collection in $\la L_\alpha[A,\bar U],\in,\bar U\ra$ to construct a set consisting of bijections $f_\xi:\kappa\to\eta_\xi$. But this set contradicts that $\beta=(\kappa^+)^{L_\alpha[A,\bar U]}$.
\end{proof}
The following lemmas will prove useful in later arguments.
\begin{lemma}
\label{Sigma n uniformisation}
If $\la L_\alpha[A],A\ra\models\KP_{n-1}$ for some set $A$ and $n\geq 1$, then it has $\Sigma_n$-definable Skolem functions for $\Sigma_n$-formulas.
\end{lemma}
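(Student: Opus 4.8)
The plan is to imitate the classical $\Sigma_1$-uniformisation argument but to stratify the witnesses by the stages of the relativised constructible hierarchy, so that the minimisation needed to pick a canonical witness stays at complexity $\Sigma_n$. First I would recall, exactly as in the proof of Lemma~\ref{lem:LU}, that the hierarchy $\la L_\beta[A]\mid\beta\leq\alpha\ra$ together with its canonical well-order $<_{L[A]}$ is obtained by a $\Sigma_1$-recursion; consequently the relation ``$s=L_\beta[A]$'' and the order $<_{L[A]}$ are $\Delta_1$-definable over $\la L_\alpha[A],A\ra$, this requiring only $\KP$, which is contained in $\KP_{n-1}$. Note that $<_{L[A]}$ is set-like and orders sets primarily by the stage at which they are constructed.

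Fix a $\Sigma_n$-formula $\varphi(x,y)\equiv\exists z\,\pi(x,y,z)$ in the language with $A$, where $\pi$ is $\Pi_{n-1}$. The obvious candidate, ``$y$ is the $<_{L[A]}$-least element with $\exists z\,\pi(x,y,z)$'', is no good: its defining formula is a conjunction of a $\Sigma_n$- and a $\Pi_n$-statement, hence only $\Delta_{n+1}$. Instead I would select the canonical witness \emph{by levels}: let $\beta$ be the least ordinal such that some pair $(y,z)$ with both coordinates in $L_\beta[A]$ satisfies $\pi(x,y,z)$, let $(y,z)$ be the $<_{L[A]}$-least such pair lying in $L_\beta[A]$, and output its first coordinate $y$. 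Since the ordinals are well-ordered and $<_{L[A]}$ restricted to the set $L_\beta[A]$ has least elements, this prescription defines a single-valued function $h$, total on $\{x\mid\exists y\,\varphi(x,y)\}$, and it uniformises $\varphi$ because the chosen pair genuinely satisfies $\pi$.

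The point of the stratification is the complexity count, and this is where $\KP_{n-1}$ enters. I would write the graph of $h$ as $\exists\beta\,\exists s\,\exists z\,[\,s=L_\beta[A]\wedge y,z\in s\wedge\pi(x,y,z)\wedge\theta_{\mathrm{lev}}\wedge\theta_{\min}\,]$, where $\theta_{\mathrm{lev}}$ asserts that no witness pair has both coordinates in any $L_\gamma[A]$ with $\gamma<\beta$, and $\theta_{\min}$ asserts that $(y,z)$ is $<_{L[A]}$-least among witness pairs inside $s$. Crucially, both minimisation clauses quantify $\gamma,y',z'$ only \emph{boundedly}, over the set $s=L_\beta[A]$, and their matrices reduce to $\neg\pi$, a $\Sigma_{n-1}$-formula (together with the $\Delta_1$-clauses for the hierarchy and $<_{L[A]}$). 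Because $\Sigma_{n-1}$-collection is the defining scheme of $\KP_{n-1}$, the class of $\Sigma_{n-1}$-formulas is closed under bounded quantification (the usual induction, feeding on $\Sigma_m$-collection for $m<n$ via Lemma~\ref{KP implies sep} and the normal form recorded before Lemma~\ref{lem:nElemImpliesKPn}); hence $\theta_{\mathrm{lev}}$ and $\theta_{\min}$ are each $\Sigma_{n-1}$. Putting the conjunction into prenex form, a $\Pi_{n-1}$-matrix $\pi$ conjoined with $\Sigma_{n-1}$- and $\Delta_1$-clauses yields a $\Sigma_n$-matrix, and prefixing the leading existentials $\exists\beta\,\exists s\,\exists z$ keeps the whole formula $\Sigma_n$. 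Thus $h$ has a $\Sigma_n$-definable graph, which is what is required.

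The main obstacle is exactly this interaction: for $n\geq2$ the matrix $\pi$ is no longer absolute to the levels $L_\beta[A]$, so one cannot, as in the plain $\Sigma_1$ case, simply evaluate its truth inside a level, and the ``global $<_{L[A]}$-least witness'' escapes $\Sigma_n$. The stratified choice sidesteps this by routing every minimisation through bounded quantifiers over the \emph{set} $L_\beta[A]$, and it is precisely $\Sigma_{n-1}$-collection that certifies these bounded minimisations remain $\Sigma_{n-1}$ and so do not push the total complexity past $\Sigma_n$. A uniform Skolem function, if wanted later, can be extracted by running the same construction with $\pi$ replaced by a truth predicate for $\Pi_{n-1}$-formulas applied to a code for $\varphi$, whenever such a predicate is available.
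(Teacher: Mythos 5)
Your proposal is correct and follows essentially the same route as the paper's proof: select the witness at the least level $L_\beta[A]$ containing a full witness pair, then take the $<_{L[A]}$-least witness within that level, using $\KP$ for the $\Delta_1$-definability of the hierarchy and $<_{L[A]}$, and $\Sigma_{n-1}$-collection from $\KP_{n-1}$ to keep the minimality clauses (bounded quantifications of $\neg\pi$, which is $\Sigma_{n-1}$) at complexity $\Sigma_{n-1}$, so the whole graph is $\Sigma_n$. The only (immaterial) difference is that you minimise over pairs $(y,z)$ where the paper minimises over $y$ with some witness $z$ in the same level; your write-up simply makes explicit the complexity bookkeeping that the paper leaves terse.
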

\begin{proof}
Suppose that $\varphi(x,y)$ is the formula $\exists z\ \psi(x,y,z)$, where $\psi(x,y,z)$ is a $\Pi_{n-1}$-formula.
Let $\gamma$ be least such that $L_\gamma[A]$ contains witnesses $y$ and $z$ for $\psi(x,y,z)$. Then let $y$ be the $<_{L[A]}$-least in $L_\gamma[A]$ such that there is a witness $z\in L_\gamma[A]$ for which $\psi(x,y,z)$ holds. Since $\la L_\alpha[A],A\ra\models\KP$, we can define the $L[A]$-hierarchy. Since $\la L_\alpha[A],A\ra\models\KP_{n-1}$, the statement that $\psi(x,y,z)$ fails for all $<_{L[A]}$-smaller $z$ is a $\Sigma_{n-1}$-statement.
\end{proof}
As a consequence of having $\Sigma_n$-definable Skolem functions for $\Sigma_n$-formulas, we have that the $\Sigma_n$-Skolem hull (over any collection of parameters) taken using these functions is $\Sigma_n$-elementary in $\la L_\alpha[A],A\ra$.

The following proof is based on an argument of Philip Welch which shows that the least admissible ordinal $\alpha>\omega_1$ has uncountable cofinality.

\begin{lemma}\label{lem:Sigma_nDefinableSubstructure}
Suppose that $M$ is a simple weak $\kappa$-model and $U$ is an $M$-ultrafilter such that $M=L_\alpha[A,U]$ for some $A\subseteq\kappa$ and $\la M,\in, U\ra\models\KP_n$. Then $\la M,\in, U\ra$ has a transitive $\Sigma_n$-elementary substructure $\la \bar M,\in, U\cap\bar M\ra$ such that $\bar M$ is a $\kappa$-model.
\end{lemma}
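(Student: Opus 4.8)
The plan is to realise $\bar M$ as the transitive collapse of an internally built continuous chain of substructures of length $\kappa$, arranging the collapse height to have cofinality exactly $\kappa$; this cofinality computation is the Welch-style heart of the matter. First I would assemble the tools available over $\la M,\in,U\ra$. Since $n\geq 1$, the theory $\KP_n$ includes $\Delta_0$-separation, so by the characterisation of weak amenability through $\Delta_0$-separation the $M$-ultrafilter $U$ is weakly amenable. As $M$ is simple, Lemma~\ref{lem:definableWellOrder} then gives a $\Delta_1$-definable strong well-order $\lhd_U$ of $M$, and Lemma~\ref{truth predicate from ultrafilter} gives a $\Delta_1$-definable truth predicate $\Tr_M$ for $\la M,\in\ra$; crucially, neither needs $U$ to be good, which is exactly the regime of this lemma. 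In addition, Lemma~\ref{Sigma n uniformisation} supplies $\Sigma_n$-definable Skolem functions for $\Sigma_n$-formulas in the language with $U$, and Lemma~\ref{KP implies sep} supplies $\Sigma_n$-replacement and $\Sigma_n$-recursion along ordinals.

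Using $\Tr_M$ and $\lhd_U$ I can define, uniformly for every $\in$-formula $\varphi$, an $\in$-Skolem function returning the $\lhd_U$-least witness; closing a set under all of these (there are only countably many) produces fully $\in$-elementary substructures of $\la M,\in\ra$, while additionally closing under the $\Sigma_n$-Skolem functions of Lemma~\ref{Sigma n uniformisation} keeps the substructure $\Sigma_n$-elementary in $\la M,\in,U\ra$. Because all these closures are definable over $\la M,\in,U\ra$ and $\KP_n$ supplies $\Sigma_n$-replacement, the hull of any set in $M$ is again an element of $M$. I would then define by $\Sigma_n$-recursion a continuous $\in$-increasing sequence $\la X_\xi\mid\xi<\kappa\ra$ of such hulls: seed $X_0$ with $V_\kappa\union\{V_\kappa,\kappa,A\}$, let $X_{\xi+1}$ be the hull of $X_\xi\union\{X_\xi\}$, and take unions at limits. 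Seeding with $V_\kappa$ forces $\kappa\of X_\xi$ and $|X_\xi|=\kappa$, and placing $X_\xi\in X_{\xi+1}$ guarantees that $X_{\xi+1}$ contains the genuine ordinal $\rho_\xi=\mathrm{rank}^M(X_\xi)>\sup(X_\xi\cap\Ord)$.

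Let $\bar X=\Union_{\xi<\kappa}X_\xi$ and let $\pi\colon\bar X\to\bar M$ be the transitive collapse. Since $\bar X$ is fully $\in$-elementary in $\la M,\in\ra$, the collapse is fully elementary, so $\bar M\models\ZFC^-$; since $\bar X$ is also $\Sigma_n$-elementary in the language with $U$ and $\pi$ fixes every subset of $\kappa$ (as $\kappa\union\{\kappa\}\of\bar X$), we obtain $\la\bar M,\in,\bar U\ra\prec_{\Sigma_n}\la M,\in,U\ra$ with $\bar U=U\cap\bar M$, and by condensation $\bar M=L_{\bar\alpha}[A,\bar U]$ for $\bar\alpha=\Ord\cap\bar M$. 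Moreover $\bar M$ is simple by $\Sigma_1$-elementarity, since $M$ is. The key point is the cofinality of $\bar\alpha$: because the $X_\xi$ strictly increase in ordinal height (each $\rho_\xi$ lies above $\sup(X_\xi\cap\Ord)$) and the chain is continuous of length $\kappa$, the collapse heights $\bar\alpha_\xi$ are strictly increasing and continuous, so $\bar\alpha=\sup_{\xi<\kappa}\bar\alpha_\xi$ has cofinality $\kappa$. This works even when $\alpha=\Ord^M$ itself has cofinality below $\kappa$—precisely the case where $M$ fails to be a $\kappa$-model—because the height is pushed up by the internal ranks $\rho_\xi<\alpha$ rather than by external cofinal ordinals.

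It remains to verify closure under ${<}\kappa$-sequences. Given $s\colon\lambda\to\bar M$ with $\lambda<\kappa$, since $\cof(\bar\alpha)=\kappa>\lambda$ the range of $s$ is bounded, say $\ran(s)\of L_{\bar\beta}[A,\bar U]$ for some $\bar\beta<\bar\alpha$. As $\bar M$ is simple and models $\ZFC^-$, it contains a bijection $b\colon\kappa\to L_{\bar\beta}[A,\bar U]$; then $b^{-1}\circ s\colon\lambda\to\kappa$ has rank below $\kappa$, so lies in $V_\kappa\of\bar M$ (here $\kappa$ inaccessible is used), whence $s=b\circ(b^{-1}\circ s)\in\bar M$ by replacement in $\bar M$. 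Thus $\bar M^{{<}\kappa}\of\bar M$ and $\bar M$ is a $\kappa$-model, as required. I expect the main obstacle to be the simultaneous bookkeeping: securing full $\in$-elementarity (to get $\ZFC^-$ in the collapse) and $\Sigma_n$-elementarity in the language with $U$, while still forcing the collapse height to have cofinality $\kappa$ via the internal ranks. The failure of goodness—so that $\lhd_U$ may be externally ill-founded—forces one to use $\lhd_U$ only as an internal selection device, never to extract genuine well-founded witnesses, and the whole construction must be carried out inside $\la M,\in,U\ra$ using $\Tr_M$ and $\Sigma_n$-recursion to keep the hulls elements of $M$.
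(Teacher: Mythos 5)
Your construction has a genuine gap at the step where you claim that ``the hull of any set in $M$ is again an element of $M$'' because ``$\KP_n$ supplies $\Sigma_n$-replacement''. The Skolem functions you close under to secure full $\in$-elementarity are of the form ``the $\lhd_U$-least $b$ with $\Tr_M(\varphi,(b,a))$'', and since $\lhd_U$ is \emph{not set-like} (the paper stresses this), the minimality clause $\forall b'\,\bigl(b'\lhd_U b\rightarrow\neg\Tr_M(\varphi,(b',a))\bigr)$ is a genuinely unbounded universal quantifier over a $\Delta_1$ matrix. So these functions have $\Pi_1$-definable graphs, not $\Sigma_1$-definable ones. For $n\geq 2$ this can be absorbed ($\Pi_1\subseteq\Sigma_2\subseteq\Sigma_n$), but the lemma must hold at exactly the level $\KP_n$ for every $n\geq 1$, and at $n=1$ you only have $\Sigma_1$-collection: forming the image of a set under a $\Pi_1$-graph function, let alone running a length-$\kappa$ recursion whose steps form such hulls, is simply not available. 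This is not bookkeeping you can tighten: the paper's Lemma~\ref{lem:ZFCnImpliesElemSubstructures}, which does build elementary substructures as \emph{elements} of the model, is stated one level up (at $\KP_{n+1}$, with $U$ good) precisely because internal hulls cost an extra level of collection. Nor can you retreat to an external construction: since $\Ord^M<\kappa^+$, we may have $\cof(\Ord^M)\leq\kappa$, in which case externally built hulls of size $\kappa$ can be cofinal in $\Ord^M$, the rank-pushing device dies, and the collapse height can have small cofinality --- as you yourself note, internality is exactly what makes your cofinality argument work, but internality is what $\KP_1$ cannot pay for.

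The paper avoids all of this by never requiring its hull to be a set in $M$ and never taking a length-$\kappa$ chain. It takes the \emph{single} $\Sigma_n$-Skolem hull of $\kappa+1$ in $\la L_\alpha[A,U],\in,A,U\ra$, using the Skolem functions of Lemma~\ref{Sigma n uniformisation}, which stay $\Sigma_n$-definable because witnesses are selected by the set-like, truly well-founded order $<_{L[A,U]}$ --- this is the real point of the hypothesis $M=L_\alpha[A,U]$, which your argument never exploits (you use $\lhd_U$ instead, and its non-set-likeness is what inflates the complexity). That hull is automatically transitive by simplicity (each of its elements admits a definable surjection from $\kappa$ lying in the hull), so no collapse is needed, and it is allowed to be all of $M$. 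Closure under ${<}\kappa$-sequences then comes not from a cofinality computation but from pointwise definability: every element of $\bar M$ is $\Sigma_n$-definable from parameters in $\kappa+1$, so an external sequence $\{x_i\mid i<\xi\}$ with $\xi<\kappa$ is coded by sequences of formulas and ordinals lying in $V_\kappa\of\bar M$; using the $\Sigma_n$-truth predicate of Lemma~\ref{truth pred for simple weak}, $\Sigma_n$-collection bounds the $x_i$ and collects witnesses so that the sequence becomes $\Pi_{n-1}$-definable, whence it is a set in $\bar M$ by $\Delta_n$-separation (Lemma~\ref{KP implies sep}). If you want to salvage your route, you must drop the full $\in$-elementary Skolem functions --- and with them your source of $\ZFC^-$ in $\bar M$ --- and close only under the $\Sigma_n$-Skolem functions, at which point you are essentially reconstructing the paper's argument.
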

\begin{proof}
We can assume without loss of generality that $A$ codes $V_\kappa$.  Let $\la \bar M,\in, A,U\cap \bar M\ra$ be the $\Sigma_n$-Skolem hull of $\kappa+1$, using the $\Sigma_n$-definable Skolem functions (from Lemma~\ref{Sigma n uniformisation}) in $\la L_\alpha[A,U],\in, A,U\ra$. By Lemma~\ref{Sigma n uniformisation}, we have $$\la \bar M,\in, A,U\cap \bar M\ra\prec_{\Sigma_n}\la L_\alpha[A,U],\in,A,U\ra.$$ Since every set in $L_\alpha[A,U]$ has size $\kappa$, it follows that $\bar M$ is transitive. Thus, by Lemma~\ref{lem:nElemImpliesKPn}, $$\la \bar M,\in, A,U\cap \bar M\ra\models\KP_n.$$ By $\Sigma_n$-elementarity, we have $\bar M=L_{\bar\alpha}[A,U]$ for some $\bar\alpha\leq\alpha$ and the structure $$\la L_{\bar\alpha}[A,U],\in,U\cap L_{\bar\alpha}[A,U]\ra$$ has the additional property that every element is $\Sigma_n$-definable using parameters from $\kappa+1$.

We claim that $\bar M^{<\kappa}\subseteq \bar M$.
To see this, suppose that $\vec{x}=\{ x_i \mid i<\xi\}$ is a sequence of elements of $\bar M$ for some $\xi<\kappa$. Let each $x_i$ be definable in $\la L_{\bar\alpha}[A,U],\in,A,U\cap L_{\bar\alpha}[A,U]\ra$ by the $\Sigma_n$-formula $\varphi_i$ using parameters $\nu_i<\kappa$ and $\kappa$. Since $V_\kappa\in\bar M$, the sequences $\{ \nu_i\mid i<\xi\}$ and \hbox{$\{\varphi_i\mid i<\xi\}$} are both in $\bar M$. Thus, using the $\Sigma_n$-definable $\Sigma_n$-truth predicate $\Tr_n^{U}(x)$ (which exists by Lemma~\ref{truth pred for simple weak}), the sequence $\{ x_i\mid i<\xi\}$ is $\Sigma_n$-definable over \hbox{$\la L_{\bar\alpha}[A,U],\in,A,U\cap L_{\bar\alpha}[A,U]\ra$}. By $\Sigma_n$-collection, there must be some $\beta<\bar\alpha$ such that all $x_i$ are in $L_\beta[A,U]$. At this point, we would be done if we had $\Sigma_n$-separation, but we have only $\Delta_n$-separation available. Thus, we need to do some more work to reduce the complexity of the formula defining the sequence $\{x_i\mid i<\xi\}$.

For each $i<\xi$, let $\varphi_i(x,\nu_i,\kappa):=\exists y\,\psi_i(x,\nu_i,\kappa,y)$, where $\psi$ is a $\Pi_{n-1}$-formula. Next, we use $\Sigma_n$-collection on the formula $$\forall i<\xi\,\exists y\,\Tr_n^{U}(\la \exists y\,\psi_i(x,z,w,y),x_i,\nu_i,\kappa)$$ to obtain a set $Y$ containing, for each $i<\xi$, a witness $y_i$ such that $\psi_i(x_i,\nu_i,\kappa,y_i)$ holds. Using the set $Y$, we can now reduce the complexity of the definition of the sequence to a $\Pi_{n-1}$-formula.

\end{proof}
Thus, in particular, we will be able to replace a model $\la M,\in, U\ra\models\KP_n$ with a $\kappa$-model with the same properties without an increase in consistency strength.


The next result will allow us to separate large cardinal notions defined using $\ZFC^-_n$ and $\KP_n$ in Section \ref{sec:hierarchy}. 

\begin{lemma}\label{lem:KP_nImpliesKappaModel}
Suppose that $M$ is a simple weak $\kappa$-model and $U$ is an $M$-ultrafilter such that $\la M,\in, U\ra\models\ZFC^-_n$. Then for every set $A\in M$, there is a weak $\kappa$-model $N\in M$ with $A\in N$ such that $\la N,\in, U\cap N\ra\models\KP_n$.
\end{lemma}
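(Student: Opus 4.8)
The plan is to produce $N$ as a $\Sigma_n$-definable Skolem hull inside a constructible reorganisation of $M$, exploiting that the \emph{full} strength of $\ZFC^-_n$ (as opposed to merely $\KP_n$) forces such a hull to be a set, and hence a proper initial segment that is an element of $M$.

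First I would reduce to a model of the right shape. Since $M$ is simple, $\kappa$ is its largest cardinal, so the given $A$ together with $V_\kappa$ is coded by a single subset $\hat A\of\kappa$ in $M$. Put $\alpha=\Ord^M$ and $\bar U=U\cap L_\alpha[\hat A,U]$. By Lemma~\ref{lem:LU} the structure $\la L_\alpha[\hat A,\bar U],\in,\bar U\ra$ satisfies $\ZFC^-_n$, and by the Proposition following Lemma~\ref{lem:LU} I may pass to the simple weak $\kappa$-model $M'=L_\beta[\hat A,\bar U]$, where $\beta=(\kappa^+)^{L_\alpha[\hat A,\bar U]}\leq\alpha$, still with $\la M',\in,\bar U\ra\models\ZFC^-_n$ (hence $\KP_n$ and $\KP_{n-1}$). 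As $M'$ is simple, $\bar U$ is weakly amenable to $M'$ via the equivalence of weak amenability with $\Delta_0$-separation, so that Lemmas~\ref{lem:definableWellOrder} and~\ref{truth predicate from ultrafilter} apply to $\la M',\in,\bar U\ra$. All the subsequent work takes place over $\la M',\in,\bar U\ra$; note that $\bar U\cap N=U\cap N$ for any $N\of M'$.

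Next I would assemble two families of $\Sigma_n$-definable Skolem functions over $\la M',\in,\bar U\ra$. Because $M'=L_\beta[\hat A,\bar U]$ carries the canonical well-order $<_{L[\hat A,\bar U]}$ and $\la M',\in,\bar U\ra\models\KP_{n-1}$, Lemma~\ref{Sigma n uniformisation} supplies $\Sigma_n$-definable Skolem functions $h_n$ for $\Sigma_n$-formulas in the language with the predicate $\bar U$; closing under these yields $\Sigma_n$-elementarity in $\{\in,\bar U\}$. For \emph{full} elementarity in the pure $\in$-language---which is what will give $N\models\ZFC^-$---I would instead use the $\Delta_1$-definable truth predicate $\Tr_{M'}$ for $\la M',\in\ra$ from Lemma~\ref{truth predicate from ultrafilter}, setting $h^\in(\varphi,\vec a)$ to be the $<_{L[\hat A,\bar U]}$-least witness occurring in the least level $L_\gamma[\hat A,\bar U]$ that contains one. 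Locating that least level is $\Delta_1$ and the unbounded existential over $\gamma$ then makes the graph of $h^\in$ a $\Sigma_1$-relation---the same device as in Lemma~\ref{Sigma n uniformisation}, but with the single predicate $\Tr_{M'}$ in place of formula-by-formula truth. Thus both $h_n$ and $h^\in$ are $\Sigma_n$-definable over $\la M',\in,\bar U\ra$.

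Finally I would close $\kappa+1\cup\{\hat A\}$ under $h^\in$ and $h_n$: with $N_0=\kappa+1\cup\{\hat A\}$ and $N_{k+1}=N_k\cup h^\in[\omega\times N_k^{{<}\omega}]\cup h_n[\omega\times N_k^{{<}\omega}]$, each $N_k$ is a set by $\Sigma_n$-replacement, the sequence $\la N_k\mid k<\omega\ra$ exists by $\Sigma_n$-recursion, and $N=\bigcup_{k<\omega}N_k$ is a set in $M'$. This single step is where I genuinely use $\Sigma_n$-collection, and hence the hypothesis $\ZFC^-_n$ rather than $\KP_n$. Since every element of $M'$ has size at most $\kappa$ and $\kappa\of N$, the usual argument shows $N$ is transitive, so $N=L_{\bar\alpha}[\hat A,\bar U]$ with $\bar\alpha=\sup(N\cap\Ord)<\beta\leq\alpha$; therefore $N\in M$. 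Closure under $h^\in$ gives $N\prec\la M',\in\ra$, whence $N\models\ZFC^-$, and together with transitivity, size $\kappa$, and $V_\kappa\in N$ (decoded from $\hat A$, with $A\in N$ as well) this makes $N$ a weak $\kappa$-model. Closure under $h_n$ gives $\la N,\in,\bar U\cap N\ra\prec_{\Sigma_n}\la M',\in,\bar U\ra$, so Lemma~\ref{lem:nElemImpliesKPn}, applied with $M'$ as the ambient model, yields $\la N,\in,U\cap N\ra\models\KP_n$, as required. The main obstacle is exactly the simultaneous demand that $N$ be a genuine weak $\kappa$-model (needing full $\in$-elementarity, hence $\Tr_{M'}$) and an \emph{element} of $M$ (needing the hull to be a set, hence $\Sigma_n$-collection over the $L[\hat A,\bar U]$-form); neither the canonical $\Sigma_n$-Skolem functions nor the set-ness argument survives without first reorganising $M$ as $L_\beta[\hat A,\bar U]$.
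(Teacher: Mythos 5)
Your single closure step is fine: forming the witness set over a fixed set $B$ needs only $\Sigma_n$-separation (to carve out which tuples lie in the domain of the Skolem functions, i.e.\ which existential $\Sigma_n$-statements are true) followed by $\Sigma_n$-replacement, and this matches what the paper itself does. The gap is the iteration. You claim the sequence $\la N_k\mid k<\omega\ra$ exists by $\Sigma_n$-recursion inside $\la M',\in,\bar U\ra$, but the recursion step is not $\Sigma_n$-definable: to say ``$y=N_{k+1}$'' one must assert, besides the clause that every element of $y$ is in $N_k$ or is a Skolem value over $N_k$ (which is $\Sigma_n$ under collection), also the completeness clause that \emph{every} tuple in the domain of $h_n$ gets its value into $y$; since ``$t\in\dom(h_n)$'' is $\Sigma_n$, that clause is $\Pi_n$, so the graph of the step function is of the form $\Sigma_n\wedge\Pi_n$, and the recursion scheme available from Lemma~\ref{KP implies sep} does not apply --- one needs $\Sigma_{n+1}$-recursion, i.e.\ $\KP_{n+1}$. (Carrying the truth sets $T_{N_k}$ along as parameters does not help: each exists by $\Sigma_n$-separation, but \emph{recognizing} that a candidate set is the correct truth set over $N_k$ is again a $\Sigma_n\wedge\Pi_n$ condition.) This one-quantifier jump is exactly what the paper is careful about: Lemma~\ref{lem:ZFCnImpliesElemSubstructures}, the one result that performs such an internal recursion, assumes $\KP_{n+1}$, and the remark following Lemma~\ref{lem:ElemSubstructuresImplyZFCn} notes that its conclusion genuinely fails from $\ZFC^-_n$ alone.

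Moreover, your conclusion cannot be repaired, because it is simply false that the hull is always a set in $M'$. In the case where every element of $M'$ is $\Sigma_n$-definable in $\la M',\in,\bar U\ra$ from parameters in $\kappa+1\cup\{\hat A\}$, your $N$ contains the $\Sigma_n$-hull of $\kappa+1$ and hence equals $M'$, so it is not an element of $M'$. This is precisely the case the paper's proof must, and does, treat separately: it first applies Lemma~\ref{lem:Sigma_nDefinableSubstructure} to obtain the transitive $\Sigma_n$-elementary hull $\bar M$, which is a $\kappa$-model; if $\bar M\in M$ we are done, and otherwise $\bar M=M$, so that $M$ itself is a $\kappa$-model and in particular closed under $\omega$-sequences \emph{externally}. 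In that second case the paper builds the chain $M_0\of M_1\of\cdots$ of witness-closures externally --- each single step internal, exactly as in your proposal --- and concludes that the chain and its union lie in $M$ by $M$'s external closure, not by any internal recursion. That use of external $\omega$-closure is what substitutes for the unavailable $\Sigma_{n+1}$-recursion, and your argument has no analogue of this move.
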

\begin{proof}
By shrinking the model, if necessary, we can assume without loss of generality that $M=L_\alpha[A,U]$. By Lemma~\ref{lem:Sigma_nDefinableSubstructure}, $\la M,\in, U\ra$ has a transitive $\Sigma_n$-elementary substructure \hbox{$\la \bar M,\in, U\cap\bar M\ra$} such that $\bar M$ is a $\kappa$-model. If $\bar M\in M$, then we are done. Otherwise, $\bar M=M$. Observe that for any set $B\in M$, the set $$T_B=\{\la\varphi,b\ra\mid b\in B\text{ and }\Tr_n^U(\varphi,b)\}$$ is in $M$ by $\Sigma_n$-separation. Also, using $\Sigma_n$-collection and the set $T_B$, $M$ has a set $B^*$ such that for every $\Sigma_n$-formula $\exists y\,\psi(x,y)$ and $b\in B$ such that $\la M,\in, U\ra\models\exists y\,\psi(y,b)$, there is $y_b\in B^*$ such that $\la M,\in,U\ra\models\psi(b,y_b)$. Let $M_0\in M$ be any weak $\kappa$-model with $A\in M_0$. Let $M_1=\mathrm{TCl}(M_0^*)$, and more generally, given $M_n$, let $M_{n+1}=\mathrm{TCl}(M_n^*)$. Let $N=\bigcup_{n<\omega} M_n$, and observe that $N\in M$ by closure. By construction $N$ is transitive and $$\la N,\in, U\cap N\ra\prec_{\Sigma_n}\la M,\in, U\ra,$$ and so $\la N,\in, U\cap N\ra\models\KP_n$ by Lemma~\ref{lem:nElemImpliesKPn}.
\end{proof}


\section{Baby versions}
\label{section: babym}

We now define $n$-baby measurable cardinals by slightly simplifying the notions of Bovykin and McKenzie \cite{BovykinMcKenzie:RamseyLikeNFUM} and we further introduce some related large cardinal notions.

\begin{definition}
\label{def baby meas}
A cardinal $\kappa$ is:
\begin{enumerate}
\item
\label{def baby meas veryweak}
\emph{faintly $n$-baby measurable} if every $A\subseteq\kappa$ is an element of a weak $\kappa$-model $M$ for which there is an $M$-ultrafilter such that $\la M,\in,U\ra\models\ZFC^-_n$.



\item
\label{def baby meas weak}
 \emph{weakly $n$-baby measurable} if \eqref{def baby meas veryweak} holds and in addition, $U$ is good.
%
%

\item
\label{def baby meas main}
\emph{$n$-baby measurable} if \eqref{def baby meas weak} holds and in addition, $M$ is a $\kappa$-model.
\item 
\label{def [n] baby meas}
\emph{{$[n]$-baby measurable}} if \eqref{def baby meas main} holds but with $\KP_n$ instead of $\ZFC^-_n$.

\end{enumerate}
Notions (1)-(3) have variants where $\ZFC^-_n$ is replaced by $\ZFC^-$. We then omit $n$ from the notation. We will see from Lemma~\ref{lem:noFaintWeakKP} below that the concepts of faintly and weakly $[n]$-baby measurable cardinals are equivalent to \eqref{def [n] baby meas}. 
\end{definition}

We can further assume that all weak $\kappa$-models involved in the definitions above are simple. While the $M$-ultrafilter in the definition of ($n$-)faintly baby measurable cardinals need not be good, the notion will still turn out to be quite strong.

The faintly ($n$-)baby measurable cardinals are analogues of completely ineffable cardinals, but with stronger $M$-ultrafilters. The weakly ($n$-)baby measurable cardinals are analogues of $1$-iterable cardinals, but with stronger $M$-ultrafilters.
 The ($n$-)baby measurable cardinals are analogues of strongly Ramsey cardinals, but with stronger $M$-ultrafilters.

We will vary the closure condition to study the gap between weakly $n$-baby measurable and $n$-baby measurable cardinals.
For this purpose, it is useful to discard the requirement that $M$ be transitive and require that $M$ is elementary in some $H_\theta$ as in \cite{HolySchlicht:HierarchyRamseyLikeCardinals}.

\begin{definition}
Suppose that $\kappa$ is a cardinal and $\omega\leq\alpha\leq\kappa$ is a regular cardinal.\footnote{We use the notation $\alpha$ for cardinals following \cite{HolySchlicht:HierarchyRamseyLikeCardinals} with the motivation that these notions can be characterissed by games of length $\alpha$ and they could thus, in principle, be generalized to ordinals.}
$\kappa$ is called \emph{$(\alpha,n)$-baby measurable} if for every $A\subseteq\kappa$ and arbitrarily large $\theta$ there is a ${<}\alpha$-closed basic weak $\kappa$-model $M\prec H_\theta$ with $A\in M$ and a (good) $M$-ultrafilter $U$ such that $\la H_{\kappa^+}^M,\in,U\ra\models\ZFC^-_n$.

$\kappa$ is called \emph{faintly $(\omega,n)$-baby measurable} if the $M$-ultrafilter is not required to be good.

$\kappa$ is called \emph{$\alpha$-baby measurable} if we replace $\ZFC^-_n$ by $\ZFC^-$ in the above definition.
\end{definition}
Note that we only need to explicitly state that the $M$-ultrafilter $U$ is good in the case that $\alpha=\omega$. Further variants such as \emph{$({<}\alpha,n)$-baby measurable} are used below with the obvious meaning.

The $(\alpha,n)$-baby measurable ($\alpha$-baby measurable) cardinals are  analogues of the $\alpha$-Ramsey cardinals, but with stronger $M$-ultrafilters. 

The reflective $(\alpha,n)$-baby measurable ($\alpha$-baby measurable) cardinals strengthen this notion by requiring strong $\Sigma_n$-reflection (elementary reflection) in the structure $\la H_{\kappa^+},\in,U\ra$.
\begin{definition}
\label{def: reflective} Suppose that $\kappa$ is a cardinal and $\omega\leq\alpha\leq\kappa$ is a  regular cardinal.
$\kappa$ is called \emph{reflective $(\alpha,n)$-baby measurable} if for every $A\subseteq\kappa$ and arbitrarily large $\theta$ there is a ${<}\alpha$-closed basic weak $\kappa$-model model $M\prec H_\theta$ with $A\in M$ and a (good) $M$-ultrafilter $U$ such that for every $B\subseteq\kappa$ in $M$, there is a $\kappa$-model $\bar M\in M$ with $B\in\bar M$ such that $\la \bar M,\in,U\cap\bar M\ra\prec_{\Sigma_n} \la H_{\kappa^+}^M,\in,U\ra$ and $\la \bar M,\in,U\cap\bar M\ra\models\ZFC^-_n$.

$\kappa$ is called \emph{faintly reflective $(\omega,n)$-baby measurable} if the $M$-ultrafilter is not required to be good.

$\kappa$ is called \emph{reflective $\alpha$-baby measurable} if we replace $\Sigma_n$-elementarity by full elementarity in the above definition.
\end{definition}

Note that we get  $\la H_{\kappa^+}^M,\in,U\ra\models\ZFC^-_n$ for free by Lemma~\ref{lem:ElemSubstructuresImplyZFCn}.
These notions will have analogous game theoretic definitions similar to that of $\alpha$-Ramsey cardinals. We define these games in  Section~\ref{section: games}.

\section{The hierarchies}\label{sec:hierarchy}

In this section, we will show where the various (non-game related) notions defined above fit into the large cardinal hierarchy.

First, we show that there are no faint or weak versions of the $[n]$-baby measurable cardinals because this is one of those rare instances where the closure on the model (and hence the well-foundedness of the ultrapower) comes for free.
\begin{lemma}\label{lem:noFaintWeakKP}
Suppose that $M$ is a weak $\kappa$-model and $U$ is an $M$-ultrafilter such that\break $\la M,\in, U\ra\models\KP_n$ for some $n\geq 0$. Then for every $A\subseteq\kappa$ in $M$, there is a $\kappa$-model $\bar M\subseteq M$ with $A\in\bar M$ such that $\la \bar M,\in U\cap\bar M\ra\models\KP_n$.
\end{lemma}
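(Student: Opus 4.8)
The plan is to reduce the situation to the exact hypotheses of Lemma~\ref{lem:Sigma_nDefinableSubstructure} and then invoke that lemma directly, since it already manufactures a transitive $\Sigma_n$-elementary substructure which is a $\kappa$-model and (via Lemma~\ref{lem:nElemImpliesKPn}) models $\KP_n$. The substantive work is thus entirely inherited from Lemma~\ref{lem:Sigma_nDefinableSubstructure}; everything in the present proof is bookkeeping that massages $\la M,\in,U\ra$ into its form. First I would dispose of the degenerate case: since $\KP_0$ and $\KP_1$ are equivalent, the case $n=0$ follows from $n=1$, so I assume $n\geq 1$.

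Fix $A\subseteq\kappa$ in $M$ and let $A'\subseteq\kappa$ be a code for the pair $(A,V_\kappa)$. Working inside $\la M,\in,U\ra$, where $\Sigma_1$-recursion is available by $\KP_n$, I would build the relative constructible hierarchy $L_\alpha[A',U]$ for $\alpha=\Ord^M$ and set $\bar U_0=U\cap L_\alpha[A',U]$. By Lemma~\ref{lem:LU} the structure $\la L_\alpha[A',\bar U_0],\in,\bar U_0\ra$ is a model of $\KP_n$, and since $A'$ codes $V_\kappa$ it is a weak $\kappa$-model. As it need not be simple, I would then apply the Proposition following Lemma~\ref{lem:LU}: with $\beta=(\kappa^+)^{L_\alpha[A',\bar U_0]}$ the structure $M':=L_\beta[A',\bar U_0]$, equipped with $U':=U\cap M'$, is a \emph{simple} weak $\kappa$-model satisfying $\KP_n$. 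Here $M'\subseteq M$; the predicate $U'$ is an $M'$-ultrafilter (its defining clauses are low complexity and restrict downward, and this is already presupposed by the Proposition's conclusion); and $A\in M'$, since $A$ is decodable from $A'\in M'$.

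Now $M'=L_\beta[A',U']$ is a simple weak $\kappa$-model with $\la M',\in,U'\ra\models\KP_n$, which is precisely the hypothesis of Lemma~\ref{lem:Sigma_nDefinableSubstructure}. Applying it yields a transitive $\Sigma_n$-elementary substructure $\la\bar M,\in,U'\cap\bar M\ra$ with $\bar M$ a $\kappa$-model; tracing its proof, $\bar M$ is the $\Sigma_n$-Skolem hull of $\kappa+1$, so the $\Delta_1$-definable predicate $A'$ lies in $\bar M$ and hence $A\in\bar M$. Since $\bar M\subseteq M'\subseteq M$ and $U'\cap\bar M=U\cap\bar M$, the structure $\la\bar M,\in,U\cap\bar M\ra$ models $\KP_n$ and $\bar M$ is the required $\kappa$-model.

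The main obstacle is not in the present argument at all but in Lemma~\ref{lem:Sigma_nDefinableSubstructure}, whose $\Sigma_n$-Skolem-hull construction supplies the crucial closure $\bar M^{<\kappa}\subseteq\bar M$. The only points here that demand genuine care are verifying that the passage to $L_\beta[A',U']$ preserves both membership of $A$ and the $M$-ultrafilter structure, and that the closure asserted by Lemma~\ref{lem:Sigma_nDefinableSubstructure} is external rather than merely internal to $M'$; the latter holds because the sequences of defining formulas and ordinal parameters used there lie in $V_\kappa\subseteq\bar M$, so (using that $\kappa$ is inaccessible) the argument applies verbatim to externally given ${<}\kappa$-sequences.
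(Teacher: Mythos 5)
Your proof is correct and follows essentially the same route as the paper's: reduce without loss of generality to the case $M=L_\beta[A,U]$ with $\beta=(\kappa^+)^{L_\beta[A,U]}$ via Lemma~\ref{lem:LU} and the proposition following it, then invoke Lemma~\ref{lem:Sigma_nDefinableSubstructure}. Your extra bookkeeping (coding $V_\kappa$ into the predicate, the reduction of $n=0$ to $n=1$, and checking that the closure is external) only makes explicit what the paper leaves implicit.
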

\begin{proof}
By a sequence of lemmas at the end of Section~\ref{section: preliminaries}, we can assume without loss of generality that $M=L_\alpha[A,U]$ with $\alpha=(\kappa^+)^{L_\alpha[A,U]}$. Thus, by Lemma~\ref{lem:Sigma_nDefinableSubstructure}, there is a $\kappa$-model $\bar M\subseteq M$ with $A\in\bar M$ such that $\la \bar M,\in, U\cap \bar M\ra\models\KP_n$.
\end{proof}
Since the theories $\ZFC^-_0$ and $\KP_0$, and $\KP_1$ are all the same, the following large cardinals are all equivalent:
\begin{enumerate}
\item $[0]$-baby measurable cardinals
\item $[1]$-baby measurable cardinals
\item faintly $0$-baby measurable cardinals
\item weakly $0$-baby measurable cardinals
\item $0$-baby measurable cardinals
\end{enumerate}

Next, we show that the faintly $0$-baby measurable cardinals, which are like 0-iterable cardinals with just the additional assumption of $\Delta_0$-collection, are stronger than Ramsey cardinals, and hence cannot exist in $L$.

\begin{theorem}
\label{th:0babymeasurable}
If $\kappa$ is faintly $0$-baby measurable, then $\kappa$ is a strongly Ramsey limit of strongly Ramsey cardinals.
\end{theorem}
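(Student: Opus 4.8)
The plan is to prove the two clauses separately: \emph{strongly Ramsey} comes almost for free from the machinery at the end of Section~\ref{section: amen coll}, while the \emph{limit} clause I would get by reflecting through an ultrapower.

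For strong Ramseyness, fix $A\of\kappa$ and, using faint $0$-baby measurability, a weak $\kappa$-model $M\ni A$ with an $M$-ultrafilter $U$ such that $\la M,\in,U\ra\models\ZFC^-_0$. Since $\ZFC^-_0$ and $\KP_0$ coincide, Lemma~\ref{lem:noFaintWeakKP} yields a $\kappa$-model $\bar M\of M$ with $A\in\bar M$ and $\la\bar M,\in,U\cap\bar M\ra\models\KP_0$. The restriction $U\cap\bar M$ is an $\bar M$-ultrafilter (the ultrafilter, uniformity and normality properties pass to the ${<}\kappa$-closed submodel $\bar M$), it is good because $\bar M$ is a $\kappa$-model, and it is weakly amenable because $\la\bar M,\in,U\cap\bar M\ra$ satisfies $\Delta_0$-separation, which is exactly the characterisation of weak amenability. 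As $A$ was arbitrary, $\kappa$ is strongly Ramsey.

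For the limit clause I would fix $A\of\kappa$, take a simple witness $M\ni A$ with weakly amenable $U$ and $\la M,\in,U\ra\models\ZFC^-_0$, and form the (possibly ill-founded) ultrapower $j\colon M\to N$ with $\crit(j)=\kappa$. By Lemma~\ref{prop:wellFoundedPartOfUltrapowerWA} we have $M=H_{\kappa^+}^N$, so $M\of N$, $P^M(\kappa)=P^N(\kappa)$, and $\kappa$ is genuinely inaccessible in $N$; by Lemma~\ref{Los theorem} \Los' theorem holds for $\Sigma_0$-formulas in the language with the predicate, so $\la N,\in,W\ra$ again models $\ZFC^-_0$ with $W$ weakly amenable. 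The heart of the argument is to show that $N\models$ ``$\kappa$ is strongly Ramsey''. Granting this, reflection finishes: for each $\beta<\kappa$ we have $j(\beta)=\beta<\kappa<j(\kappa)$, so $N\models\exists\gamma\,(\beta<\gamma<j(\kappa)\wedge\gamma\text{ is strongly Ramsey})$, witnessed by $\gamma=\kappa$; by elementarity of $j$, $M\models\exists\gamma\,(\beta<\gamma<\kappa\wedge\gamma\text{ is strongly Ramsey})$. Since the witnessing $\gamma$-models for any such $\gamma<\kappa$ lie in $H_{\gamma^+}\of V_\kappa\in M$ and $V_\kappa^M=V_\kappa$, the model $M$ is correct about strong Ramseyness below $\kappa$ (here Lemma~\ref{prop:internalKappaModel} guarantees $M$'s internal $\gamma$-models really are $\gamma$-models), so there is a genuine strongly Ramsey cardinal in $(\beta,\kappa)$; as $\beta$ was arbitrary, $\kappa$ is a limit of strongly Ramsey cardinals. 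To establish $N\models$``$\kappa$ is strongly Ramsey'' I would show that for each $B\in P^N(\kappa)=P^M(\kappa)$ there is, inside $H_{\kappa^+}^N=M$, a $\kappa$-model $m\ni B$ with a weakly amenable $m$-ultrafilter. The natural ultrafilter is $U\cap m$: weak amenability of $U$ gives $U\cap m\in M\of N$ whenever $m\in M$ has size $\kappa$, and because $m$ and $U\cap m$ lie in the well-founded part of $N$, the ill-foundedness of $N$ above $\kappa^+$ is irrelevant—$N$ recognises $m$ as a $\kappa$-model and $U\cap m$ as a weakly amenable $m$-ultrafilter. The models $m$ are produced by the $L[A,U]$/$\Sigma_n$-Skolem-hull construction of Lemmas~\ref{lem:Sigma_nDefinableSubstructure} and~\ref{lem:noFaintWeakKP}, applied with $B$ adjoined to the hull.

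The delicate point—and what I expect to be the main work—is the existence of these witnesses as genuine \emph{elements} of $M=H_{\kappa^+}^N$ for \emph{every} $B\in P^M(\kappa)$. The $\Sigma_n$-Skolem hull of $(\kappa+1)\cup\{B\}$ in $\la M,\in,U\ra$ is a $\kappa$-model of $\KP_0$, but a priori it could exhaust $M$ rather than sit below $\Ord^M$, so that it fails to be an element of $M$ (this is genuinely possible for $\in$-minimal witnesses, where there are no proper admissible levels). Overcoming this is where the full strength of having $\ZFC^-_0$—hence $\KP$, with the $\Delta_1$-definable well-order and truth predicate of Lemmas~\ref{lem:definableWellOrder} and~\ref{truth predicate from ultrafilter}—is used, together with the freedom to replace the original witness by the $H_{\kappa^+}$ of a taller model so that the relevant admissible $\kappa$-model levels are cofinal below $\Ord^M$ and every $B$ enters one of them. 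I expect this bookkeeping, rather than the reflection step itself, to be the crux.
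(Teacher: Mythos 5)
Your first clause is fine, and it follows the paper's own preliminary observations: Lemma~\ref{lem:noFaintWeakKP} (proved before this theorem, so there is no circularity) yields for each $A\of\kappa$ a genuine $\kappa$-model $\bar M\of M$ containing $A$ whose restricted ultrafilter satisfies $\Delta_0$-separation and is therefore weakly amenable, so $\kappa$ is strongly Ramsey. Your reflection argument at the end is also correct as stated.

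The gap is exactly where you suspected, and it is not mere bookkeeping: to get $N\models$ ``$\kappa$ is strongly Ramsey'' you must produce, for \emph{every} $B\in P^M(\kappa)$, a witnessing $\kappa$-model that is an \emph{element} of $M=H_{\kappa^+}^N$, whereas the Skolem-hull machinery you invoke (Lemmas~\ref{lem:Sigma_nDefinableSubstructure} and~\ref{lem:noFaintWeakKP}) only produces substructures $\bar M\of M$ that are $\Sigma_1$-elementary \emph{in the language with $U$}, and such a hull can exhaust $M$ (e.g.\ when $\la M,\in,U\ra$ is a minimal model of $\KP$, in which case no transitive $\Sigma_1$-elementary $\KP$-substructure can be an element of $M$ at all, by Lemma~\ref{lem:nElemImpliesKPn}). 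Your proposed remedy fails: faint $0$-baby measurability hands you \emph{some} witness $M$ with no control over its height, so there is no ``taller model'' whose $H_{\kappa^+}$ you may pass to; and the lemma that does produce element-witnesses, Lemma~\ref{lem:KP_nImpliesKappaModel}, needs $\ZFC^-_n$ with $n\geq 1$ in its proof, which $\ZFC^-_0=\KP$ does not supply. The paper's proof avoids the exhaustion problem by demanding elementarity only in the language \emph{without} $U$: working inside $\la M,\in,U\ra$, it runs a $\Sigma_1$-recursion of length $\kappa$ (legitimate by Lemma~\ref{KP implies sep}) building a continuous chain $\la M_\alpha\mid\alpha\leq\kappa\ra$ of elements of $M$, where even steps adjoin $U\cap M_\alpha$ (in $M$ by weak amenability) and odd steps choose an elementary substructure of $\la M,\in\ra$ that is a $\kappa$-model from $M$'s point of view, the choice being made by the $\Delta_1$-definable choice function of Lemma~\ref{Delta1ClassChoice} (the relevant relation exists in the ultrapower $N$). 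The union $M_\kappa$ is then an element of $M$ containing $A$, carries the weakly amenable ultrafilter $U\cap M_\kappa$, and is a real $\kappa$-model by Proposition~\ref{prop:internalKappaModel}; since $M_\kappa\prec\la M,\in\ra$ but need not be $\Sigma_1$-elementary in $\la M,\in,U\ra$, minimality of $M$ is no obstruction. This single internal construction delivers both clauses at once: $\kappa$ is strongly Ramsey in $V$, and, strong Ramseyness being a statement about $H_{\kappa^+}$, also in $N$, after which your reflection step finishes. Without this (or some substitute) idea, the crux of the theorem is missing from your proposal.
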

\begin{proof}
Suppose that $M$ is a simple weak $\kappa$-model and $U$ is an $M$-ultrafilter with $\la M,\in,U\ra\models\KP$.
Let $N$ be the, not necessarily well-founded, ultrapower of $M$ with $U$.

We first show that $\kappa$ is strongly Ramsey in $M$. Fix any $A\subseteq\kappa$  in $M$.
We construct a continuous increasing sequence $\langle M_\alpha \mid \alpha\leq \kappa \rangle$ with $A\in M_0$ by a $\Sigma_1$-recursion in $\langle M,\in,U\rangle$. Note that the $\Sigma_1$-recursion scheme holds in $\langle M,\in,U\rangle$ by Lemma~\ref{KP implies sep}.
Let $M_0\in M$ be arbitrary with $A\in M_0$.
For even $\alpha<\kappa$, let $M_{\alpha+1}:= M_\alpha \cup \{\{U\cap M_\alpha\}\}$.
For odd $\alpha<\kappa$, let $M_{\alpha+1}\prec M$ be a $\kappa$-model from the perspective of $M$, with $M_\alpha\in M_{\alpha+1}$, given by a $\Delta_1$-definable choice function for the relation $R$ that consists of all pairs $(x,y)\in M$,
where $x\in y$ and $(y,\in)$ is an elementary substructure of $\langle M,\in\rangle$ that is closed under sequences of length less than $\kappa$.
Such a choice function exists by Lemma \ref{Delta1ClassChoice} since the relation $R$ exists in $N$.
For all limits $\alpha\leq\kappa$, let $M_\alpha:=\bigcup_{\bar{\alpha}<\alpha} M_{\bar{\alpha}}$.
In $M$, we have that $M_\kappa$ is a $\kappa$-model with $A\in M_\kappa$ and $U\cap M_\kappa$ is a weakly amenable $M_\kappa$-ultrafilter.
Since $A$ was arbitrary, $\kappa$ is strongly Ramsey in $M$. By Proposition~\ref{prop:internalKappaModel}, $M_\kappa$ is really a $\kappa$-model, and so, since $M$ was arbitrary, $\kappa$ is strongly Ramsey. Moreover, $\kappa$ is strongly Ramsey in $N$, and hence it is a limit of strongly Ramsey cardinals by elementarity.
\end{proof}

Thus, in particular, the existence of faintly $0$-baby measurable cardinals already implies that $0^\#$ exists. In fact, faintly $0$-baby measurable cardinals have higher consistency strength.

\begin{theorem}
\label{th:0bmalpharam}
If there is a faintly $0$-baby measurable cardinal, then there is a model of $\ZFC$ with a $\kappa$-Ramsey cardinal $\kappa$.
\end{theorem}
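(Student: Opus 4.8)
The plan is to build the required model as $W=V_{j(\kappa)}^N$ for a suitable ultrapower $j\colon M\to N$ extracted from $\kappa$, and to prove that $\kappa$ is $\kappa$-Ramsey in $W$ by defeating every would-be winning strategy of the challenger in the length-$\kappa$ filter game. Applying faint $0$-baby measurability and then Lemma~\ref{lem:noFaintWeakKP} (with $\KP=\KP_1$), I would fix a simple $\kappa$-model $M$ and a weakly amenable $M$-ultrafilter $U$ with $\la M,\in,U\ra\models\KP$; since $M$ is a $\kappa$-model, $U$ is automatically countably complete, hence good. Let $j\colon M\to N$ be the resulting (transitive) ultrapower, so that $\crit(j)=\kappa$, $M=H_{\kappa^+}^N$ by Lemma~\ref{prop:wellFoundedPartOfUltrapowerWA}, and $j(\kappa)$ is the largest cardinal of $N$ and is inaccessible there. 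Then $W:=V_{j(\kappa)}^N$ is a transitive model of $\ZFC$ in which $\kappa$ is inaccessible and $H_{\kappa^+}^W=M$.

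By the $\alpha=\kappa$ case of the game characterisation of $\kappa$-Ramseyness \cite[Theorem 5.6]{HolySchlicht:HierarchyRamseyLikeCardinals}, it suffices to show that in $W$ the challenger has no winning strategy in $\mathsf{RamseyG}^\theta_\kappa(\kappa)$; I would take $\theta=(\kappa^+)^W$, so that $H_\theta^W=M$ and every basic $\kappa$-model the challenger can play is an element of $H_{\kappa^+}^W=M$. Suppose toward a contradiction that $\sigma\in W$ is a winning strategy for the challenger, and let the judge answer each challenger move $M_\xi$ by $U_\xi:=U\cap M_\xi$. Using that $U$ is a normal $M$-ultrafilter and that every $\kappa$-sequence of subsets of $\kappa$ lying in some $M_\xi\prec M$ already lies in $M=H_{\kappa^+}^N$, one checks that each $U_\xi$ is a legitimate move: a good (by the ${<}\kappa$-closure of $M_\xi$) $M_\xi$-ultrafilter extending the previous ones. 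This judge never gets stuck, and at stage $\kappa$ the union $\bigcup_{\xi<\kappa}U_\xi=U\cap\mathcal M$ is a good $\mathcal M$-ultrafilter for the ${<}\kappa$-closed model $\mathcal M:=\bigcup_{\xi<\kappa}M_\xi$; thus the judge wins this run $r=\la (M_\xi,U_\xi)\mid\xi<\kappa\ra$.

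The main obstacle is that this only yields a contradiction once $r$ is seen to be an element of $W$. In the bounded-length games used to show that a strongly Ramsey cardinal is a limit of ${<}\alpha$-Ramsey cardinals, the whole run lies in the $\kappa$-model by ${<}\kappa$-closure; here the run has length $\kappa$ and need not lie in $M$. This is where weak amenability is exploited a second time. Since all moves lie in $M$ and $\mathcal M$ has size $\kappa$, the set $\mathcal{P}^{\mathcal M}(\kappa)$ is a size-$\kappa$ member of $M$, and hence the single object $g:=j\restriction\mathcal{P}^{\mathcal M}(\kappa)$ lies in $N$: for any size-$\kappa$ set $X\in M$ a bijection $e\colon\kappa\to X$ in $M$ gives $j\restriction X=(j(e)\restriction\kappa)\circ e^{-1}\in N$ because $\crit(j)=\kappa$. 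Equipped with $g\in N$ and with $\sigma\in W\subseteq N$, the model $N$ can re-run the very same recursion internally, at stage $\xi$ computing $M_\xi=\sigma(r\restriction\xi)$ and $U_\xi=\{Y\in\mathcal{P}^{M_\xi}(\kappa)\mid\kappa\in g(Y)\}$, thereby recovering $r$; as $r$ has rank below $j(\kappa)$ we conclude $r\in W$.

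The delicate point I expect to need the most care is guaranteeing $\mathcal{P}^{\mathcal M}(\kappa)\in M$ despite $\sigma\notin M$. I would arrange this by first absorbing $\sigma$ into a size-$\kappa$ elementary submodel of $M$ formed inside $W$ (which, being an element of $W\subseteq N$ of hereditary size $\kappa$, is then an element of $H_{\kappa^+}^W=M$), and interleaving this closure with the $\Sigma_1$-recursion available in $\la M,\in,U\ra$ by $\KP$, exactly as in the construction of the internal $\kappa$-model in the proof of Theorem~\ref{th:0babymeasurable}, so that the resulting $\mathcal M$ is simultaneously closed under $\sigma$ and under $X\mapsto U\cap X$. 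Once $r\in W$ is secured, $W$ sees a play following $\sigma$ that the judge wins, contradicting that $\sigma$ is winning; hence the challenger has no winning strategy, and by the $\theta$-independence of these games $\kappa$ is $\kappa$-Ramsey in the model $W\models\ZFC$, as required.
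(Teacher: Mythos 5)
Your skeleton is the same as the paper's: pass to the ultrapower $j\colon M\to N$, set $W=V_{j(\kappa)}^N$, and defeat a putative winning strategy $\sigma\in W$ for the challenger in $\mathsf{RamseyG}^{\kappa^+}_\kappa(\kappa)$ by letting the judge answer each $M_\xi$ with $U\cap M_\xi$. You also correctly isolate the crux: the length-$\kappa$ run is built externally using $U$, so one must still show it is an element of $W$. But your resolution of the crux has a genuine gap, in two places. First, the ``absorption'' step cannot be carried out: if $\sigma$ were an element of $M=H_{\kappa^+}^W$ (see Lemma \ref{prop:wellFoundedPartOfUltrapowerWA}) the whole difficulty would vanish, since then the run would be a $\Delta_1$-recursion over $\la M,\in,U\ra$ with parameters $\sigma$ and $U$, completed by $\KP$ via Lemma \ref{KP implies sep}; the problem is exactly that in general $\sigma\notin M$, because a challenger strategy must respond to every legal judge move and there are more than $\kappa$-many candidate $M_0$-ultrafilters in $W$ already at the first step, so $\sigma$ has size greater than $\kappa$ in $W$. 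Consequently no elementary submodel of $M$ can contain $\sigma$, and any set having $\sigma$ as an element has transitive closure of size greater than $\kappa$ in $W$, so your parenthetical claim that such a submodel ``is of hereditary size $\kappa$ and hence an element of $H_{\kappa^+}^W=M$'' is false. Second, the interleaving does not break the circularity you identified: the chain must be closed under $X\mapsto U\cap X$, and $U$ is an element of neither $M$ nor $W$, so this closure is again an external length-$\kappa$ construction, and its union, hence $\mathcal{P}^{\mathcal M}(\kappa)$, hence $g=j\restrict\mathcal{P}^{\mathcal M}(\kappa)$ and the run itself, are exactly as unjustified as before. You cannot invoke the $\Sigma_1$-recursion of $\la M,\in,U\ra$ for this construction, because that recursion can only use relations definable over $\la M,\in,U\ra$, and closure under $\sigma$ is not such a relation without further work; this is precisely the difference from Theorem \ref{th:0babymeasurable}, where the internal recursion only consults $U$ and a $\Delta_1$-definable choice function (Lemma \ref{Delta1ClassChoice}), never an external strategy.

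The missing idea is the paper's device for letting $\la M,\in,U\ra$ evaluate $\sigma$: write $\sigma=[S]_U$ for a function $S\in M$ representing it in the ultrapower, code positions and responses by subsets of $\kappa$ (the class $\mathcal C$ of codes from Lemma \ref{lem:definableWellOrder}), and observe that ``the challenger's response according to $\sigma$ to the position coded by $A$ is $y$'' becomes the statement that $\{\xi<\kappa\mid f_A(\xi)\in S(\xi)\}\in U$, where $f_A(\xi)=A\cap\xi$; by weak amenability (that is, $\Delta_0$-separation in the language with $U$) this is $\Delta_1$ over $\la M,\in,U\ra$. The entire run of the game, with the judge playing $U\cap M_\xi$, is then produced by a single $\Delta_1$-recursion of length $\kappa$ inside $\la M,\in,U\ra$, which $\KP$ completes, so the run is an element of $M=H_{\kappa^+}^N\subseteq V_{j(\kappa)}^N$ and the contradiction goes through. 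Note that once this device is in place, your preliminary reduction via Lemma \ref{lem:noFaintWeakKP} to a $\kappa$-model with a good ultrafilter is unnecessary: the argument works with the possibly ill-founded ultrapower of an arbitrary simple weak $\kappa$-model, which is how the paper proves the theorem.
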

\begin{proof}
Suppose that $\la M,\in,U\ra\models\KP$ where $M$ is a simple weak $\kappa$-model and $U$ is an $M$-ultrafilter, and let $N$ be the (not necessarily well-founded) ultrapower of $M$ by $U$.
It suffices to show that $\kappa$ is $\kappa$-Ramsey in $V_{j(\kappa)}^N$.
Otherwise the challenger has a winning strategy $\sigma$ in the game $\mathsf{RamseyG}_{\kappa}^{\kappa^+}(\kappa)$ in $V_{j(\kappa)}^N$ (equivalently in $N$). The element $\sigma$ of $N$ is represented by some equivalence class $[S]_U$.
Since the ultrapower is $\kappa$-powerset preserving, $\sigma$ is a map from sequences of elements of $M$ to elements of $M$ and all proper initial segments of runs are in $M$.
By coding, we can assume that elements of $\sigma$ are coding subsets of $\kappa$ (elements of $\mathcal C$ as defined in the proof of Lemma~\ref{lem:definableWellOrder}), which code pairs $(s,y)$, where $s$ is a sequence of plays by the judge and $y$ is the response to the last move. In $M$, we will use $U$ to play against $\sigma$ and build a run of the game $\mathsf{RamseyG}_\kappa^{\kappa^+}(\kappa)$ won by the judge, contradicting that $\sigma$ was a winning strategy.
In detail, the judge responds to the model $M_\alpha$ played by the challenger in round $\alpha$ by playing $U\cap M_\alpha$.
The challenger plays according to $\sigma$. The assertion that the challenger's response to $x$ is $y$ according to $\sigma$ translates to asking whether the set $\{\alpha<\kappa\mid f_A(\xi)\in S(\xi)\}$ is an element of $U$, where $A$ is the coding subset for pair $(s,y)$ with $s$ being the sequence of the judge's moves so far ending in $x$, and  $f_A(\xi)=A\cap \xi$ for every $\xi<\kappa$. The above recursion is thus $\Delta_1$, so has a solution $M$.
\end{proof}

Note that the least $0$-baby measurable cardinal $\kappa$ cannot be super Ramsey because being $0$-baby measurable is a property of $H_{\kappa^+}$ and would therefore be reflected down if $\kappa$ was super Ramsey.

Next, we show that surprisingly the hierarchies of the variants of $n$-baby measurable cardinals are intertwined both in the case of $\ZFC^-_n$ and that of $\KP_n$.

\begin{theorem}
For any $n\geq 1$, a faintly $n$-baby measurable $\kappa$ is a limit of $[n]$-baby measurable cardinals.
\end{theorem}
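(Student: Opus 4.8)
The plan is to mirror the ultrapower-and-reflection argument used for Theorems~\ref{th:0babymeasurable} and~\ref{th:0bmalpharam}. Fix a simple weak $\kappa$-model $M$ together with an $M$-ultrafilter $U$ such that $\la M,\in,U\ra\models\ZFC^-_n$; such $M,U$ exist by faint $n$-baby measurability, and we may take $M$ simple. Since $\ZFC^-_n$ implies $\KP_n$ by Lemma~\ref{KP implies sep}, and for $n\geq 1$ the theory $\KP_n$ includes $\Delta_0$-separation in the language with $U$, the ultrafilter $U$ is weakly amenable. Let $j\colon M\to N$ be the (possibly ill-founded) ultrapower by $U$, with $\crit(j)=\kappa$. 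By weak amenability, $M$ and $N$ have the same subsets of $\kappa$, and by Lemma~\ref{prop:wellFoundedPartOfUltrapowerWA} (whose proof does not require $U$ to be good) we have $M=H_{\kappa^+}^N$. Note also that $N\models\ZFC^-$ by \Los' theorem for $\in$-formulas, and that $\Power(\kappa)^N$ is a set of $N$ since $M\models$``$\Power(\alpha)$ exists for all $\alpha<\kappa$''; consequently $\Power(\kappa)^N=P^M(\kappa)$.

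The key step is to show $N\models$``$\kappa$ is $[n]$-baby measurable''. Let $\Phi(\lambda)$ abbreviate the first-order assertion ``$\lambda$ is $[n]$-baby measurable''. Fix $A\in\Power(\kappa)^N=P^M(\kappa)$. By Lemma~\ref{lem:KP_nImpliesKappaModel} there is a weak $\kappa$-model $N_0\in M$ with $A\in N_0$ and $\la N_0,\in,U\cap N_0\ra\models\KP_n$, where $U\cap N_0\in M$ by weak amenability. Since $N_0$ is a genuine set in $M\models\ZFC^-$, we may run the construction of Lemma~\ref{lem:noFaintWeakKP} internally in $M$ on the structure $\la N_0,\in,U\cap N_0\ra$, obtaining an \emph{element} $\bar M\in M$ that is a $\kappa$-model with $A\in\bar M$ and $\la \bar M,\in,U\cap\bar M\ra\models\KP_n$. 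Because $\bar M$ is a $\kappa$-model it is $\omega$-closed, so $U\cap\bar M$ is automatically good. Thus $\bar M$ and $\bar U=U\cap\bar M$ are witnesses lying in $H_{\kappa^+}^N=M$; since $N$ is correct about its well-founded part and the relevant properties (being a $\kappa$-model, an ultrafilter, satisfying $\KP_n$, and goodness) are absolute for the well-founded objects in $H_{\kappa^+}^N$, the model $N$ verifies them. As $A\in\Power(\kappa)^N$ was arbitrary, $N\models\Phi(\kappa)$.

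Now reflect. For each $\beta<\kappa$ we have $\beta=j(\beta)$, and $N\models\exists\lambda\,(\beta<\lambda<j(\kappa)\wedge\Phi(\lambda))$, as witnessed by $\lambda=\kappa$. Applying the elementarity of $j\colon M\to N$ to this $\in$-formula in the parameters $j(\beta)$ and $j(\kappa)$, we obtain $M\models\exists\lambda\,(\beta<\lambda<\kappa\wedge\Phi(\lambda))$. Since $\beta<\kappa$ was arbitrary, $M\models$``$\kappa$ is a limit of $[n]$-baby measurable cardinals''. Finally, for $\lambda<\kappa$ the property $\Phi(\lambda)$ depends only on $H_{\lambda^+}$ and on $\Power(\lambda)$, and since $V_\kappa\in M$ we have $H_{\lambda^+}^M=H_{\lambda^+}$ and $\Power(\lambda)^M=\Power(\lambda)$; as goodness and $\KP_n$ are absolute, $M\models\Phi(\lambda)$ if and only if $\Phi(\lambda)$ holds in $V$, for every $\lambda<\kappa$. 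Hence the unboundedly many witnesses below $\kappa$ are genuinely $[n]$-baby measurable, so $\kappa$ is a limit of $[n]$-baby measurable cardinals.

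I expect the main obstacle to be the middle step: producing the witnessing $\kappa$-model and ultrafilter as \emph{elements} of $N$ (equivalently of $H_{\kappa^+}^N=M$), rather than merely as external subsets of $M$, so that $N$ can evaluate the first-order statement $\Phi(\kappa)$. This is precisely why one first descends into $M$ via Lemma~\ref{lem:KP_nImpliesKappaModel} to a set-sized weak $\kappa$-model $N_0\in M$ before shrinking to a $\kappa$-model internally; the possible ill-foundedness of $N$ is a secondary issue, handled by confining all witnesses to the well-founded part $H_{\kappa^+}^N=M$.
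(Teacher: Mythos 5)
Your proof is correct and follows essentially the same route as the paper: both arguments hinge on Lemma~\ref{lem:KP_nImpliesKappaModel} to produce witnesses lying inside $M=H_{\kappa^+}^N$, verify that the restricted ultrafilters are good, and then transfer $[n]$-baby measurability of $\kappa$ from the (possibly ill-founded) ultrapower down below $\kappa$ by elementarity. The only cosmetic differences are that the paper certifies goodness via countable completeness of $U\cap N_0$ and invokes the stated equivalence of faintly $[n]$-baby measurable with $[n]$-baby measurable, whereas you shrink further to a $\kappa$-model by running Lemma~\ref{lem:noFaintWeakKP} inside $M$, and you spell out the reflection and absoluteness steps that the paper leaves implicit.
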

\begin{proof}
Fix $A\subseteq\kappa$ and choose a weak $\kappa$-model $M$, with $A\in M$, for which there is an $M$-ultrafilter $U$ such that $\la M,\in,U\ra\models\ZFC^-_n$. By the sequence of lemmas at the end of Section~\ref{section: preliminaries}, we can assume without loss of generality that $M=L_\alpha[A,U]$ and $\alpha=(\kappa^+)^{L_\alpha[A,U]}$. By Lemma~\ref{lem:KP_nImpliesKappaModel}, there is a weak $\kappa$-model $N\in M$ with $A\in N$ such that $\la N,\in, U\cap N\ra\models\KP_n$. Since $U$ is an $M$-ultrafilter and $N$ is a set in $M$, $U\cap N$ is countably complete, and hence good. It follows $\kappa$ is $[n]$-baby measurable.

An analogous argument also shows that $\kappa$ is $[n]$-baby measurable in the, not necessarily well-founded, ultrapower of $M$ by $U$. Thus, $\kappa$ is a limit of $[n]$-baby measurable cardinals.
\end{proof}

\begin{theorem}\
\label{babym hierarchy}
For any $n\geq 1$, a faintly $[n+1]$-baby measurable $\kappa$ is an $n$-baby measurable limit of $n$-baby measurable cardinals.
\end{theorem}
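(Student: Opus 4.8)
The plan is to prove the two assertions separately. For the \emph{$n$-baby measurability} of $\kappa$ I would first note that, by Lemma~\ref{lem:noFaintWeakKP}, a faintly $[n+1]$-baby measurable cardinal is already $[n+1]$-baby measurable: given $A\subseteq\kappa$ in a weak $\kappa$-model $M$ with an $M$-ultrafilter $U$ and $\la M,\in,U\ra\models\KP_{n+1}$, the lemma produces a $\kappa$-model $\bar M$ with $A\in\bar M$ and $\la\bar M,\in,U\cap\bar M\ra\models\KP_{n+1}$, and since a $\kappa$-model is $\omega$-closed the ultrafilter $U\cap\bar M$ is automatically countably complete, hence good. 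As $\KP_{n+1}$ implies $\ZFC^-_n$, this same witness shows $\kappa$ is $n$-baby measurable.

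For the \emph{limit} assertion I would run a reflection-into-the-ultrapower argument. Fix $A\subseteq\kappa$ and, using the reductions at the end of Section~\ref{section: preliminaries}, choose a \emph{simple} $\kappa$-model $M$ with $A\in M$ and a good $M$-ultrafilter $U$ such that $\la M,\in,U\ra\models\KP_{n+1}$. Since $\KP_{n+1}$ entails $\Delta_0$-separation, $U$ is weakly amenable. Let $j\colon M\to N$ be the (well-founded, after collapse) ultrapower by $U$; then $\crit(j)=\kappa<j(\kappa)$, weak amenability gives $P^N(\kappa)=P^M(\kappa)$, and Lemma~\ref{prop:wellFoundedPartOfUltrapowerWA} gives $H_{\kappa^+}^N=M$.

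The heart of the proof is to check that $N\models$``$\kappa$ is $n$-baby measurable''. Because $n$-baby measurability of $\kappa$ only speaks about $\kappa$-models and their ultrafilters, all of which have size $\kappa$ and hence lie in $H_{\kappa^+}^N=M$, and because $P^N(\kappa)=P^M(\kappa)$, this reduces to: for every $A\subseteq\kappa$ in $M$ there is, inside $M$, a $\kappa$-model $\bar M$ with $A\in\bar M$ carrying a good $\bar M$-ultrafilter $u$ with $\la\bar M,\in,u\ra\models\ZFC^-_n$. This is exactly supplied by Lemma~\ref{lem:ZFCnImpliesElemSubstructures} (applicable since $M$ is simple, $U$ good and $\la M,\in,U\ra\models\KP_{n+1}$), which yields $\bar M\in M$ with $\la\bar M,\in,U\cap\bar M\ra\prec_{\Sigma_n}\la M,\in,U\ra$ and $\la\bar M,\in,U\cap\bar M\ra\models\ZFC^-_n$. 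Weak amenability places $u:=U\cap\bar M$ in $M\subseteq N$; downward $\Sigma_n$-elementarity (with $n\geq 1$) makes $u$ a genuine uniform normal $\bar M$-ultrafilter, and since $N$ agrees, via $H_{\kappa^+}^N=M$, that $\bar M$ is a ${<}\kappa$-closed $\kappa$-model, $u$ is good from the point of view of $N$. Hence $N$ recognizes the required witness for each such $A$.

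Finally I would transfer this downward: since $\kappa<j(\kappa)$ while $j(\beta)=\beta$ for every $\beta<\kappa$, the witness $\gamma=\kappa$ shows $N\models$``there is an $n$-baby measurable cardinal $\gamma$ with $j(\beta)<\gamma<j(\kappa)$'', so by elementarity of $j$, $M\models$``there is an $n$-baby measurable cardinal in $(\beta,\kappa)$''. As $\beta<\kappa$ was arbitrary, $M$ thinks $\kappa$ is a limit of $n$-baby measurable cardinals; and since $n$-baby measurability of a $\gamma<\kappa$ is decided inside $H_{\gamma^+}\subseteq V_\kappa$, on which the transitive model $M$ is correct, these $\gamma$ are genuinely $n$-baby measurable in $V$. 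I expect the main obstacle to be the bookkeeping of the third paragraph: verifying that the witnesses produced by Lemma~\ref{lem:ZFCnImpliesElemSubstructures} actually land in $N$ and are recognized there as good $\bar M$-ultrafilters satisfying $\ZFC^-_n$, for which the identifications $H_{\kappa^+}^N=M$ and $P^N(\kappa)=P^M(\kappa)$ are essential.
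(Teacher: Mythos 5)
Your proof is correct, and its core --- producing, for each $A'\subseteq\kappa$ in $M$, a $\kappa$-model witness $\bar M\in M$ with $\la\bar M,\in,U\cap\bar M\ra\models\ZFC^-_n$ via Lemma~\ref{lem:ZFCnImpliesElemSubstructures}, observing that $N$ recognizes these witnesses because $H_{\kappa^+}^N=M$ and $P^N(\kappa)=P^M(\kappa)$, and then reflecting below $\kappa$ by elementarity of $j$ --- is exactly the paper's strategy. Where you genuinely diverge is in the treatment of the \emph{faintly} hypothesis, i.e.\ the fact that $U$ need not be good. You eliminate non-goodness at the outset: Lemma~\ref{lem:noFaintWeakKP} converts the faint witnesses into genuine $\kappa$-models satisfying $\KP_{n+1}$, whose ultrafilters are countably complete and hence good; this yields the $n$-baby measurability of $\kappa$ itself for free (since $\KP_{n+1}$ implies $\ZFC^-_n$ by Lemma~\ref{KP implies sep}) and lets you invoke Lemma~\ref{lem:ZFCnImpliesElemSubstructures} as a black box, with a well-founded ultrapower, so the absoluteness bookkeeping in your third paragraph is clean. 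The paper instead stays in the faint setting throughout: it replaces $M$ by $L_\alpha[A,U]$, whose $\Delta_1$-definable well-order is a \emph{true} well-order even when $U$ is not good, and re-runs the \emph{proof} (not the statement) of Lemma~\ref{lem:ZFCnImpliesElemSubstructures} with that order in place of $\lhd_U$; its ultrapower may be ill-founded, but the witnesses still land in the well-founded part $H_{\kappa^+}^N=M$, which is all the reflection step needs. Your route buys black-box use of the lemmas and no ill-founded ultrapowers; the paper's is shorter and avoids your one loose step: the appeal to ``the reductions at the end of Section~\ref{section: preliminaries}'' to produce a \emph{simple} $\kappa$-model is not quite right as cited, since those reductions (to $L_\alpha[A,U]$ and then $L_\beta[A,U]$) yield simple models that need not be $\kappa$-models, while Lemma~\ref{lem:noFaintWeakKP} yields a $\kappa$-model that is not stated to be simple. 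To repair this, note that the transitive $\Sigma_{n+1}$-hull of $\kappa+1$ produced inside the simple model $L_\alpha[A,U]$ by Lemma~\ref{lem:Sigma_nDefinableSubstructure} (which is how Lemma~\ref{lem:noFaintWeakKP} obtains its $\kappa$-model) is itself simple, because ``$x$ injects into $\kappa$'' is $\Sigma_1$ and transfers down to the hull; alternatively, lean on the paper's blanket remark after Definition~\ref{def baby meas} that all models in these definitions may be assumed simple. Either way this is a caveat at the level of detail the paper itself elides, not a gap.
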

\begin{proof}
Fix $A\subseteq\kappa$ and choose a weak $\kappa$-model $M$, with $A\in M$, for which there is an $M$-ultrafilter $U$ such that $\la M,\in,U\ra\models\KP_{n+1}$. We can assume without loss of generality that $M=L_\alpha[A,U]$ and thus has a $\Delta_1$-definable true well-order. Thus, the proof of Lemma~\ref{lem:ZFCnImpliesElemSubstructures} shows that there is a $\kappa$-model $\bar M\in M$ such that $\la \bar M,\in, U\cap\bar M\ra\models\ZFC_n^-$. Since the subset $A$ was arbitrary, we have verified that $\kappa$ is $n$-baby measurable. Since $\kappa$ is $n$-baby measurable in the ultrapower $N$, we also have that $\kappa$ is a limit of $n$-baby measurable cardinals.
\end{proof}

\begin{theorem}
\label{weakly vs faintly nmb}
For any $n\geq 1$, a weakly $n$-baby measurable cardinal $\kappa$ is a limit of faintly $n$-baby measurable cardinals.
\end{theorem}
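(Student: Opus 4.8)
The plan is to use the well-foundedness of the ultrapower furnished by goodness to reflect the property below $\kappa$, exactly as in the proof that a $2$-iterable cardinal is a limit of $\omega$-Ramsey cardinals and in Theorem~\ref{babym hierarchy}. Fix, for a given $A\subseteq\kappa$, a simple weak $\kappa$-model $M$ with $A\in M$ and a good $M$-ultrafilter $U$ such that $\la M,\in,U\ra\models\ZFC^-_n$; as $n\geq 1$ yields $\Delta_0$-separation, $U$ is weakly amenable. Let $j\colon M\to N$ be the ultrapower, which is well-founded since $U$ is good, so that $M=H_{\kappa^+}^N$ and $P^M(\kappa)=P^N(\kappa)$ by Lemma~\ref{prop:wellFoundedPartOfUltrapowerWA}, with $\crit(j)=\kappa<j(\kappa)$. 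The key claim to establish is that $N\models$``$\kappa$ is faintly $n$-baby measurable''. Granting this, I would finish by reflection: being faintly $n$-baby measurable at a cardinal $\lambda$ is a property of $H_{\lambda^+}$, so for every $\beta<\kappa$ the model $N$ satisfies that there is a faintly $n$-baby measurable cardinal in $(\beta,j(\kappa))$, namely $\kappa$; pulling this back along the $\in$-elementary map $j$ shows $M$ satisfies that there is such a cardinal in $(\beta,\kappa)$. Since $V_\kappa^M=V_\kappa$ and hence $H_{\lambda^+}^M=H_{\lambda^+}$ for all $\lambda<\kappa$, the model $M$ is correct about this, and as $\beta$ was arbitrary, $\kappa$ is a limit of faintly $n$-baby measurable cardinals.

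To verify the claim, fix $B\subseteq\kappa$ with $B\in N$, so $B\in M$, and produce inside $N$ a weak $\kappa$-model $m$ with $B\in m$ together with an $m$-ultrafilter $u$ — \emph{not} required to be good — such that $\la m,\in,u\ra\models\ZFC^-_n$. I would build externally a continuous coherent chain of size-$\kappa$ structures $\la m_k,\in,U\cap m_k\ra$ (the restrictions $U\cap m_k$ lie in $M\subseteq N$ by weak amenability), arranged so that consecutive structures are $\Sigma_n$-elementary and $\la m_k,\in,U\cap m_k\ra$ is an element of $\la m_{k+1},\in,U\cap m_{k+1}\ra$. For any such $\omega$-length chain, each $\la m_k,\in,U\cap m_k\ra$ is then a transitive $\Sigma_n$-elementary substructure of the union $\la m,\in,u\ra$ that is moreover an element of it, so $\la m,\in,u\ra$ satisfies $\Sigma_n$-reflection and hence $\la m,\in,u\ra\models\ZFC^-_n$ by Lemma~\ref{lem:ElemSubstructuresImplyZFCn}. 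Crucially $m$ is only a weak $\kappa$-model, its height having cofinality $\omega$, so $u$ need not be good; this is exactly what ``faintly'' permits and is the structural reason the method stops short of $n$-baby measurability (a $\kappa$-model would require a length-$\kappa$ object, which an ill-founded tree in $N$ cannot deliver). Encoding finite approximations to such chains as a tree $T\in N$, using the embeddings $j\restriction m_k\in N$ (available since each $m_k$ has size $\kappa$), the external chain witnesses that $T$ is ill-founded in $V$, so $T$ is ill-founded in $N$ by absoluteness of well-foundedness, and a branch through $T$ lying in $N$ produces the desired $m,u\in N$.

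The main obstacle is precisely the upgrade from $\KP_n$ to full $\ZFC^-_n$ in the captured union. Intersecting $U$ with an $\in$-elementary submodel, or invoking Lemma~\ref{lem:KP_nImpliesKappaModel}, yields only $\KP_n$ and so would prove no more than that $\kappa$ is a limit of $[n]$-baby measurable cardinals — the content of the preceding theorem for the merely faintly-good case. Securing genuine $\Sigma_n$-elementarity, and thus $\Sigma_n$-reflection, in the union forces the node conditions of $T$ to record $\Sigma_n$-elementarity between the successive ultrafilter-expanded structures, which must be tracked through the $\Sigma_n$-definable truth predicate $\Tr_n^U$ of Lemma~\ref{truth pred for simple weak}. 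Here goodness is indispensable: it is what makes $N$ and the associated well-order $\lhd_U$ genuinely well-founded, so that the ill-foundedness of $T$ is absolute and the branch found in $N$ really does close up into a model of $\ZFC^-_n$. The delicate point I expect to spend the most care on is arranging this $\Sigma_n$-elementarity simultaneously along the external chain and as a condition checkable inside $N$, while keeping every piece $m_k$ and $U\cap m_k$ inside $N$, so that the external witness is a legitimate branch of the internal tree.
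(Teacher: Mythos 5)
Your high-level skeleton does match the paper's: fix a simple weak $\kappa$-model $M$ with a good $M$-ultrafilter $U$ such that $\la M,\in,U\ra\models\ZFC^-_n$, pass to the well-founded ultrapower $N$ (where $M=H_{\kappa^+}^N$ by Lemma~\ref{prop:wellFoundedPartOfUltrapowerWA}), build a tree of finite approximations inside $N$, witness its ill-foundedness externally using $U$ and weak amenability, pull a branch back into $N$ by absoluteness of well-foundedness (this is indeed where goodness is essential), and finish by elementarity of $j$. The gap is exactly in the step you flag as ``delicate'' but leave unresolved: the construction of the external chain. Your tree nodes demand \emph{actual} $\Sigma_n$-elementarity (in the language with the ultrafilter) between consecutive structures, and your limit analysis, via $\Sigma_n$-reflection and Lemma~\ref{lem:ElemSubstructuresImplyZFCn}, needs exactly that. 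But every structure occurring in a node must be an element of $H_{\kappa^+}^N=M$, and the hypothesis $\ZFC^-_n$ gives you no way to produce transitive sets in $M$ that are $\Sigma_n$-elementary in $\la M,\in,U\ra$: producing such substructures is precisely what requires $\KP_{n+1}$ in Lemma~\ref{lem:ZFCnImpliesElemSubstructures}, whose conclusion the paper notes can fail outright for $\in$-minimal models of $\ZFC^-_n$ (this is also why Theorem~\ref{babym hierarchy} must start from faintly $[n+1]$-baby measurable to reach $n$-baby measurability). The only tool $\ZFC^-_n$ provides is the witness-closure operator $B\mapsto B^*$; genuine elementarity requires iterating it $\omega$-many times, and since $M$ is \emph{not} countably closed, the union of the iteration is not an element of $M$, so it can neither appear in a node of a tree lying in $N$ nor have $U$ intersected with it via weak amenability. (When closure is available this construction does work---that is the proof of Theorem~\ref{bm stronger than weaklybm}---and the entire difficulty of the present theorem is that closure is missing.) Arranging consecutive elementarity ``locally,'' without anchoring all members to $\la M,\in,U\ra$, would require committing at each finite stage to what $\Sigma_n$-truth over the current parameters will be in the not-yet-constructed union, which is the very problem your proposal does not solve.

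That commitment device is the paper's key idea, and it is what your proposal is missing: the paper never claims any elementarity among the finite stages. Each node instead carries a $\Sigma_n$-\emph{pseudo} truth predicate $T_i$ (a complete, $\Sigma_0$-correct, universally-instantiated set of formula--parameter pairs, with the existential-witness clause deliberately omitted), together with witness- and decision-sets $X_j,Y_j$. Along the external branch, $T_i$ is taken to be the restriction $T_{M_i}$ of the ambient predicate $\Tr_n^U$ of Lemma~\ref{truth pred for simple weak} to parameters in $M_i$; unlike elementary substructures, these restrictions \emph{are} sets in $M$ by $\Sigma_n$-separation. The finite-stage structures $\la M_i,\in,U_i\ra$ are in general not $\Sigma_n$-elementary in one another or in the limit (their internal satisfaction disagrees with $T_i$); what happens is that the union $\bar T$ of the pseudo predicates becomes an actual $\Sigma_n$-truth predicate for the union structure, and $\Sigma_n$-separation and collection for the limit are then verified directly from $\bar T$, $X_j$, $Y_j$ and full separation in the member models $M_{j+1}$---not via Lemma~\ref{lem:ElemSubstructuresImplyZFCn}. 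So your proof is not a correct variant of the paper's argument: replacing this bookkeeping by honest elementarity yields a tree whose ill-foundedness you cannot witness under the theorem's hypothesis, and repairing the construction leads back to the paper's pseudo-truth-predicate tree.
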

\begin{proof}
Suppose that $n\geq 1$, $M$ is a weak $\kappa$-model, and $U$ is a good $M$-ultrafilter such that $\langle M,\in,U\rangle\models\ZFC^-_n$.
We claim that $\kappa$ is faintly $n$-baby measurable in $N$, and hence a limit of faintly $n$-baby measurable cardinals.

Recall the structure $\la M,\in, U\ra$ has a $\Sigma_n$-definable truth predicate $\Tr_n^U(x)$ for $\Sigma_n$-formulas (by Lemma~\ref{truth pred for simple weak}). For the future, we fix a canonical $\Sigma_n$-formula $\exists y\,\theta(x,y)$ defining $\Tr_n^U(x)$. Observe that for any set $B\in M$, the set $$T_B=\{\la\varphi,b\ra\mid b\in B\text{ and }\Tr_n^U(\la \varphi,b\ra)\}$$ is in $M$ by $\Sigma_n$-separation. Also,  using $\Sigma_n$-collection and the set $T_B$, $M$ has a set $B^*$ such that for every $\Sigma_n$-formula $\exists y\,\psi(x,y)$ and $b\in B$, there is $y_b\in B^*$ such that $\la M,\in,U\ra\models\psi(b,y_b)$.

We say that a formula is in \emph{$\Sigma_n$-} or \emph{$\Pi_n$-normal form} if it has a block of at most $n$ alternating quantifiers preceding a $\Sigma_0$-formula.
We can assume that the $\Sigma_n$-formula $\exists y\,\theta(x,y)$ defining $\Tr_n^U(x)$ is in normal form.
We denote the standard way of converting a negation $\neg \varphi$ of a $\Sigma_n$-formula $\varphi$ in normal form to a logically equivalent $\Pi_n$-formula in normal form by $\neg^*\varphi$.
We denote the standard way of converting a conjunction $\varphi\wedge\psi$ of two $\Sigma_n$- or $\Pi_n$-formulas $\varphi$ and $\psi$ into a logically equivalent $\Sigma_n$- or $\Pi_n$-formula in normal form by $\varphi\wedge^*\psi$.

Given a structure $\bar M$, let us call a set $T\subseteq \bar M$ a \emph{$\Sigma_n$-pseudo truth predicate} if it consists of pairs $\la \varphi,\bar a\ra$, where $\varphi$ is a $\Sigma_n$- or $\Pi_n$-formula in normal form and $\bar a\in \bar M$ is a finite tuple, and $T$ satisfies the following conditions.
We will simplify the notation by writing $\varphi(\bar a)\in T$ instead of $\la \varphi,\bar a\ra\in T$.
\begin{enumerate}
\item For every $\Sigma_0$-formula $\varphi(\bar x)$ and finite tuple $\bar a\in \bar M$, $\varphi(\bar a)\in T$ if and only if $\bar M\models\varphi(\bar a)$.
\item For every $\Sigma_n$-formula $\varphi(\bar x)$ and finite tuple $\bar a\in \bar M$,
either $\varphi(\bar a)\in T$ or $\neg^*\varphi\in T$.
\item
If $\forall x\,\psi(x,\bar a)\in T$ for some finite tuple $\bar a\in \bar M$, then $\psi(b,\bar a)\in T$ for every $b\in \bar M$.
\end{enumerate}
Note that what separates $T$ from an actual truth predicate is the omission of the existential quantifier condition that witnesses for existential statements are provided.

Take any $A\subseteq\kappa$ in $M$. We construct a tree $\mathcal T$ of height $\omega$ in $N$ whose branches union up to produce structures  $\la \bar M,\in, \bar U\ra\models\ZFC_n^-$, where $\bar M$ a weak $\kappa$-model with $A\in\bar M$ and $\bar U$ is an $M$-ultrafilter on $\kappa$. We will then proceed to show that $\mathcal T$ has a branch in $V$ and hence also in $N$ by the absoluteness of well-foundedness.

A node on level $m$ of $\mathcal T$ is going to be a sequence $\{\la M_i,\in, U_i, T_i\ra\mid i<m\}$ satisfying the following conditions:
\begin{enumerate}
\item
$A\in M_0$.
\item Each $M_i$ is a weak $\kappa$-model, $U_i$ is an $M_i$-ultrafilter, and $T_i$ is a $\Sigma_n$-pseudo truth predicate for the structure $\la M_i,\in, U_i\ra$.
\item For $i<j<m$, $M_i\subseteq M_j$, $U_i\subseteq U_j$, and $T_i\subseteq T_j$.
\item
\label{weakly vs faintly nmb - amenable}
For each $i<m$, $U_i\cap M_i\in M_{i+1}$.
\item For each $j<m-1$, $\Sigma_n$-formula $\exists y\,\psi(x,y)\in T_j$, and $a\in M_j$, there is some $b\in M_{j+1}$ such that $\psi(a,b)\in T_{j+1}$.
\item
For each $j<m-1$, $M_{j+1}$ contains elements $X_j\subseteq M_j$ and $Y_j\subseteq M_{j+1}$ with the following properties:
\begin{enumerate}
\item
For each $x\in X_j$,
$\exists y\, [\theta(x,y)\wedge^* (y\in Y_j)]$ is in $T_{j+1}$.
\item For each $x\in M_j\setminus X_j$, $\forall y\,\neg^* \theta(x,y)$ is in $T_{j+1}$.
\end{enumerate}
\end{enumerate}
Suppose that $\{\la M_i,\in,U_i\ra\mid i<\omega\}$ is a branch of $\mathcal T$. Let $\bar M=\bigcup_{i<\omega}M_i$, $\bar U=\bigcup_{i<\omega}U_i$ and $\bar T=\bigcup_{i<\omega}T_i$. Let's argue that $\bar T$ is an actual $\Sigma_n$-truth predicate for $\la \bar M,\in,\bar U\ra$.
We argue by induction on complexity of formulas. Suppose that $\varphi(x)$ is a $\Sigma_0$-formula. We have that $\la \bar M,\in,\bar U\ra\models\varphi(a)$ if and only if $\la M_i,\in,U_i\ra\models\varphi(a)$, where $a\in M_i$, by absoluteness if and only if $\varphi(a)\in T_i\subseteq \bar T$. Next, consider a formula $\exists y\,\psi(x,y)$ such that the hypothesis holds for $\psi(x,y)$ by the inductive assumption. Suppose first that $\la \bar M,\in,\bar U\ra\models\exists y\,\psi(a,y)$. Then $\la \bar M,\in,\bar U\ra\models\psi(b,a)$ for some $b$. By the inductive assumption $\psi(a,b)$ is in $\bar T$ and hence in $T_j$ for some $j$. Thus, $\forall y\,\neg^*\psi(a,y)$ cannot be in $T_j$. It follows that $\exists y\,\psi(a,y)\in T_j\subseteq \bar T$. Finally, suppose that $\exists y\,\psi(a,y)$ is in $\bar T$ and hence in some $T_j$. Then there is $b\in M_{j+1}$ such that $\psi(a,b)$ is in $T_j\subseteq T$. Thus, by our inductive assumption, $\la \bar M,\in,\bar U\ra\models \psi(a,b)$.

We now show that $\la \bar M,\in,\bar U\ra\models\ZFC^-_n$.
Since $\bar{U}$ is amenable to $\bar{M}$ by \eqref{weakly vs faintly nmb - amenable}, $\exists y\,\theta(\langle \varphi,a\rangle,y)$ is equivalent to $\exists x\, \varphi(x, a)$ for any $\Pi_{n-1}$-formula $\varphi(x,y)$ and any $a\in \bar{M}$.
We start with $\Sigma_n$-separation. Fix a $\Sigma_n$-formula $\exists z\,\psi(x,w,z)$ and sets $a,b\in \bar M$. We need to show that the set $X=\{x\in a\mid \exists z\,\psi(x,b,z)\}$ is in $\bar M$. Let $a,b\in M_j$. The set $X_j\in M_{j+1}$ consists of all $x\in M_j$ for which $\exists y\,\theta(x,y)\in T_{j+1}$. Since $\bar T$ is a truth predicate for $\bar M$, the set $X_j$ consists of all $x\in M_j$ for which $\exists y\,\theta(x,y)$ holds in $\bar M$. We can use separation in $\la M_{j+1},\in\ra$ to obtain the set $\{x\in a\mid \la \exists z\,\psi(x,b,z),a\ra\in X_j\}$, which is precisely the required set $X$. Next, let's argue that $\Sigma_n$-collection holds in $\la \bar M,\in,\bar U\ra$. Fix a $\Sigma_n$-formula $\exists y\,\rho(x,y)$ and a set $a\in\bar M$ such that $\la \bar M,\in,\bar U\ra\models\forall x\in a\,\exists y\,\rho(x,y)$. We need to find a set $b\in \bar M$ such that for every $x\in a$, there is $y_x\in b$ such that $\la \bar M,\in,\bar U\ra\models\rho(x,y_x)$. Let $a\in M_j$. The set $Y_j\in M_{j+1}$ consists of all witnesses $y$ for $x\in M_j$ for the formula $\theta(x,y)$. Since $\exists y\,\theta(x,y)$ was the canonical $\Sigma_n$-definition of $\Sigma_n$-truth, we have that $\theta(b,c)$ holds if and only if $b=\la d,\exists z\,\psi(x,z)\ra$ and $\psi(d,c)$ holds. Thus, in particular, the set $Y_j$ already contains all witnesses for $\Sigma_n$-formulas with parameters in $M_j$ true in $\bar{M}$.

It remains to argue that the tree $\mathcal T$ has a branch in $V$. Let $M_0$ be any weak $\kappa$-model in $M$ with $A\in M_0$, let $U_0=M_0\cap U$, and let $T_0=T_{M_0}$, where $T_{M_0}$ denotes the restriction of the $\Sigma_n$-truth predicate for $\la M,\in,U\ra$ to formulas with parameters in $M_0$ constructed earlier using $\Sigma_n$-separation. Let $M_0^*\in M$ contain witnesses for every $\Sigma_n$-formula with parameters in $M_0$. The set $X_0$ exists by $\Sigma_n$-separation applied to the formula $\exists y\,\theta(x,y)$ and $M_0$. The set $Y_0$ is then the collecting set of witnesses for the set $X_0$ and the formula $\exists y\,\theta(x,y)$. So  let $M_1$ be any weak $\kappa$-model in $M$ such that $M_0^*\subseteq M_1$ and $X_0,Y_0\in M_1$. Moreover, let $U_1=U\cap M_1$ and $T_1=T_{M_1}$. We define the structures $\la M_n,\in,U_n,T_n\ra$ for $n\geq 1$ analogously.
\end{proof}

Next, we show that weakly $n$-baby measurable cardinals are weaker than $n$-baby measurable cardinals.

\begin{theorem}
\label{bm stronger than weaklybm}
An $n$-baby measurable cardinal is a limit of weakly $n$-baby measurable cardinals.
\end{theorem}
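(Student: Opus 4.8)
The plan is to mirror the proof of Theorem~\ref{weakly vs faintly nmb}, replacing the faintly-witnessing structures built there with \emph{good} ones, the extra closure being exactly what the $\kappa$-model hypothesis buys us. Fix $A\subseteq\kappa$ and choose a $\kappa$-model $M$ with $A\in M$, a good $M$-ultrafilter $U$, and $\la M,\in,U\ra\models\ZFC^-_n$; we may assume $M$ is simple. Since $M$ is a $\kappa$-model, $M^\omega\subseteq M$, so $U$ is in fact countably complete, not merely good. Let $j\colon M\to N$ be the (well-founded) ultrapower of $M$ by $U$, so $N$ is transitive and $M=H_{\kappa^+}^N$ by Lemma~\ref{prop:wellFoundedPartOfUltrapowerWA}. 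As in Theorem~\ref{weakly vs faintly nmb}, it suffices to show that $\kappa$ is weakly $n$-baby measurable in $N$: since $\crit(j)=\kappa$ and $\kappa<j(\kappa)$, for each $\beta<\kappa$ the model $N$ satisfies that there is a weakly $n$-baby measurable cardinal in $(\beta,j(\kappa))$ (namely $\kappa$), so by elementarity of $j$ the model $M$ satisfies that there is one in $(\beta,\kappa)$; as $\beta$ was arbitrary, $M$ thinks $\kappa$ is a limit of weakly $n$-baby measurable cardinals. Finally, since $V_\kappa\subseteq M$, weak $n$-baby measurability of a $\gamma<\kappa$ is absolute between $M$ and $V$ (all relevant subsets of $\gamma$, all witnessing weak $\gamma$-models and their ultrafilters lie in $V_\kappa$, and goodness is absolute), so $\kappa$ is a limit of weakly $n$-baby measurable cardinals in $V$.

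To see that $\kappa$ is weakly $n$-baby measurable in $N$, fix $A\in P^M(\kappa)=P^N(\kappa)$ and build inside $N$ the same tree $\mathcal T$ of height $\omega$ as in Theorem~\ref{weakly vs faintly nmb}, whose nodes are finite coherent sequences $\la M_i,\in,U_i,T_i\ra$ of weak $\kappa$-models $M_i$, $M_i$-ultrafilters $U_i$, and $\Sigma_n$-pseudo truth predicates $T_i$, arranged so that the union along any branch produces a weak $\kappa$-model $\bar M\ni A$ together with an $\bar M$-ultrafilter $\bar U$ for which $\bar T=\bigcup_i T_i$ is a genuine $\Sigma_n$-truth predicate, whence $\la\bar M,\in,\bar U\ra\models\ZFC^-_n$. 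This part is verbatim the earlier argument: it uses only the amenability of the $U_i$ and the bookkeeping provided by the pseudo truth predicates, and so it does not require $\KP_{n+1}$.

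The one new point, and the main obstacle, is that weak $n$-baby measurability demands that $\bar U$ be \emph{good}, whereas the branches of the tree in Theorem~\ref{weakly vs faintly nmb} need not produce a good ultrafilter (which is precisely why that theorem only yields faintly-measurable witnesses). I would remedy this by enlarging each node of $\mathcal T$ with a finite approximation to a rank function certifying that the partial ultrapower $\Ult(M_i,U_i)$ is well-founded, so that any branch now yields, inside $N$, a rank function on $\Ult(\bar M,\bar U)$ witnessing that $\bar U$ is good. It remains to check that the enlarged tree is still ill-founded in $V$; for this I use the external branch obtained, exactly as before, from a continuous elementary chain of weak $\kappa$-models $M_i\in M$ with $U_i=U\cap M_i$. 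Because $U$ is countably complete, $\bar U=U\cap\bar M\subseteq U$ is countably complete as well, so $\Ult(\bar M,\bar U)$ is well-founded and the required rank function exists in $V$; thus the external branch populates the enlarged tree and witnesses its ill-foundedness. By absoluteness of well-foundedness this ill-foundedness transfers to $N$, producing a branch there and hence a good $\bar U\in N$ with $\la\bar M,\in,\bar U\ra\models\ZFC^-_n$. Since $A$ was arbitrary, $\kappa$ is weakly $n$-baby measurable in $N$, completing the argument.
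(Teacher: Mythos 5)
Your reduction to showing that $\kappa$ is weakly $n$-baby measurable in the ultrapower $N$ is fine, and the tree bookkeeping for $\ZFC^-_n$ is indeed verbatim from Theorem~\ref{weakly vs faintly nmb}. The gap is in the sentence ``the required rank function exists in $V$; thus the external branch populates the enlarged tree.'' The tree $\mathcal T$ is an element of $N$, so its nodes are elements of $N$; for your external branch to witness ill-foundedness of $\mathcal T$ (rather than of some larger tree visible only in $V$), every finite initial segment of it --- including the attached rank functions $r_i$ --- must actually be a node of $\mathcal T$, hence an element of $N$. In Theorem~\ref{weakly vs faintly nmb} this is automatic, because each component $M_i$, $U_i=U\cap M_i$, $T_i$ of a node lies in $M=H_{\kappa^+}^N\subseteq N$ by weak amenability and $\Sigma_n$-separation. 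Your new components break this: the rank function on $\Ult(\bar M,\bar U)$ that you invoke is constructed in $V$ from $U$, which is not in $N$, and you never argue that its restrictions to the $\Ult(M_i,U_i)$ lie in $N$. (There is also a formulation issue: if ``finite approximation to a rank function'' is meant literally, the union along a branch ranks only countably many of the $\kappa$ many elements of $\Ult(\bar M,\bar U)$ and certifies nothing; each node must carry a full rank function on $\Ult(M_i,U_i)$, coherent under restriction.) The step can be repaired, e.g.\ by first proving that $j\restrict A\in N$ for every $A\in M$ of size $\kappa$ in $M$ (fix a bijection $h:\kappa\to A$ in $M\subseteq N$ and note $j\restrict A=\{(h(\alpha),j(h)(\alpha))\mid\alpha<\kappa\}$ is definable in $N$ from $h$ and $j(h)$), so that $f\mapsto\mathrm{rank}^N([f]_U)$ restricted to each $\Ult(M_i,U_i)$ is in $N$ and these restrictions cohere --- but this is an additional lemma your write-up neither states nor proves, and without it the transfer of ill-foundedness does not go through.

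Moreover, once you use the hypothesis that $M$ is a $\kappa$-model --- and you already invoke $M^\omega\subseteq M$ to get countable completeness of $U$ --- the entire tree apparatus is unnecessary, and this is what the paper does. Since $M$ is closed under $\omega$-sequences, your external chain $\la M_i\mid i<\omega\ra$ is itself an element of $M$; hence $\bar M=\bigcup_{i<\omega}M_i\in M$, $U\cap\bar M\in M$ by weak amenability, and iterating this construction $\omega$ more times produces $\bar M\in M$ such that $\la\bar M,\in,U\cap\bar M\ra$ is a union of a $\Sigma_n$-elementary chain, so it satisfies $\Sigma_n$-reflection and therefore $\ZFC^-_n$ by Lemma~\ref{lem:ElemSubstructuresImplyZFCn}; goodness of $U\cap\bar M$ is automatic from countable completeness of $U$. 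The witnessing structure thus lies in $M=H_{\kappa^+}^N\subseteq N$ directly, so $\kappa$ is weakly $n$-baby measurable in $N$, and the limit conclusion follows by the elementarity and absoluteness argument you give in your first paragraph. The tree-plus-absoluteness method is genuinely needed only in the setting of Theorem~\ref{weakly vs faintly nmb}, where the ambient model has no closure; here the closure does all the work.
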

\begin{proof}
Suppose that $M$ is a $\kappa$-model such that $\la M,\in,U\ra\models\ZFC_n^-$.
We show that $\kappa$ is weakly $n$-baby measurable in the ultrapower $N$ of $M$ by $U$, and hence a limit of weakly $n$-baby measurable cardinals. As we already argued in the proof of Theorem~\ref{weakly vs faintly nmb}, $\ZFC^-_n$ implies that for every set $B$, there is a set $B^*$ containing for every $\Sigma_n$-formula $\varphi(x):=\exists y\psi(x,y)$ and $b\in B$, a witness $y_b$ such that $\psi(b,y_b)$ holds in $\la M,\in, U\ra$.

Fix $A\subseteq \kappa$ in $M$. Let $M_0\in M$ be any weak $\kappa$-model with $A_0\in M_0$. Let $M_0^*$, as above, contain witnesses for all $\Sigma_n$-formulas with parameters from $M_0$ that are true over $\la M,\in, U\ra$. Let $M_1\in M$ be any weak $\kappa$-model with $M_0^*\subseteq M_1$. Given $M_n$, we define $M_{n+1}$ analogously. Let $\bar M_0=\bigcup_{n<\omega} M_n$. Then clearly $\la \bar M_0,\in, U\cap \bar M\ra\prec_{\Sigma_n}\la M,\in, U\ra$. By closure, the sequence $\{M_n\mid n<\omega\}$ is in $M$, and hence so is $\bar M_0$. Next, we repeat the process, starting with $\bar M_0$ instead of $A$, to build a model $\bar M_1\in M$ such that $$\la \bar M_0,\in, U\cap \bar M_0\ra\prec_{\Sigma_n} \la \bar M_1,\in,U\cap \bar M_1\ra\prec_{\Sigma_n}\la M,\in, U\ra.$$ Continuing in this manner, we define a $\Sigma_n$-elementary chain of models $\la \bar M_n,\in, U\cap \bar M_n\ra$, and observe that the sequence must be in $M$ by closure. Let $\bar M=\bigcup_{n<\omega}\bar M_n$. Then, by construction, $\la \bar M,\in, U\cap \bar M\ra$ satisfies $\Sigma_n$-reflection, and so satisfies $\ZFC_n^-$ by Lemma~\ref{lem:ElemSubstructuresImplyZFCn}.
\end{proof}

Note that the above proof only required the model $M$ to be closed under countable sequences.



\section{Games}
\label{section: games}

The distinction between weakly $n$-baby measurable and $n$-baby measurable cardinals (see Proposition \ref{bm stronger than weaklybm}) suggests that one can obtain different large cardinal notions by varying the closure of the models.
We will show that in fact closure properties induce a hierarchy between these two notions using games analogous to the ones for $\alpha$-Ramsey cardinals from \cite{HolySchlicht:HierarchyRamseyLikeCardinals}.
We now describe games with perfect information associated to the $(\alpha,n)$-baby measurable cardinals and their variants.
All games have two players, the challenger and the judge.
Suppose that $\kappa$, $\alpha$ and $\theta$ are regular cardinals with $\omega\leq\alpha\leq\kappa<\theta$.

\begin{definition}
The game $\g^{\theta,n}_\alpha(\kappa)$ proceeds for $\alpha$-many steps. The challenger starts the game and plays a basic $\kappa$-model $M_0\prec H_\theta$. The judge responds by playing a structure $\la N_0,\in, U_0\ra$, where \hbox{$P^{M_0}(\kappa)\subseteq N_0$} is a $\kappa$-model and $U_0$ is an $N_0$-ultrafilter. At stage $\gamma$, the challenger plays a basic $\kappa$-model $M_\gamma\prec H_\theta$ such that
$$\{\la N_\xi,\in,U_\xi\ra\mid\xi<\gamma\}\in M_\gamma,$$
and the judge responds with a structure $\la N_\gamma,\in,U_\gamma\ra$ such that $P^{M_\gamma}(\kappa)\subseteq N_\gamma$ is a $\kappa$-model and $U_\gamma$ is an $N_\gamma$-ultrafilter extending $\Union_{\xi<\gamma}U_\gamma$. After $\alpha$-many steps, let $M=\Union_{\xi<\alpha}M_\xi$ and $U=\Union_{\xi<\alpha}U_\xi$. The judge wins the game if she was able to play for $\alpha$-many steps such that at the end $U$ is a (good) $M$-ultrafilter and $\la H_{\kappa^+}^M,\in,U\ra\models\ZFC_n^-$. Otherwise, the challenger wins.

Let $\mathsf{faint}\g^{\theta,n}_\omega(\kappa)$ be the analogous game where we do not require the $M$-ultrafilter $U$ to be good.
\end{definition}

Note that this game is played as the game $\Ra\bar{\g}^{\theta}_\alpha(\kappa)$, but, while in that game in order to win, the judge needed to ensure that $U$ is a (good) $M$-ultrafilter, here the judge also needs to ensure that $\la H_{\kappa^+}^M,\in,U\ra\models\ZFC^-_n$. Note also that the $M$-ultrafilter $U$ is automatically good by closure for uncountable $\alpha$.

Let's argue that $H^M_{\kappa^+}=\Union_{\xi<\alpha}N_\xi$ is the union of the judge's moves. If $A\in H_{\kappa^+}^M$, then $A$ can be coded by a subset of $\kappa$ in some $M_\xi$, and hence $A\in N_\xi$. If $A\in N_\xi$, then $A\in H_{\kappa^+}$, and $M$ knows this because $M\prec H_\theta$. 

We will characterize $(n,\alpha)$-baby measurable cardinals by the statement that the challenger does not have a winning strategy in the game $\g^{\theta,n}_\alpha(\kappa)$ and faintly $(\omega,n)$-baby measurable cardinals by the statement that the challenger does not have a winning strategy in the game $\mathsf{faint}\g^{\theta,n}_\omega(\kappa)$ .

The next game strengthens the winning requirement that $\la H_{\kappa^+}^M,\in,U\ra\models\ZFC_n^-$ to the requirement that $\Sigma_n$-reflection holds.

\begin{definition}
\label{def: refl game}
The game $\rg^{\theta,n}_\alpha(\kappa)$ is defined just like the game $\g^{\theta,n}_\alpha(\kappa)$, except that the judge has to ensure that (1) $\la N_\gamma,\in, U_\gamma\ra\models\ZFC^-_n$ and (2) $\la N_\xi,\in,U_\xi\ra \prec_{\Sigma_n} \la N_\gamma,\in,U_\gamma\ra$ for all $\xi<\gamma$ in move $\gamma$. The game $\mathsf{faint}\rg^{\theta,n}_\omega(\kappa)$ is defined analogously.
\end{definition}

As before, observe that $H_{\kappa^+}^M=\Union_{\xi<\alpha}N_\xi$.
Moreover, requiring elementarity between the moves ensures that $\la N_\xi,\in,U_\xi\ra \prec_{\Sigma_n} \la H_{\kappa^+}^M,\in,U\ra$ for all $\xi<\gamma$. Note that this already implies $(H_{\kappa^+}^M,\in, U)$ is a model of $\ZFC^-_n$ by Lemma~\ref{lem:ElemSubstructuresImplyZFCn}.

We will characterise reflective $(n,\alpha)$-baby measurable cardinals by the statement that the challenger does not have a winning strategy in $\rg^{\theta,n}_\alpha(\kappa)$ and faintly reflective $(\omega,n)$-baby measurable cardinals by the statement that the challenger does not have a winning strategy in $\mathsf{faint}\rg^{\theta,n}_\alpha(\kappa)$.

Suppose that $\omega\leq\alpha\leq\kappa$ is a regular cardinal.
Let's argue that in the definition of the (reflective) $(\alpha,n)$-baby measurable cardinals, we can strengthen the assumption that every subset of $\kappa$ can be put into the required basic $\kappa$-model to show that, in fact, we can put every set into such a model.

\begin{proposition}
\label{strong version alphanbm}
If $\kappa$ is $(\alpha,n)$-baby measurable, then for every set $A$ and all sufficiently large cardinals $\theta$, there exists a basic weak $\kappa$-model $M\prec H_\theta$ with $A\in M$ for which there is a (good) $M$-ultrafilter $U$ such that $\la H_{\kappa^+}^M,\in,U\ra\models\ZFC^-_n$.
The same strengthening applies to faintly $(\omega,n)$-baby measurable cardinals, reflective $(\alpha,n)$-baby measurable cardinals, and faintly reflective $(\omega,n)$-baby measurable cardinals.
\end{proposition}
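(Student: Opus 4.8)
The plan is to read the strengthening off the game-theoretic description of these notions that this section is built around, using the fact that in the games $\g^{\theta,n}_\alpha(\kappa)$ it is the challenger, not the judge, who plays the models $M_\gamma\prec H_\theta$. Thus the set $A$ to be captured can simply be fed into the challenger's opening move, and the large cardinal hypothesis (in the form ``the challenger has no winning strategy'') takes care of the rest. I would first record the characterisation that $\kappa$ is $(\alpha,n)$-baby measurable precisely when the challenger has no winning strategy in $\g^{\theta,n}_\alpha(\kappa)$ for some, equivalently all, regular $\theta>\kappa$; the $\theta$-independence also upgrades the ``arbitrarily large $\theta$'' of the definition to the ``all sufficiently large $\theta$'' of the conclusion. (There is no circularity here, as the characterisation does not appeal to this proposition.)

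Now fix an arbitrary set $A$ and a sufficiently large regular $\theta$ with $A\in H_\theta$. The crucial observation is that even though $\mathrm{TCl}(A)$ may have size larger than $\kappa$ --- so that $A$ can be neither coded by a subset of $\kappa$ nor placed in the simple part $H_{\kappa^+}^M$ on which the ultrafilter actually lives --- all that is required of $A$ is that it be an \emph{element} of the final model $M\prec H_\theta$, and membership in $M$ is entirely under the challenger's control. By L\"owenheim--Skolem there is a basic $\kappa$-model $M_0\prec H_\theta$ with $A\in M_0$. Consider any challenger strategy opening with $M_0$. Since the challenger has no winning strategy, this strategy is not winning, so there is a full run of $\g^{\theta,n}_\alpha(\kappa)$ whose first move is $M_0$ and which the judge wins.

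Writing $M=\Union_{\gamma<\alpha}M_\gamma$ and $U=\Union_{\gamma<\alpha}U_\gamma$ for that run, $M$ is a ${<}\alpha$-closed basic weak $\kappa$-model with $M_0\subseteq M$, hence $A\in M$ and $M\prec H_\theta$; and the judge's victory means exactly that $U$ is a good $M$-ultrafilter with $\la H_{\kappa^+}^M,\in,U\ra\models\ZFC^-_n$. This is the desired witness. The faintly, reflective, and faintly reflective variants are obtained by the identical argument applied to the games $\mathsf{faint}\g^{\theta,n}_\omega(\kappa)$, $\rg^{\theta,n}_\alpha(\kappa)$ and $\mathsf{faint}\rg^{\theta,n}_\omega(\kappa)$ and their characterisations; in the two reflective cases the judge additionally guarantees $\la N_\xi,\in,U_\xi\ra\prec_{\Sigma_n}\la N_\gamma,\in,U_\gamma\ra$, so that in the limit each judge-move $\la N_\xi,\in,U_\xi\ra$ is a $\kappa$-model in $M$ which is $\Sigma_n$-elementary in $\la H_{\kappa^+}^M,\in,U\ra$, delivering the reflection clause of Definition \ref{def: reflective}.

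The entire weight of the argument sits in the game characterisation itself, i.e.\ that the bare $(\alpha,n)$-baby measurable property already denies the challenger a winning strategy; granting that, the proposition is immediate. It is instructive to see why a naive direct proof fails, which is also what the characterisation must overcome: one would take $N\prec H_\theta$ with $A\in N$, code $P^N(\kappa)$ by a subset of $\kappa$, apply the definition to obtain a good ultrafilter $U_*$ on a witness $M_*$ with $P^N(\kappa)\subseteq P^{M_*}(\kappa)$, and restrict to $U'=U_*\cap P^N(\kappa)$. This yields an $N$-ultrafilter, but only $\KP_n$ rather than full $\ZFC^-_n$ on $\la H_{\kappa^+}^N,\in,U'\ra$, because the restriction need not be $\Sigma_n$-elementary and separate applications of the definition to a growing chain of models produce incoherent ultrafilters. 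The game framework resolves precisely this by forcing the judge to maintain a single increasing ultrafilter whose limit, by the winning condition, satisfies $\ZFC^-_n$.
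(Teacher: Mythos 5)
Your proposal has a genuine circularity problem, located precisely in your parenthetical claim ``there is no circularity here, as the characterisation does not appeal to this proposition.'' That claim is false. The hard direction of the game characterisation (Theorem \ref{th: char win chal}) — that $(\alpha,n)$-baby measurability implies the challenger has no winning strategy in $\g^{\theta,n}_\alpha(\kappa)$ — begins by fixing a putative winning strategy $\sigma$ and placing it inside a ${<}\alpha$-closed basic weak $\kappa$-model $M\prec H_\theta$ carrying a suitable ultrafilter, so that the judge can use $U$ to defeat it. But $\sigma$ is a function on partial plays, a set of size roughly $2^\kappa$; it is not a subset of $\kappa$ and cannot be coded as one, so the bare definition of $(\alpha,n)$-baby measurable (which only captures subsets of $\kappa$) does not supply such an $M$. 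What supplies it is exactly Proposition \ref{strong version alphanbm}. This is why, in the paper, the proposition is proved \emph{before} the characterisation and without reference to the games; your derivation of the proposition from the characterisation runs the dependency backwards. (The $\theta$-independence Lemma \ref{prop:weakWinningStrategyIndependentOfTheta} and the direction ``no winning strategy for the challenger $\Rightarrow$ witnesses containing $A$ exist'' are fine on their own; the circularity is confined to the other direction, which is the one carrying all the weight in your argument, as you yourself observe.)

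The paper's actual proof is a short reflection argument that sidesteps the need to prescribe an arbitrary set into the model. Suppose towards a contradiction that $(\theta,A)$ is a counterexample, i.e.\ $A\in H_\theta$ but no $M\prec H_\theta$ as required contains $A$. Choose $\nu$ so large that $H_\nu$ sees the existence of such a counterexample, and apply the definition of $(\alpha,n)$-baby measurability at $\nu$ to get some basic $\kappa$-model $M\prec H_\nu$ with a (good) $M$-ultrafilter $U$ such that $\la H_{\kappa^+}^M,\in,U\ra\models\ZFC^-_n$ — note that here one needs no control over which sets lie in $M$. By elementarity $M$ contains a counterexample $(\bar\theta,\bar A)$, and then $\bar M:=M\cap H_{\bar\theta}$ satisfies $\bar M\prec H_{\bar\theta}$, $\bar A\in\bar M$, and $H_{\kappa^+}^{\bar M}=H_{\kappa^+}^M$, so the same $U$ witnesses the requirements for $(\bar\theta,\bar A)$ — a contradiction. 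The insight you missed is that elementarity hands you the needed set (a counterexample) already inside the model, instead of your trying to force a given set in from outside. Your approach could be repaired by proving the hard direction of Theorem \ref{th: char win chal} via this same reflection trick (take $H_\nu$ seeing ``the challenger has a winning strategy''; any witness model elementary in $H_\nu$ then contains \emph{some} winning strategy to play against), but that amounts to re-proving the proposition inside the theorem rather than deducing it from the theorem.
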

\begin{proof}
Suppose that $\kappa$ is $(\alpha,n)$-baby measurable and fix a set $A$.
Suppose towards a contradiction that there is a cardinal $\theta$ with $A\in H_\theta$ for which there exists no basic weak $\kappa$-model as required containing $A$ and elementary in $H_\theta$.
Choose any large enough $H_\nu$ (1) which sees that there is such a counterexample $(\theta,A)$ and (2) for which there is some basic $\kappa$-model $M\prec H_\nu$ satisfying the requirements of $(\alpha,n)$-baby measurability for $\nu$, namely that there is a (good) $M$-ultrafilter $U$ such that $\la H_{\kappa^+}^M,\in,U\ra\models\ZFC^-_n$. By elementarity, $M$ has some counterexample $(\bar\theta,\bar A)$. Let $\bar M=M\cap H_{\bar\theta}$. Then we have $\bar M\prec H_{\bar\theta}$, $\bar A\in\bar M$, and $H_{\kappa^+}^M=H_{\kappa^+}^{\bar M}$. Thus, $U$ is a (good) $\bar M$-ultrafilter such that $\la H_{\kappa^+}^{\bar M},\in,U\ra\models\ZFC^-_n$, but this contradicts that $(\bar\theta,\bar A)$ was a counterexample.
The proof obviously generalizes to the other large cardinal notions.
\end{proof}

Indeed, the above proof shows that in the definition of the (reflective) $(\alpha,n)$-baby measurable cardinals it suffices to assume that for sufficiently large $\theta$, there is a single basic $\kappa$-model $M$ satisfying the requirements.

\begin{lemma}\label{prop:weakWinningStrategyIndependentOfTheta}
The existence of a winning strategy for either player in the game $\g^{\theta,n}_\alpha(\kappa)$ is independent of $\theta>\kappa$. An analogous result holds for the games $\mathsf{faint}\g^{\theta,n}_\omega(\kappa)$, $\rg^{\theta,n}_\alpha(\kappa)$, and $\mathsf{faint}\g^{\theta,n}_\omega(\kappa)$.
\end{lemma}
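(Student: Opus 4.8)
The plan is to mimic the proof of \cite[Lemma 3.3]{HolySchlicht:HierarchyRamseyLikeCardinals} (and the variant for $\Ra\bar{\g}^{\theta}_\alpha(\kappa)$ indicated above), running the games for two regular cardinals $\kappa<\theta<\rho$ side by side. Everything hinges on two structural observations that I would record first. The first is that each move $\la N_\gamma,\in,U_\gamma\ra$ of the judge is a structure of hereditary size $\kappa$, so it lies in $H_{\kappa^+}$ and is a legal judge move at \emph{both} levels; hence the judge's moves can be copied verbatim from one game to the other. The second is the observation already made just before the lemma, that $H_{\kappa^+}^M=\Union_{\xi<\alpha}N_\xi$: this means that the structure $\la H_{\kappa^+}^M,\in,U\ra$ occurring in the winning condition, the set $P^M(\kappa)=\Union_{\xi<\alpha}P^{N_\xi}(\kappa)$ over which $U$ is an ultrafilter, and the normality of $U$ (which refers only to $\kappa$-sequences of subsets of $\kappa$, each coded into $\Union_\xi N_\xi$) are all determined by the judge's moves alone, and similarly for the per-move clauses of $\rg^{\theta,n}_\alpha(\kappa)$. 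In particular, whether a completed play meets the requirement ``$U$ is an $M$-ultrafilter and $\la H_{\kappa^+}^M,\in,U\ra\models\ZFC^-_n$'' (resp.\ its reflective strengthening) depends only on the sequence $\la N_\xi,U_\xi\mid\xi<\alpha\ra$ of judge moves, not on the challenger's models $M_\xi$.

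Granting this, the translation of challenger moves between the two levels is one-directional and routine. Whenever a basic $\kappa$-model $K$ is played at one level, I would replace it at the other level $\lambda'\in\{\theta,\rho\}$ by \emph{any} basic $\kappa$-model $K'\prec H_{\lambda'}$ containing $V_\kappa$, all subsets of $\kappa$ in $K$, and the current sequence of judge moves as an element; such a $K'$ exists by the downward L\"owenheim--Skolem theorem together with $\kappa^{<\kappa}=\kappa$, since $P^{K}(\kappa)$ and that sequence together form a set of size $\kappa$. Since each element of $P^K(\kappa)$ is a subset of $\kappa$, we get $P^{K}(\kappa)\subseteq P^{K'}(\kappa)$, so $K'$ carries the larger collection of subsets of $\kappa$. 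Running the two games in parallel, copying the judge's moves and translating the challenger's models in this way, I would maintain a pair of legal plays sharing the same judge moves. The only thing to verify is that each shared judge move is legal on both sides, and this is arranged precisely because the acting judge always responds to whichever challenger model carries the larger $P(\kappa)$: the constraint $P^{K}(\kappa)\subseteq N_\gamma$ for the smaller model then follows from the one for the larger. This yields all four transfers. To carry a challenger winning strategy from one level to the other I would play, as my actual move, a basic $\kappa$-model containing the strategy's model; to carry a judge winning strategy I would feed the given strategy a basic $\kappa$-model containing the adversary's subsets of $\kappa$. Because the two parallel plays have identical judge moves, the second observation makes one a judge win exactly when the other is, so the simulated strategy wins iff the original does.

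The faint and reflective variants require no new idea: dropping goodness only weakens a condition on $U$, while the per-move clauses $\la N_\gamma,\in,U_\gamma\ra\models\ZFC^-_n$ and $\la N_\xi,\in,U_\xi\ra\prec_{\Sigma_n}\la N_\gamma,\in,U_\gamma\ra$ again constrain the judge's moves only, hence are shared by the two plays. The one genuinely delicate point — and the step I expect to be the main obstacle — is the goodness clause in the \emph{non}-faint game at $\alpha=\omega$, where, unlike for uncountable $\alpha$, closure of $M=\Union_{n<\omega}M_n$ does not make $U$ automatically good. Here goodness of $U$ as an $M$-ultrafilter is a strictly stronger demand than weak amenability (this is exactly the gap between $0$-iterable and $1$-iterable cardinals), and it is sensitive to $M$ \emph{above} $H_{\kappa^+}^M$, which the two parallel plays cannot be made to share. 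I would handle this as in the $\alpha=\omega$ case of \cite[Lemma 3.3]{HolySchlicht:HierarchyRamseyLikeCardinals}: ill-foundedness of the ultrapower, if it occurs, is witnessed by an external descending $\omega$-sequence, and one arranges the simulation so that, by the absoluteness of well-foundedness, such a sequence is present in one play iff in the other — precisely the kind of transfer of an external $\omega$-branch by absoluteness that is carried out for the tree $\mathcal T$ in the proof of Theorem~\ref{weakly vs faintly nmb}. Checking that this transfer respects the two observations above is the crux of the argument.
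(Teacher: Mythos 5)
Your core construction is exactly the paper's. The paper's proof fixes $\theta,\rho>\kappa$, translates each challenger move $M_\gamma$ at one level into a basic $\kappa$-model $\bar M_\gamma$ at the other level with $P^{M_\gamma}(\kappa)\subseteq \bar M_\gamma$ (and the sequence of the judge's prior moves as an element), copies the judge's moves verbatim, and then identifies the winning conditions of the two parallel runs via $H_{\kappa^+}^M=H_{\kappa^+}^{\bar M}=\Union_{\xi<\alpha}N_\xi$; your two ``structural observations'' and your handling of both strategy-transfer directions coincide with this, down to the point that the side whose challenger model carries the larger collection of subsets of $\kappa$ is the one the judge's moves must be legal against.

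The one place you depart from the paper is the goodness clause at $\alpha=\omega$, and that is also where your proposal has a gap by its own account. The paper does not treat goodness separately: its proof takes the entire winning condition --- including ``$U$ is a good $M$-ultrafilter'' --- to be settled by the shared data $\Union_{\xi}N_\xi$ and $U$, exactly as for the $\ZFC^-_n$ clause (this is likewise how the prototype, \cite[Lemma 3.3]{HolySchlicht:HierarchyRamseyLikeCardinals}, is run). You instead assert that goodness ``is sensitive to $M$ above $H_{\kappa^+}^M$'' and defer to an argument you describe only as arranging the simulation so that ``by the absoluteness of well-foundedness'' a witnessing sequence is present in one play iff in the other, explicitly flagging this as the unchecked crux. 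As written this is not an argument, and the mechanism you invoke is misplaced: both parallel plays take place in $V$, so absoluteness of well-foundedness --- which in Theorem~\ref{weakly vs faintly nmb} and in \cite{HolySchlicht:HierarchyRamseyLikeCardinals} serves to move a branch between $V$ and a possibly ill-founded ultrapower $N$ --- gives nothing here. What your reading would actually require is that an external descending sequence of functions $f_n\colon\kappa\to\Ord\cap\bar M$ in $\bar M$, witnessing ill-foundedness of the ultrapower of one union model, yields such a sequence of functions lying in the other union model $M$; these functions need not belong to $M$, nor to $H_{\kappa^+}^M$, so there is no single sequence to ``transfer.'' So either you follow the paper and treat goodness, like every other clause, as a condition determined by the judge's moves alone --- in which case your final paragraph should simply be deleted --- or you owe a genuine proof that well-foundedness of the ultrapower transfers between the two union models, and your proposal does not contain one.
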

\begin{proof}
We will prove the result for the game $\g^{\theta,n}_\alpha(\kappa)$.
The proof for the other games is nearly identical.
Fix cardinals $\theta,\rho>\kappa$. Let's argue that if either player has a winning strategy in the game $\g_\alpha^{\theta,n}(\kappa)$, then they have a winning strategy in the game $\g^{\rho,n}_\alpha(\kappa)$.

Suppose that the challenger has a winning strategy $\sigma$ in the game $\g_\alpha^{\rho,n}(\kappa)$. Let $\tau$ be the following strategy for the challenger in the game $\g^{\theta,n}_\alpha(\kappa)$. Let $M_0\prec H_\rho$ be the first move of the challenger in $\sigma$. The first move of the challenger in $\tau$ is going to be a basic $\kappa$-model $\bar M_0\prec H_\theta$ such that $P^{M_0}(\kappa)\subseteq \bar M_0$. At stage $\gamma$ in the game, $\tau$ needs to respond to the moves $\{\la N_\xi,\in,U_\xi\ra\mid\xi<\gamma\}$ of the judge. Note that as long as we continue to choose $\bar M_\xi$ for $\xi<\gamma$ such that $P^{M_\xi}(\kappa)\subseteq \bar M_\xi$, where $M_\xi$ is the move given by $\sigma$, then any move $\la N_\xi,\in,U\ra$ of the judge in the game $\g_\alpha^{\theta,n}(\kappa)$ will also be a valid move in the game $\g^{\rho,n}_\alpha(\kappa)$. So if $M_\gamma$ is the response of $\sigma$ to the moves $\{\la N_\xi,\in, U_\xi\ra\mid\xi<\gamma\}$ of the judge in the game $\g^{\rho,n}_\alpha(\kappa)$, then $\tau$ will tell the challenger to respond with $\bar M_\gamma\prec H_\theta$ such that $P^{M_\gamma}(\kappa)\subseteq \bar M_\gamma$.
Suppose the judge can win a run of the game $\g^{\theta,n}_{\alpha}(\kappa)$. The run of the game gives models $\la \bar M_\xi, \in,U_\xi\ra$ for $\xi<\alpha$. Let $\bar M=\Union_{\xi<\kappa}\bar M_\xi$ and $U=\Union_{\xi<\kappa}U_\xi$. Since the judge wins, we have $\la H_{\kappa^+}^{\bar M},\in,U\ra\models\ZFC^-_n$. By construction of $\tau$, the models $\bar M_\xi$ were chosen based on the moves $M_\xi$ dictated by $\sigma$. But now it follows that the judge would win against the moves $M_\xi$ by playing $\la N_\xi,\in,U_\xi\ra$ because $H_{\kappa^+}^M=H_{\kappa^+}^{\bar M}=\Union_{\xi<\kappa}N_\xi$.

Next, suppose that the judge has a winning strategy $\sigma$ in the game $\g_\alpha^{\rho,n}(\kappa)$. Let $\tau$ be the following strategy for the judge in the game $\g^{\theta,n}_{\alpha}(\kappa)$. Let $M_0\prec H_\theta$ be the first move of the challenger. Let $\bar M_0\prec H_\rho$ be such that $P^{M_0}(\kappa)\subseteq \bar M_0$. Now let $\tau$ respond with the structure $\la N_0,\in,U_0\ra$ that is the response of $\sigma$ to $\bar M_0$. Given a play $\{M_\xi\mid\xi<\gamma\}$ by the challenger, $\tau$ will tell the judge to respond with the response of $\sigma$ to the play $\la \bar M_\xi\mid\xi<\gamma\}$ such that $\bar M_\xi\prec H_\rho$ and $P^{M_\xi}(\kappa)\subseteq \bar M_\xi$. Now observe that if the challenger wins a play against $\tau$ with the moves $\{ M_\xi\mid\xi<\alpha\}$, then the challenger would also win with the moves $\{\bar M_\xi\mid\xi<\alpha\}$ against $\sigma$ because $H_{\kappa^+}^M=H_{\kappa^+}^{\bar M}=\Union_{\xi<\kappa}N_\xi$ for $M=\Union_{\xi<\alpha}M_\xi$ and $\bar M=\Union_{\xi<\alpha}\bar M_\xi$.
\end{proof}

\begin{theorem}\label{th: char win chal}
A cardinal $\kappa$ is $(\alpha,n)$-baby measurable if and only if the challenger doesn't have a winning strategy in the game $\g^{\theta,n}_\alpha(\kappa)$ for some/all cardinals $\theta>\kappa$. A cardinal $\kappa$ is reflective $(\alpha,n)$-baby measurable if and only if the challenger doesn't have a winning strategy in the game $\rg^{\theta,n}_\alpha(\kappa)$ for some/all cardinals $\theta>\kappa$. An analogous result holds for the faint notions and games.
\end{theorem}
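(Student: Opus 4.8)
The plan is to follow the template of the game characterisation of $\alpha$-Ramsey cardinals in Theorem~\ref{th:alphaRamseyEquivGame}, handling all four variants uniformly and using Lemma~\ref{prop:weakWinningStrategyIndependentOfTheta} to fix one convenient $\theta$. I prove each biconditional through its two contrapositives. The easy direction shows that a failure of $(\alpha,n)$-baby measurability produces a winning strategy for the challenger. If $\kappa$ is not $(\alpha,n)$-baby measurable, then by Proposition~\ref{strong version alphanbm} I may fix a set $A$ and a cardinal $\theta$ so that no ${<}\alpha$-closed basic weak $\kappa$-model $M\prec H_\theta$ with $A\in M$ carries a (good) $M$-ultrafilter $U$ with $\la H^M_{\kappa^+},\in,U\ra\models\ZFC^-_n$. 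The challenger opens with a basic $\kappa$-model $M_0\prec H_\theta$ containing $A$ and thereafter plays any legal $M_\gamma$ absorbing the judge's prior moves. Since basic $\kappa$-models are ${<}\kappa$-closed and $\alpha$ is regular, the union $M=\Union_{\xi<\alpha}M_\xi$ is a ${<}\alpha$-closed basic weak $\kappa$-model with $M\prec H_\theta$, $A\in M$, and $H^M_{\kappa^+}=\Union_{\xi<\alpha}N_\xi$. Were the judge to win, $U=\Union_{\xi<\alpha}U_\xi$ would witness exactly the forbidden configuration, a contradiction. For $\rg^{\theta,n}_\alpha(\kappa)$ one adds the observation that the judge's moves $N_\xi\in M$ would then witness reflective $(\alpha,n)$-baby measurability of $M,U$, contradicting the analogous failure.

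The substantial direction is that $(\alpha,n)$-baby measurability prevents the challenger from having a winning strategy. Assuming toward a contradiction that $\sigma$ is one, I apply the strengthened notion (Proposition~\ref{strong version alphanbm}) to the set $\sigma$ to obtain a simple ${<}\alpha$-closed basic weak $\kappa$-model $M^*\prec H_{\theta'}$ with $\sigma\in M^*$ and a (good) $M^*$-ultrafilter $U^*$ such that, writing $\hat M=H^{M^*}_{\kappa^+}$, we have $\la \hat M,\in,U^*\ra\models\ZFC^-_n$. As $n\geq1$, the theory $\ZFC^-_n$ includes $\Delta_0$-separation, so $U^*$ is weakly amenable, and since $\hat M$ is simple it is fully amenable; hence $\la\hat M,\in,U^*\ra$ carries the definable truth predicates of Lemmas~\ref{truth predicate from ultrafilter} and~\ref{truth pred for simple weak}. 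The judge now defeats $\sigma$ while playing entirely inside $M^*$: she answers the challenger's move $M_\gamma$ (which lies in $M^*$ because $\sigma\in M^*$) by a pair $\la N_\gamma,\in,U_\gamma\ra$ with $N_\gamma\in\hat M$ an internal $\kappa$-model (really one by Lemma~\ref{prop:internalKappaModel}) absorbing $P^{M_\gamma}(\kappa)$ and the previous $N_\xi$, and $U_\gamma=U^*\cap N_\gamma$, which lies in $\hat M$ by weak amenability. Because $M^*$ is ${<}\alpha$-closed and $\alpha$ is regular, the sequence of prior moves is an element of $M^*$ at every stage, so $\sigma$ applies and returns a move in $M^*$ and the run survives all $\alpha$ steps.

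To secure the winning condition I would have the judge close off under $\Sigma_n$-witnesses exactly as in the proofs of Theorems~\ref{weakly vs faintly nmb} and~\ref{bm stronger than weaklybm}: using the $\Sigma_n$-truth predicate and $\Sigma_n$-collection, each $B\in\hat M$ has a set $B^*\in\hat M$ containing a witness for every true $\Sigma_n$-formula with parameters in $B$, and she arranges $N_{\gamma+1}\supseteq(\Union_{\xi\leq\gamma}N_\xi)^*$. Then $\Union_{\xi<\alpha}N_\xi=H^M_{\kappa^+}$ is closed under $\Sigma_n$-witnesses, so $\la H^M_{\kappa^+},\in,U\ra\prec_{\Sigma_n}\la\hat M,\in,U^*\ra$ with $U=U^*\cap H^M_{\kappa^+}=\Union_{\xi<\alpha}U_\xi$; hence it satisfies $\Sigma_n$-reflection and, by Lemma~\ref{lem:ElemSubstructuresImplyZFCn}, models $\ZFC^-_n$, so the judge wins, a contradiction. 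For the reflective game the judge instead plays the $\kappa$-models $N_\gamma\in M^*$ handed to her by the defining clause of reflective $(\alpha,n)$-baby measurability (Definition~\ref{def: reflective}), which already satisfy $\la N_\gamma,\in,U^*\cap N_\gamma\ra\prec_{\Sigma_n}\la\hat M,\in,U^*\ra$ and $\models\ZFC^-_n$, so the stronger winning requirements of $\rg^{\theta,n}_\alpha(\kappa)$ hold automatically. The faint cases $\alpha=\omega$ are identical except that $U^*$ is not assumed good; here ${<}\omega$-closure is trivial and Lemma~\ref{truth pred for simple weak} still supplies the $\Sigma_n$-truth predicate from amenability alone, so the witness-closure argument goes through despite the ill-founded ultrapower.

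The main obstacle is this hard direction, where the judge must simultaneously keep her moves as genuine \emph{elements} of the master model $M^*$ (so that $\sigma$ is always fed a legal input) and force the union of her moves to have the right first-order theory. Weak amenability is precisely what keeps each restriction $U^*\cap N_\gamma$ inside $M^*$, and the stagewise witness-closure bookkeeping is what upgrades the bare $\Sigma_n$-elementarity of the union into a full model of $\ZFC^-_n$; I deliberately route around Lemma~\ref{lem:ZFCnImpliesElemSubstructures}, whose stronger hypothesis $\KP_{n+1}$ on the master model is not available here, reserving the internal production of $\Sigma_n$-elementary $\ZFC^-_n$-submodels for the reflective game, where the hypothesis itself guarantees them.
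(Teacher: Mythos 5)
Your overall architecture matches the paper's proof: the easy direction is the same contrapositive argument, and in the hard direction you, like the paper, place the challenger's strategy $\sigma$ into a master model via Proposition~\ref{strong version alphanbm}, have the judge answer with weakly amenable restrictions $U^*\cap N_\gamma$ inside internal $\kappa$-models, and use ${<}\alpha$-closure of $M^*$ to keep every position inside $M^*$. The genuine gap is in how you secure the winning condition $\la H_{\kappa^+}^M,\in,U\ra\models\ZFC^-_n$. You argue: the union is closed under $\Sigma_n$-witnesses, hence $\prec_{\Sigma_n}\la\hat M,\in,U^*\ra$, ``hence it satisfies $\Sigma_n$-reflection.'' That last step is a non sequitur: being a $\Sigma_n$-elementary substructure of a model of $\ZFC^-_n$ yields neither $\Sigma_n$-reflection (which requires \emph{internal} reflecting sets) nor $\ZFC^-_n$ itself, since the $\Sigma_n$-separation and $\Sigma_n$-collection axioms exceed $\Sigma_n$ in complexity and so do not transfer down. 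The paper's own example makes this concrete: the least $\Sigma_1$-elementary substructure of a model $L_\beta\models\ZFC^-_1$ --- which is precisely the witness-closure of an $\omega$-chain --- fails $\ZFC^-_1$ (see the discussion preceding Lemma~\ref{lem:nElemImpliesKPn}; such a hull is only guaranteed $\KP_n$). For uncountable $\alpha$ your argument is repairable with material you already have: since each move absorbs the previous $N_\xi$'s, the partial unions $\Union_{\xi<\lambda}N_\xi$ at limit stages $\lambda<\alpha$ are \emph{elements} of the final model, and being witness-closed they are $\Sigma_n$-elementary in $\hat M$ and hence in the final union; these partial unions are the reflecting sets, and this is exactly what the chain displayed in the paper's proof records before invoking Lemma~\ref{lem:ElemSubstructuresImplyZFCn}.

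At $\alpha=\omega$, however, this repair is unavailable: there are no limit stages below $\omega$, so the construction produces no internal reflecting sets, and your claim that ``the witness-closure argument goes through'' is false --- for the non-faint $(\omega,n)$-game as well as the faint one, since the issue is the $\ZFC^-_n$ clause of the winning condition, not goodness. This is precisely why the paper treats $\alpha=\omega$ separately: there the judge must additionally insert into her moves the separation sets $X_j$ and collection sets $Y_j$ extracted from $\la\hat M,\in,U^*\ra$ via its $\Sigma_n$-truth predicate, following the pseudo-truth-predicate bookkeeping of Theorem~\ref{weakly vs faintly nmb}, so that $\Sigma_n$-separation and $\Sigma_n$-collection in the limit model are verified directly by ordinary separation inside the $\ZFC^-$-models $N_{j+1}$ rather than deduced from reflection. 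Without that extra insertion, your limit model is only guaranteed to be $\Sigma_n$-elementary in $\hat M$, which is not enough to win the game.
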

\begin{proof}
Again, we will only prove the result about the $(\alpha,n)$-baby measurable cardinals because the other proof is nearly identical.

Suppose that the challenger doesn't have a winning strategy in the game $\g^{\theta,n}_\alpha(\kappa)$ for some fixed regular cardinal $\theta>\kappa$. Fix $A\subseteq\kappa$. In particular, starting with a basic $\kappa$-model $M_0\prec H_\theta$ with $A\in M_0$ is not a winning strategy for the challenger, and so the judge wins some run of the game, where the challenger starts with such an $M_0\prec H_\theta$. Let $M$ the union of the challenger's moves in this run of the game and $U$ be the union of the ultrafilters played by the judge. Since the game was played for $\alpha$-many steps, the model in each step was closed under ${<}\kappa$-sequences and $\alpha$ is a regular cardinal, the union model $M$ is closed under ${<}\alpha$-sequences. Finally, since the judge wins, we have $\la H_{\kappa^+}^M,\in,U\ra\models\ZFC^-_n$.

In the other direction, suppose that $\kappa$ is $(\alpha,n)$-baby measurable. Suppose towards a contradiction that the challenger has a winning strategy $\sigma$ in the game $\g^{\kappa^+,n}_\alpha(\kappa)$.
It is not hard to see that $\sigma\in H_\theta$ for $\theta=(2^{\kappa})^+$. So fix some basic $\kappa$-model $M\prec H_\theta$ closed under ${<}\alpha$-sequences with $\sigma\in M$ for which there is an $M$-ultrafilter $U$ such that $\la H_{\kappa^+}^M,\in,U\ra\models\ZFC^-_n$. We will use $M$ and $U$ to play against $\sigma$ and win, thereby showing that it couldn't have been a winning strategy. First suppose that $\alpha$ is uncountable.

Let $M_0$ be the first move of the challenger according to $\sigma$, and observe that, by elementarity, we have $M_0\in M$. Let $N_0\in M$ be a simple $\kappa$-model with $M_0\in N_0$ such that every $\Sigma_n$-statement that holds in $H_{\kappa^+}^M$ for an element of $M_0$ has a witness in $N_0$.
Such a model exists since $\ZFC^-_n$ holds in $\la H_{\kappa^+}^M,\in,U\ra$.
Let the judge play $\la N_0,\in,U\cap N_0\ra$.
Since $\la N_0,\in,U\cap N_0\ra\in M$, the response of the challenger according to $\sigma$ must be in $M$ as well.
We continue letting the judge play $\la N_\xi,\in, N_\xi\cap U\ra\in M$ for each $\xi<\alpha$
in this fashion.
Since $M$ is closed under ${<}\alpha$-sequences, $M$ will always have the sequence of the judge's moves at each step $\gamma<\alpha$ of the game.
Thus, the judge can continue to play for $\alpha$-many steps. Let $N=\Union_{\xi<\alpha}N_\xi$.
We have $$\la N_\omega,\in,U\cap N_\omega\ra\prec_{\Sigma_n}\cdots\prec_{\Sigma_n} \la N_\lambda,\in, U\cap N_\lambda\ra\prec_{\Sigma_n}\cdots\prec_{\Sigma_n}\la N,\in, U\cap N\ra\prec_{\Sigma_n} \la H_{\kappa^+}^M,\in,U \ra$$ for limit ordinals $\lambda$. Thus, $\la N,\in, U\cap N\ra$ satisfies $\Sigma_n$-reflection, and therefore $\la N,\in,U\cap N\ra\models\ZFC^-_n$ by Lemma~\ref{lem:ElemSubstructuresImplyZFCn}. As we already observed, $N$ is precisely the $H_{\kappa^+}$ of $M=\Union_{\xi<\alpha}M_\xi$, the union of the moves of the challenger. Thus, we have shown that the judge can win against $\sigma$, contradicting that $\sigma$ was a winning strategy.

Finally, suppose that $\alpha=\omega$, which means that the model $M$ doesn't have any closure. In this case, the judge responds to the moves of the challenger by choosing the models with the partial truth predicates as in the proof of Theorem~\ref{weakly vs faintly nmb} to ensure that the limit model satisfies $\ZFC^-_n$.
\end{proof}

It follows from the proof that $\theta= (2^\kappa)^+$ suffices in the definition of $(\alpha,n)$-baby measurable cardinals.
Thus, $\kappa$ is $(\alpha,n)$-baby measurable if and only if every $A\in H_{(2^\kappa)^+}$ is an element of a ${<}\alpha$-closed imperfect weak $\kappa$-model $M\prec H_{(2^\kappa)^+}$ for which there is an $M$-ultrafilter such that $\la H_{\kappa^+}^M,\in,U\ra\models\ZFC^-_n$. An analogous result holds for reflective $(\alpha,n)$-baby measurable cardinals and the faint notions.

Next, we show that the (reflective) $(\alpha,n)$-baby measurable cardinals form a hierarchy.

\begin{proposition}
\label{th: hierarchy alphanbm}
Suppose that $\alpha<\beta\leq\kappa$ are uncountable regular cardinals. Then every $(\beta,n)$-baby measurable cardinal is a limit of cardinals $\nu>\alpha$ that are $(\alpha,n)$-baby measurable. An analogous result holds for reflective $(\beta,n)$-baby measurable cardinals.
\end{proposition}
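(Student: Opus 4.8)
The plan is to follow the template of Theorem~\ref{th: char win chal} together with the earlier ultrapower--reflection arguments (e.g.\ the proof that a strongly Ramsey cardinal is a limit of ${<}\alpha$-Ramsey cardinals): build, inside a suitable well-founded ultrapower, a model of ``$\kappa$ is $(\alpha,n)$-baby measurable'', and then reflect this statement below $\kappa$. First I would fix a witness for $(\beta,n)$-baby measurability. By Theorem~\ref{th: char win chal} and Proposition~\ref{strong version alphanbm} there is a ${<}\beta$-closed basic weak $\kappa$-model $M\prec H_\theta$ and a good $M$-ultrafilter $U$ with $\la H_{\kappa^+}^M,\in,U\ra\models\ZFC^-_n$. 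Put $\M=H_{\kappa^+}^M$; as observed in Section~\ref{section: preliminaries}, $\M$ is a transitive \emph{simple} weak $\kappa$-model, it inherits ${<}\beta$-closure (hence $\omega$-closure, as $\beta$ is uncountable), and $U$ is a good $\M$-ultrafilter with $\la\M,\in,U\ra\models\ZFC^-_n$. Since $\ZFC^-_n$ includes $\Delta_0$-separation in the language with $U$, the ultrafilter $U$ is fully amenable to the simple model $\M$. Let $j\colon\M\to N$ be the ultrapower by $U$; it is well-founded, $\crit(j)=\kappa$, $j(\kappa)$ is inaccessible in $N$, and by Lemma~\ref{prop:wellFoundedPartOfUltrapowerWA} we have $\M=H_{\kappa^+}^N\subseteq W:=V_{j(\kappa)}^N$. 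The crucial consequence of amenability is that for any $\kappa$-model $N_\xi\in\M\subseteq W$ the restriction $U\cap N_\xi$ again lies in $\M\subseteq W$, so the judge's moves computed from the external filter $U$ are legitimate elements of $W$.

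Next I would show $W\models$ ``$\kappa$ is $(\alpha,n)$-baby measurable'' by proving, via Theorem~\ref{th: char win chal} and the $\theta$-independence of Lemma~\ref{prop:weakWinningStrategyIndependentOfTheta}, that the challenger has no winning strategy in $\g^{\kappa^+,n}_\alpha(\kappa)$ as computed in $W$. Given a putative winning strategy $\sigma\in W$, the judge plays against it using $U$: at stage $\xi$ she answers the challenger's $M_\xi$ with $\la N_\xi,\in,U\cap N_\xi\ra$, where $N_\xi\in W$ is a $\kappa$-model containing $\Union_{\eta<\xi}N_\eta\cup P^{M_\xi}(\kappa)$ together with witnesses for every $\Sigma_n$-assertion over $\la\M,\in,U\ra$ with parameters from that set. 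These witnesses form a set by $\Sigma_n$-collection in $\la\M,\in,U\ra\models\ZFC^-_n$ (the $B^*$-construction from the proof of Theorem~\ref{weakly vs faintly nmb}), and such a $\kappa$-model exists in $W$ because $\kappa$ is inaccessible in $W\models\ZFC$. Exactly as in the proof of Theorem~\ref{th: char win chal}, the witness-closure guarantees that at limit stages the unions are $\Sigma_n$-elementary in $\la\M,\in,U\ra$; hence the final model $\la H_{\kappa^+}^{M^*},\in,U^*\ra=\la\Union_{\xi<\alpha}N_\xi,\in,U\cap\Union_{\xi<\alpha}N_\xi\ra$ is $\Sigma_n$-elementary in $\la\M,\in,U\ra$, so it satisfies $\Sigma_n$-reflection and therefore $\ZFC^-_n$ by Lemma~\ref{lem:ElemSubstructuresImplyZFCn}. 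Since $\alpha$ is regular and uncountable, $H_{\kappa^+}^{M^*}$ is $\omega$-closed and $U^*$ is automatically good, so the judge wins, contradicting the choice of $\sigma$.

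Finally I would reflect. Since $(\alpha,n)$-baby measurability of $\kappa$ is a property of $H_{(2^\kappa)^+}$ (by the remark following Theorem~\ref{th: char win chal}) and $(2^\kappa)^{+N}<j(\kappa)$, the statement is absolute between $W$ and $N$, so $N\models$ ``$\kappa$ is $(\alpha,n)$-baby measurable''. Fix any $\gamma$ with $\alpha\leq\gamma<\kappa$; then $j(\gamma)=\gamma$ and $j(\alpha)=\alpha$, and $\kappa$ witnesses in $N$ the statement $\exists\nu\,(\gamma<\nu<j(\kappa)\wedge \nu\text{ is }(\alpha,n)\text{-baby measurable})$. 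By elementarity of $j$, the model $\M$ satisfies the same statement with $j(\kappa)$ replaced by $\kappa$, yielding some $(\alpha,n)$-baby measurable $\nu$ with $\gamma<\nu<\kappa$. Because $V_\kappa^{\M}=V_\kappa$ and $(\alpha,n)$-baby measurability of a cardinal $\nu<\kappa$ is decided inside $V_\kappa$ (it only involves $H_{(2^\nu)^+}\subseteq V_\kappa$ and games of length $<\kappa$), such $\nu$ is genuinely $(\alpha,n)$-baby measurable in $V$; as $\gamma<\kappa$ was arbitrary these are unbounded in $\kappa$, so $\kappa$ is a limit of $(\alpha,n)$-baby measurable cardinals $\nu>\alpha$. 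For the reflective variant the argument is identical, except that in the game $\rg^{\kappa^+,n}_\alpha(\kappa)$ the judge plays the $\kappa$-models $\bar M\in\M$ directly supplied by reflective $(\beta,n)$-baby measurability, which already satisfy $\la\bar M,\in,U\cap\bar M\ra\prec_{\Sigma_n}\la\M,\in,U\ra\models\ZFC^-_n$.

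The step I expect to be the main obstacle is the middle one: ensuring simultaneously that the judge's responses are \emph{legal elements of $W$} (this is exactly where full amenability of $U$ to the simple model $\M$ is used, so that each $U\cap N_\xi$ is internal) and that the limit model genuinely satisfies $\ZFC^-_n$ \emph{even though we only have $\ZFC^-_n$, not $\KP_{n+1}$, in $\M$} — which is why the judge must close under $\Sigma_n$-witnesses stage by stage and rely on the $\Sigma_n$-elementarity of the unions rather than on the definable well-order of Lemma~\ref{lem:ZFCnImpliesElemSubstructures}.
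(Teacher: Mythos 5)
Your overall route is the same as the paper's: pass to the well-founded ultrapower $j\colon\M\to N$ of $\M=H_{\kappa^+}^M$ by $U$, show that $\kappa$ is $(\alpha,n)$-baby measurable in $W=V_{j(\kappa)}^N$ by having the judge use $U$ to defeat a putative winning strategy $\sigma\in W$ for the challenger (via Theorem~\ref{th: char win chal} and Lemma~\ref{prop:weakWinningStrategyIndependentOfTheta}), and then reflect below $\kappa$. Your setup — amenability of $U$ to the simple model $\M$, the $B^*$-witness construction from Theorem~\ref{weakly vs faintly nmb}, $\Sigma_n$-reflection of the union via Lemma~\ref{lem:ElemSubstructuresImplyZFCn}, goodness for free from $\alpha$ uncountable — and your final reflection step all match the paper.

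However, there is a genuine gap at the central step, and it is exactly the point on which the paper's proof turns. To contradict the assumption that $\sigma$ is winning \emph{in $W$}, the run you construct — including every position $\{\la N_\eta,\in,U\cap N_\eta\ra\mid\eta<\xi\}$ to which $\sigma$ must respond, and the final $\alpha$-length run — must be an \emph{element of $W$}: a strategy in $W$ is only required to defeat runs that lie in $W$. Your write-up secures each individual move in $W$ (via amenability), but never argues that the sequences of moves are in $W$; relatedly, your claim that a suitable $N_\xi$ exists in $W$ ``because $\kappa$ is inaccessible in $W$'' presupposes that $\Union_{\eta<\xi}N_\eta$ is already a set in $W$, which again requires the sequence of earlier moves to be there. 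This is where the paper invokes the ${<}\beta$-closure of the original model: each position is a sequence of length less than $\beta$ of elements of $H_{\kappa^+}^M\subseteq W$, hence lies in the model by closure and therefore in $H_{\kappa^+}^M=H_{\kappa^+}^N\subseteq W$, and the same applies to the full run precisely because $\alpha<\beta$. Note this is the only place the hypothesis $\alpha<\beta$ enters; your argument as written never uses it and would equally ``prove'' the statement for $\alpha=\beta$, which fails for the least $(\beta,n)$-baby measurable cardinal. You do record the ${<}\beta$-closure of $\M$ in your first paragraph, so the repair is a one-line observation — but it is the essential one, and it is not the amenability/witness-closure issue you flag as the main obstacle.
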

\begin{proof}
As above, we will only prove the result about the $(\beta,n)$-baby measurable cardinals because the other proof is nearly identical.

Suppose that $\kappa$ is $(\beta,n)$-baby measurable and fix a ${<}\beta$-closed basic weak $\kappa$-model $M$ for which there is an $M$-ultrafilter $U$ with $\la H_{\kappa^+}^M,\in,U\ra\models\ZFC^-_n$.
Note that we do not use $M\prec H_\theta$ for some $\theta$.
Let $N$ be the ultrapower of $M$ by $U$.  We will argue that $\kappa$ is $(\alpha,n)$-baby measurable in $V_{j(\kappa)}^N$.
Otherwise in $V_{j(\kappa)}^N$, the challenger has a winning strategy in the game $\g^{\kappa^+,n}_\alpha(\kappa)$ by Lemma~\ref{prop:weakWinningStrategyIndependentOfTheta}.
We will use $U$ to play against $\sigma$ and argue that the resulting run of the game is in $N$. Using the same argument as in the proof of Theorem~\ref{th: char win chal}, it suffices to observe that $M$ has all the required sequences by ${<}\beta$-closure. Finally, Lemma~\ref{prop:weakWinningStrategyIndependentOfTheta} shows that $V_\kappa$ must be correct about a cardinal being $(\alpha,n)$-game baby measurable.
\end{proof}


\begin{theorem}
\label{nbm versus alphanbm}
For $n\geq 1$, an $n$-baby measurable cardinal is a limit of cardinals $\nu$ that are
$({<}\nu,n)$-baby measurable. Moreover, an $(\kappa,n)$-baby measurable cardinal $\kappa$ is a limit of $n$-baby measurable cardinals.
\end{theorem}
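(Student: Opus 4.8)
The plan is to handle the two statements separately, in each case reflecting a suitable witness through its ultrapower, exactly as in Proposition~\ref{th: hierarchy alphanbm} and in the earlier analysis of strongly Ramsey cardinals. For the first statement, fix a simple $\kappa$-model $M$ with a good $M$-ultrafilter $U$ such that $\la M,\in,U\ra\models\ZFC^-_n$, and let $j\colon M\to N$ be the well-founded ultrapower, so that $M=H_{\kappa^+}^N$ by Lemma~\ref{prop:wellFoundedPartOfUltrapowerWA} and $V_{j(\kappa)}^N\models\ZFC$. I would show that $\kappa$ is $({<}\kappa,n)$-baby measurable inside $V_{j(\kappa)}^N$. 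Fixing a regular $\beta<\kappa$, suppose toward a contradiction that the challenger has a winning strategy $\sigma\in V_{j(\kappa)}^N$ in $\g^{(\kappa^+)^N,n}_\beta(\kappa)$. Since the challenger's moves are basic $\kappa$-models elementary in $H_{(\kappa^+)^N}^{V_{j(\kappa)}^N}=M$ and hence lie in $M$, I would have the judge defeat $\sigma$ by playing $\la N_\gamma,\in,U\cap N_\gamma\ra$, where $N_\gamma\in M$ is a simple $\kappa$-model extending $M_\gamma$ closed under witnesses for $\Sigma_n$-statements true in $\la M,\in,U\ra$ (these exist because $\la M,\in,U\ra\models\ZFC^-_n$), precisely as in the proofs of Theorems~\ref{weakly vs faintly nmb} and~\ref{th: char win chal}. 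Weak amenability returns each $U\cap N_\gamma$ to $M$, the ${<}\kappa$-closure of $M$ keeps every initial run in $M$ so that the play survives $\beta$ steps, and the witness-closure makes $\la\bigcup_\gamma N_\gamma,\in,U\cap\bigcup_\gamma N_\gamma\ra$ satisfy $\Sigma_n$-reflection, hence $\ZFC^-_n$ by Lemma~\ref{lem:ElemSubstructuresImplyZFCn}; for $\beta=\omega$ the partial-truth-predicate bookkeeping of Theorem~\ref{weakly vs faintly nmb} secures both $\ZFC^-_n$ and goodness. This run lies in $V_{j(\kappa)}^N$ and is won by the judge, contradicting the choice of $\sigma$.

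Having established that $V_{j(\kappa)}^N\models$``$\kappa$ is $({<}\kappa,n)$-baby measurable'', I would reflect through the elementary map $j\restriction V_\kappa^M\colon V_\kappa=V_\kappa^M\to V_{j(\kappa)}^N$ with critical point $\kappa$: for every $\gamma<\kappa$ the cardinal $\kappa$ itself witnesses in $V_{j(\kappa)}^N$ the existence of a $({<}\nu,n)$-baby measurable $\nu>\gamma$, so by elementarity $V_\kappa$ has such a $\nu\in(\gamma,\kappa)$. Since for $\nu<\kappa$ the relevant strategies and models live in $H_{(2^\nu)^+}\subseteq V_\kappa$, this property is absolute between $V_\kappa$ and $V$ by Lemma~\ref{prop:weakWinningStrategyIndependentOfTheta} together with the remark that $\theta=(2^\nu)^+$ suffices, so $\kappa$ is genuinely a limit of $({<}\nu,n)$-baby measurable cardinals.

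For the second statement, fix a witness $M$: a ${<}\kappa$-closed basic weak $\kappa$-model $M\prec H_\theta$ with a good $M$-ultrafilter $U$ and $\la H_{\kappa^+}^M,\in,U\ra\models\ZFC^-_n$. Writing $H=H_{\kappa^+}^M=M\cap H_{\kappa^+}$, I would first check that $H$ is a simple $\kappa$-model with $H\prec H_{\kappa^+}$ (the latter because $M\prec H_\theta$ reflects satisfaction in $H_{\kappa^+}$) and that $U$ is a good $H$-ultrafilter with $\la H,\in,U\ra\models\ZFC^-_n$; as $P^M(\kappa)=P^H(\kappa)$, the structure $H$ directly witnesses, for each $A\subseteq\kappa$, that $A$ lies in a $\kappa$-model carrying a good ultrafilter modelling $\ZFC^-_n$, so $\kappa$ is already $n$-baby measurable. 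To upgrade this to a limit, I would use that ``$\kappa$ is $n$-baby measurable'', call it $\phi(\kappa)$, is a statement whose quantifiers range over $H_{\kappa^+}$ and whose matrix (``$\bar M$ is a $\kappa$-model'', ``$U'$ is good'', ``$\la\bar M,\in,U'\ra\models\ZFC^-_n$'') is absolute between transitive models of $\ZFC^-$ containing the relevant sets of size $\kappa$, invoking Lemma~\ref{prop:internalKappaModel} for the $\kappa$-model clause. Thus $V\models\phi(\kappa)$ gives $H_{\kappa^+}\models\phi(\kappa)$, hence $H\models\phi(\kappa)$ by $H\prec H_{\kappa^+}$, and hence $N\models\phi(\kappa)$ for the ultrapower $j\colon H\to N$, since $H=H_{\kappa^+}^N$. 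Reflecting $\phi(\kappa)$ through the elementary $j$ with critical point $\kappa$ exactly as in the first part yields unboundedly many $\nu<\kappa$ with $H\models\phi(\nu)$, which transfer to $V$ by $H\prec H_{\kappa^+}$ and the same $H_{\nu^+}$-absoluteness, so $\kappa$ is a limit of $n$-baby measurable cardinals.

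The main obstacle in the first part is verifying that the judge's improvised run genuinely meets the winning condition — that the limit structure models $\ZFC^-_n$, and for $\beta=\omega$ is good — which is exactly where the witness-closure and partial-truth-predicate machinery of Theorems~\ref{weakly vs faintly nmb} and~\ref{th: char win chal} must be re-used carefully rather than cited in passing. In the second part the delicate point is pinning down the precise sense in which $n$-baby measurability is an $H_{\kappa^+}$-local, absolute property, so that it passes cleanly along $H\prec H_{\kappa^+}$ and into the ultrapower $N$ before the elementarity of $j$ pushes it below $\kappa$.
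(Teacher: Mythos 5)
Your proposal is correct and follows essentially the same route as the paper: the first part is the paper's argument via Proposition~\ref{th: hierarchy alphanbm} and Theorem~\ref{th: char win chal} (playing $U$ against a putative strategy inside $V_{j(\kappa)}^N$, using ${<}\kappa$-closure for the runs and witness-closure/truth-predicate bookkeeping for $\ZFC^-_n$), and the second part is the paper's observation that $n$-baby measurability is verifiable in $H_{\kappa^+}$, transfers to the elementary substructure $H_{\kappa^+}^M\prec H_{\kappa^+}$ and hence to the ultrapower, and then reflects below $\kappa$. Your write-up is simply more explicit than the paper's terse proof, e.g.\ in treating the $\beta=\omega$ case and in cutting a general $\theta$-witness down to $M\cap H_{\kappa^+}$.
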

\begin{proof}
The first part follow from the proof of Proposition~\ref{th: hierarchy alphanbm}.

For the second part, observe that $\kappa$ is at $n$-least baby measurable. Now, fix a $\kappa$-model $M\prec H_{\kappa^+}$ for which there is an $M$-ultrafilter such that $U$ such that $\la M,\in, U\ra\models\ZFC^-_n$, and let $N$ be the ultrapower of $M$ by $U$. We will argue that $\kappa$ is $n$-baby measurable in $N$. Since $n$-baby measurability is verifiable in $H_{\kappa^+}$, $M$ satisfies that $\kappa$ is $n$-baby measurable by elementarity, and thus, so does $N$.
\end{proof}

Since being weakly $n$-baby measurable is a property of $H_{\kappa^+}$, it is easy to see that an $(\omega,n)$-baby measurable cardinal is a weakly $n$-baby measurable limit of weakly $n$-baby measurable cardinals.
\begin{theorem}
A weakly $n$-baby measurable is a limit of faintly $(\omega,n)$-baby measurable cardinals.
\end{theorem}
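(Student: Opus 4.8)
The plan is to pass to the ultrapower and argue there via the game characterisation. I would fix a witness for $\kappa$ being weakly $n$-baby measurable, which we may take to be a simple weak $\kappa$-model $M$ together with a good $M$-ultrafilter $U$ such that $\la M,\in,U\ra\models\ZFC^-_n$. Since $n\geq 1$, $U$ is weakly amenable (it yields $\Delta_0$-separation), so $M$ and its ultrapower $N$ have the same subsets of $\kappa$ and, by Lemma~\ref{prop:wellFoundedPartOfUltrapowerWA}, $M=H_{\kappa^+}^N$; moreover $N$ is well-founded because $U$ is good, and $j\colon M\to N$ has critical point $\kappa$. I would show that $N\models$ ``$\kappa$ is faintly $(\omega,n)$-baby measurable.'' As this is a property verified inside $H^N_{(2^\kappa)^+}$ and $\kappa<j(\kappa)$, the usual reflection argument finishes: for each $\beta<\kappa$ the statement $\exists\delta\,(j(\beta)<\delta<j(\kappa)\wedge\delta\text{ is faintly }(\omega,n)\text{-baby measurable})$ holds in $N$ with witness $\kappa$, so by elementarity $M\models\exists\delta\,(\beta<\delta<\kappa\wedge\dots)$, and since $M\supseteq V_\kappa$ is correct about the property for $\delta<\kappa$, this gives that $\kappa$ is a limit of faintly $(\omega,n)$-baby measurable cardinals in $V$.

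By Theorem~\ref{th: char win chal} applied inside $N$, it suffices to show that in $N$ the challenger has no winning strategy in $\mathsf{faint}\g^{\kappa^+,n}_\omega(\kappa)$. I would suppose towards a contradiction that $\sigma$ is such a winning strategy; by $\kappa$-powerset preservation $\sigma$ is represented in the ultrapower by some $[S]_U$ with $S\in M$, so its values can be computed inside $\la M,\in,U\ra$ from $S$ and $U$ using \Los' theorem, exactly as in the proof of Theorem~\ref{th:0bmalpharam}. Then I would build, by a recursion of length $\omega$ inside $\la M,\in,U\ra$ (available since $\ZFC^-_n$ implies $\KP_n$), a run defeating $\sigma$: to the challenger's move $M_i$ (a basic $\kappa$-model, coded by a subset of $\kappa$ and hence lying in $M=H^N_{\kappa^+}$) the judge replies with $\la N_i,\in,U\cap N_i\ra$, where $N_i\in M$ is the $\lhd_U$-least $\kappa$-model (such $\kappa$-models are elements of $M=H^N_{\kappa^+}$, as in the proof of Lemma~\ref{lem:ZFCnImpliesElemSubstructures}) satisfying $P^{M_i}(\kappa)\cup N_{i-1}\of N_i$ and containing a witness for every $\Sigma_n$-formula in the language with $U$, with parameters from $N_{i-1}\cup P^{M_i}(\kappa)$, that is true in $\la M,\in,U\ra$. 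Such witness sets exist by $\Sigma_n$-collection together with the $\Sigma_n$-truth predicate $\Tr^U_n$ of Lemma~\ref{truth pred for simple weak}, and $\lhd_U$ is a genuine well-order by Lemma~\ref{lem:definableWellOrder} because $U$ is good.

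The resulting run has length $\omega$ and all of its moves lie in $M=H^N_{\kappa^+}$, so the run is itself a set in $H^N_{\kappa^+}\of N$; hence $N$ recognises it as a legal play consistent with $\sigma$. Writing $M^\ast=\Union_i M_i$ and $U^\ast=\Union_i(U\cap N_i)$, we have $H^{M^\ast}_{\kappa^+}=\Union_i N_i$, and the iterated witness-closure makes $\la\Union_i N_i,\in,U^\ast\ra$ a $\Sigma_n$-elementary substructure of $\la M,\in,U\ra$; it therefore satisfies $\Sigma_n$-reflection, and so models $\ZFC^-_n$ by Lemma~\ref{lem:ElemSubstructuresImplyZFCn}. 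Thus the judge wins, contradicting that $\sigma$ was winning in $N$. The main obstacle is exactly this final step—guaranteeing that the length-$\omega$ limit structure models $\ZFC^-_n$ despite the absence of closure at the limit—which is handled by the same $\Sigma_n$-witness (partial truth predicate) bookkeeping as in Theorem~\ref{weakly vs faintly nmb}; the secondary point needing care is checking that the run constructed inside $\la M,\in,U\ra$ is genuinely an element of $N$, so that the contradiction is witnessed by $N$ rather than only by $V$ (the independence of the game from $\theta$, Lemma~\ref{prop:weakWinningStrategyIndependentOfTheta}, lets us work at $\theta=\kappa^+$ throughout).
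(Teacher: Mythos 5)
Your high-level plan (pass to the ultrapower $N$, invoke the game characterisation there, and use $U$ to defeat a putative winning strategy $\sigma=[S]_U$ of the challenger) is indeed the paper's skeleton, but the execution has a genuine gap at the crucial step: the ``recursion of length $\omega$ inside $\la M,\in,U\ra$'' is not available. From $\ZFC^-_n$ you get only $\KP_n$, hence only $\Sigma_n$-recursion (Lemma~\ref{KP implies sep}). Your recursion step, however, is not $\Sigma_n$: the condition that $N_i$ ``contains a witness for every $\Sigma_n$-formula with parameters from $N_{i-1}\cup P^{M_i}(\kappa)$ true in $\la M,\in,U\ra$'' requires a universal quantifier over the $\Sigma_n$-definable truth predicate $\Tr_n^U$ of Lemma~\ref{truth pred for simple weak}, so it is at best $\Pi_n$, and the clause ``$N_i$ is the $\lhd_U$-least such $\kappa$-model'' adds a further unbounded universal quantifier over negations of that condition, putting the graph of the step at roughly $\Pi_{n+1}$. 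This is exactly why the paper runs recursions of this kind only under $\KP_{n+1}$ (Lemma~\ref{lem:ZFCnImpliesElemSubstructures}, and Theorem~\ref{[n+1]bm limit of reflective}, where the step is observed to be $\Sigma_{n+1}$), one level above your hypothesis. Without the internal recursion, your run is built in $V$ as an $\omega$-sequence of elements of $M$; since $M$ is merely a weak $\kappa$-model with no countable closure, that sequence need not be an element of $M=H_{\kappa^+}^N$ or of $N$, so $N$ never witnesses the judge's victory and no contradiction with ``$\sigma$ is winning in $N$'' results. This obstruction is precisely what the paper's proof is engineered around: it builds \emph{in $N$} a tree of finite partial plays (odd levels the challenger's moves according to $\sigma$, even levels judge moves equipped with $\Sigma_n$-pseudo truth predicates as in Theorem~\ref{weakly vs faintly nmb}), constructs a branch externally in $V$ by playing with $U$, and then pulls a branch into $N$ by absoluteness of ill-foundedness, which is legitimate because $U$ is good.

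There is a second gap even granting that your run lands in $N$: your verification of the winning condition is wrong. You infer from $\la \bigcup_i N_i,\in,U^*\ra\prec_{\Sigma_n}\la M,\in,U\ra$ that the union ``therefore satisfies $\Sigma_n$-reflection'' and hence $\ZFC^-_n$. Being a $\Sigma_n$-elementary substructure does not imply satisfying $\Sigma_n$-reflection: a transitive $\Sigma_n$-elementary substructure of a model of $\ZFC^-_n$ is guaranteed only to satisfy $\KP_n$ (Lemma~\ref{lem:nElemImpliesKPn}), and the paper explicitly notes that $\Sigma_n$-separation can fail in such substructures (the least $\Sigma_1$-elementary substructure of an $L_\alpha\models\ZFC^-_1$). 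Concretely, for $a,b$ in the union and a $\Sigma_n$-formula $\varphi$, the separation set $\{x\in a\mid\varphi(x,b)\}$ is only $\Delta_{n+1}$-definable over the structure, so closing under witnesses for $\Sigma_n$-formulas never forces this set into any $N_i$. This is exactly the role of the sets $X_j$ and $Y_j$ carried by the tree nodes in Theorem~\ref{weakly vs faintly nmb}: they explicitly insert the truth-predicate separation sets and the collecting sets into the next model, so that $\Sigma_n$-separation and $\Sigma_n$-collection can be verified directly in the limit structure. Getting the limit to satisfy genuine $\Sigma_n$-reflection, as in Lemma~\ref{lem:ElemSubstructuresImplyZFCn}, would require each judge move to be itself $\Sigma_n$-elementary in the final structure, which is what the reflective notions demand and what $\ZFC^-_n$ alone does not provide.
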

\begin{proof}
As usual, we suppose that $\sigma$ is a winning strategy of the challenger in the ultrapower of $M$ by $U$ and use $U$ to play against the strategy. We use the construction from the proof of Theorem~\ref{weakly vs faintly nmb}, but modify the tree so that the even stages in the sequences are as above and the odd ones are moves of the challenger.
\end{proof}
\begin{theorem}
\label{[n+1]bm limit of reflective} 
For $n\geq 1$ and $\alpha<\kappa$, a $[n+1]$-baby measurable cardinal $\kappa$ is a limit of reflective $(\alpha,n)$-baby measurable cardinals.
\end{theorem}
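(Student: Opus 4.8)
The plan is to run the strategy-stealing argument of Proposition~\ref{th: hierarchy alphanbm} and Theorem~\ref{th: char win chal} inside the ultrapower, using the good ultrafilter provided by $[n+1]$-baby measurability to defeat any purported winning strategy of the challenger in the reflective game. Fix a simple $\kappa$-model $M$ together with a good $M$-ultrafilter $U$ such that $\la M,\in,U\ra\models\KP_{n+1}$; since $M$ is a $\kappa$-model, $U$ is automatically countably complete, and since $\la M,\in,U\ra\models\KP_{n+1}$ implies $\Delta_0$-separation, $U$ is weakly amenable, so Lemma~\ref{prop:wellFoundedPartOfUltrapowerWA} gives $M=H_{\kappa^+}^N$, where $j\colon M\to N$ is the ultrapower embedding. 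As $\kappa$ is inaccessible in $M$, $j(\kappa)$ is inaccessible in $N$, whence $V_{j(\kappa)}^N\models\ZFC$, $(2^\kappa)^{+N}<j(\kappa)$, and $M=H_{\kappa^+}^{V_{j(\kappa)}^N}\subseteq V_{j(\kappa)}^N$. I will show that $\kappa$ is reflective $(\alpha,n)$-baby measurable in $V_{j(\kappa)}^N$. Because this is a property of $H_{(2^\kappa)^+}$ (the reflective case of the remark after Theorem~\ref{th: char win chal}) and $(2^\kappa)^{+N}<j(\kappa)$, it then holds in $N$; and since for any $\delta$ with $\alpha\le\delta<\kappa$ the statement ``$\delta$ is reflective $(\alpha,n)$-baby measurable'' is decided in $H_{(2^\delta)^+}\subseteq V_\kappa\in M$ and is therefore computed identically by $M$ and $V$, a standard reflection through $j$ (applying elementarity to ``$\exists\delta\,(\gamma<\delta<j(\kappa)$ and $\delta$ is reflective $(\alpha,n)$-baby measurable$)$'', witnessed by $\delta=\kappa$, for each $\gamma<\kappa$) yields unboundedly many reflective $(\alpha,n)$-baby measurable $\delta<\kappa$ in $V$, i.e.\ $\kappa$ is a limit of such cardinals.

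By Theorem~\ref{th: char win chal} applied inside $V_{j(\kappa)}^N$, together with Lemma~\ref{prop:weakWinningStrategyIndependentOfTheta} to fix $\theta=\kappa^+$, it suffices to rule out a winning strategy $\sigma\in V_{j(\kappa)}^N$ for the challenger in $\rg^{\kappa^+,n}_\alpha(\kappa)$. I will play the judge against $\sigma$ using $U$. When the challenger plays a basic $\kappa$-model $M_\xi$, it has size $\kappa$ and hence $M_\xi\in H_{\kappa^+}^N=M$, so $P^{M_\xi}(\kappa)\in M$. Applying Lemma~\ref{lem:ZFCnImpliesElemSubstructures} inside $\la M,\in,U\ra\models\KP_{n+1}$ to a set coding $P^{M_\xi}(\kappa)$, $M_\xi$, and the earlier models $\{N_{\xi'}\mid\xi'<\xi\}$ (which lies in $M$ by ${<}\kappa$-closure), I obtain a $\kappa$-model $N_\xi\in M$ containing all of these with $\la N_\xi,\in,U\cap N_\xi\ra\prec_{\Sigma_n}\la M,\in,U\ra$ and $\la N_\xi,\in,U\cap N_\xi\ra\models\ZFC^-_n$; the judge plays $\la N_\xi,\in,U\cap N_\xi\ra$. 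This is legal: $P^{M_\xi}(\kappa)\subseteq N_\xi$, and $U\cap N_\xi$ is an $N_\xi$-ultrafilter because the ultrafilter, uniformity and normality axioms are $\Pi_1$ in the predicate for $U$ and hence pass from $\la M,\in,U\ra$ to $\la N_\xi,\in,U\cap N_\xi\ra$ under $\Sigma_n$-elementarity (here $n\ge1$). Moreover $U\cap N_\xi$ extends $\Union_{\xi'<\xi}(U\cap N_{\xi'})$ since the $N_{\xi'}$ are nested, and for $\xi'<\xi$ both structures are $\Sigma_n$-elementary in $\la M,\in,U\ra$ and nested, so a Tarski--Vaught argument gives $\la N_{\xi'},\in,U\cap N_{\xi'}\ra\prec_{\Sigma_n}\la N_\xi,\in,U\cap N_\xi\ra$, which is precisely the reflective winning requirement.

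Because $M$ is ${<}\kappa$-closed and $\alpha<\kappa$, at each stage the sequence of the judge's moves is an element of $M$, so $\sigma$ can be consulted and the challenger's responses again land in $M$; the full run, being a sequence of length $\alpha<\kappa$ of elements of $M$, is itself in $M\subseteq V_{j(\kappa)}^N$, hence a genuine play of the game in $V_{j(\kappa)}^N$. After $\alpha$ steps set $\bar N=\Union_{\xi<\alpha}N_\xi$ and $\bar U=U\cap\bar N$; as in the defining remark of the game, $\bar N=H_{\kappa^+}^{\tilde M}$ with $\tilde M=\Union_{\xi<\alpha}M_\xi$. The structure $\la\bar N,\in,\bar U\ra$ is an increasing union of a $\Sigma_n$-elementary chain each of whose members is $\Sigma_n$-elementary in $\la M,\in,U\ra$, so it satisfies $\Sigma_n$-reflection and hence $\ZFC^-_n$ by Lemma~\ref{lem:ElemSubstructuresImplyZFCn}. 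Finally $\bar U\subseteq U$ is countably complete, hence good. Thus the judge wins, contradicting that $\sigma$ was a winning strategy, and the reduction of the first paragraph finishes the proof.

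The main obstacle is the bookkeeping of the middle paragraph: the $U$-guided responses must simultaneously be legal moves (so that $U\cap N_\xi$ is recognised as an $N_\xi$-ultrafilter, which is where $n\ge1$ and the $\Pi_1$-character of the ultrafilter axioms are essential) and meet the strengthened reflective conditions, all while the entire run is kept inside $M$ so that $\sigma$ may be applied throughout and the final limit structure is correctly identified as $H_{\kappa^+}^{\tilde M}$. Note that the uncountable-versus-$\omega$ case split of Theorem~\ref{th: char win chal} does not arise, since the playing model $M$ is ${<}\kappa$-closed and so already contains every ${<}\kappa$-length sequence of moves regardless of the value of $\alpha<\kappa$.
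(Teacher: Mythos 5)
Your proof is correct, but the core mechanism is genuinely different from the paper's. The paper also steals a purported challenger strategy $\sigma$ inside the ultrapower and also takes the judge's moves from Lemma~\ref{lem:ZFCnImpliesElemSubstructures}, but it does not use closure of $M$ or goodness of $U$ at all: it first passes to $L[A,U]$ to obtain a $\Delta_1$-definable bijection $F:\Ord^M\to M$, and then builds the entire counter-play by a $\Sigma_{n+1}$-recursion \emph{internal} to $\la M,\in,U\ra$, taking $F$-least models of the form given by Lemma~\ref{lem:ZFCnImpliesElemSubstructures} and consulting $\sigma$ through the coding apparatus of Lemma~\ref{lem:definableWellOrder} (membership in $\sigma$ is decided by $U$ via a function representing $\sigma$ in the possibly ill-founded ultrapower). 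This makes the run an element of $M$ by construction, so the paper's argument works verbatim from a weak $\kappa$-model with a non-good ultrafilter. You instead take the $\kappa$-model and good ultrafilter that Definition~\ref{def baby meas}~\eqref{def [n] baby meas} provides, play \emph{externally} in $V$, and certify that every partial play and the final run lie in $M\subseteq V_{j(\kappa)}^N$ using ${<}\kappa$-closure of $M$ together with $M=H_{\kappa^+}^N$ from Lemma~\ref{prop:wellFoundedPartOfUltrapowerWA} --- essentially the technique of Proposition~\ref{th: hierarchy alphanbm} and of the uncountable case of Theorem~\ref{th: char win chal}. What your route buys is a more elementary argument: no move to $L[A,U]$, no definable well-order, no complexity bookkeeping for an internal recursion, and, as you note, no case split at $\alpha=\omega$; what the paper's route buys is independence from closure and well-foundedness, which costs you nothing here only because Lemma~\ref{lem:noFaintWeakKP} makes the faint, weak and full $[n+1]$-baby measurable notions coincide. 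Your supporting steps --- the Tarski--Vaught observation that nested $\Sigma_n$-elementary substructures of $\la M,\in,U\ra$ are $\Sigma_n$-elementary in one another, the $\Pi_1$ downward transfer of the ultrafilter axioms, and the explicit $H_{(2^\kappa)^+}$-reflection argument for the ``limit of'' conclusion (which the paper leaves implicit) --- are all sound.
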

\begin{proof}
Fix $A\subseteq\kappa$ and choose a weak $\kappa$-model $M$, with $A\in M$, for which there is an $M$-ultrafilter $U$ such that $\la M,\in,U\ra\models\KP_{n+1}$. By moving to $L[A,U]$ as constructed in $M$, we can assume without loss of generality that $\la M,\in, U\ra$ has a $\Delta_1$-definable bijection $F:\Ord^M\to M$. Let $N$ denote the, not necessarily well-founded, ultrapower of $M$ by $U$. Fix $\alpha<\kappa$. We will argue that $\kappa$ is reflective $(\alpha,n)$-baby measurable in $V_{j(\kappa)}^N$. By Theorem~\ref{th: char win chal}, we only need to show that the challenger doesn't have a winning strategy in the game $\rg^{\kappa^+,n}_\kappa$. So suppose that $\sigma\in V_{j(\kappa)}^N$ is a winning strategy for the challenger in $\rg^{\kappa^+,n}_\kappa$. Note that via coding we can view $\sigma$ as a map from $\mathcal C$ to $\mathcal C$, the collection of codes for elements of $M$, defined in the proof of Lemma~\ref{lem:definableWellOrder}.

As in the proof of Theorem~\ref{th: char win chal}, we will use $U$ to play against $\sigma$. Since $\la M,\in,U\ra\models\KP_{n+1}$, by Lemma~\ref{lem:ZFCnImpliesElemSubstructures}, every set $B\in M$ is contained in a $\kappa$-model $M_B\in M$ such that $$(M_B,\in,U\cap M_B)\prec_{\Sigma_n}\la M,\in,U\ra$$ and $(M_B,\in,U\cap M_B)\models\ZFC^-_n$. Observe that, for a set $B$, the assertion that $\bar M$ is the $F$-least $\kappa$-model of this form is $\Sigma_{n+1}$.

Now using a $\Sigma_{n+1}$-recursion of length $\alpha$ in the structure $\la M,\in, U\ra$, we can construct a sequence of models $\{ N_\xi\mid\xi<\alpha\}\in M$ as follows. Let $M_0$ be the first move of the challenger according to $\sigma$. Let $N_0$ be the $F$-least $\kappa$-model of the form $M_{M_0}$. Given that we have constructed the sequence $\{ \la N_\xi,\in,U\cap N_\xi\ra\mid\xi<\beta\}$ for some $\beta<\alpha$ as the judge's moves, let $M_\beta$ be the response of $\sigma$ to this sequence for the challenger and let $N_\beta$ be the $F$-least $\kappa$-model of the form $M_{M_\beta}$. To ask whether $M_\beta$ is a response of the challenger to the sequence $\{ \la N_\xi,\in, U\cap N_\xi\ra\mid\xi<\beta\}$, we just need to find a code in $\mathcal C$ as a subset of $\kappa$ for the sequence, and then decode the subset of $\kappa$ which is the response of $\sigma$. We use functions representing $\mathcal C$ and $\sigma$ in the ultrapower together with $U$ to determine membership in these sets. Clearly, the sequence $\{ \la N_\xi,\in,U\cap N_\xi\ra\mid \xi<\alpha\}\in M\subseteq N$ is winning for the judge, which contradicts that $\sigma$ was a winning strategy for the challenger in $V_{j(\kappa)}^N$.
\end{proof}

Next, we show that the hierarchy of reflective $(\alpha,n)$-baby measurable cardinals for $n\geq 1$ sits on top of the $(\alpha,n)$-baby measurable cardinals. Note that for $n=0$, the $(\alpha,n)$-baby measurable and the reflective $(\alpha,n)$-baby measurable are the same large cardinal notion because $\Sigma_0$-elementarity is equivalent to being a submodel for transitive structures.

\begin{theorem}
\label{th: omega1 ref bm}
For $n\geq 1$, a faintly reflective $(\omega,n)$-baby measurable cardinal is a limit of cardinals $\alpha$ that are $(\alpha,n)$-baby measurable.
\end{theorem}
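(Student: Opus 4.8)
The plan is to follow the ultrapower-and-reflect strategy used for the other ``faintly $X$ is a limit of $Y$'' results (cf.\ Theorems~\ref{[n+1]bm limit of reflective} and~\ref{th:0bmalpharam}). Fix $A\subseteq\kappa$ and use faint reflective $(\omega,n)$-baby measurability to obtain a basic weak $\kappa$-model $M\prec H_\theta$ with $A\in M$ and an $M$-ultrafilter $U$ witnessing the reflection property. Passing to $M':=H^M_{\kappa^+}$ (and, if necessary, its transitive collapse), we get a simple transitive weak $\kappa$-model with $\la M',\in,U\ra\models\ZFC^-_n$ together with the reflection property: every $B\subseteq\kappa$ in $M'$ lies in a $\kappa$-model $\bar M\in M'$ with $\la\bar M,\in,U\cap\bar M\ra\prec_{\Sigma_n}\la M',\in,U\ra$ and $\la\bar M,\in,U\cap\bar M\ra\models\ZFC^-_n$. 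Since $n\geq 1$, $\ZFC^-_n$ yields $\Delta_0$-separation, so $U$ is weakly amenable to $M'$, and we have available the $\Sigma_n$-definable truth predicate $\Tr^U_n$ (Lemma~\ref{truth pred for simple weak}), the $\Delta_1$-definable strong well-order $\lhd_U$ (Lemma~\ref{lem:definableWellOrder}), and $\Sigma_n$-definable Skolem functions (Lemma~\ref{Sigma n uniformisation}). Let $N$ be the (possibly ill-founded) ultrapower of $M'$ by $U$, with $j\colon M'\to N$ and $\crit(j)=\kappa$.

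The heart of the argument is to show that $\kappa$ is $(\kappa,n)$-baby measurable in $V^N_{j(\kappa)}$; once this is done, $j$-elementarity transfers the statement ``$\kappa$ is a limit of cardinals $\gamma$ that are $(\gamma,n)$-baby measurable'' down to $M'$, and since this property is verifiable in $H_{(2^\gamma)^+}\subseteq V_\kappa$ for $\gamma<\kappa$ and $M'$ is transitive and correct below $\kappa$, it holds in $V$. By Theorem~\ref{th: char win chal} (applied inside the $\ZFC$-model $V^N_{j(\kappa)}$, using that $\kappa$, hence $j(\kappa)$, is inaccessible there) it suffices to show that the challenger has no winning strategy in the length-$\kappa$ game $\g^{\kappa^+,n}_\kappa(\kappa)$ as computed in $V^N_{j(\kappa)}$. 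Suppose toward a contradiction that $\sigma\in V^N_{j(\kappa)}$ is such a strategy; write $\sigma=[S]_U$ for some $S\in M'$, and view its values through the coding apparatus $\mathcal C$ of Lemma~\ref{lem:definableWellOrder}, so that queries ``$\sigma$ responds to $s$ with $M_\gamma$'' reduce, via \Los' theorem, to $\Delta_0(U)$ questions about membership in $U$.

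I would then have the judge play against $\sigma$ using $U$, responding to each challenger move $M_\gamma$ with a structure $\la N_\gamma,\in,U\cap N_\gamma\ra$, where $N_\gamma$ is a $\kappa$-model that is $\Sigma_n$-elementary in $\la M',\in,U\ra$ and contains $\bigcup_{\xi<\gamma}N_\xi$ together with a code for $P^{M_\gamma}(\kappa)$; such an $N_\gamma$ is supplied by the reflection property. Because all the $N_\gamma$ are $\Sigma_n$-elementary in $\la M',\in,U\ra$ and increasing, they form a $\Sigma_n$-elementary chain, so their union $\bigcup_{\xi<\kappa}N_\xi=H^{M^*}_{\kappa^+}$ (where $M^*=\bigcup_\xi M_\xi$) is $\Sigma_n$-elementary in $\la M',\in,U\ra$, has $\Sigma_n$-reflection, and therefore models $\ZFC^-_n$ by Lemma~\ref{lem:ElemSubstructuresImplyZFCn}; thus the judge wins this run. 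The run is built by a recursion of length $\kappa$ carried out inside $\la M',\in,U\ra$ (legitimate since $\kappa<\Ord^{M'}$ and $M'$ is transitive, so the recursion proceeds along genuine ordinals even though $N$ may be ill-founded), and the resulting play is a set in $M'\subseteq N$, contradicting that $\sigma$ was winning in $N$.

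The main obstacle is precisely the complexity bookkeeping that makes this length-$\kappa$ recursion legitimate under only $\ZFC^-_n$. Unlike Theorem~\ref{[n+1]bm limit of reflective}, where $\KP_{n+1}$ was available to \emph{manufacture} the $\Sigma_n$-elementary $\kappa$-submodels by a $\Sigma_{n+1}$-recursion, here the reflective hypothesis hands these submodels to us directly, and the task is to \emph{select} them definably so that the recursion step is $\Sigma_n$ (equivalently $\Delta_n$) and the $\Sigma_n$-recursion scheme of Lemma~\ref{KP implies sep} applies. I expect to express ``$N$ is a $\kappa$-model that is $\Sigma_n$-elementary in $\la M',\in,U\ra$'' as a $\Sigma_n$-condition by phrasing $\Sigma_n$-elementarity as closure under the $\Sigma_n$-definable Skolem functions (using $\Sigma_n$-collection to absorb the bounded universal quantifier into a $\Sigma_n$-assertion), and then to pin down a definite $N_\gamma$ using $\Sigma_n$-separation---available in $\ZFC^-_n$ but not in $\KP_n$---to form a bounded set of candidates below a $\lhd_U$-witness and take its internal $\lhd_U$-least element. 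Verifying that this keeps the whole recursion $\Sigma_n$, and that the coding of the judge's sequence and of $\sigma$ through $\mathcal C$ and $U$ does not raise the complexity, is the delicate part; everything else is routine transfer and reflection.
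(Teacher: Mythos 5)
Your overall frame (pass to the ultrapower $N$, represent the challenger's putative winning strategy $\sigma$ as $[S]_U$, let the judge play against it using $U$, and land the contradiction by producing a winning run inside $N$) is the same as the paper's. The gap is in how you produce the run. You build it by a length-$\kappa$ recursion carried out \emph{inside} $\la M',\in,U\ra$, and for that you need the recursion step --- ``choose a $\kappa$-model $N_\gamma$ containing the previous data with $\la N_\gamma,\in,U\cap N_\gamma\ra\prec_{\Sigma_n}\la M',\in,U\ra$'' --- to be $\Sigma_n$-definable, since $\ZFC^-_n$ only yields $\Sigma_n$-recursion (Lemma~\ref{KP implies sep}). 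But this selection is not $\Sigma_n$. Witness-closure (equivalently, closure under the $\Sigma_n$-Skolem functions) has the shape $\forall a\in N_\gamma\,\forall\varphi\,\bigl(\Tr^U_n(\exists y\,\varphi,a)\rightarrow\exists y\in N_\gamma\,\Tr^U_n(\varphi,\la a,y\ra)\bigr)$, i.e.\ a universal quantifier over a disjunction of a $\Pi_n$- and a $\Sigma_n$-formula; the paper computes exactly this kind of assertion to be $\Sigma_{n+1}$ in the proof of Lemma~\ref{lem:ZFCnImpliesElemSubstructures}, where $\Sigma_{n+1}$-collection is invoked, and this is precisely why Theorem~\ref{[n+1]bm limit of reflective} assumes $[n{+}1]$-baby measurability (there the assertion that $\bar M$ is the $F$-least such $\kappa$-model is explicitly noted to be $\Sigma_{n+1}$, and the recursion is a $\Sigma_{n+1}$-recursion under $\KP_{n+1}$). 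Your proposed repair --- $\lhd_U$-least selection plus $\Sigma_n$-separation --- does not help: the candidate property is already above $\Sigma_n$, and minimality with respect to the non-set-like order $\lhd_U$ adds a further unbounded universal quantifier on top. So under the hypothesis $\ZFC^-_n$ alone, the recursion you describe is not licensed, and this is not delicate bookkeeping but a genuine level-$(n{+}1)$ obstruction.

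The paper's proof avoids internal recursion in the ultrafilter structure altogether, and this is the idea you are missing. Since $\sigma$-membership is checkable by a $\Sigma_1$-procedure from a function $s$ representing $\sigma$, one fixes a \emph{single} reflecting $\kappa$-model $\bar M\in M$ with $s\in\bar M$; by $\Sigma_n$-elementarity (here $n\geq 1$ is used) $\la\bar M,\in,U\cap\bar M\ra$ is correct about membership in $\sigma$. The judge's moves are then $\kappa$-models $N_\xi\in\bar M$ that contain witnesses for $\Sigma_n$-assertions true in $\la\bar M,\in,U\cap\bar M\ra$ --- these exist as a set-existence consequence of $\ZFC^-_n$ holding \emph{in} $\bar M$, so no definable selection over $\la M',\in,U\ra$ is ever needed. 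Partial plays of length ${<}\kappa$ stay in $\bar M$ because $\bar M$ is a genuine $\kappa$-model, and the full $\kappa$-length run is assembled in $\la M,\in\ra$, where \emph{full} $\ZFC^-$ is available and all the needed data ($\bar M$, $s$, and $U\cap\bar M$, the latter a set in $M$ by weak amenability) are set parameters; satisfaction in the set structure $\la\bar M,\in,U\cap\bar M\ra$ is then $\Delta_1$ over $M$, so every complexity issue disappears. The resulting run lies in $M=H_{\kappa^+}^N\subseteq N$ and is winning for the judge, giving the contradiction. If you want to salvage your write-up, replace your step (5) by this two-model argument; as it stands, the proof does not go through.
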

\begin{proof}
Suppose that $\kappa$ is $(\omega,n)$-baby measurable. Fix a simple weak $\kappa$-model $M$ for which there is an $M$-ultrafilter $U$ such that for every $A\in M$ there is a $\kappa$-model $\bar M\in M$ such that $\la \bar M,\in,U\cap\bar M\ra\prec_{\Sigma_n}\la M,\in,U\ra$ and $\la \bar M,\in, U\cap \bar M\ra\models\ZFC^-_n$. Suppose that $\kappa$ is not $(\kappa,n)$-baby measurable in the ultrapower $N$ of $M$ by $U$. This means that in $N$, the challenger has a winning strategy $\sigma$ in the game $\g^{\kappa^+,n}_\kappa(\kappa)$.
As in Lemma~\ref{lem:definableWellOrder}, the structure $\la M,{\in,} U\ra$ can check membership in $\sigma$ using some function $s$ representing it in the ultrapower and the checking procedure is $\Sigma_1$. Therefore the models $\la \bar M,\in, U\cap\bar M\ra$, where $s\in \bar M$, are going to be correct about membership in $\sigma$ as well.

So fix some such $\la \bar M,\in, \bar M\cap U\ra$, which we will use to play against $\sigma$. The first move $M_0$ of the challenger must in $\bar M$ by $\Sigma_1$-elementarity. Let $\la N_0,\in U\cap N_0\ra$ be the response of the judge, where $N_0$ is a $\kappa$-model in $\bar M$ that has witnesses for all $\Sigma_n$-assertions true in $\la \bar M,\in, U\cap \bar M\ra$ with parameters in $M_0$ (this exists since $\la \bar M,\in, U\cap \bar M\ra\models\ZFC_n^-$). Since $\la N_0,\in,U\cap N_0\ra\in \bar M$, the next move of the challenger according to $\sigma$ will also be in $\bar M$ by $\Sigma_1$-elementarity, and so we can choose $N_1\in\bar M$ analogously. At limits $\lambda$, the sequence $$\{\la N_\xi,\in, U\cap N_\xi\ra\mid\xi<\lambda\}$$ will be in $\bar M$ by closure, allowing us to analogously choose the next move $N_\lambda$. The $\kappa$-length sequence $\{ \la N_\xi,\in,U\cap N_\xi\ra\mid\xi<\kappa\}$ may not be an element of $\bar M$, but since $\la \bar M,\in,U\cap \bar M\ra$ is an element of $M$, $M$ sees the entire construction and therefore has the $\kappa$-length sequence, which is clearly winning for the judge. Thus, we have reached the desired contradiction showing that $\kappa$ is $(\kappa,n)$-baby measurable in $N$.
\end{proof}
Finally, it is not difficult to see that for $n\geq 1$, a reflective $(\omega,n)$-baby measurable cardinal is a limit of faintly reflective $(\omega,n)$-baby measurable cardinals by constructing a tree of partial plays of the judge against the strategy and arguing that the tree has a branch.

\section{From baby to locally measurable}
\label{section baby loc meas}

The pattern of large cardinal notions around baby measurables is similar to the one around $n$-baby measurables, often with analogous and in some cases simpler proofs.
Weakly baby measurable cardinals are above faintly baby measurable cardinals for a similar reason as in Proposition \ref{weakly vs faintly nmb}.

\begin{proposition}
\label{weakbm lim faintbm}
A weakly baby measurable cardinal $\kappa$ is a limit of faintly baby measurable cardinals.
\end{proposition}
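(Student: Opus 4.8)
The plan is to mimic the tree construction in the proof of Theorem~\ref{weakly vs faintly nmb}, adapting it from the fixed fragment $\ZFC^-_n$ to full $\ZFC^-$ by treating all complexity levels simultaneously. So I would fix a simple weak $\kappa$-model $M$ (as permitted by the remark following Definition~\ref{def baby meas}) together with a good $M$-ultrafilter $U$ such that $\la M,\in,U\ra\models\ZFC^-$. Since $\la M,\in,U\ra$ has full separation in the language with $U$, the ultrafilter $U$ is (weakly) amenable, so by Lemma~\ref{prop:wellFoundedPartOfUltrapowerWA} the well-founded ultrapower $N$ of $M$ by $U$ satisfies $M=H_{\kappa^+}^N$. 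As in Theorem~\ref{weakly vs faintly nmb}, it suffices to show that $\kappa$ is faintly baby measurable in $N$: the conclusion that $\kappa$ is a limit of faintly baby measurable cardinals then follows from the usual reflection argument through the embedding $j\colon M\to N$, using that $M$ and $V$ have the same subsets of each $\alpha<\kappa$ (as $\kappa$ is inaccessible and $V_\kappa\in M$) and hence agree on faint baby measurability below $\kappa$.

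Fix $A\subseteq\kappa$ in $M$; these are exactly the subsets of $\kappa$ in $N$. I would build, inside $N$, a tree $\mathcal T$ of height $\omega$ whose nodes are finite sequences $\{\la M_i,\in,U_i,T_i\ra\mid i<m\}$ exactly as in the proof of Theorem~\ref{weakly vs faintly nmb}, with the single change that at level $i$ the datum $T_i$ is a $\Sigma_i$-pseudo truth predicate for $\la M_i,\in,U_i\ra$ and that the witness condition together with the separation/collection sets $X_j,Y_j$ are imposed for $\Sigma_i$-formulas rather than for $\Sigma_n$-formulas with a fixed $n$; I additionally require the coherence condition that $T_{i+1}$ restricted to $\Sigma_i$-formulas equals $T_i$. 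As before, all of these conditions are internal to $N$ (they never mention the external $U$), and since $M=H_{\kappa^+}^N$ is a set in $N\models\ZFC^-$, the tree $\mathcal T$ is an element of $N$.

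The verification that a branch does the job is then a dovetailed version of the argument in Theorem~\ref{weakly vs faintly nmb}. Given a branch, set $\bar M=\bigcup_i M_i$, $\bar U=\bigcup_i U_i$ and $\bar T=\bigcup_i T_i$. Every formula is $\Sigma_n$ for some $n$, and for $i\ge n$ the level-$i$ conditions decide it and supply witnesses; the induction on complexity then shows that $\bar T$ is a genuine (full) truth predicate for $\la\bar M,\in,\bar U\ra$, while the sets $X_j,Y_j$, occurring cofinally for every fixed complexity, yield full separation and full collection. Hence $\bar M$ is a weak $\kappa$-model with $A\in\bar M$, $\bar U$ is an $\bar M$-ultrafilter, and $\la\bar M,\in,\bar U\ra\models\ZFC^-$, which is precisely the witness needed for faint baby measurability. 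Finally, exactly as before, I would exhibit an actual branch in $V$ by running the construction with the genuine objects: at level $i$ take $T_i$ to be the restriction to parameters in $M_i$ of the $\Sigma_i$-truth predicate $\Tr_i^U$ of $\la M,\in,U\ra$ (a set by $\Sigma_i$-separation, available since $\la M,\in,U\ra\models\ZFC^-$, and existing by Lemma~\ref{truth pred for simple weak}), and close off under $\Sigma_i$-witnesses and the corresponding separation sets using $\Sigma_i$-collection and $\Sigma_i$-separation in $\la M,\in,U\ra$. Since $\mathcal T$ then has a branch in $V$, by the absoluteness of well-foundedness it has one in the well-founded model $N$, completing the proof.

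The step I expect to be the main obstacle is the bookkeeping for the passage from a fixed fragment to full $\ZFC^-$: one must arrange the growing-complexity conditions so that, along a single branch of a height-$\omega$ tree, \emph{every} instance of separation and collection is eventually met and the coherent union $\bar T$ of the $\Sigma_i$-pseudo truth predicates becomes a correct truth predicate for all formulas. This relies essentially on the full strength of $\la M,\in,U\ra\models\ZFC^-$, which furnishes a $\Sigma_i$-truth predicate and $\Sigma_i$-separation and collection for every $i$, in contrast to the single level available in the $\ZFC^-_n$ setting of Theorem~\ref{weakly vs faintly nmb}.
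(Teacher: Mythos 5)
Your proof is correct, and it shares the paper's skeleton: pass to the well-founded ultrapower $N$, build inside $N$ a height-$\omega$ tree of finite approximations to the desired structure, witness the tree's ill-foundedness in $V$ using the genuine ultrafilter $U$, and pull a branch back into $N$ by absoluteness of well-foundedness. Where you genuinely diverge is in the design of the tree and the lemma that supplies the branch. The paper's tree carries no truth-predicate data at all: its level-$m$ nodes are sequences $\la\la M_0,\in,U_0\ra,\ldots,\la M_{m-1},\in,U_{m-1}\ra\ra$ with $A\in M_0$, $\la M_i,\in,U_i\ra\models\ZFC^-_i$, and $\la M_i,\in,U_i\ra\prec_{\Sigma_i}\la M_j,\in,U_j\ra$ for $i<j$; that is, it dovetails the \emph{amount of theory and degree of elementarity} rather than pseudo truth predicates. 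The branch in $V$ is then handed over directly by Lemma~\ref{lem:ZFCnImpliesElemSubstructures}, which applies at every level $i$ because full $\ZFC^-$ implies $\KP_{i+1}$ for all $i$, and the union of a branch models $\ZFC^-$ since it satisfies $\Sigma_i$-reflection for every $i$ (Lemma~\ref{lem:ElemSubstructuresImplyZFCn}). You instead dovetail the $\Sigma_i$-pseudo-truth-predicate machinery of Theorem~\ref{weakly vs faintly nmb}, which is heavier bookkeeping but avoids Lemma~\ref{lem:ZFCnImpliesElemSubstructures} entirely: your branch-in-$V$ construction needs only the truth predicates of Lemma~\ref{truth pred for simple weak} (available from amenability alone) together with $\Sigma_i$-separation and $\Sigma_i$-collection, whereas Lemma~\ref{lem:ZFCnImpliesElemSubstructures} uses goodness of $U$ to know that the definable well-order $\lhd_U$ is a true well-order. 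For this proposition the distinction buys nothing, since goodness is part of the hypothesis and is needed anyway for the absoluteness step, so the paper's route is the shorter and cleaner one; but your version is the one that survives in settings where genuine $\Sigma_i$-elementary substructures cannot be produced inside the model, which is exactly why the paper resorts to the same machinery in Theorem~\ref{weakly vs faintly nmb}.
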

\begin{proof}
Let $M$ be a simple weak $\kappa$-model for which there is a good $M$-ultrafilter $U$ such that $\la M,\in,U\ra\models\ZFC^-$.  Let $N$ be the ultrapower of $M$ by $U$.
Working in $N$, we argue that $\kappa$ is faintly baby measurable.
Fix a subset $A$ of $\kappa$ and consider the tree $T$ whose elements on level $n$ are sequences $\la\la M_0,\in,U_0\ra,\ldots,\la M_{n-1},\in,U_{n-1}\ra\ra$ such that
\begin{itemize}
\item
$A\in M_0$,

\item
$\la M_i,\in,U_i\ra\models\ZFC^-_i$ for all $i< n$,

\item
$ \la M_i,\in,U_i\ra\prec_{\Sigma_i} \la M_j,\in,U_j\ra$ for all $i< j< n$.

\end{itemize}
The tree is ill-founded in $V$ as witnessed by a chain of elementary substructures $$\la M_i,\in,U\cap M_i\ra\prec_{\Sigma_i}\la M,\in, U\ra$$ built using Lemma~\ref{lem:ZFCnImpliesElemSubstructures}.
Thus, $N$ has a branch through $T$ as well.
The union of models on the branch gives a structure $\la\bar M,\in,W\ra\models\ZFC^-$ such that $\bar M$ is a weak $\kappa$-model and $W$ is a $\bar M$-ultrafilter. Thus, $\kappa$ is faintly baby measurable in $N$, and hence $\kappa$ is a limit of these cardinals by elementarity.
\end{proof}

Above these notions, it is easy to see that an $\omega_1$-baby measurable cardinal $\kappa$ is a limit of weakly baby measurable cardinals.
The interval between weakly baby measurable and baby measurable cardinals can be studied by replacing $\ZFC^-_n$ with $\ZFC^-$ and $\Sigma_n$-elementarity with full elementarity in the definitions of the games $\g^{\theta,n}_\alpha(\kappa)$ and $\rg^{\theta,n}_\alpha(\kappa)$ above, resulting in games $\g^\theta_\alpha(\kappa)$ and $\rg^\theta_\alpha(\kappa)$ respectively.
Just like in Lemma \ref{prop:weakWinningStrategyIndependentOfTheta}, one can now show that the existence of a winning strategy in these games for either player is independent of $\theta\geq\kappa^+$.
As in Theorem \ref{th: char win chal}, one can then show that a cardinal $\kappa$ is $\alpha$-baby measurable if and only if the challenger does not have a winning strategy in $\g^\theta_\alpha(\kappa)$, and the analogous characterisation holds for reflective $\alpha$-baby measurable cardinals and $\rg^\theta_\alpha(\kappa)$.
The proofs are virtually the same except that in the construction of a winning run for the judge, in step $\lambda+n$ for limits $\lambda$ one adds witnesses for $\Sigma_n$-truths only.
The $\alpha$-baby measurables and the reflective $\alpha$-baby measurables form strict hierarchies as in Proposition \ref{th: hierarchy alphanbm}.
Moreover, the former hierarchy sits strictly below the latter, since a faintly reflective $\omega$-baby measurable cardinal is a limit of cardinals $\alpha$ that are $\alpha$-baby measurable as in Proposition \ref{th: omega1 ref bm}.
At the top of these hierarchies, a baby measurable cardinal is a limit of cardinals $\nu$ that are ${<}\nu$-baby measurable as in Proposition \ref{nbm versus alphanbm}, similarly to how a strongly Ramsey cardinal $\kappa$ is a limit of cardinals $\nu$ that are ${<}\nu$-Ramsey, and a $\kappa$-baby measurable cardinal $\kappa$ is a limit of baby measurable cardinals, similarly to how a $\kappa$-Ramsey cardinal $\kappa$ is a limit of strongly Ramsey cardinals.

Locally measurable cardinals (see Definition \ref{def: loc meas}) are above all these notions.

\begin{proposition}\label{prop:locallymeasurableBabyMeasurable}
A locally measurable cardinal $\kappa$ is baby measurable and a limit of cardinals $\nu$ that are reflective $\nu$-baby measurable.
\end{proposition}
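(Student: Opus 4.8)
The plan is to exploit that inside the locally measurable witness model $\kappa$ is \emph{genuinely} measurable, and to push everything downward through the internal ultrapower. Throughout I fix, for a given $A\subseteq\kappa$, a weak $\kappa$-model $M_0$ with $A\in M_0$ carrying $U\in M_0$ that $M_0$ regards as a normal measure on $\kappa$; by Proposition~\ref{prop:locallyMeasurableInternalUltrapower} I may assume the internal ultrapower $j\colon M_0\to N\subseteq M_0$ is well-founded. For baby measurability, first note that since $U\in M_0$ and $U\subseteq H_{\kappa^+}^{M_0}$, the structure $\la H_{\kappa^+}^{M_0},\in,U\ra$ is a set structure definable in $M_0$ and inherits separation and collection from $M_0$ via the ($M_0$-definable) satisfaction relation; hence $\la H_{\kappa^+}^{M_0},\in,U\ra\models\ZFC^-$, it is simple (its largest cardinal is $\kappa$), $U$ is a weakly amenable $H_{\kappa^+}^{M_0}$-ultrafilter, and $\la H_{\kappa^+}^{M_0},\in,U\ra$ models ``$U$ is a uniform normal ultrafilter''. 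Working in $M_0$, I take an elementary substructure $X\prec\la H_{\kappa^+}^{M_0},\in,U\ra$ of size $\kappa$, closed under ${<}\kappa$-sequences, with $(\kappa+1)\cup\{A\}\subseteq X$, and collapse $\pi\colon X\to M'$, setting $U'=\pi[U\cap X]=U\cap M'$ (note $\pi$ fixes subsets of $\kappa$). Then $\la M',\in,U'\ra\models\ZFC^-$, $M'$ is a $\kappa$-model in $M_0$ with $A\in M'$, and $U'$ is an $M'$-ultrafilter; by Proposition~\ref{prop:internalKappaModel}, $M'$ is genuinely a $\kappa$-model, so $U'$ is automatically good. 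As $A$ was arbitrary, $\kappa$ is baby measurable.

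For the limit claim I would show that $\kappa$ is reflective $\kappa$-baby measurable in $V_{j(\kappa)}^N$ and then reflect. Two facts about the internal ultrapower are needed: since $A=j(A)\cap\kappa\in N$ for every $A\in P^{M_0}(\kappa)$, the measure $U$ is $\kappa$-powerset preserving, so $H_{\kappa^+}^{M_0}=H_{\kappa^+}^N$; and $N$ is closed under $\kappa$-sequences lying in $M_0$, as for any normal-measure ultrapower. Since $\kappa$ is inaccessible in $M_0$, $j(\kappa)$ is inaccessible in $N$, so $V_{j(\kappa)}^N\models\ZFC$ with arbitrarily large cardinals below $j(\kappa)$. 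By the full-elementarity analogue of the game characterisation (Theorem~\ref{th: char win chal} together with the remarks opening Section~\ref{section baby loc meas}) and the $\theta$-independence of Lemma~\ref{prop:weakWinningStrategyIndependentOfTheta}, it suffices to show that in $V_{j(\kappa)}^N$ the challenger has no winning strategy in $\rg^{\kappa^+}_\kappa(\kappa)$.

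So suppose $\sigma\in V_{j(\kappa)}^N\subseteq M_0$ were such a strategy and play against it as in Theorem~\ref{[n+1]bm limit of reflective}. Using the $\Delta_1$-definable truth predicate and genuine well-order $\lhd_U$ of $\la H_{\kappa^+}^{M_0},\in,U\ra$ (Lemmas~\ref{lem:definableWellOrder} and~\ref{truth predicate from ultrafilter}; the well-order is genuine because $U$ is good) together with the full version of Lemma~\ref{lem:ZFCnImpliesElemSubstructures}, I carry out a recursion of length $\kappa$ inside $M_0$: at stage $\gamma$ let $M_\gamma$ be $\sigma$ applied to the sequence of previous judge moves, and let $N_\gamma$ be the $\lhd_U$-least $\kappa$-model with $P^{M_\gamma}(\kappa)\cup\bigcup_{\xi<\gamma}N_\xi\subseteq N_\gamma$ and $\la N_\gamma,\in,U\cap N_\gamma\ra\prec\la H_{\kappa^+}^{M_0},\in,U\ra$. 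Each $N_\gamma\in H_{\kappa^+}^{M_0}=H_{\kappa^+}^N\subseteq V_{j(\kappa)}^N$, the judge's moves form a fully elementary increasing chain of models of $\ZFC^-$, and the entire run lies in $N$, hence in $V_{j(\kappa)}^N$, by the $\kappa$-closure of $N$. This run is won by the judge, contradicting that $\sigma$ was winning; thus $V_{j(\kappa)}^N\models$ ``$\kappa$ is reflective $\kappa$-baby measurable''.

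Finally, since $\crit(j)=\kappa$ and $\kappa<j(\kappa)$, for every $\beta<\kappa$ the statement ``there is $\nu\in(\beta,j(\kappa))$ that is reflective $\nu$-baby measurable'' holds in $N$ (witnessed by $\nu=\kappa$), so by elementarity of $j$ and $j(\beta)=\beta$ the reflective $\nu$-baby measurable cardinals are unbounded below $\kappa$ in $M_0$. For $\nu<\kappa$ this property is verifiable in $H_{(2^\nu)^+}\subseteq V_\kappa=V_\kappa^{M_0}$, and $M_0$ is correct about $V_\kappa$, so the same cardinals are reflective $\nu$-baby measurable in $V$; hence $\kappa$ is a limit of them. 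The main obstacle is the bookkeeping of the third paragraph: ensuring the judge's run, built inside $M_0$, is actually an element of $V_{j(\kappa)}^N$ and a legal winning play there. This is exactly where $\kappa$-powerset preservation ($H_{\kappa^+}^{M_0}=H_{\kappa^+}^N$) and the $\kappa$-closure of $N$ are essential, and where one must verify that \emph{full} (not merely $\Sigma_n$) elementarity propagates along the chain of judge moves.
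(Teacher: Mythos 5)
Your proposal is correct and follows essentially the same route as the paper: for baby measurability you build, inside the witness model $M$ (where $H_{\kappa^+}^M$ is a set since $U\in M$), a ${<}\kappa$-closed elementary substructure of $\la H_{\kappa^+}^M,\in,U\ra$ and invoke Lemma~\ref{prop:internalKappaModel} to see it is genuinely a $\kappa$-model; for the limit claim you defeat any purported challenger strategy for $\rg^{\kappa^+}_\kappa(\kappa)$ in the internal ultrapower by having $M$ use $U$ to produce a winning run for the judge, then reflect below $\kappa$. The extra machinery you cite (Lemmas~\ref{lem:definableWellOrder} and \ref{truth predicate from ultrafilter}) is not actually needed here since $U$ is a set in $M$, but this is harmless; the paper's proof differs only in accessing the strategy via a representing function rather than via $N\subseteq M$.
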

\begin{proof}
Let $M$ be a weak $\kappa$-model which thinks it has a normal ultrafilter $U$ on $\kappa$. Let $N$ be the ultrapower of $M$ by $U$. If $N$ had a strategy $\sigma$ for the challenger in the game $\rg^{\kappa^+}_\kappa$, then $M$ would see the strategy via the function representing it in the ultrapower and would be able to use $U$ to play against it.

It remains to show that $\kappa$ is baby measurable. Fix $A\subseteq\kappa$ and a weak $\kappa$-model $M$, with $A\in M$, having what it thinks is a normal ultrafilter $U$ on $\kappa$. Clearly, $M$ can build what it thinks is a $\kappa$-model $\bar M$ such that $\la \bar M,\in, U\cap \bar M\ra\prec \la H_{\kappa^+}^M,\in, U\ra$, with $A\in\bar M$, because $H_{\kappa^+}^M$ is a set in $M$. But then $\bar M$ is actually a $\kappa$-model.
\end{proof}

\section{Indestructibility}
\label{section: indestructibility}

In this section, we provide a few basic indestructibility results for faintly baby measurable, weakly baby measurable, and baby measurable cardinals. More specifically, we show that these large cardinals $\kappa$ are indestructible by small forcing and can be made indestructible by the forcing $\Add(\kappa,1)$ adding a Cohen subset to $\kappa$.

The indestructibility arguments will use properties of class forcing over models of second-order set theory. A model of second-order set theory is a triple \hbox{$\mathbf M=\la M,\in,\mathcal C\ra$}, where $M$ consists of the sets of the model and $\mathcal C$ consists of the classes. The second-order theory ${\rm GBc}^-$ consists of the axioms $\ZFC^-$ for sets, the extensionality axiom for classes, the class replacement axiom asserting that every class function restricted to a set is a set, and the first-order comprehension scheme asserting that every first-order formula defines a class. The theory $\GBC^-$ is the theory ${\rm GBc}^-$ together with the assertion that there is a class well-order of sets of order-type $\Ord$. Given a class partial order $\p\in\mathcal C$, we say that a filter $G\subseteq\p$ is $\mathbf M$-\emph{generic} for $\p$ if it meets every dense subclass of $\p$ from $\mathcal C$. Given an $\mathbf M$-generic filter $G$, the forcing extension of $\mathbf M$ by $G$ is the structure $\la M[G],\in,\mathcal C[G]\ra$, where $M[G]$ consists of the interpretation of $\p$-names by $G$ and $\mathcal C[G]$ consists of the interpretation of class $\p$-names by $G$, where a \emph{class $\p$-name} is any class whose elements are pairs of the form $\la \sigma,p\ra$ with $p\in\p$ and $\sigma$ a $\p$-name. If $M$ is ill-founded, we can still form the generic extension by taking instead of interpretations of names, the structure whose elements are equivalence classes of (class) $\p$-names moded out by the filter $G$. Although class forcing may not always preserve replacement to the forcing extension, pretame partial orders preserve the theories ${\rm GBc}^-$ and ${\rm GBC}^-$. See \cite{PeterHolyRegulaKrapfPhilippSchlicht:classforcing2} for the definition and results on pretameness, and see \cite{AntosGitman:ModernClassForcing} for details on models of second-order set theory and class forcing.

\begin{proposition}
Faintly baby measurable, weakly baby measurable, and baby measurable cardinals are indestructible by small forcing.
\end{proposition}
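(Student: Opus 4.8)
The plan is to run a \Levy--Solovay-style argument, lifting both the model and the $M$-ultrafilter through the small forcing. Fix a forcing notion $\p$ with $|\p|<\kappa$ (which we may assume lies in $V_\kappa$) and a $V$-generic filter $G\of\p$; the argument will be uniform over the three notions, which all concern $\ZFC^-$. Since small forcing preserves the inaccessibility of $\kappa$, it suffices to show that every $A\of\kappa$ in $V[G]$ lies in a suitable weak $\kappa$-model of $V[G]$ carrying an appropriate ultrafilter. Given such an $A$, I would choose in $V$ a nice $\p$-name $\dot A$ for it; as $|\p|<\kappa$ this name is coded by a subset of $\kappa$, so by the relevant baby measurability of $\kappa$ in $V$ there is a (simple) weak $\kappa$-model $M$ with $\dot A\in M$ and an $M$-ultrafilter $U$ such that $\la M,\in,U\ra\models\ZFC^-$; in the weakly, respectively plain, baby measurable cases we additionally have that $U$ is good, respectively that $M$ is a $\kappa$-model. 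Because $\p\in V_\kappa\of M$, the filter $G$ is $M$-generic, and $M[G]$ is again a weak $\kappa$-model of $V[G]$: it is transitive, has size $\kappa$, satisfies $\ZFC^-$, and contains $V_\kappa^{V[G]}=(V_\kappa^V)[G]$ with $A=\dot A_G\in M[G]$. A routine name-counting argument shows that the property of being a $\kappa$-model is preserved, since a ${<}\kappa$-sequence over $M[G]$ is read off from a ${<}\kappa$-sequence of names lying in $M$ by closure.

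Next I would lift the ultrapower. Let $j\colon M\to N=\Ult(M,U)$ be the (possibly ill-founded) ultrapower embedding, so $\crit(j)=\kappa$ and $j\restrict V_\kappa=\mathrm{id}$, whence $j(\p)=\p$. Every dense $D\of\p$ with $D\in N$ has rank below $\kappa$ and so lies in $V_\kappa^N=V_\kappa^M\of M$; thus $G$ is also $N$-generic, and since $j\image G=G$ the embedding lifts to an elementary $j^\plus\colon M[G]\to N[G]$ with $j^\plus(\dot A_G)=j(\dot A)_G$ (forming $N[G]$ as equivalence classes of names when $N$ is ill-founded). I then set
$$U^*=\{A\of\kappa\mid A\in M[G]\text{ and }\kappa\in j^\plus(A)\},$$
the $M[G]$-ultrafilter derived from $j^\plus$. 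It extends $U$, is uniform (every tail set lies in $V_\kappa$ and is in $U$), and $\la M[G],\in,U^*\ra$ thinks $U^*$ is a normal ultrafilter by the usual derived-ultrafilter facts. If $U$ is good then $N$, and hence $N[G]$, is well-founded, so $U^*$ is good as well; if $M$ is a $\kappa$-model then $U^*$ is automatically good.

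The heart of the argument, and the step I expect to be the main obstacle, is to verify that $\la M[G],\in,U^*\ra\models\ZFC^-$, i.e.\ that full separation and collection survive in the \emph{expanded} language once the model learns about $U^*$. The plan is to exhibit $\la M[G],\in,U^*\ra$ as a set-forcing extension of $\la M,\in,U\ra$ in the language with a predicate for the ultrafilter. Concretely, one uses \Los' theorem in the ultrapower (with $\kappa=[\mathrm{id}]_U$ and $j(\dot A)=[c_{\dot A}]_U$) to compute, for a name $\dot A\in M$, that $\dot A_G\in U^*$ if and only if there is $p\in G$ with $\{\alpha<\kappa\mid M\models(p\forces\check\alpha\in\dot A)\}\in U$. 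This identity shows $U^*=\dot W_G$ for the $\p$-name
$$\dot W=\{\la\dot A,p\ra\mid \dot A\text{ a nice }\p\text{-name for a subset of }\kappa,\ \{\alpha<\kappa\mid M\models(p\forces\check\alpha\in\dot A)\}\in U\},$$
which is a \emph{set} of $M$ definable over $\la M,\in,U\ra$ by $\ZFC^-$-separation. Since the atomic forcing relation for ``$\tau\in\dot W$'' is thereby definable in $\la M,\in,U\ra$, the forcing theorem holds for the full language with the ultrafilter predicate, and the standard argument transfers $\ZFC^-$-separation and collection from $\la M,\in,U\ra$ to its extension $\la M[G],\in,U^*\ra$. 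As $A$ was an arbitrary subset of $\kappa$ in $V[G]$, this establishes the three indestructibility statements at once, the only differences being the goodness of $U$ and the closure of $M$ tracked above.
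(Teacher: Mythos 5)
Your proposal follows the same route as the paper's proof---lift the ultrapower embedding through the small forcing \Levy--Solovay style, derive the extended ultrafilter $U^*$ from the lift, and transfer $\ZFC^-$ in the expanded language---and most of the steps (genericity of $G$ over the possibly ill-founded $N$, goodness of $U^*$ via the lifted embedding, preservation of $\kappa$-closure by choosing names inside $M$) are correct. The genuine error occurs at exactly the step you yourself flag as the heart of the matter: the name $\dot W$ is \emph{not} a set of $M$, and it cannot be obtained by separation. Its field ranges over all nice $\p$-names for subsets of $\kappa$, and these form a proper class of $M$: such names are essentially arbitrary subsets of the set $\{(\check\alpha,p)\mid \alpha<\kappa,\ p\in\p\}$ of size $\kappa$, and in a simple weak $\kappa$-model $P^M(\kappa)$ is a proper class, so the nice names cannot be collected into a set. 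Indeed, the sethood claim is impossible for a reason internal to your own argument: if $\dot W$ were a set of $M$, then $U^*=\dot W_G$ would be an element of $M[G]$, and then separation and replacement of $M[G]$ applied with the set parameter $U^*$ (via the map sending each $A\of\kappa$ to whichever of $A$, $\kappa\setminus A$ lies in $U^*$) would collapse the proper class $P^{M[G]}(\kappa)$ into a set.

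This matters because, once $\dot W$ is recognized as a (definable) class name, your appeal to ``the standard argument'' is precisely the point at issue: separation and collection for formulas mentioning the class predicate $U^*$ do not follow from the ordinary set-forcing forcing theorem. You are forcing over a structure $\la M,\in,U\ra$ carrying an amenable class predicate, and you must prove (or cite) the forcing theorem and axiom preservation for that expanded language---this is class-forcing territory. The gap is repairable along your own lines: the atomic relation $p\forces\tau\in\dot W$ is still definable over $\la M,\in,U\ra$ (now with an unbounded quantifier over nice names), the truth lemma for it holds by a density argument, and then the full separation and collection of $\la M,\in,U\ra$ in the language with $U$ push separation and collection up to $\la M[G],\in,U^*\ra$ by the usual induction. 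The paper packages exactly this step by passing to the second-order model $\la M,\in,\mathcal C\ra\models{\rm GBc}^-$, where $\mathcal C$ consists of the classes definable over $\la M,\in,U\ra$, and citing that set forcing, being trivially pretame, preserves ${\rm GBc}^-$; since the interpreted class $W$ lies in $\mathcal C[G]$, the conclusion $\la M[G],\in,W\ra\models\ZFC^-$ falls out. So your skeleton is the right one, but as written the false sethood claim silently assumes away the one difficulty that distinguishes this proposition from routine \Levy--Solovay.
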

\begin{proof}
Suppose that $\kappa$ is weakly baby measurable, $\p\in V_\kappa$ is a forcing notion, and $g\subseteq \p$ is $V$-generic. Fix $A\subseteq\kappa$ in $V[g]$ and let $\dot A$ be a nice $\p$-name such that $\dot A_g=A$. Since $\p\in V_\kappa$, $\dot A$ is a subset of $V_\kappa$ as well and hence we can put it into a simple weak $\kappa$-model $M$ for which there is a good $M$-ultrafilter $U$ such that $\la M,\in,U\ra\models\ZFC^-$. Let $\mathcal C$ be the classes of $M$ generated by $U$, so that the second-order structure $\la M,\in,\mathcal C\ra\models{\rm GBc}^-$. Since $\p$ is a set forcing in $M$, it is, in particular, trivially pretame. Since pretame forcing preserves ${\rm GBc}^-$, we have that the second-order structure $\la M[g],\in,\mathcal C[g]\ra\models{\rm GBc}^-$ as well. Let $W$ the $M[g]$-ultrafilter generated by $U$ in $M[g]$. Clearly $W\in \mathcal C[g]$, and so it follows that $\la M[g],\in,W\ra\models\ZFC^-$. Also, clearly $W$ is good because we can lift the ultrapower embedding $j:M\to N$ to $j:M[g]\to N[g]$ and the $M[g]$-ultrafilter generated from the lift is precisely $W$. Since $A\in M[g]$, the structure $\la M[g],\in, W\ra$ witnesses weak baby measurability for $A$.

The case of faintly baby measurable cardinals is even easier because it suffices to note that $W$ is definable in $\la M[g],\in,\mathcal C[g]\ra$.

For the case of baby measurable cardinals it suffices to observe that a forcing extension $M[g]$ of a $\kappa$-model $M$ by $\p$ is again a $\kappa$-model in $V[g]$.
\end{proof}

\begin{theorem}
Faintly baby measurable cardinals, weakly baby measurable cardinals, and  baby measurable cardinals $\kappa$ can be made indestructible by the forcing $\Add(\kappa,1)$.
\end{theorem}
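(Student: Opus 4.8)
The plan is to show that $\kappa$ retains each of the three properties after forcing with $\Add(\kappa,1)$, proceeding much as in the preceding small-forcing proposition but now exploiting that $\Add(\kappa,1)$, while not small, is nonetheless a \emph{set} forcing inside every witnessing model: it is a definable subset of $V_\kappa\in M$ of size $\kappa$, so $\Add(\kappa,1)^M=\Add(\kappa,1)$ is an element of $M$. Let $g\of\Add(\kappa,1)$ be $V$-generic and fix $A\of\kappa$ in $V[g]$. Since $\kappa$ is inaccessible, $|\Add(\kappa,1)|=2^{{<}\kappa}=\kappa$, so $A$ has a nice name $\dot A$ coded by a subset of $\kappa$; applying the relevant large cardinal property of $\kappa$ in $V$ to this code, I obtain a simple weak $\kappa$-model $M$ with $\dot A\in M$ (hence $A\in M[g]$), with $\Add(\kappa,1)\in M$, and a good $M$-ultrafilter $U$ such that $\la M,\in,U\ra\models\ZFC^-$. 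As $\Add(\kappa,1)$ is ${<}\kappa$-closed it adds no new ${<}\kappa$-sequences, so $V_\kappa^{V[g]}=V_\kappa\in M\of M[g]$ and $M[g]$ is again a weak $\kappa$-model in $V[g]$; in the baby measurable case, where $M$ is a $\kappa$-model, $M[g]$ remains a $\kappa$-model by the same ${<}\kappa$-closure.

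For the faintly baby measurable case I would argue exactly as in the small-forcing proposition. Let $\mathcal C$ be the classes of $M$ generated by $U$, so that $\la M,\in,\mathcal C\ra\models{\rm GBc}^-$. Since every set forcing is trivially pretame and pretame forcing preserves ${\rm GBc}^-$, the extension satisfies $\la M[g],\in,\mathcal C[g]\ra\models{\rm GBc}^-$; the $M[g]$-ultrafilter $W$ generated by $U$ lies in $\mathcal C[g]$ and is definable there, so $\la M[g],\in,W\ra\models\ZFC^-$. This witnesses faint baby measurability in $V[g]$, with no goodness needed. In the baby measurable case the identical argument yields $\la M[g],\in,W\ra\models\ZFC^-$, and since $M[g]$ is a $\kappa$-model the ultrafilter $W$ is automatically countably complete, hence good; thus $\kappa$ remains baby measurable in $V[g]$.

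The genuine difficulty is the weakly baby measurable case, where $M$ carries no closure yet we must preserve \emph{goodness} of the derived ultrafilter. Here the plan is to lift the ultrapower map $j\colon M\to N$ to $\bar j\colon M[g]\to N[G]$. The function $s=\bigcup g\colon\kappa\to 2$ is a condition of $\Add(j(\kappa),1)^N$ serving as a master condition: for $p\in g$ we have $j(p)=p$ as $\crit(j)=\kappa$, and $p\of s$, so $s\leq j(p)$. Hence any $N$-generic $G\ni s$ for $\Add(j(\kappa),1)^N$ satisfies $j[g]\of G$ and induces the lift $\bar j$. Because $N$ is well-founded, $N[G]$ is well-founded, so the $M[g]$-ultrafilter derived from $\bar j$ is good; and weak amenability persists because the tail forcing $\Add([\kappa,j(\kappa)),1)^N$ is ${<}j(\kappa)$-closed in $N$ and so adds no new subsets of $\kappa$, giving that $M[g]$ and $N[G]$ have the same subsets of $\kappa$.

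The main obstacle is the construction of $G$ itself. One enumerates the at most $\kappa$ dense subsets of $\Add(j(\kappa),1)^N$ lying in $N$ and builds a descending $\kappa$-sequence of conditions meeting them; the delicate point is obtaining lower bounds at limit stages $\lambda<\kappa$, which requires the partial runs to be elements of $N$, i.e.\ external ${<}\kappa$-closure of $N$. In the baby measurable case this closure is automatic, since $M^{{<}\kappa}\of M$ forces $N^{{<}\kappa}\of N$: a sequence $\la[f_\xi]_U\mid\xi<\lambda\ra$ is represented by $[\alpha\mapsto\la f_\xi(\alpha)\mid\xi<\lambda\ra]_U$, whose defining function lies in $M$ by closure. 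For the weakly baby measurable case no such closure is available, and this is precisely why the statement asserts only that these cardinals \emph{can be made} indestructible: the requisite closure of the ultrapower must be secured separately, either by a preparatory forcing after which the witnessing models can be chosen sufficiently closed, or by constructing the witness in $V[g]$ through an ill-founded-tree and absoluteness-of-well-foundedness argument in the style of the proof of Theorem~\ref{weakly vs faintly nmb}, where goodness is encoded as a branch condition of the tree.
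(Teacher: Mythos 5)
Your proposal has two fatal gaps, both traceable to the same missing ingredient: the preparation forcing, which is what ``can be made indestructible'' refers to and which your proof never actually performs (you force with $\Add(\kappa,1)$ directly over $V$ and only mention a preparation as a possible fix in the last sentence). The first gap is in your faintly and baby measurable cases, where you transfer the small-forcing (L\'evy--Solovay) argument verbatim: the filter $W$ generated by $U$ in $M[g]$ is simply not an $M[g]$-ultrafilter. Let $A_g=\{\alpha<\kappa\mid (\bigcup g)(\alpha)=1\}\in M[g]$. For every unbounded ground-model $B\of\kappa$, the sets of conditions placing some $\alpha\in B$ into $A_g$, respectively out of $A_g$, are both dense, so $B\not\of A_g$ and $B\not\of\kappa\setminus A_g$; since every element of $U$ is unbounded, no element of $U$ is contained in $A_g$ or its complement, and $W$ measures neither. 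Unlike forcing of size less than $\kappa$, the forcing $\Add(\kappa,1)$ adds new subsets of $\kappa$, and producing an ultrafilter that measures them is the entire content of the theorem. (Had your argument worked, it would prove outright indestructibility by $\Add(\kappa,1)$, which is false in general --- exactly why the theorem says ``can be made''.)

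The second gap is that the master-condition lift you sketch for the weakly baby measurable case provably cannot exist without preparation. Since $g$ is $V$-generic, $s=\bigcup g\notin V$, hence $s\notin N$, so $s$ is not a condition of $\Add(j(\kappa),1)^N$ at all. Worse, let $E\in N$ be the dense set of conditions $q$ with $\kappa\of\dom(q)$: if $G$ were an $N$-generic filter with $j[g]=g\of G$, then any $q\in G\cap E$ would be compatible with every $p\in g$, forcing $q\restrict\kappa=s$ and hence $s\in N$, a contradiction. The paper circumvents precisely this by first forcing with the $\kappa$-length Easton-support iteration $\p_\kappa$ of $\Add(\alpha,1)$ at regular $\alpha<\kappa$: then $\Psi(\p_\kappa)$ factors as $\p_\kappa*\Add(\kappa,1)*\p_\tail$, the actual generic $G*g$ is used for the first two factors --- so that $g$, and hence the master condition $s$, lies in $\Ult[G*g]$ by weak amenability --- the ${<}\kappa^+$-closed tail generic is built by diagonalization, and only then is a generic for $\Add(\Psi(\kappa),1)$ built below $s$. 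Moreover, the paper carries out this diagonalization internally in $\la M[G*g],\in,\mathcal C[G*g]\ra\models\GBC^-$, using a definable class well-order and the internal closure of the internal ultrapower; this is what makes the construction work when $M$ has no external closure (weakly case) and when the ultrapower is ill-founded (faint case), and it is also what guarantees that the resulting ultrafilter $W$ is a class of a $\GBC^-$ model, which is how one obtains $\la M[G*g],\in,W\ra\models\ZFC^-$ in full --- a point your sketch never addresses, since weak amenability of a lifted ultrafilter is far weaker than $\ZFC^-$ for the expanded structure. Your two fallback suggestions do not repair this: a preparation cannot make the witnessing models of weak baby measurability closed (that would upgrade them to witnesses of baby measurability), and a tree-plus-absoluteness argument in the style of Theorem~\ref{weakly vs faintly nmb} cannot encode goodness as the existence of a branch, since well-foundedness of the resulting ultrapower is not a property witnessed by finite approximations.
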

\begin{proof}
Suppose that $\kappa$ is weakly baby measurable. Let $\p_\kappa$ be the $\kappa$-length Easton support iteration forcing with $\Add(\alpha,1)$ at regular cardinal stages, and let $G*g$ be $V$-generic for $\p_\kappa*\Add(\kappa,1)$. With some slight renamings, we can assume that the poset $\p_\kappa*\Add(\kappa,1)$ is a subset of $V_\kappa$. Every subset $A\subseteq\kappa$ in $V[G*g]$ has a nice $\p_{\kappa}*\Add(\kappa,1)$-name $\dot A$ in $V$, which can therefore be put into a weak $\kappa$-model $M$ for which there is an $M$-ultrafilter $U$ such that $\mathbf{M}:=\la M,\in,U\ra\models\ZFC^-$. By moving to $L[\dot A,U]$ of $M$, we can assume without loss of generality that $M$ has a definable class bijection $F:\Ord^M\to M$. Let $\mathcal C$ be the classes of $M$ generated by $U$. Since the forcing $\p_\kappa*\Add(\kappa,1)$ is a set and hence pretame, we have that the second-order structure $$\la M[G*g],\in,\mathcal C[G*g]\ra\models\GBC^-.$$ Let $\Ult$ be the (not collapsed) ultrapower of $M$ by $U$ that is definable in \hbox{$\la M,\in,U\ra$} and let $\Psi:M\to \Ult$ be the ultrapower map, which is also definable there. Note that $\la M,\in,U\ra$ can pick out a unique element of each equivalence class using the global well-order function $F$. The model $\mathbf M[G*g]=\la M[G*g],\in,\mathcal C[G*g]\ra$ has the classes $M$ and $\Ult$. Using $G*g$, we can define the model $\Ult[G*g]$ inside $\la M[G*g],\in,\mathcal C[G*g]\ra$. We can think of elements of $\Ult[G*g]$ as equivalence classes $[\tau]$ of $\p_\kappa*\Add(\kappa,1)$-names from $\Ult$, where we have that $\tau$ is equivalent to $\sigma$ whenever there is $p\in G*g$ such that $p\forces\tau=\sigma$. The entire construction is definable in $\mathbf M[G][g]$. We will lift the ultrapower embedding $\Psi$ of $M$ by $U$ to an ultrapower embedding of $M[G*g]$ inside the structure $\la M[G*g],\in,\mathcal C[G*g]\ra$.

First, we lift $\Psi$ to $M[G]$. Using the standard lifting criterion for lifting elementary embeddings to a forcing extension,  we need to build an $\Ult$-generic filter $H$ for the poset $\Psi(\p_\kappa)$ with $\Psi[G]\subseteq H$. The poset $\Psi(\p_\kappa)$ factors as $\p_\kappa*\Add(\kappa,1)*\p_\tail$ (we will associate the initial segment of the ultrapower that is isomorphic to $M$ with $M$ itself to simplify notation). We use $G*g$ for the initial segment of the forcing, thereby trivially satisfying the requirement that $\Psi[G]=G$ will be contained in the filter we end up building. So it remains to find an $\Ult[G*g]$-generic filter $G_\tail$ for the tail forcing $\p_\tail$, which is ${<}\kappa^+$-closed there. The model $\la M[G*g],\in,\mathcal C[G*g]\ra$ has a class bijection $F':\Ord^M\to M[G*g]$ constructed from $F$. Using $F'$, in $\la M[G*g],\in,\mathcal C[G*g]\ra$, we can enumerate all the dense subsets of $\Psi(\p_\kappa)$ in $\Ult[G*g]$ in a class sequence of length $\Ord^M$. The length of every initial segment of this enumeration is some ordinal $\alpha\in M$. Note next that every sequence of elements of
$\Ult$ of order type $\alpha$ must be an element of $\Ult$ because it is an ultrapower. The closure also transfers to the pairs $M[G]$ and $\Ult[G]$, as well as $M[G*g]$ and $\Ult[G*g]$ (for details of closure arguments, see \cite[Section 3]{gitman:ramseyindes}). Thus, as we diagonalize against the sequence of dense sets inside $\la M[G*g],\in,\mathcal C[G*g]\ra$, every initial segment of our choices of elements from the dense sets is going to be a sequence in $\Ult[G*g]$ and therefore since $\Psi(\p_\kappa)$ is ${<}\kappa^+$-closed in $\Ult[G*g]$, we can find an element below the diagonalization sequence. Thus, we can continue using replacement in our structure to define the generic filter. Note that the generic filter $G_\tail$ we construct in this manner is a class in $\mathbf M[G*g]$.

Once we have the generic filter $G_\tail$, we can repeat the process to define an $\Ult[G*g][G_\tail]$-generic filter for the image $\Psi(\Add(\kappa,1))=\Add(\Psi(\kappa),1)$ of $\Add(\kappa,1)$ under the lifted ultrapower map $\Psi$. Thus, we can lift the ultrapower embedding $\Psi$ to $M[G*g]$ and use the ultrapower map $\Psi$ to define the $M[G*g]$-ultrafilter $W$ extending $U$. Since $W$ was defined inside the structure $\la M[G*g],\in,\mathcal C[G*g]\ra$, we have that $\la M[G*g],\in,W\ra\models\ZFC^-$. The lifted ultrapower map verifies that $W$ is good.

The case of faintly baby measurable cardinals proceeds identically to above because we never used the well-foundedness of the ultrapower in our construction.

For the case of baby measurable cardinals it suffices to observe that a forcing extension $M[G][g]$ of a $\kappa$-model $M$ by $\p_\kappa*\Add(\kappa,1)$ is again a $\kappa$-model in $V[G][g]$ (for details, see \cite[Section 3]{gitman:ramseyindes}).
\end{proof}

\begin{corollary}
A faintly baby measurable, weakly baby measurable or baby measurable cardinal $\kappa$ can be made indestructible by the forcing $\Add(\kappa,\theta)$ for any cardinal $\theta$.
\end{corollary}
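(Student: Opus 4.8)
The plan is to reduce everything to the preceding theorem, which, after a preparation forcing $\p_\kappa$ (the $\kappa$-length Easton support iteration of $\Add(\alpha,1)$), produces a ground model $\bar V=V[G]$ in which $2^{{<}\kappa}=\kappa$ and in which $\kappa$ is indestructible by $\Add(\kappa,1)$. I claim that this same $\bar V$ already witnesses indestructibility by $\Add(\kappa,\theta)$ for every cardinal $\theta$, so no further preparation is needed. Fix a $\bar V$-generic filter $h$ for $\Add(\kappa,\theta)$ and an arbitrary $A\subseteq\kappa$ in $\bar V[h]$; the task is to produce, in $\bar V[h]$, a witnessing structure $\la M,\in,U\ra$ for $A$ appropriate to the notion at hand (faintly, weakly, or baby measurable).

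The first step is a nice-name reduction. Since $2^{{<}\kappa}=\kappa$ in $\bar V$, the poset $\Add(\kappa,\theta)$ is ${<}\kappa$-closed and $\kappa^+$-cc there, so $A$ has a nice name $\dot A$ assembled from $\kappa$-many antichains each of size at most $\kappa$; as every condition is a partial function of size ${<}\kappa$, the name $\dot A$ mentions at most $\kappa$-many of the $\theta$ coordinates, say those in some $I\subseteq\theta$ with $|I|\le\kappa$. Hence $A\in\bar V[h\restriction I]$. Reindexing and using $2^{{<}\kappa}=\kappa$ with $1\le|I|\le\kappa$, the forcing $\Add(\kappa,I)$ is isomorphic in $\bar V$ to $\Add(\kappa,1)$ (the standard fact that $\Add(\kappa,\lambda)\cong\Add(\kappa,1)$ for $1\le\lambda\le\kappa$). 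Thus $\bar V[h\restriction I]=\bar V[g]$ for some $\bar V$-generic $g$ for $\Add(\kappa,1)$, and indestructibility under $\Add(\kappa,1)$ supplies a witness $\la M,\in,U\ra$ for $A$ in $\bar V[h\restriction I]$: a (simple) weak $\kappa$-model $M$ with $A\in M$ and an $M$-ultrafilter $U$ with $\la M,\in,U\ra\models\ZFC^-$, with $U$ good in the weakly and baby measurable cases and $M$ a $\kappa$-model in the baby measurable case.

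The second step transfers this witness up to $\bar V[h]$. Write $\bar V[h]=\bar V[h\restriction I][h']$, where $h'$ is generic for the tail $\Add(\kappa,\theta\setminus I)$, which is ${<}\kappa$-closed over $\bar V[h\restriction I]$ because $\Add(\kappa,I)$ adds no new ${<}\kappa$-sequences. Every clause defining the witness survives: ``$\la M,\in,U\ra\models\ZFC^-$'' and ``$U$ is an $M$-ultrafilter'' are first-order properties of the fixed set structure and hence absolute; goodness of $U$ is well-foundedness of the set-sized ultrapower, which is absolute to $\bar V[h]$; and $A\in M$ is unchanged. In the baby measurable case the only clause at risk is that $M$ be a $\kappa$-model, but $h'$ is generic for ${<}\kappa$-closed forcing, so it adds no new ${<}\kappa$-sequences of elements of $M$, and $M^{{<}\kappa}\subseteq M$ is preserved in $\bar V[h]$. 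Thus $\la M,\in,U\ra$ remains a witness for $A$ in $\bar V[h]$, and since $A$ was arbitrary, $\kappa$ retains the relevant large cardinal property there.

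The main obstacle is precisely the preservation of $\kappa$-model closure in the baby measurable case, and this is what dictates the shape of the argument: the nice-name reduction must be arranged so that the coordinates outside $I$ are absorbed by genuinely ${<}\kappa$-closed tail forcing, so that no new short sequences over $M$ appear. Once the factorization $\Add(\kappa,\theta)\cong\Add(\kappa,I)\ast\Add(\kappa,\theta\setminus I)$ with $|I|\le\kappa$ is in place, the remaining ingredients — the $\kappa^+$-cc and ${<}\kappa$-closure of $\Add(\kappa,\theta)$ under $2^{{<}\kappa}=\kappa$, the support bound for nice names, and the isomorphism $\Add(\kappa,\lambda)\cong\Add(\kappa,1)$ for $1\le\lambda\le\kappa$ — are all standard, so the corollary amounts to a bookkeeping of absoluteness together with the single closure observation.
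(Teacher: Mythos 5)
Your proof is correct and follows essentially the same route as the paper's: a nice-name argument showing that any $A\subseteq\kappa$ in the $\Add(\kappa,\theta)$-extension depends on at most $\kappa$-many coordinates, so that the problem reduces to indestructibility by $\Add(\kappa,1)$. The only difference is that you spell out the transfer of the witness from $\bar V[h\restrict I]$ to $\bar V[h]$ (absoluteness of the first-order clauses and of well-foundedness, and preservation of $\kappa$-model closure via the closure of the tail forcing), which the paper's two-line proof leaves implicit.
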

\begin{proof}
Suppose that $G\subseteq \Add(\kappa,\theta)$ is $V$-generic. Observe that any $A\subseteq\kappa\in V[G]$ has an $\Add(\kappa,\theta)$-name $\dot A$ in $V$ that uses at most $\kappa$-many coordinates in $\theta$. Thus, we can view $\dot A$ as an $\Add(\kappa,1)$-name. Hence, it suffices to show that these cardinals can be made indestructible by $\Add(\kappa,1)$.
\end{proof}

It follows that we can make the $\GCH$ fail at these cardinals.
Also, a simple version of the above lifting argument can be used to show that the $\GCH$ forcing that adds Cohen subsets at all successor cardinals preserves these cardinals.

We believe that the indestructibility results should work for the level by level $n$-versions of the baby measurable cardinals as well (at least for some reasonably large $n$). This would rely on set forcing preserving an appropriate version of the class theory $\GBC^-_n$ extending $\ZFC^-_n$ (or $\KP^-_n$) and checking that the complexity of the constructions (ultrapower and generic filters) never beyond $\Sigma_n$.

\section{A detour into second-order set theory}
\label{section: second-order}

Models of $\ZFC^-$ with a largest cardinal $\kappa$ that is inaccessible are bi-interpretable with models of the second-order set theory Kelley-Morse strengthened by a choice principle for classes. Let $\ZFC^-_I$ be the theory asserting that $\ZFC^-$ holds, that there is a largest cardinal $\kappa$, and that $\kappa$ is inaccessible ($P(\alpha)$ exists and $2^\alpha<\kappa$ for all $\alpha<\kappa$). The assumption that $\kappa$ is inaccessible implies, in particular, that $V_\kappa$ exists, and that $V_\kappa\models\ZFC$. Recall that Kelley-Morse ($\KM$) is a second-order set theory whose axioms consist of $\GBC$ together with the full comprehension scheme asserting for every second-order formula that it defines a class. We can further strengthen $\KM$ by adding various very useful choice principles for classes. Let the \emph{choice scheme} (${\rm CC}$) be the scheme which asserts, for every second-order formula $\varphi(x,X,A)$, that if for every set $x$, there is a class $X$ witnessing $\varphi$, then there is a single class $Y$ collecting witnesses for every set $x$ on its slices $Y_x=\{y\mid \la x,y\ra\in Y\}$.

Marek showed that the theory Kelley-Morse together with the choice scheme ($\KM+\CC$) is bi-interpretable with $\ZFC^-_I$ \cite[Section 2]{Marek:KM}. Given a model $\VV=\la V,\in,\mathcal C\ra\models\KM+\CC$, we obtain the corresponding model $M_{\VV}\models\ZFC^-_I$ by taking all the well-founded extensional relations in $\mathcal C$, modulo isomorphism, with the natural membership relation that results from viewing these relations as transitive sets. We get that $V_\kappa^{M_{\VV}}\cong V$ and $P(V_\kappa)^{M_{\VV}}\cong \mathcal C$. In the other direction, given any model $M\models\ZFC^-_I$, we obtain the corresponding model $$\VV=\la V_\kappa^M,\in,P(V_\kappa)^M\ra\models\KM+\CC.$$ Moreover, what gives us bi-interpretability is that the $\ZFC^-_I$-model $M_{\VV}$ corresponding to $\VV$ is precisely $M$.

Let $\ZFC^-_U$ be the theory in the language with an additional unary predicate $U$ consisting of $\ZFC^-_I$ in the extended language together with the assertion that $U$ is a uniform normal ultrafilter on the largest cardinal $\kappa$.

On the second-order side, let $\KM_U$ be the theory in the language of second-order set theory with an additional unary predicate $U$ on classes consisting of $\KM$ in the extended language together with the assertion that $U$ is a uniform normal ultrafilter on $\Ord$. Let $$\VV=\la V,\in,\mathcal C,U\ra\models\KM_U.$$ Consider the ultrapower structure $\la \mathrm{Ult},{\rm E}\ra$ consisting of the equivalence classes of class functions $F:\Ord\to V$ from $\mathcal C$ modulo $U$. It is not difficult to see that \Los' Theorem holds for the structures $\la V,\in\ra$ and $\la\Ult,{\rm E}\ra$: $\la \Ult,{\rm E}\ra\models\varphi([F])$ if and only if $\{\alpha\mid \la V,\in\ra\models \varphi(F(\alpha)\}\in U$. It follows that the universe $V$ is isomorphic to a rank-initial segment $\Ult_\kappa$ of $\Ult$ consisting of equivalence classes of constant functions $C_a:\Ord\to V$ such that $C_a(\alpha)=a$ for all $\alpha$, and that $\Ult_{\kappa+1}$ consists precisely of the classes $\mathcal C$ via this isomorphism. Since by  \Los' Theorem, $\la \Ult,{\rm E}\ra\models\ZFC$, it has a well-ordering of $\Ult_{\kappa+1}$. Now, essentially by the argument of Lemma~\ref{lem:definableWellOrder}, we can conclude that $\VV$ has a definable well-ordering of its classes. The existence of a definable well-ordering of classes is much stronger than the choice scheme, which clearly follows from it.

Similar arguments as above show that $\KM_U$ and $\ZFC^-_U$ are bi-interpretable. Therefore we should view $\ZFC^-_U$ as essentially being a strong second-order set theory asserting that $\Ord$ is measurable.

From the bi-interpretability of these theories, we also know the first-order consequences in models $\VV=\la V,\in,\mathcal C,U\ra$ of $\KM_U$.
Using the proof of Theorem \ref{th:0bmalpharam} and $\Sigma_n$ class reflection mentioned below, it follows that there is a proper class of cardinals $\kappa$ that are $\kappa$-Ramsey.
Our results further shed light on the global structure of $\VV$.
We say that $\Ord$ is \emph{ineffably Ramsey} if every class function $F:[\Ord]^{{<}\omega}\to 2$ has a stationary homogeneous class. By \emph{$\Sigma_n$ class reflection}, we mean the assertions for each $n\in\omega$ that for every class $A\in\mathcal C$, there is a collection $\bar{\mathcal{C}} \subseteq \mathcal{C}$ coded by a single class such that $A\in\bar{\mathcal C}$ and $\la V,\in,\bar{\mathcal C},U\cap \mathcal{\bar C}\ra\prec_{\Sigma_n} \la V,\in,\mathcal C,U\ra$.
Note that the results cited in the following claims can be applied in this situation since the proofs of Lemma \ref{prop:wellFoundedPartOfUltrapowerWA} and Lemmas \ref{lem:definableWellOrder}, \ref{truth predicate from ultrafilter} and \ref{lem:ZFCnImpliesElemSubstructures} only use internal properties of the structures $\langle M,\in,U\rangle$ and therefore, these results hold for all models $\langle M,\in,U\rangle$ of the respective theory such that $U$ is an $M$-ultrafilter from the viewpoint of $\langle M,\in,U\rangle$.
\begin{itemize}
\item
$\VV$ has a definable global well-order of $\mathcal C$ by Lemma \ref{lem:definableWellOrder}.

\item
$\VV$ has a truth predicate for $\la V,\in,\mathcal C\ra$ by Lemma \ref{truth predicate from ultrafilter}.

\item
$\Ord$ is ineffably Ramsey in $\VV$. (This can be seen by carrying out the argument from \cite[Lemma~3.6]{gitman:ramsey} internally to $\VV$.)

\item
$\VV$ satisfies $\Sigma_n$ class reflection for each $n\in\omega$ by Lemma \ref{lem:ZFCnImpliesElemSubstructures}.
\end{itemize}

We next provide a version of Bovykin's and McKenzie's Theorem \ref{BovykinMcKenzieMain} that uses our variants of their $n$-baby measurable cardinals and the above results about them.

\begin{theorem}
\label{Bovykin equicon}
The following theories are equiconsistent:
\begin{enumerate}
\item
\label{Bovykin equicon 1}
$\ZFC^-_U$.

\item
\label{Bovykin equicon 2}
$\ZFC$ together with the scheme consisting of the assertions for all $n\in \omega$:
\smallskip
\begin{quote}
``There exists an $n$-baby measurable cardinal $\kappa$ with $V_\kappa\prec_{\Sigma_n} V.$''
\end{quote}
\smallskip
\item
\label{Bovykin equicon 3}
$\ZFC$ together with the scheme consisting of the assertions for all $n\in\omega$:
\smallskip
\begin{quote}
``There exists an $n$-baby measurable cardinal.''
\end{quote}
\smallskip
\end{enumerate}
Moreover, \eqref{Bovykin equicon 2} captures precisely the theory of models $M$ of \eqref{Bovykin equicon 1} restricted to $V_\kappa^M$, where $\kappa$ is the largest cardinal of $M$.
\end{theorem}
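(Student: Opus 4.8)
The plan is to establish the cycle $(1)\Rightarrow(2)\Rightarrow(3)\Rightarrow(1)$ at the level of consistency, together with the sharper ``moreover'' clause. Two of the arrows are cheap. For $(2)\Rightarrow(3)$ I would note that each instance of scheme \eqref{Bovykin equicon 3} is obtained from the corresponding instance of \eqref{Bovykin equicon 2} by deleting the clause $V_\kappa\prec_{\Sigma_n}V$, so any model of \eqref{Bovykin equicon 2} is already a model of \eqref{Bovykin equicon 3}. For $(3)\Rightarrow(1)$ I would argue by compactness: every finite fragment of $\ZFC^-_U$ is a consequence of $\ZFC^-_m$, for $m$ large enough to bound the separation and collection instances occurring, together with the assertions that $U$ is a uniform normal ultrafilter on the largest cardinal $\kappa$ and that $\kappa$ is inaccessible. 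Working in a model of \eqref{Bovykin equicon 3}, an $m$-baby measurable cardinal furnishes a simple $\kappa$-model $\bar M$ (simple by the remark following Definition \ref{def baby meas}) with a good $\bar M$-ultrafilter $U$ such that $\la\bar M,\in,U\ra\models\ZFC^-_m$; since $\kappa$ is then the largest cardinal of $\bar M$ and inaccessible there, $\la\bar M,\in,U\ra$ satisfies the fragment, and hence $\ZFC^-_U$ is consistent.

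The crux is $(1)\Rightarrow(2)$, which I would prove by working inside an arbitrary $\la M,\in,U\ra\models\ZFC^-_U$ with largest (inaccessible) cardinal $\kappa$, showing that $V_\kappa^M$ models $\ZFC$ together with scheme \eqref{Bovykin equicon 2}. The first step is that $M$ believes $\kappa$ is $n$-baby measurable. By the remark preceding the theorem, Lemma \ref{lem:ZFCnImpliesElemSubstructures} applies to $\la M,\in,U\ra$ itself, which satisfies $\KP_{n+1}$ and internally regards $U$ as a good $M$-ultrafilter, so for every $A\subseteq\kappa$ there is a $\kappa$-model $\bar M\in M$ with $A\in\bar M$ and $\la\bar M,\in,U\cap\bar M\ra\models\ZFC^-_n$; as $\bar M$ is internally closed under $\omega$-sequences, $U\cap\bar M$ is good. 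Thus $M\models$``$\kappa$ is $n$-baby measurable'', a statement verified in $H_{\kappa^+}^M=M$. The second step is a reflection. Forming the internal ultrapower $j\colon M\to N=\Ult(M,U)$ is legitimate because every $f\colon\kappa\to M$ has hereditary cardinality at most $\kappa$ and so lies in $H_{\kappa^+}^M=M$, while $\kappa$-completeness of $U$ makes $N$ internally well-founded; weak amenability of $U$ gives $P(\kappa)^N=P(\kappa)^M$, hence $H_{\kappa^+}^N=M$ by Lemma \ref{prop:wellFoundedPartOfUltrapowerWA}. Since ``$\kappa$ is $n$-baby measurable'' is verified in $H_{\kappa^+}^N=M$, we get $N\models$``$\kappa$ is $n$-baby measurable'', so $\kappa\in j(B_n)$ where $B_n=\{\lambda<\kappa:\lambda\text{ is }n\text{-baby measurable}\}$; as $U$ is the measure derived from $j$, this yields $B_n\in U$. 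Because $U$ is normal it also contains the club $C_n=\{\lambda<\kappa:V_\lambda\prec_{\Sigma_n}V_\kappa\}$, so $B_n\cap C_n\neq\emptyset$, and any $\lambda$ in it witnesses $V_\kappa^M\models$``there is an $n$-baby measurable $\lambda$ with $V_\lambda\prec_{\Sigma_n}V$''. Letting $n$ range over $\omega$, with $n=0$ covered by the equivalences after Lemma \ref{lem:noFaintWeakKP} and Theorem \ref{th:0babymeasurable}, shows $V_\kappa^M$ models \eqref{Bovykin equicon 2}. The whole argument is internal to $M$, so no external well-foundedness of $M$ is needed.

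For the ``moreover'' I would show that the deductive closure of \eqref{Bovykin equicon 2} equals $\bigcap\{\Th(V_\kappa^M):M\models\eqref{Bovykin equicon 1}\}$. The inclusion $\subseteq$ is exactly the content of $(1)\Rightarrow(2)$. For $\supseteq$, given a sentence $\sigma$ not provable from \eqref{Bovykin equicon 2}, I would take $W\models\eqref{Bovykin equicon 2}+\neg\sigma$ and build, by compactness, a model $M\models\ZFC^-_U$ with $V_\kappa^M\equiv W$. The relevant theory is $\ZFC^-_U$ together with $\{\,``V_\kappa\models\psi\,"\ :\ W\models\psi\,\}$, where $V_\kappa$ denotes the rank-initial segment below the largest cardinal. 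For a finite fragment, choosing $m$ to bound both the required axiom instances and the complexities of the finitely many $\psi_i$ involved, the instance of scheme \eqref{Bovykin equicon 2} for $m$ holding in $W$ provides an $m$-baby measurable $\kappa_m$ with $V_{\kappa_m}^W\prec_{\Sigma_m}W$ and a simple $\kappa_m$-model $\bar M$ with $\la\bar M,\in,U\ra\models\ZFC^-_m$ and $V_{\kappa_m}^{\bar M}=V_{\kappa_m}^W$; since $\prec_{\Sigma_m}$ decides each $\psi_i$, the structure $\la\bar M,\in,U\ra$ satisfies the fragment. Here the elementarity clause $V_\kappa\prec_{\Sigma_n}V$ of scheme \eqref{Bovykin equicon 2} is exactly what is needed, and is precisely what distinguishes \eqref{Bovykin equicon 2} from \eqref{Bovykin equicon 3}.

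I expect the main obstacle to be the reflection step in $(1)\Rightarrow(2)$: passing from ``$\kappa$ is internally $n$-baby measurable'' down to an actual $\lambda<\kappa$ that is $n$-baby measurable in $V_\kappa^M$. The delicate points are justifying the internal ultrapower and the identity $H_{\kappa^+}^N=M$ without assuming $U$ is externally good, which I handle via weak amenability and Lemma \ref{prop:wellFoundedPartOfUltrapowerWA}, and verifying that $n$-baby measurability is genuinely a property of $H_{\kappa^+}$, so that it transfers between $M$ and $N$ and reflects onto a set in $U$.
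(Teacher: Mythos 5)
Your proposal is correct and follows essentially the same route as the paper's proof: the same cycle of implications, the same internal use of Lemma~\ref{lem:ZFCnImpliesElemSubstructures} (licensed by the remark preceding the theorem) together with $H_{\kappa^+}^N=M$ from Lemma~\ref{prop:wellFoundedPartOfUltrapowerWA} to make the ultrapower see that $\kappa$ is $n$-baby measurable, and the same compactness arguments for $(3)\Rightarrow(1)$ and for the ``moreover'' clause. The only deviation is cosmetic: where the paper reflects ``$V_\alpha\prec V_\kappa$ and $\alpha$ is $n$-baby measurable'' below $\kappa$ in a single application of \Los' theorem, you split the reflection into $B_n\in U$ (via the derived measure) and $C_n\in U$ (via normality and the club of $\Sigma_n$-elementary ranks), which is an equivalent argument.
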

\begin{proof}
To show that the consistency of \eqref{Bovykin equicon 1} implies that of \eqref{Bovykin equicon 2}, suppose that $\la M,\in, U\ra\models\ZFC^-_U$ with a largest cardinal $\kappa$ (although we use $\in$ for the membership relation here, we don't assume that it is the actual membership relation). We show that \eqref{Bovykin equicon 2} holds in $V_\kappa^M$. Fix $n\in\omega$. The proof of Lemma~\ref{lem:ZFCnImpliesElemSubstructures} shows that every $A\subseteq\kappa$ in $M$ is an element of $\bar M$, which $M$ thinks is a $\kappa$-model, such that $\la \bar M,\in\bar M\cap U\ra\prec_{\Sigma_n}\la M,\in, U\ra$. Thus, the ultrapower $N$ of $M$ by $U$ satisfies that $\kappa$ is $n$-baby measurable. Let's argue that $V_\kappa\prec V_{j(\kappa)}$ in $N$, where $j$ is the ultrapower map. Note that $M$ and $N$ have the same natural numbers (possibly nonstandard), so they agree about formulas. Given a (possibly nonstandard) formula $\varphi(x)$, we also have that $M$ and $N$ agree on whether $V_\kappa\models\varphi(a)$. Moreover, if $M\models``V_\kappa\models\varphi(a)"$, then $N\models ``V_{j(\kappa)}\models\varphi(j(a))"$ by elementarity. It follows, by \Los' Theorem, that there is some $\alpha<\kappa$ such that $M$ satisfies that $V_\alpha\prec V_\kappa$ and $\alpha$ is $n$-baby measurable. Thus, in particular, we actually have that $V_\alpha\prec_{\Sigma_n} V_\kappa$ because $M$ will be correct about satisfaction for standard formulas. Also, $V_\kappa$ clearly agrees with $M$ that $\alpha$ is $n$-baby measurable.

\eqref{Bovykin equicon 2} clearly implies \eqref{Bovykin equicon 3}.

To show that the consistency of \eqref{Bovykin equicon 3} implies that of \eqref{Bovykin equicon 1}, suppose that $N$ is a model  \eqref{Bovykin equicon 3}. Suppose towards a contradiction that there is no model of \eqref{Bovykin equicon 1}, meaning that for some $n<\omega$, the fragment of $\ZFC^-_U$ mentioning only instances of collection and separation for formulas of complexity at most $\Sigma_n$ is inconsistent. Fix such an $n<\omega$. The model $N$ has a model $\la M,\in,U\ra$ satisfying what it thinks is all instances of collection and separation for formulas of complexity at most $\Sigma_n$, and it must be correct about this for standard formulas. Thus, we have reached a contradiction by producing such a model.

It remains to show the ``moreover" part. So suppose that $N$ is a model of \eqref{Bovykin equicon 3}. We need to argue that the theory $\ZFC^-_U$ together with the assertions that $V_\kappa\models\varphi$ for every $\varphi\in \mathrm {Th}(N)$ is consistent. If this were not the case, then there would be some finite fragment $T$ of $\ZFC^-_U$ and some $\varphi$ such that $N\models\varphi$, but there is no model $\la M,\in, U\ra\models T$ such that $V_\kappa^M\models\varphi$. Choose $n<\omega$ bounding the complexity of $\varphi$ and all assertions in $T$. Let $\kappa$ be an $n$-baby measurable cardinal in $N$ such that $V_\kappa^N\prec_{\Sigma_n} N$. Then $N$ has a model $\la M,\in, U\ra$ witnessing that $\kappa$ is $n$-baby measurable. It follows that $\la M,\in, U\ra\models T$ and also $V_\kappa^M=V_\kappa^N\models\varphi$ by $\Sigma_n$-elementarity, contradicting our assumption that this theory is inconsistent.
\end{proof}

The above argument works as well for measurable cardinals.
Let $\ZFC_n$ denote $\ZFC$ with the replacement (equivalently, collection) and separation schemes restricted to $\Sigma_n$-formulas.
We call a cardinal $\kappa$ \emph{$n$-junior measurable} if every $A\subseteq\kappa$ is an element of a $\kappa$-model $M$ of $\ZFC_n$ with a normal ultrafilter $U\in M$.

\begin{remark}
\label{remark theory meas}
Let $T$ denote $\ZFC$ together with the existence of a measurable cardinal.
Let $S$ denote $\ZFC$ with the scheme consisting of the following sentences for all $n\in \omega$:
$$\text{``There exists an $n$-junior measurable cardinal $\kappa$ with $V_\kappa\prec_n V$''}.$$
As in Theorem \ref{Bovykin equicon}, $S$ captures precisely the consequences in $V_\kappa$ of measurable cardinals $\kappa$.
Since the argument only uses $\Pi^1_2$-indescribability, a similar claim holds for all other large cardinal notions that imply $\Pi^1_2$-indescribability, for instance for the notions of strong and supercompact cardinals.
\end{remark}

In particular, there is a hierarchy of natural large cardinal notions, the $n$-junior measurable cardinals, that reaches up all the way to measurable cardinals.
This answers a question of Daniel Isaacson asked at the first-listed author's talk in the Oxford Set Theory Seminar in May 2020.

\section{Outlook}

We provided a fine analysis of large cardinal notions in the interval between Ramsey and measurable cardinals defined by expanding the amount of collection and separation available in the relevant models.
The diagram in Figure \ref{diagram large cardinals} below provides an overview of relationships between the large cardinal notions. 
The patterns around $\alpha$-Ramsey, $(\alpha,n)$-baby measurable and $\alpha$-baby measurable cardinals  are enclosed by solid boxes. 
Repeating steps in a hierarchy that depends on $\omega\leq\alpha<\kappa$ or $1\leq n<\omega$ are enclosed by dashed boxes. 
For example, the large dashed box encloses a pattern that repeats for each $1\leq n<\omega$. 
An $[n+1]$-baby measurable cardinal is a limit of reflective $(\alpha,n)$-baby measurable cardinals by Theorem \ref{[n+1]bm limit of reflective} and such cardinals are $[n]$-baby measurable. 
Note that the notions in this box collapse for $n=0$ by Section \ref{sec:hierarchy}. 
The range of $\alpha$ for $\alpha$-Ramsey cardinals is meant to be $\omega_1\leq\alpha<\kappa$. 

The differences between the properties of $\KP_n$ and $\ZFC^-_n$ studied in Section \ref{section: amen coll} entail that closure properties are not relevant for the large cardinal notions defined via $\KP_n$ by Lemma~\ref{lem:noFaintWeakKP}, while they induce to a strict hierarchy for the ones defined via $\ZFC^-_n$ by Theorem \ref{bm stronger than weaklybm}
that is studied via the games $\g^{\theta,n}_\alpha(\kappa)$ in Section \ref{sec:hierarchy}.

We expect a similar pattern as the one around $\kappa$-baby measurable cardinals to recur at reflective $\kappa$-baby measurable cardinals. 
It is natural to ask whether reflective $(\alpha,n)$-baby measurable cardinals are precisely the cardinals $\kappa$ such that the challenger does not have a winning strategy for the game of length $\kappa\cdot\alpha$.
Some issues are left open for the strong variant of the game $\rg^{\theta,n}_\alpha(\kappa)$ in Definition \ref{def: refl game} where we ask that $M_\gamma\in N_\gamma$ for all $\gamma<\alpha$.
We have a similar characterisation as in Theorem \ref{th: char win chal} for winning strategies for the challenger by modifying the reflection in Definition \ref{def: reflective} to all sets $B\in M$ instead of just subsets of $\kappa$, but it is open whether the existence of a winning strategy for the challenger in this game is independent of $\theta$ as in Lemma \ref{prop:weakWinningStrategyIndependentOfTheta}.

Theorem \ref{Bovykin equicon} provides a bridge between large cardinals in set theory and class theory.
In particular, the theory $S$ in \eqref{Bovykin equicon 3} is interpretable in $T:=\ZFC^-_U$ and $T$ is conservative over $S$.
Since in familiar examples of conservative extensions such as G\"odel-Bernays class theory $\GBC$ and $\ZFC$, any model of $\ZFC$ can be extending to one of $\GBC$ by adding a second-order part, we ask if the same holds here.

\begin{problem}
Is every model of $S$ the restriction to $V_\kappa$ of a model of $T$, where $\kappa$ is the largest cardinal?
In other words, is the function from $\mathrm{Mod}(T)$ to $\mathrm{Mod}(S)$ induced by the interpretation surjective?
\end{problem}

The results in Section \ref{section: second-order} show how to approximate some large cardinal notions from below.
For instance, we studied the precise consequences in $V_\kappa^M$ for a measurable cardinal $\kappa$ in a model $M$ of $\ZFC^-$ in Theorem \ref{Bovykin equicon}.
We further ask whether $n$-baby measurable cardinals can be replaced by $(\kappa,n)$-baby measurable cardinals in this theorem. 
Regarding a finer version of the theorem, let $\ZFC^-_{U,n}$ denote the variant of $\ZFC^-_U$ where the collection and separation schemes are restricted to $\Sigma_n$-formulas 
for some $n\geq 1$ and $\kappa$ denotes the largest cardinal. 
Are the consequences $\ZFC_{U,n}$ in $V_\kappa$ axiomatizable by large cardinal properties?

The same problem is of interest for smaller large cardinals.

\begin{problem}
Is the theory of models  of the form $V_\kappa^M$ axiomatizable, where $M$ is a model of $\ZFC$ and $\kappa$ is a weakly compact cardinal in $M$?
\end{problem}

The approximation of measurable cardinals from below in  Remark \ref{remark theory meas}
suggests to ask whether there is a connection with Bagaria's characterization of the existence of measurable cardinals \cite[Section 5.2]{bagaria2023large}.

The following variant of the above notions for countable models may be connected with properties of sets of reals and the determinacy of infinite games.

\begin{problem}
Consider the statement that every real is contained in a countable transitive model $M$ of $\ZFC^-$ with the largest cardinal $\kappa$ and an $M$-ultrafilter $U$ on $\mathcal{P}(\kappa)^M$ such that $(M,\in,U)$ is an $\omega_1$-iterable model of $\KP_n$, where $n\geq 1$.
Is this equivalent to the determinacy of a natural class of projective sets?
\end{problem}

\newpage

\begin{figure}[H]\label{Figure.LargeCardinals}
  \begin{tikzpicture}[theory/.style={scale=.85,minimum height=6.5mm},scale=.35]
  \draw[dashed, gray]   (0.434,24.413) .. controls (0.434,22.586) and (1.915,21.105) .. (3.741,21.105) -- 
  (30.226,21.105) .. controls (32.053,21.105) and (33.534,22.586) .. (33.534,24.413) -- 
  (33.534,37.398) .. controls (33.534,39.225) and (32.053,40.705) .. (30.226,40.705) -- 
  (3.741,40.705) .. controls (1.915,40.705) and (0.434,39.225) .. (0.434,37.398) -- cycle ;
  \draw[line width=0.1mm, gray]   
  (4.5,9.7) -- (4.5,7.2) .. controls (4.5,5.7) and (6.0,5.7) .. (6.0,5.7) -- 
  (28.0,5.7) .. controls (29.5,5.7) and (29.5,7.2) .. (29.5,7.2) -- 
  (29.5,9.7); 
  \draw[dotted, line width=0.15mm]   
   (29.5,9.7)  -- (4.5,9.7); 
  \draw[line width=0.1mm, gray]  
  (29.5,14.3) --   (29.5,19.2)  .. controls (29.5,20.7) and (28.0,20.7) .. (28.0,20.7) --
  (6.0,20.7)  .. controls (4.5,20.7) and (4.5,19.2) .. (4.5,19.2) -- 
  (4.5,14.3);  
  \draw[dotted, line width=0.15mm] 
  (4.5,14.3) -- (29.5,14.3); 
  \draw[line width=0.1mm, gray]   
  (4.5,29.2) .. controls (4.5,27.8) and (6.0,27.8) .. (6.0,27.8) -- 
  (28.0,27.8) .. controls (29.5,27.8) and (29.5,29.2) .. (29.5,29.2) --  
  (29.5,35.0) .. controls (29.5,36.5) and (28.0,36.5) .. (28.0,36.5) -- 
  (6.0,36.5) .. controls (4.5,36.5) and (4.5,35.0) .. (4.5,35.0) -- cycle; 
  \draw[line width=0.1mm, gray]   
  (4.5,44.3) .. controls (4.5,42.8) and (6.0,42.8) .. (6.0,42.8) -- 
  (28.0,42.8)  .. controls (29.5,42.8) and (29.5,44.3) .. (29.5,44.3) --  
  (29.5,50.5)  .. controls (29.5,52) and (28.0,52) .. (28.0,52) -- 
  (6.0,52) .. controls (4.5,52) and (4.5,50.5) .. (4.5,50.5)  -- cycle; 
  \draw[line width=0.1mm, dashed]   
  (7.9,15.0) .. controls (7.9,14.8) and (8.1,14.8) .. (8.1,14.8) --
  (11.8,14.8) .. controls (12.0,14.8) and (12.0,15.0) .. (12.0,15.0) --  
  (12.0,15.9) .. controls (12.0,16.1) and (11.8,16.1) .. (11.8,16.1) -- 
  (8.1,16.1) .. controls (7.9,16.1) and (7.9,15.9) .. (7.9,15.9) -- cycle; 
  \draw[line width=0.1mm, dashed]   
  (5.8,30.3) .. controls (5.8,30.1) and (6.0,30.1) .. (6.0,30.1) -- 
  (14.0,30.1) .. controls (14.2,30.1) and (14.2,30.2) .. (14.2,30.2)  --  
  (14.2,31.2) .. controls (14.2,31.5) and (14.0,31.5) .. (14.0,31.5) -- 
  (6.0,31.5) .. controls (5.8,31.5) and (5.8,31.3) .. (5.8,31.3) -- cycle; 
  \draw[line width=0.1mm, dashed]   
  (4.1,39.05) .. controls (4.1,38.85) and (4.3,38.85) .. (4.3,38.85) -- 
  (15.7,38.95) .. controls (15.9,38.95) and (15.9,39.15) .. (15.9,39.15) --  
  (15.9,40.1) .. controls (15.9,40.3) and (15.7,40.3) .. (15.7,40.3) -- 
  (4.3,40.2) .. controls (4.1,40.2) and (4.1,40.0) .. (4.1,40.0) -- cycle; 
  \draw[line width=0.1mm, dashed]   
  (6.5,45.65) .. controls (6.5,45.45) and (6.7,45.45) .. (6.7,45.45) -- 
  (13.3,45.45) .. controls (13.5,45.45) and (13.5,45.65) .. (13.5,45.65) --  
  (13.5,46.25) .. controls (13.5,46.95) and (13.3,46.95) .. (13.3,46.95) -- 
  (6.7,46.95) .. controls (6.5,46.95) and (6.5,46.75) .. (6.5,46.75) -- cycle; 
  \draw[line width=0.1mm, dashed]   
  (4.75,54.5) .. controls (4.75,54.3) and (4.95,54.3) .. (4.95,54.3) -- 
  (15.05,54.3) .. controls (15.25,54.3) and (15.25,54.5) .. (15.25,54.5) --  
  (15.25,55.55) .. controls (15.25,55.75) and (15.05,55.75) .. (15.05,55.75) -- 
  (4.95,55.75) .. controls (4.75,55.75) and (4.75,55.55) .. (4.75,55.55) -- cycle; 
  \draw[line width=0.1mm, dashed]   
  (20.3,58.85) .. controls (20.3,58.65) and (20.5,58.65) .. (20.5,58.65) -- 
  (27.5,58.65) .. controls (27.7,58.65) and (27.7,58.85) .. (27.7,58.85) --  
  (27.7,59.95) .. controls (27.7,60.15) and (27.5,60.15) .. (27.5,60.15) -- 
  (20.5,60.15) .. controls (20.3,60.15) and (20.3,59.95) .. (20.3,58.95) -- cycle; 

\newcommand{\attention}{} 

\newcommand{\yy}{2.2} 
     \draw (0:10) node[theory] (1) {}
           ++(90:\yy) node[theory] (2) {}
           ++(90:\yy) node[theory] (3) {completely ineffable = faintly $\omega$-Ramsey}
           ++(90:\yy) node[theory] (4) {} 
           ++(90:\yy) node[theory] (5) {{\attention$\omega$-Ramsey}} 
           ++(90:\yy) node[theory] (6) {}
           ++(90:\yy) node[theory] (7) {}
           ++(90:\yy) node[theory] (8) {$\alpha$-Ramsey}
           ++(90:\yy) node[theory] (9) {} 
           ++(90:\yy) node[theory] (10) {$\kappa$-Ramsey}
           ++(90:\yy) node[theory] (11) {} 
           ++(90:\yy) node[theory] (12) {}
           ++(90:\yy) node[theory, blue!80!black] (13) {{\attention faintly $(\omega,n)$-baby measurable}}
           ++(90:\yy) node[theory, blue!80!black] (14) {}
           ++(90:\yy) node[theory, blue!80!black] (15) { $(\alpha,n)$-baby measurable}
           ++(90:\yy) node[theory, blue!80!black] (16) {}
           ++(90:\yy) node[theory, blue!80!black] (17) {$(\kappa,n)$-baby measurable} 
           ++(90:\yy) node[theory, blue!80!black] (18) {{\attention faintly reflective $(\omega,n)$-baby measurable}}
           ++(90:\yy) node[theory, blue!80!black] (19) {reflective $(\alpha,n)$-baby measurable}
           ++(90:\yy) node[theory, blue!80!black] (20) {}
           ++(90:\yy) node[theory, blue!80!black] (21) {}
           ++(90:\yy) node[theory, blue!80!black] (22) {$\alpha$-baby measurable} 
           ++(90:\yy) node[theory, blue!80!black] (23) {}
           ++(90:\yy) node[theory, blue!80!black] (24) {$\kappa$-baby measurable} 
           ++(90:\yy) node[theory, blue!80!black] (25) {{\attention faintly reflective $\omega$-baby measurable}} 
           ++(90:\yy) node[theory, blue!80!black] (26) {reflective $\alpha$-baby measurable}
           ++(90:\yy) node[theory, blue!80!black] (27) {}
           ++(90:\yy) node[theory, blue!80!black] (28) {}; 
     \draw (0:10) node[theory, blue!80!black] (1a) {};
     \draw (0:24) node[theory] (1a) {weakly ineffable}
           ++(90:\yy) node[theory] (2a) {$0$-iterable}
           ++(90:\yy) node[theory] (3a) {}
           ++(90:\yy) node[theory] (4a) {$1$-iterable}
           ++(90:\yy) node[theory] (5a) {}
           ++(90:\yy) node[theory] (6a) {{\attention $2$-iterable}}
           ++(90:\yy) node[theory] (7a) {Ramsey}
           ++(90:\yy) node[theory] (8a) {}
           ++(90:\yy) node[theory] (9a) {strongly Ramsey} 
           ++(90:\yy) node[theory, blue!80!black] (10a) {}
           ++(90:\yy) node[theory, blue!80!black] (11a) {$[n]$-baby measurable}        
           ++(90:\yy) node[theory, blue!80!black] (12a) {faintly $n$-baby measurable}
           ++(90:\yy) node[theory, blue!80!black] (13a) {}
           ++(90:\yy) node[theory, blue!80!black] (14a) {weakly $n$-baby measurable} 
           ++(90:\yy) node[theory, blue!80!black] (15a) {}
           ++(90:\yy) node[theory, blue!80!black] (16a) {$n$-baby measurable}
           ++(90:\yy) node[theory, blue!80!black] (17a) {}
           ++(90:\yy) node[theory, blue!80!black] (18a) {} 
           ++(90:\yy) node[theory, blue!80!black] (19a) {} 
           ++(90:\yy) node[theory, blue!80!black] (20a) {faintly baby measurable} 
           ++(90:\yy) node[theory, blue!80!black] (21a) {weakly baby measurable}
           ++(90:\yy) node[theory, blue!80!black] (22a) {}
           ++(90:\yy) node[theory, blue!80!black] (23a) {baby measurable}
           ++(90:\yy) node[theory, blue!80!black] (24a) {} 
           ++(90:\yy) node[theory, blue!80!black] (25a) {} 
           ++(90:\yy) node[theory, blue!80!black] (26a) {}
           ++(90:\yy) node[theory] (27a) {locally measurable}
           ++(90:\yy) node[theory, blue!80!black] (28a) {$n$-junior measurable}
           ++(90:\yy) node[theory] (29a) {measurable}; 
     \tikzset{>={Stealth[round, length=1.5mm,width=1.1mm]}}
     \draw[<-]
                        (3) edge (5)
                        (5) edge (8)
                        (8) edge (10); 
     \draw[<-]
                        (10) edge (13)
                        (13) edge (15)
                        (15) edge (17)
                        (17) edge (18)
                        (18) edge (19)
                        (19) edge (22)
                        (22) edge (24) 
                        (24) edge (25) 
                        (25) edge (26);
     \draw[<-]
                        (1a) edge (2a)
                        (2a) edge (4a)
                        (4a) edge (6a)
                        (6a) edge (7a)
                        (7a) edge (9a); 
     \draw[<-]
                        (9a) edge (11a)
                        (11a) edge (12a)
                        (12a) edge (14a)
                        (14a) edge (16a)
                        (16a) edge (20a)
                        (20a) edge (21a)
                        (21a) edge (23a)
                        (23a) edge (27a)
                        (27a) edge (28a)
                        (28a) edge (29a);
\newcommand{\rxx}{179}
\newcommand{\ryy}{-1}
\newcommand{\lxx}{1}
\newcommand{\lyy}{181}
     \draw[<-, dotted]
                        (3) edge [out=\lxx, in=\lyy] ([yshift=-1.5mm]4a.west)
                        (5) edge [out=\lxx, in=\lyy] (6a.west)
                        (8) edge [out=\lxx, in=\lyy] ([yshift=-1.5mm]9a.west); 
     \draw[<-, dotted]
                        (10) edge [out=\lxx, in=\lyy] (11a.west)
                        (13) edge [out=\lxx, in=\lyy]([yshift=-1mm]14a.west)
                        (15) edge [out=\lxx, in=\lyy]([yshift=-1.2mm]16a.west)
                        (22) edge [out=\lxx, in=\lyy] ([yshift=-1.2mm]23a.west); 
     \draw[<-]
                        (2a) edge [out=\rxx, in=\ryy] ([yshift=-0.5mm]3.east)
                        (4a) edge [out=\rxx, in=\ryy] ([yshift=-1.5mm]5.east)
                        (7a) edge [out=\rxx, in=\ryy] ([yshift=-1.5mm]8.east)
                        (9a) edge [out=\rxx, in=\ryy] ([yshift=-1.5mm]10.east); 
     \draw[<-]
                        (12a) edge [out=\rxx, in=\ryy] ([yshift=-0.5mm]13.east)
                        (14a) edge [out=\rxx, in=\ryy] ([yshift=-0.8mm]15.east)
                        (16a) edge [out=\rxx, in=\ryy] (17.east)
     			     (21a) edge [out=\rxx, in=\ryy] ([yshift=-1mm]22.east)
                        (23a) edge [out=\rxx, in=\ryy] (24.east); 
     \draw[<-, dotted]
         		     (19) edge [out=\lxx, in=\lyy] (20a.west)
                        (26) edge [out=\lxx, in=\lyy] (27a.west); 
  \end{tikzpicture}
  \label{diagram large cardinals} 
  \caption{Implications between large cardinal notions. Solid arrows denote direct implications, dotted arrows implications in consistency strength. }
\end{figure}
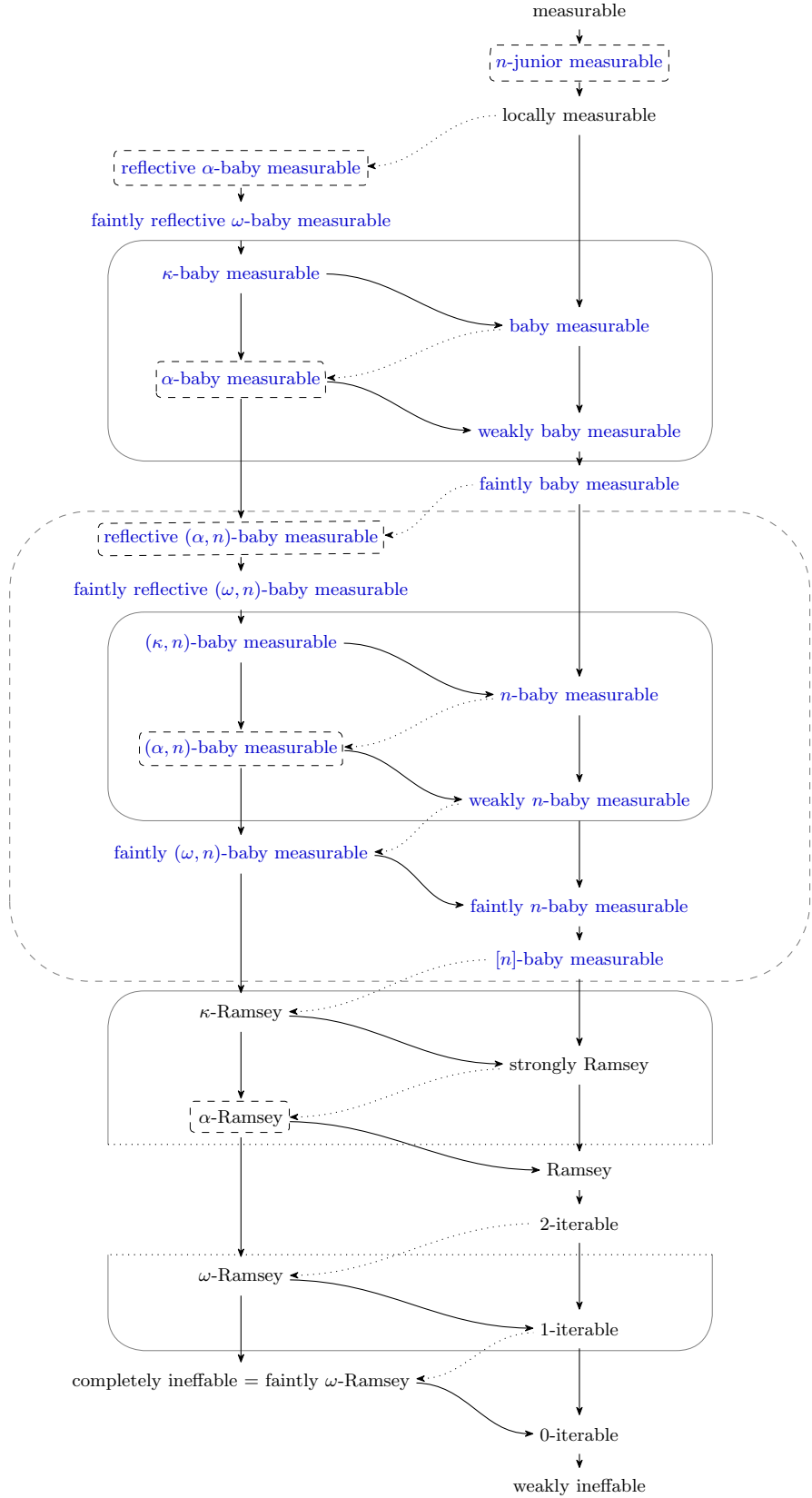


\bibliographystyle{alpha}
\bibliography{bm}

\end{document}